
\documentclass{birkjour}
\usepackage{amssymb}
%
 \newtheorem{thm}{Theorem}[section]
 \newtheorem{thmm}{Theorem}
 \newtheorem{thmmm}{Theorem} 
 \newtheorem{cor}[thm]{Corollary}
 \newtheorem{lem}[thm]{Lemma}
 \newtheorem{prop}[thm]{Proposition}
 \theoremstyle{definition}
 \newtheorem{defn}[thm]{Definition}
 \theoremstyle{remark}

 \numberwithin{equation}{section}

\newcommand{\ee}{\varepsilon}
\newcommand{\R}{\mathbb{R}}

\newcommand{\Ee}{\mathbb{E}}
\newcommand{\N}{\mathbb{N}}
\newcommand{\Z}{\mathbb{Z}}
\newcommand{\M}{\mathbb{M}}
\newcommand{\T}{\mathbb{T}}
\newcommand{\X}{\mathbb{X}}
\newcommand{\Hm}{\mathbb{H}}
\newcommand{\mS}{\mathbb{S}}
\newcommand{\PP}{\mathbb{P}}
\newcommand{\Rn}{\mathbb{R}^{2n}}
\newcommand{\Tn}{\mathbb{T}^{2n}}
\newcommand{\Zn}{\mathbb{Z}^{2n}}

\newcommand{\Hc}{\mathcal{H}}
\newcommand{\J}{\mathcal{J}}
\newcommand{\Lc}{\mathcal{L}}
\newcommand{\Kc}{\mathcal{K}}
\newcommand{\MM}{\mathcal{M}}
\newcommand{\Pc}{\mathcal{P}}
\newcommand{\V}{\mathcal{V}}
\newcommand{\WW}{\mathcal{W}}

\newcommand{\Ah}{\nabla_{_{1/2}} A_H} 
\newcommand{\AH}{\nabla^2_{_{1/2}} A_H}
\newcommand{\dd}{\overline \p_{J_0}} 
\newcommand{\dHH}{\overline \p_{J,H}}  
\newcommand{\dH}{\overline \p_{J_0,H}}
\newcommand{\ffi}{\varphi_{X}} 
\newcommand{\HH}{H^{1/2}(\mS^1)}
\newcommand{\HR}{H^{1/2}(\mS^1,\R^{2n})}
\newcommand{\HT}{H^{1/2}(\mS^1,\T^{2n})}
\newcommand{\HS}{H^{1/2}(\mS^1)}
\newcommand{\LS}{L^{2}(\mS^1)} 
\newcommand{\LLR}{L^2(\mS^1,\R^{2n})}

\newcommand{\LRR}{L^{2}(\R\times \mS^1,\Rn)} 
\newcommand{\LZR}{L^2(Z,\R^{2n})}
\newcommand{\WRR}{H^{1}(\R\times \mS^1,\Rn)}
\newcommand{\WRRl}{H^{1}_{\loc}(\R\times \mS^1,\Rn)}
\newcommand{\WRTl}{H^{1}_{\loc}(\R\times \mS^1,\T^{2n})}
\newcommand{\WZR}{H^{1}(Z,\R^{2n})}
\newcommand{\WZTl}{H^{1}_{\loc}(Z,\T^{2n})}
\newcommand{\CoR}{C^{\infty}_0([0,\infty)\times \mS^1,\R^{2n})} 
\newcommand{\CC}{C^{\infty}(\R\times \mS^1,\T^{2n})}
\newcommand{\Txx}{\Theta_{x^-,c_{x^+}}} 
\newcommand{\ffiXK}{\varphi_{\X_K}}

\newcommand{\lo}{\longrightarrow}
\newcommand{\rlo}{\rightarrow}
\newcommand{\p}{\partial}
\newcommand{\li}{\left}
\newcommand{\re}{\right}
\newcommand{\nl}{||\,}
\newcommand{\nr}{\,||}
\newcommand{\mi}{\,\,\big|\,\,}

\DeclareMathOperator{\coker}{\mathrm{coker}}
\DeclareMathOperator{\ran}{\mathrm{ran}}
\DeclareMathOperator{\graph}{\mathrm{graph}}
\DeclareMathOperator{\rest}{\mathrm{sing}}
\DeclareMathOperator{\crit}{\mathrm{crit}}
\DeclareMathOperator{\loc}{\mathrm{loc}}
\DeclareMathOperator{\hyb}{\mathrm{hyb}}
\DeclareMathOperator{\spec}{\mathrm{spec}}
\DeclareMathOperator{\ind}{\mathrm{ind}}
\DeclareMathOperator{\reg}{\mathrm{reg}}
\DeclareMathOperator{\Hr}{\mathcal{H}_{\mathrm{reg}}}
\DeclareMathOperator{\Image}{\mathrm{Im}}
\DeclareMathOperator{\Gr}{\mathrm{Gr}}
\DeclareMathOperator{\dist}{\mathrm{dist}}
\DeclareMathOperator{\codim}{\mathrm{codim}}
\DeclareMathOperator{\ev}{\mathrm{ev}}
\DeclareMathOperator{\Sp}{\mathrm{Sp}}
\DeclareMathOperator{\Id}{\mathrm{Id}}

\DeclareMathOperator{\Hrr}{\mathcal{H}_{\mathrm{reg}}}
\DeclareMathOperator{\supp}{\mathrm{supp}}

\begin{document}

%
%
%
%
%
%
%
%
%

\title{Isomorphic chain complexes of Hamiltonian dynamics on tori}

\author[Michael Hecht]{Michael Hecht}

\address{%
Karl-Heine-Str.61 \\
04229 Leipzig \\
Germany}

\email{hecht@math.uni-leipzig.de}
\email{michael@bioinf.uni-leipzig.de}

\keywords{Floer homology, Morse homology, chain isomorphism }

\date{April 18, 2012}
\dedicatory{I dedicate this work to Yvonne Choquet-Bruhat.}
 
\begin{abstract} In this work we construct for a given smooth,  generic Hamiltonian $H : \mS^1 \times \Tn \lo \R$ on the torus $\Tn = \Rn/\Z^{2n}$ a chain isomorphism 
$ \Phi_* : \big(C_*(H),\p^M_*\big) \lo \big(C_*(H),\p^F_*\big)$ between the Morse complex of the Hamiltonian action $A_H$ on the free loop space of the torus 
$\Lambda_0(\Tn)$ and the Floer complex. 
Though both complexes are generated by the critical points of $A_H$, their boundary operators differ. Therefore the construction of $\Phi$ is based on counting the 
moduli spaces of hybrid type solutions which involves stating a new non-Lagrangian boundary value problem for Cauchy-Riemann type operators not yet studied in 
Floer theory.  We finally want to note that the problem is completely symmetric. So we also could construct an isomorphism 
$\Psi_* :  \big(C_*(H),\p^F_*\big) \lo \big(C_*(H),\p^M_*\big)\,.$
\end{abstract}

\maketitle
\section{Introduction}
For a generic smooth Hamiltonian $H :\mS^1 \times \Tn \lo \R $ with only contractible periodic orbits and standard complex structure 
\begin{equation*}
J_0 =\li ( \begin{array}{rc}
 0 & 1 \\
-1 & 0
\end{array}\re)
\end{equation*}
we have proven the following statement. 
\begin{thmm}[Main Theorem]
 The Morse complex of the Hamiltonian action $A_H$ is chain isomorphic to the Floer complex of $(H,J_0)$. 
\end{thmm}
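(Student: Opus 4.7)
The plan is to construct the chain isomorphism $\Phi_*$ by counting moduli spaces of \emph{hybrid} trajectories that interpolate between the negative $\HH$-gradient flow of $A_H$ on the loop space $\Lambda_0(\Tn)$ and Floer trajectories on the half-cylinder $[0,\infty)\times\mS^1$. Concretely, for critical points $x^{\pm}\in\crit(A_H)$ I would consider pairs $(u,w)$ where $u:(-\infty,0]\to\Lambda_0(\Tn)$ is a negative $\HH$-gradient trajectory of $A_H$ with $\lim_{s\to-\infty}u(s)=x^-$, and $w:[0,\infty)\times\mS^1\to\Tn$ solves the Floer equation for $(H,J_0)$ with $\lim_{s\to\infty}w(s,\cdot)=x^+$, subject to the matching condition $u(0)=w(0,\cdot)$ in $\HT$. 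Counting, with appropriate signs, the rigid elements of the zero-dimensional component of the resulting hybrid moduli space $\MM_{\hyb}(x^-,x^+)$ defines the matrix coefficient of $\Phi_*$ from $x^-$ (viewed as a Morse generator) to $x^+$ (viewed as a Floer generator).

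The analytic heart of the construction is the Fredholm theory of the linearized hybrid operator. I would work in weighted Sobolev spaces adapted to exponential decay at $x^{\pm}$, identify the trace at $s=0$ of $w$ with a loop in $\HT$ via the standard trace theorem, and impose the matching as a bounded linear coupling between the two ends. The linearization then splits into the $\HH$-linearization of $\nabla A_H$ on the left and a Cauchy-Riemann-type operator on the right half-cylinder, joined through the trace. The boundary condition on $\{0\}\times\mS^1$ is not of the usual totally real/Lagrangian kind but is the infinite-codimensional graph condition coming from matching with an $H^{1/2}$-loop; this is precisely the non-Lagrangian boundary value problem alluded to in the abstract. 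I would prove Fredholmness by a parametrix argument and compute the index by homotoping to a split model, obtaining that the virtual dimension of $\MM_{\hyb}(x^-,x^+)$ equals the difference $\mu(x^-)-\mu(x^+)$ of Conley-Zehnder indices.

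Compactness of the low-dimensional hybrid moduli spaces follows from combining the standard $L^{\infty}$-estimates for Floer strips on the torus (no bubbling occurs since $\pi_2(\Tn)=0$) with Palais-Smale-type bounds for $A_H$ in $\HH$, both controlled a priori by the energy identity
\begin{equation*}
A_H(x^-)-A_H(x^+)=\int_{-\infty}^0\|\dot u\|_{\HH}^2\,ds+\int_0^{\infty}\!\!\int_{\mS^1}|\p_s w|^2\,dt\,ds\,.
\end{equation*}
Transversality can be arranged by a generic perturbation of $H$ or of an additional parameter inserted into the matching. The chain map property $\Phi_*\p^M_*=\p^F_*\Phi_*$ then follows from analyzing the oriented boundary of the one-dimensional strata of $\MM_{\hyb}$, which break either at $s=-\infty$ into a Morse trajectory followed by a hybrid, or at $s=+\infty$ into a hybrid followed by a Floer strip.

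To upgrade $\Phi_*$ from a chain map to an \emph{isomorphism} I would exploit the action filtration: the hybrid equations enforce $A_H(x^+)\le A_H(x^-)$ with equality only for the constant pair $u\equiv w\equiv x$, so after ordering the generators by $A_H$ the matrix of $\Phi_*$ is triangular with $\pm1$ on the diagonal and hence invertible over $\Z$. The main technical obstacle is expected to be the non-Lagrangian matching itself: since the boundary data live in the infinite-dimensional loop space $\HT$ rather than on a finite-dimensional totally real submanifold, neither the standard Riemann-Hilbert setup nor the usual elliptic boundary estimates apply directly, and one needs a tailored treatment of the trace operator, the Fredholm index, the exponential decay at the asymptotic orbits, and the gluing theorem adapted to this coupling.
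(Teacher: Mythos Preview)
Your overall strategy matches the paper's: hybrid moduli spaces coupling a half Morse trajectory to a Floer half-cylinder via the matching $u(0)=w(0,\cdot)$, Fredholm theory for the linearized operator with index $\mu(x^-)-\mu(x^+)$, the chain-map identity from the boundary of the one-dimensional strata, and the triangular-with-ones argument coming from the action filtration. The paper in fact uses both your pair formulation $(u,w)$ and the equivalent description as half-cylinders $w$ with the non-Lagrangian boundary condition $w(0,\cdot)\in\WW^u(x^-)$.

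The one place your sketch has a genuine gap is compactness. The functional $A_H$ on $\HT$ does \emph{not} satisfy the Palais--Smale condition: it is strongly indefinite, with infinite Morse index and co-index at every critical point, and bounded Palais--Smale sequences need not be precompact. So ``Palais--Smale-type bounds for $A_H$ in $\HH$'' are not available and cannot control the Morse half of the hybrid curve. What the paper uses instead is a structural result of Abbondandolo--Majer: with respect to the admissible subbundle $\V=\M\times(\R^n\times\Hm^+)$ satisfying conditions \textbf{C1}--\textbf{C2}, the unstable manifold is \emph{essentially vertical}, meaning $\PP^-\big(\WW^u(x^-)\cap\{A_H\ge c\}\big)$ is precompact in $\Hm^-$. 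This is exactly what closes the key estimate on $Z_{T}=[0,T]\times\mS^1$,
\[
\|\delta w\|_{H^1(Z_{T'})}\le C\big(\|\delta w\|_{L^2(Z_T)}+\|\PP^-\delta w(0,\cdot)\|_{\HS}\big),
\]
since the boundary term ranges over a precompact set. Without this input the hybrid compactness does not follow from the energy identity alone.

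A secondary point: the paper achieves transversality for the hybrid problem not by perturbing $H$ (which is kept fixed once $(H,J_0)$ is regular) but by adding a small generic compact vector field $K$ to $-\Ah$, so that the boundary condition becomes $w(0,\cdot)\in\WW^u_{\X_K}(x^-)$ with $\X_K=-\Ah+K$. This leaves the Floer side untouched while giving enough freedom on the Morse side. The paper also works over $\Z_2$, sidestepping the orientation issues implicit in your ``$\pm1$ on the diagonal''.
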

To give a short summary observe that the component of contractible loops of the free loop space of the $2n$ - dimensional torus $(\Tn = \Rn/\Z^{2n},\omega_0)$ 
equipped with the standard symplectic structure $\omega_0 = \sum_{i=1}^ndy_i\wedge dx_i$ can be completed to a Hilbert manifold $\M$ with respect to the 
$H^{1/2}$- inner product. In this case $\M$ splits into $\M = \Tn \times \Hm$, where $\Hm$ is a Hilbert space and each $x \in \M$ can be written as 
\begin{equation*}
 x  = [x_0] + \sum_{k \not = 0}e^{2\pi J_0kt}x_k \,, \quad [x_0] \in \Tn \,, x_k \in \Rn \quad \forall \, k \in \Z\setminus\{0\}\,.
\end{equation*}
Note that $J_0$ is the complex structure  on $\Tn$ such that $(\Tn,\omega_0,J_0)$ becomes a K\"ahler manifold with 
\begin{equation*}
\omega_0(\cdot,J_0\cdot) = \li<\cdot,\cdot\re>\,. 
\end{equation*}
For given 1-periodic Hamiltonian $H : \mS^1 \times \Tn \lo \R$ the Hamiltonian action $A_H : \M \lo \R $ is defined by 
\begin{equation*}
A_H(x) =  -\frac{1}{2}\li(\nl \PP^+x \nr^2_{\HH} - \nl \PP^-x \nr^2_{\HH}\re) +\int_0^1 H\big(t,x(t)\big)dt 
\end{equation*}
where $\PP^{\pm}: \M \lo \Hm^{\pm}\,,  \sum \limits_{k\in \Z}e^{2\pi J_0kt}x_k \mapsto  \sum \limits_{\pm k>0}e^{2\pi J_0kt}x_k $ denote the projections 
onto the loops all of whose frequencies are positive or negative respectively. The critical points of $A_H$ are precisely the 1-periodic contractible solutions $\Pc_0(H)$ of the Hamiltonian flow 
$\varphi_{X_H} : \R\times \Tn \lo \Tn$ determined by the equation 
\begin{equation*}
 \frac{d}{dt} \varphi_{X_H} = X_H \circ \varphi_{X_H} \,, \quad  \varphi_{X_H}(0,\cdot) = \mathrm{id}_{\Tn} \,,\quad \text{with} \quad i_{X_H}\omega_0 = -dH \,. 
\end{equation*}
Now there are several possibilities to develop a Morse theory for this action functional, see for instance \cite{ConZeh}, \cite{amann}, \cite{rabino}, \cite{rabino2}.
We want to focus on two other ideas. The first one goes back to the results of A. Abbondandolo and P. Majer published in  \cite{Al5} and  \cite{Al1} and uses 
the fact that $A_H$ possesses a smooth gradient $\X:= -\Ah$ on $\M$ to build up a theory  directly on the loop space. The second one is known as A. Floer's approach 
\cite{floer3} where solutions of a type of Cauchy Riemann PDE's are considered, i.e., the solutions 
$$ u : \R\times \mS^1 \lo \Tn $$ 
of the  Floer equation
\begin{equation*}
\dHH \big(u(s,t)\big) = \p_su(s,t) +J_t(u(s,t))\big(\p_t u(s,t) -X_H(t,u(s,t))\big) = 0 \,, \quad J_t \in \J
\end{equation*}
where $\J$ denotes the set of all smooth and 1-periodic almost complex structures on $(\Tn,\omega_0)$ which are compatible with $\omega_0$. In our case we have to 
model these equations in the $H^1$-setup, therefore we find it convenient to restrict ourselves to the case where $J =J_0$ is the constant standard complex structure.
To achieve transversality we asssume that the Hamiltonian $H$ was generically chosen. Hence we can count the solutions with finite energy which therefore converge to 
critical points of $A_H$ at the asymptotics and define the Floer boundary operator   
\begin{equation*}
 \p^F_k  : C_k(H) \lo C_{k-1}(H)\,,  
\end{equation*}
between the free abelian groups generated by the critical points of $A_H$ with Conley-Zehnder index $k \in \Z$. According  to Floer's fundamental theorem the above 
setup becomes a complex, i.e.,  $\p^F_{k-1} \circ \p^F_k = 0$. The homology of this complex is called the {\bf Floer homology } and it is known due to Floer's continuation 
theorem that this homology is independent of the chosen Hamiltonian $H$. In particular this asserts why we can assume that there are only contractible 1-periodic 
Hamiltonian orbits and can therefore restrict ourselves to the component of contractible loops of the free loop space $\Lambda_0(\Tn)$ as introduced. Furthermore 
the continuation property was used by A. Floer to prove that the Floer homology is chain isomorphic to the singular homology of the symplectic manifold, i.e., in our 
case the torus $\Tn$. This fact is the main ingredient in Floer's proof of the {\bf Arnold conjecture} in the generic situation \cite{floer3}.\\

Coming back to the first approach the fact that $A_H$ is strongly indefinite, i.e., possesses infinite Morse indices and co-indices for all singular points 
$x,y \in \rest(\X)$ leads to the problem that the unstable and stable manifolds $\WW^u(x),\WW^s(x)$ are infinite dimensional submanifolds of $\M$. 
Nevertheless, one can hope, that the intersections $\WW^u(x)\cap\WW^s(y) $ are finite dimensional. To prove such a result A. Abbondandolo and P. Majer 
require the existence of a subbundle $\V \subset T\M = \M \times \Ee$, $\Ee = \Rn \times \Hm$ which in our situation can be chosen as 
$\V = \M \times \big(\R^n\times\Hm^+\big)$ such that the Morse vector field $\X$ satisfies two conditions, namely : \\

{\bf (C1)} for every singular point $x \in \rest (\X)$, the unstable eigenspace $E^u\li(D\X(x)\re)$ of the Jacobian  of $\X$ at $x$ is a compact perturbation of 
$\V(x)$; meaning that the corresponding orthogonal projections $P_{E^u}:=P_{E^u\li(D\X(x)\re)}$ and $P_V$ with $V = \R^n\times \Hm^+$ have compact difference, i.e.,
$P_{E^u} - P_V$ is a compact operator. \\

{\bf (C2)} for all $p\in \M$ the operator $\big[ D\X(p),P_V \big]$ is compact, where $[S,T]=ST-TS$, $\forall$ $S,T \in \Lc(\Ee)$ denotes the commutator. \\  

Under condition {\bf C1} and a strengthened, global version of {\bf C2}, A. Abbondandolo and P. Majer could prove in \cite{Al5} that if the intersections 
$\WW^u(x)\cap\WW^s(y)$, $x,y \in \rest(\X)$ are transverse, then these are finite dimensional manifolds which are compact up to broken trajectories. Furthermore
A. Abbondandolo and P. Majer defined a relative Morse index with respect to $\V$, which in our situation due to our special choice of $\V$ coincides with the Conley-Zehnder index. 
Therefore 
we are almost in the same situation as in the finite dimensional case treated in \cite{schwarz-2} and can built up a Morse complex for the action functional $A_H$. 
That is to consider again the free abelian groups $C_k(H)$, generated by the critical points $x\in \crit(A_H) $ of $A_H$ with Conley-Zehnder index $\mu(x) = k \in \Z$ and to 
define the boundary operator 
\begin{equation*}
 \p^M_k : C_k(H) \lo C_{k-1}(H) 
\end{equation*}
in the usual way by counting the connected components of $\WW^u(x)\cap\WW^s(y)$ with $\mu(x) = k$, $\mu(y) = k-1$. The fact that $\p^M_{k-1}\circ \p^M_k  = 0$ is 
again deep and proven in \cite{Al5}. So we obtain a homology called the {\bf Morse homology} of $A_H$. Though we have to perturb the gradient of $A_H$ 
by a compact vector field $K : \M \lo \Ee$ to achieve transversality, the above homology is independent of the perturbation as well as of the particular choice of $H$. 
So again the theory can be treated in the situation of contractible critical points of $A_H$. \\

After introducing both complexes we now want to give a sketch of the construction of the isomorphism. The main ingredients are the {\bf moduli spaces of hybrid 
type curves}. For given critical points $x^-,x^+ \in \crit(A_H)$, a $C^3$- vector field $X$  on $\M$ with globally defined flow  
that are the spaces  
\begin{align*}
 \MM_{\hyb}(x^-,x^+,H,J_0,X):= \Big \{ & u \in  H^1_{\loc}(Z,\Tn) \mi   \dH (u) = 0 \,, \\ 
                                       & u(0,\cdot)\in \WW^u_X(x^-)\,, u(+\infty,\cdot) = x^+\Big\}\,,
\end{align*}
where $Z=[0,+\infty)\times \mS^1$ denotes the half cylinder.  
Though the solutions are smooth on the interior $(0,+\infty)\times\mS^1$ the equation $\dH (u) = 0$ is understood in the weak sense. The fact that the 
evaluation at zero is a well defined smooth submersion implies that the boundary condition $u(0,\cdot)\in \WW^u_X(x^-)$ which says that the loop 
$u(0,\cdot) \in \M$ shall sit on the unstable manifold of $x^-$ with respect to $X$ is well posed. The new outcome of this thesis is that this non-Lagrangian 
boundary condition leads to a well posed Fredholm problem and to the fact that the moduli spaces are compact up to broken trajectories in the 
$H^1_{\loc}$-sense as long as $X$ is of the form $-\Ah + K$ where $K : \M \lo \Ee$ plays the role of a compact perturbation. \\

Both results in this robust theory use new estimates for the linearized operator and the connecting curves and again the existence of the subbundle $\V$ which 
is admissible for $X$ in the sense of conditions {\bf C1} and {\bf C2}. As an outlook one hopefully can extract these methods for the analogous problem on more general 
manifolds as the torus $\Tn$. \\
   
To finish the construction of the isomorphism we define the following map on the abelian groups 
$$C_k(H) = \bigoplus_{\genfrac{}{}{0pt}{}{x \in \Pc_0(H)\,,}{\mu(x)=k }}\Z_2 x \,. $$
$$ \Phi_k : C_k(H)\lo C_k(H) \,,\quad  x \mapsto \sum_{\mu(y) = k} \upsilon(x,y)y  $$  
where $\upsilon(x,y)$ denotes  the sum of connected components of the moduli space  $\MM_{\hyb}(x,y,H,J_0,X)$. By a standard argument using the gluing method $\Phi$ is a chain 
homomorphism. Finally, we order the critical points of $A_H$ by their levels, then $\Phi_k$ gets the form of an upper triangular matrix with $1$'s on the diagonal
and therefore $\Phi$ is an isomorphism as claimed.

\section{The Morse complex}\label{Morse}

In this chapter we construct the Morse complex for the Hamiltonian action functional $A_H$  on the 
component of contractible loops of the  free loop space  $ \Lambda_0(\Tn)$ of the torus $\Tn = \Rn /\Z^{2n}$.
To improve the readability of this paper some proofs of already known statements adapted to our particular setting
are explicitly given.
 
\subsection{The analytical setting}

This section shows how to complete the component of contractible loops of the free loop space of the torus to a Hilbert manifold 
whose structure is induced by the special Sobolev-space $\HR$. Furthermore we show that our choice of the $H^{1/2}$-setup matches perfectly for the Hamiltonian 
action $A_H$  in the sense that its gradient with respect to the $H^{1/2}$-inner product is well defined and smooth. 
Most facts are already discussed in \cite{hof}. Therefore we restrict ourselves to the essential statements.\\

We consider the $2n$-dimensional torus $\Tn$, $n\in\N$, as the quotient $\Tn = \Rn /\Z^{2n}$ with the standard symplectic structure 
$\omega_0 = \sum \limits_{i=1}^n dy_i\wedge dx_i$ induced by the standard symplectic basis $\li\{\p x_i,\p y_i\re\}_{i =1,...,n}$ on $\Rn$
and denote by 
\begin{equation*}
J_0 =\li ( \begin{array}{rc}
 0 & 1 \\
-1 & 0
\end{array}\re)
\end{equation*}
the special complex structure on $\Tn$  such that $(\Tn,\omega_0,J_0)$ becomes a K\"ahler manifold with 
\begin{equation*}
\omega_0(\cdot,J_0\cdot) = \li<\cdot,\cdot\re>\,. 
\end{equation*}
We introduce a smooth, 1-periodic Hamiltonian $H : \mS^1 \times \Tn \lo \R $, its Hamiltonian vector field $X_H$ 
defined by 
\begin{equation*}
 i_{X_H}\omega_0 = -dH  
\end{equation*}
and the corresponding smooth flow $\varphi_{X_H}$, which since $ \Tn$ is compact, is globally defined by requiring that  $\varphi_{X_H} : \R \times \Tn \lo \Tn$ solves
\begin{equation*}
 \frac{d}{dt} \varphi_{X_H}(t,p) = \big(X_H \circ \varphi_{X_H}\big)(t,p)\,, \quad \varphi_{X_H}(0,\cdot) = \mathrm{id_{\Tn}}\,. 
\end{equation*}
It is well-known that the search for contractible 1-periodic orbits of $\varphi_{X_H}$ can be reformulated as a variational problem as follows. 
Consider the {\bf component of contractible loops of the free loop space of the torus}. That is 
\begin{equation*}
 \Lambda_0(\Tn):=\li\{x\in C^{\infty}(\mS^1,\Tn) \mi [x] = 0 \quad \text{in} \quad \pi_1(\Tn) \re \}\,,
\end{equation*}
where $\pi_1(\Tn)$ denotes the first fundamental group. To introduce the {\bf Hamiltonian action functional} on $\Lambda_0(\Tn)$ we lift 
the contractible loop $x \in \Lambda_0(\Tn) $ to a closed curve $\tilde x \in C^{\infty}(\mS^1,\Rn)$ and lift $H$ to a function  
$\tilde H$ that is 1-periodic in all variables on $\mS^1\times \Rn$ and set 
\begin{equation}\label{tildeAH}
 \tilde A_H (x) := \frac{1}{2}\int_0^1 \omega_0 \big( \dot{\tilde{x}}(t), \tilde x(t)\big)dt + 
 \int_0^1 \tilde H\big(t, \tilde x(t)\big)dt \,, \quad x \in \Lambda_0(\Tn)\,.
\end{equation}
Now by partial integration the first term is independent of the chosen lift of $x$ and by the periodicity of $\tilde H$ so is the second. Hence $\tilde A_H$ is 
a well defined functional and if there is no danger of  confusion we will make no notational difference between $x$ and its lift. 
Varying $\tilde A_H$ at $x \in \Lambda_0(\Tn)$ in direction of a vector field  $Y$ along $x$ yields in
\begin{equation*}
 \delta \tilde A_H(x)[Y] = \int_0^1 \li< J_0 \,\dot x(t) +  \nabla H\big(x(t)\big),Y(t)\re>dt \,, 
\end{equation*}
which readily shows the one to one correspondence between critical points of $\tilde A_H$ and the contractible 1-periodic orbits of $\varphi_{X_H}$, which we denote 
by $\Pc_0(H)$. \\

We complete $\Lambda_0(\Tn)$ to a Hilbert manifold as follows. Since $\Lambda_0(\Rn) \subset \LLR$ each curve $x \in \Lambda_0(\Rn)$ can be written as a Fourier 
series with coefficients in $\Rn$
\begin{equation*}
 x(t) = \sum_{k\in\Z} e^{2\pi k J_0 t}x_k \,, \quad x_k \in \Rn\,. 
\end{equation*}
Furthermore the $\Zn$-action acts only on the constants. Therefore  we obtain by identifying those curves whose image on the torus is the same 
\begin{equation*}
 x(t)= \big(x_0,\hat x(t)\big):=  [x_0] + \sum_{k \not = 0} e^{2\pi k J_0 t}x_k \,, 
\quad [x_0] \in \Tn\,, \quad x_k \in \Rn\,.
\end{equation*}
This special Fourier representation allows us to complete the component of contractible loops of the free loop space with respect to a Hilbert structure induced 
by the following {\bf fractional Sobolev spaces}. For $s\geq 0$ we set
\begin{equation*}
 H^s(\mS^1,\Rn) = \Big\{x \in L^2(\mS^1,\Rn) \mi  \sum_{k \not = 0} |k|^{2s}|x_k|^2 < \infty \Big\}\,.  
\end{equation*}
The spaces $H^s(\mS^1,\Rn)$ are Hilbert spaces with inner product and associated norm 
\begin{align*}
\li<x,y\re>_{H^s(\mS^1)} & := \li<x_0,y_0\re> + 2\pi\sum_{k\in \Z}|k|^{2s} \li<x_k,y_k\re> \\
\nl x \nr_{H^s(\mS^1)}^2 &  = \li<x,x\re>_{H^s(\mS^1)}
\end{align*}
We are especially interested in the case $s=1/2$ and the orthogonal splitting 
\begin{equation}\label{split}
\HR = \Rn \times \Hm^+\times \Hm^-=:\Ee\,, \quad \Hm:= \Hm^+\times \Hm^- 
\end{equation}
into the constants and the loops all of whose frequencies are positive or negative respectively given by the corresponding orthogonal projections 
$\PP^{\pm} : \Hm \lo \Hm^{\pm}$, $\PP^0 : \Ee \lo \Rn$  with 
\begin{equation}\label{pro}
 \PP^{\pm}(x) = \sum_{0  < \pm k}e^{2\pi kJ_0t}x_k\,, \quad \PP^0(x) = x_0 \,. 
\end{equation}
The space $\Hm^+$ is a special {\bf Hardy-space} \cite{hardy} on the unit circle, i.e., the space of limits of holomorphic curves on the open 
disc to the boundary and $\Hm^-$ is its anti-holomorphic counter-part. We set 
\begin{equation}\label{ht}
\M:= \HT= \Tn \times \Hm \,,
\end{equation}
then $\M$ carries the structure of a Hilbert manifold with trivial tangent bundle $T\M =\M \times \Ee$.
We often abuse notation slightly by still writing $\PP^{\pm} :\M \lo \Hm^{\pm}$, $\PP^0 : \M \lo \Tn$ for the maps 
$x = (x_0,\hat x) \mapsto \PP^{\pm}\hat x$ and $x = (x_0,\hat x) \mapsto x_0$. 
Furthermore we can extand $\tilde A_H$ to a functional defined on $\M$, by setting
\begin{equation}\label{AH}
 A_H(x) = -\frac{1}{2}\li(\nl \PP^+ x \nr^2_{\HH} - \nl \PP^- x \nr^2_{\HH}\re) +\int_0^1 H\big(t,x(t)\big)dt \,.
\end{equation}
Before we go on we need a better understanding of the spaces $H^s(\mS^1,\Rn)$. Therefore we follow H. Hofer and E. Zehnder  \cite{hof} and give a brief summary 
of the essential properties. \\

For $t \geq s \geq 0$ the spaces decrease,
$$ H^t(\mS^1,\Rn)\subset H^s(\mS^1,\Rn) \subset H^0(\mS^1,\Rn) = L^2(\mS^1,\Rn) \,, $$
while the norms increase : 
$$\nl x\nr^2_{H^t(\mS^1)} \geq \nl x\nr^2_{H^s(\mS^1)} \geq \nl x\nr^2_{L^2(\mS^1)} \quad \text{for} \quad x \in   H^t(\mS^1,\Rn) \,.$$ 
In particular the inclusion maps  $H^t(\mS^1,\Rn) \lo H^s(\mS^1,\Rn)$ are continuous for $t\geq s$; if in particular $t>s\geq 0$, then as proved in \cite{hof} they are 
moreover compact operators. Furthermore we obtain:
\begin{prop}\label{hof1}{\rm(\cite{hof})} Let $s>\frac{1}{2}$. If $x \in  H^s(\mS^1,\Rn)$ then $x \in C^0(\mS^1,\Rn)$. Moreover there is a constant 
$c=c_s$ such that 
$$ \nl x \nr_{C^0(\mS^1)} \leq c\nl x \nr_{H^{s}(\mS^1)}\,, \quad \text{for all} \quad x \in  H^s(\mS^1,\Rn)\,.$$ 
\end{prop}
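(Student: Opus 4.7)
The plan is to prove this classical Sobolev-type embedding directly from the Fourier series representation, using only Cauchy--Schwarz and summability of $|k|^{-2s}$.

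First I would write $x \in H^s(\mS^1,\R^{2n})$ in its Fourier form
\begin{equation*}
x(t) = x_0 + \sum_{k \neq 0} e^{2\pi k J_0 t} x_k,
\end{equation*}
and use the fact that each partial sum is a smooth function on $\mS^1$. Showing that this series converges uniformly in $t$ will immediately imply that $x$ has a continuous representative. Since $|e^{2\pi k J_0 t} x_k| = |x_k|$, uniform convergence will reduce to absolute summability of the coefficients.

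Next I would estimate, by Cauchy--Schwarz,
\begin{equation*}
\sum_{k \neq 0} |x_k| = \sum_{k \neq 0} |k|^{-s} \cdot |k|^{s}|x_k| \leq \left(\sum_{k \neq 0} |k|^{-2s}\right)^{1/2}\left(\sum_{k \neq 0} |k|^{2s}|x_k|^2\right)^{1/2}.
\end{equation*}
The hypothesis $s > 1/2$ gives $2s > 1$, so $\sum_{k \neq 0} |k|^{-2s} =: 2\zeta(2s) < \infty$. The second factor is controlled by the $H^s$-norm via the definition $\|x\|_{H^s(\mS^1)}^2 = |x_0|^2 + 2\pi \sum_{k\neq 0}|k|^{2s}|x_k|^2$. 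Combining these, the series for $x(t)$ converges uniformly on $\mS^1$, which proves continuity.

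Finally, for the quantitative bound, I would simply estimate
\begin{equation*}
\|x\|_{C^0(\mS^1)} \leq |x_0| + \sum_{k \neq 0}|x_k| \leq |x_0| + \left(\tfrac{1}{2\pi}\sum_{k\neq 0}|k|^{-2s}\right)^{1/2} \|x\|_{H^s(\mS^1)},
\end{equation*}
and then absorb $|x_0| \leq \|x\|_{H^s(\mS^1)}$ to obtain a single constant $c_s$ depending only on $s$. There is no real obstacle here; the only subtle point is bookkeeping the normalization constants of the $H^s$-inner product, but this is routine. The mild issue that the sum defines $x$ only a.e.\ is handled by observing that the uniform limit of continuous functions is continuous, so $x$ has a (unique) continuous representative which we then identify with $x$ itself.
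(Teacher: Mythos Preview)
Your proof is correct and is exactly the standard argument via Cauchy--Schwarz and summability of $\sum_{k\neq 0}|k|^{-2s}$ for $s>1/2$. The paper itself does not supply a proof of this proposition but simply cites \cite{hof}, where precisely this argument is given; so your approach coincides with the intended one.
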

Since we choose the $H^{1/2}$-structure, this leads us to the borderline case of embeddings into $C^0$. 
Nevertheless, since $\Tn$ possesses a trivial tangent bundle, \eqref{ht} provides us with a Hilbert manifold structure which will be sufficient for our purpose. 
For manifolds with non-trivial tangent bundle one needs pointwise control on the orbits and therefore at least a $H^s$, $s>1/2$ setting to equip the loop space  
with a Banach manifold structure.\\

A central role in further discussions is played by the inclusion 
\begin{equation}\label{j}
 j : \HR \lo \LLR
\end{equation}
and its adjoint $j^* : \LLR \lo \HR$ 
defined as usual by 
\begin{equation*}
\li<j(x),y\re>_{\LLR} = \li<x,j^*(y)\re>_{\HR} \,,
\end{equation*}
for all $x \in \HR,y\in \LLR$. Note that as proved in  \cite{hof}  $j$ is compact and so is $j^*$ as it is well-known. In particular we obtain even more.
\begin{prop}\label{jj}{\rm (\cite{hof})} The adjoint factors 
\begin{equation*}
j^*: \LLR \lo H^1(\mS^1,\Rn) \lo \HR\,,
\end{equation*}
i.e.,
\begin{equation*}
j^*(\LLR) \subset H^1(\mS^1,\Rn) \quad \text{and} \quad \nl j^*(y) \nr _{H^1(\mS^1)} \leq \nl y \nr _{\LS} \,. 
\end{equation*}
\end{prop}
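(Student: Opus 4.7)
The plan is to produce $j^*$ explicitly in Fourier modes and then read off both the factorization through $H^1$ and the norm estimate directly from the resulting formula. The key observation is that the decomposition $x(t) = x_0 + \sum_{k\neq 0} e^{2\pi k J_0 t} x_k$ is simultaneously orthogonal for the $L^2$ and all $H^s$ inner products, since $J_0^2 = -I$ and $\int_0^1 e^{2\pi m J_0 t}\,dt = 0$ for every $m \in \Z \setminus \{0\}$. This reduces the problem to a term-by-term computation.

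First, I would unpack the defining identity
\[
\li<j(x),y\re>_{\LS} = \li<x,j^*(y)\re>_{\HS}
\]
in terms of the Fourier coefficients $x_k$ and $y_k$. Parseval's identity computes the left-hand pairing as $\sum_k \li<x_k,y_k\re>$, while the weighted formula from the paper's definition of the $H^{1/2}$ inner product expresses the right-hand pairing as $\li<x_0, a_0\re> + 2\pi \sum_{k\neq 0}|k|\li<x_k,a_k\re>$, where $a_k$ denotes the Fourier coefficient of $j^*(y)$. Matching these for all $x$ forces $a_0 = y_0$ and $a_k = y_k / (2\pi|k|)$ for $k \neq 0$, producing an explicit formula for $j^*(y)$.

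Second, I would plug this formula into the $H^1$ norm. The weight $|k|^2$ appearing in the $H^1$ inner product is exactly cancelled by the two factors of $|k|^{-1}$ in the coefficients of $j^*(y)$, so
\[
\nl j^*(y) \nr^2_{H^1(\mS^1)} = |y_0|^2 + 2\pi \sum_{k\neq 0}|k|^2 \frac{|y_k|^2}{4\pi^2 |k|^2} = |y_0|^2 + \frac{1}{2\pi}\sum_{k\neq 0}|y_k|^2,
\]
which is bounded by $\nl y \nr^2_{\LS}$. This simultaneously shows $j^*(y) \in H^1(\mS^1,\R^{2n})$ and gives the claimed estimate. There is no substantive obstacle: the only point requiring care is bookkeeping of the $2\pi$ factors in the paper's normalization of the $H^s$ inner products, and the need to treat the constant mode $y_0$ separately from the nontrivial modes throughout the comparison.
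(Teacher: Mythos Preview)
Your proposal is correct and follows essentially the same approach as the paper's proof: both derive the explicit Fourier formula $j^*(y) = y_0 + \sum_{k\neq 0}\frac{1}{2\pi|k|}e^{2\pi k J_0 t}y_k$ by matching coefficients in the adjoint identity, and then read off the $H^1$ bound. You in fact spell out the $H^1$ norm computation that the paper dismisses as ``obvious.''
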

\begin{proof} Let $x \in H^{1/2}(\mS^1,\Rn) \subset L^2(\mS^1,\Rn)$, $y \in L^2(\mS^1,\Rn)$. Then 
\begin{equation*}
\sum_{k\in\Z} \li<x_k,y_k\re> = \li<x_0,j^*(y)_0\re> + 2\pi \sum_{k\in\Z} |k|\li<x_k,j^*(y)_k\re> \,. 
\end{equation*}
So we get the following formula for $j^*$
\begin{equation}\label{jstar}
 j^*(y) = y_0 + \sum_{k \not = 0}\frac{1}{2\pi|k|}e^{2\pi k J_0t}y_k \,. 
\end{equation}
The estimate  $\nl j^*(y) \nr _{H^1(\mS^1)} \leq \nl y \nr _{\LS}$ is now obvious.
\end{proof}

We consider the second part  $b: \M \lo \R$ of the Hamiltonian action $A_H$, i.e., 
\begin{equation*}
b(x) = \int_0^1 H\big(t,x(t)\big)dt\,. 
\end{equation*}
and cite the following theorem. 
\begin{thm}\label{hof3} {\rm (\cite{hof})} $b$ belongs to  $C^{\infty}(\M,\R)$.
\end{thm}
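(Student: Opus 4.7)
The approach is to compute the Fréchet differentials of $b$ of all orders explicitly and to verify their continuity at each step. Since $\M = \Tn \times \Hm$ has trivial tangent bundle $T\M = \M \times \Ee$, smoothness in the manifold sense reduces, after fixing a local trivialization around a constant representative, to smoothness of $b$ as a function on the Hilbert space $\Ee$. The guiding principle throughout is that, although the embedding $\HR \hookrightarrow C^0$ fails at the borderline $s=1/2$, compactness of $\Tn$ ensures that $H$ and all its spatial derivatives $\nabla^k H$ are uniformly bounded on $\mS^1 \times \Tn$, so that pure $\LLR$-estimates suffice.

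First I would identify the candidate first derivative as
\begin{equation*}
 db(x)[y] = \int_0^1 \langle \nabla H(t, x(t)), y(t)\rangle\, dt
           = \langle j^*(\nabla H(\cdot, x(\cdot))), y\rangle_{\HH}.
\end{equation*}
The boundedness of $\nabla H$ gives $\nabla H(\cdot,x(\cdot)) \in L^\infty(\mS^1,\Rn) \subset \LLR$, so Proposition \ref{jj} yields that the gradient $\nabla b(x) := j^*(\nabla H(\cdot, x(\cdot)))$ lands in $H^1(\mS^1,\Rn) \subset \HR$. Fréchet differentiability then follows from the pointwise second-order Taylor formula
\begin{equation*}
 b(x+y) - b(x) - db(x)[y] = \int_0^1\!\!\int_0^1 (1-s)\, \nabla^2 H\bigl(t, x(t)+s y(t)\bigr)[y(t),y(t)]\, ds\, dt,
\end{equation*}
whose modulus is dominated by $\tfrac12 \|\nabla^2 H\|_\infty \|y\|_{\LLR}^2 \leq C\|y\|_{\HH}^2$ via the continuous inclusion $j$.

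For continuity of the gradient, if $x_n \to x$ in $\M$ then the compactness of $j$ gives $x_n \to x$ in $\LLT$, so a subsequence converges pointwise a.e.; the uniform bound on $\nabla H$ and dominated convergence yield $\nabla H(\cdot,x_n) \to \nabla H(\cdot,x)$ in $\LLR$ along this subsequence, and a standard uniqueness-of-limit argument upgrades this to convergence of the full sequence. Continuity of $j^*$ then delivers $\nabla b(x_n) \to \nabla b(x)$ in $\HR$, proving $b \in C^1(\M,\R)$. The higher derivatives are obtained by iterating this scheme: at order $k$ one guesses the symmetric $k$-linear form
\begin{equation*}
 d^k b(x)[y_1,\ldots,y_k] = \int_0^1 \nabla^k H(t, x(t))\bigl[y_1(t),\ldots,y_k(t)\bigr]\, dt,
\end{equation*}
whose continuity is controlled by $\|\nabla^k H\|_\infty \prod_i \|y_i\|_{\LLR} \leq C\prod_i \|y_i\|_{\HH}$; a Taylor expansion of order $k+1$ identifies it as the Fréchet derivative of $d^{k-1}b$, and the same dominated-convergence argument gives continuity in $x$. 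Since $H \in C^\infty$, this works for every $k$.

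The only real technical obstacle is the borderline failure of $\HR \hookrightarrow C^0$, which rules out any pointwise control of $x \in \M$. The proof circumvents this by relying exclusively on $\LLR$- and $L^\infty$-estimates, made possible by the uniform boundedness of every $\nabla^k H$ on the compact target $\Tn$; the factorization $j^* : \LLR \to H^1(\mS^1,\Rn) \hookrightarrow \HR$ from Proposition \ref{jj} is then precisely what ensures that the gradient and all higher differentials land in the desired spaces.
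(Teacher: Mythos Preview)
Your overall strategy is the correct one and coincides with the argument the paper cites from \cite{hof}: write down the candidate $k$-linear forms, estimate the Taylor remainder, and use compactness of $j$ together with dominated convergence for continuity in $x$. The paper does not reproduce this proof, so there is nothing further to compare at the level of approach.

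There is, however, a genuine gap in your treatment of the higher derivatives. For $k\ge 3$ the inequality you write,
\[
\Bigl|\int_0^1 \nabla^k H(t,x(t))[y_1(t),\ldots,y_k(t)]\,dt\Bigr|
\;\le\; \nl \nabla^k H\nr_\infty \prod_{i=1}^k \nl y_i\nr_{\LS},
\]
is false: H\"older's inequality only gives $\int_0^1 |y_1|\cdots|y_k|\,dt \le \prod_i \nl y_i\nr_{L^k(\mS^1)}$, and for $k\ge 3$ there are $L^2$ functions on $\mS^1$ that are not in $L^k$. The same issue contaminates the Taylor remainder when you pass from $d^{k-1}b$ to $d^k b$, since that remainder involves an integral of a $(k{+}1)$-fold pointwise product.

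The repair is exactly the analytical input you did not invoke: the borderline Sobolev embedding $\HR \hookrightarrow L^p(\mS^1,\Rn)$ for every $p<\infty$ (critical exponent $p^*=\infty$ in dimension one at $s=1/2$). This yields $\nl y_i\nr_{L^k}\le C_k\nl y_i\nr_{\HS}$ and restores all the multilinear bounds. In the Hofer--Zehnder proof this embedding is precisely what replaces the missing $C^0$ control; your sketch relies implicitly on it but the phrase ``pure $\LLR$-estimates suffice'' is not accurate beyond order two.
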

In particular the proof of Theorem \ref{hof3} shows that $b$ possesses the expected derivatives. Summarizing some further statements of \cite{hof} we conclude. 
\begin{cor} \label{cor1} $A_H : \M \lo \R$ is a smooth functional. Let $\PP^{\pm} $ be the orthogonal projections 
from \eqref{pro} then the gradient of $A_H$ with respect to the inner product
on $\M$ is given by 
\begin{equation}\label{gradA}
 \Ah(x) = -\PP^+x+\PP^-x +j^*\nabla H\big(\cdot,x(\cdot)\big)\,, \quad x\in \M\,.
\end{equation}
Moreover $\Ah$ is Lipschitz-continuous on $\M$ with uniform Lipschitz constant $c_L$
and its Jacobian is given by 
\begin{equation}\label{Jac}
 \AH(x) = -\PP^++\PP^- +j^*\nabla^2 H\big(\cdot,x(\cdot)\big)\,,\quad x\in \M\,.
\end{equation}
\end{cor}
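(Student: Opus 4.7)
The plan is to treat the three summands of $A_H$---the two quadratic forms $-\frac{1}{2}\nl \PP^\pm x \nr^2_{\HH}$ and the nonlinear $b(x)=\int_0^1 H(t,x(t))\,dt$---separately and assemble the statement by elementary differentiation. Proposition \ref{jj} and Theorem \ref{hof3} supply everything that is not pure calculus; the only mildly delicate point is the uniform Lipschitz constant, which will ultimately follow from a uniform operator-norm bound on $\AH$.

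For smoothness, the maps $x\mapsto\frac{1}{2}\nl \PP^\pm x\nr^2_{\HH}$ are continuous quadratic forms on $\Ee$, hence real-analytic (in fact polynomial of degree two); $b$ is smooth by Theorem \ref{hof3}; so $A_H\in C^\infty(\M,\R)$. To identify the gradient I would differentiate each piece and rewrite the result through the $\HH$-inner product. The quadratic terms give $d\bigl(-\frac{1}{2}\nl \PP^\pm x\nr^2_{\HH}\bigr)[Y]=\mp\li<\PP^\pm x,Y\re>_{\HH}$, since $\PP^\pm$ are self-adjoint orthogonal projections as in \eqref{pro}. For $b$, the chain rule together with the defining property \eqref{j} of $j^*$ yields
\[
db(x)[Y]=\int_0^1\li<\nabla H(t,x(t)),Y(t)\re>\,dt=\li<j^*\nabla H(\cdot,x(\cdot)),Y\re>_{\HH},
\]
and summing the three contributions produces \eqref{gradA}. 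Differentiating \eqref{gradA} once more, the linear summand is its own derivative and the chain rule applied to $x\mapsto j^*\nabla H(\cdot,x(\cdot))$ produces the bounded operator $Y\mapsto j^*\bigl(\nabla^2 H(\cdot,x(\cdot))Y\bigr)$, which is exactly \eqref{Jac}.

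For the Lipschitz bound I would show that the operator norm of $\AH(x)$ is bounded uniformly in $x\in\M$; the mean value inequality along paths in the product $\M=\Tn\times\Hm$ then converts this into a global Lipschitz estimate with the same constant $c_L$. Clearly $\nl -\PP^+ +\PP^-\nr_{\Lc(\Ee)}\le 1$. For the nonlinear summand, compactness of $\mS^1\times\Tn$ gives $M:=\sup|\nabla^2 H|<\infty$, so multiplication by $\nabla^2 H(\cdot,x(\cdot))$ sends $\LLR$ to $\LLR$ with norm at most $M$. Composing with the continuous inclusion $\HR\hookrightarrow\LLR$ on the right and, thanks to Proposition \ref{jj}, with $j^*\colon\LLR\to H^1(\mS^1,\Rn)\hookrightarrow\HR$ on the left, yields a uniform bound of the form $cM$. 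Hence $\nl \AH(x)\nr_{\Lc(\Ee)}\le 1+cM=:c_L$.

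I expect the main obstacle to be this last step: the pointwise control of $\nabla^2 H$ furnished by compactness of $\mS^1\times\Tn$ is essential, because without the factorization of $j^*$ through $H^1(\mS^1,\R^{2n})$ provided by Proposition \ref{jj}, one would only obtain a uniform bound on the operator $j^*\nabla^2 H(\cdot,x(\cdot))$ as a map $\LLR\to\HR$, which is not enough to produce a Lipschitz estimate in the $\HR$-norm. Once this subtlety is handled, the remaining steps are routine.
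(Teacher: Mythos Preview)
Your argument is correct and is exactly the standard computation that the paper defers to \cite{hof} without reproducing. One minor point: your closing paragraph overstates the role of Proposition~\ref{jj}. The uniform operator bound on $j^*\nabla^2H(\cdot,x(\cdot))$ as a map $\Ee\to\Ee$ follows already from the composition $\HR\xrightarrow{j}\LLR\xrightarrow{\nabla^2H(\cdot,x(\cdot))}\LLR\xrightarrow{j^*}\HR$, using only that $j$ (and hence $j^*$) is bounded---the factorization of $j^*$ through $H^1(\mS^1,\Rn)$ is not needed here, so the ``obstacle'' you anticipate does not actually arise.
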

We set $\X:= -\Ah$ then the previous result implies that the {\bf negative gradient-flow} associated to $\X$ as usual  as the solution of 
\begin{equation}\label{flow}
\p_s\varphi_{\X}(s,x) = \big(\X\circ \varphi_{\X}\big)(s,x)\,, \quad \varphi_{\X}(0,x) = x \,, \quad \text{for all} \quad x\in\M\,,  
\end{equation}
is uniquely determined, smooth  and globally defined. Note that each critical point  $x \in \M$ of $A_H$ is smooth and solves the Hamilton equation 
\begin{equation*}
\dot x(t) = J_0\nabla H\big(t,x(t)\big)\,, \quad \forall t \in \mS^1\,. 
\end{equation*}
For details we refer again to \cite{hof}. 
\begin{defn} Let $X$ be a $C^1$-vector field on a Hilbert manifold $M$.
We denote by $\rest(X):=\li\{x \in M \mi X(x)=0\re\}$ the set of all {\bf singular points} of $X$. Then $x \in \rest(X)$ is called {\bf hyperbolic} if and only if 
$\spec\big(DX(x)\big) \cap i\R = \emptyset$. $X$ is called a {\bf Morse vector field} if all singular points are hyperbolic. \\

If $DX(x) \in \Lc(E)$ is self-adjoint then the spectrum of $DX(x)$ splits into 
the disjoint sets  
\begin{align*}
\spec^+(DX(x)) & = \li\{\lambda  \in \spec(DX(x)) \mi \lambda > 0 \re\}\,,\\  
\spec^-(DX(x)) & = \li\{\lambda  \in \spec(DX(x)) \mi  \lambda < 0 \re\} \,.
\end{align*}
and by the Spectral Theorem we have that $E = E^u\oplus E^s$ splits into the  {\bf positive  (or unstable)} and {\bf negative (or stable) eigenspaces} 
of $DX(x)$. Often such operators are also called {\bf hyperbolic}. A notion of non-self-adjoint hyperbolic operators can be found in \cite{Al1}.
If there is $f \in C^1(M,\R)$ such that 
$$Df(p)[X(p)]<0\,, \quad \text{for all} \, p \in M\setminus \rest(X) $$ 
then $f$ is called a {\bf Lyapunov function} for $X$. If $X$ is a Morse vector field then $f$ is called {\bf non-degenerate} if $f$ is twice differentiable on $\rest(X)$
and $D^2f(x)$, seen as a symmetric bounded bilinear form on $E$, is such that there is $\ee >0$ with $D^2f(x)[\xi,\xi] \geq \ee \nl \xi\nr^2$ for all $\xi \in E^s(x)$ and 
$D^2f(x)[\xi,\xi] \leq -\ee \nl \xi\nr^2$ for all $\xi \in E^u(x)$.  
The Morse vector field $X$ is called {\bf gradient-like} if it has a non-degenerate Lyapunov function.  
\end{defn}

Clearly the set of critical points of $f$ coincides with $\rest(X)$ if $X$ is the positive or negative gradient of $f$.
If $X$ is a Morse vector field then $\crit(f) = \rest(X)$ even holds if $f$ is just a Lyapunov function for $X$, see \cite{Al1}. Moreover all 
{\bf singular points are isolated} in this case. That is due to the fact that $DX(x)$ is invertible combined with the inverse function theorem, 
see for example \cite{duff}. 

\begin{prop}\label{reg1} There is a residual set $\Hr \subset C^{\infty}(\mS^1\times\Tn,\R)$ of Hamiltonians such that the negative
$H^{1/2}$-gradient $\X$ of the Hamiltonian action $A_H$ is a Morse vector field for every $H \in \Hr$. 
In particular the set of all contractible critical points of $A_H$, denoted by $\Pc_0(H)$, is a finite set. 
\end{prop}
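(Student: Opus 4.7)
The plan is to first reduce hyperbolicity of a singular point of $\X$ to non-degeneracy of the corresponding $1$-periodic orbit of $\varphi_{X_H}$, and then to obtain the residual set $\Hr$ via the Sard--Smale theorem applied to a universal moduli space of critical points.

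By Corollary \ref{cor1} the Jacobian at $x\in\rest(\X)$ has the form
\[
\AH(x) = -\PP^+ + \PP^- + j^*\nabla^2 H\bigl(\cdot,x(\cdot)\bigr).
\]
Proposition \ref{jj} factors $j^*$ through $H^1(\mS^1,\Rn)$, which embeds compactly into $\HR$, so $j^*\nabla^2 H(\cdot,x(\cdot))$ is a compact operator on $\Ee$. Being the Hessian of a smooth functional, $\AH(x)$ is self-adjoint and hence has real spectrum, so $\spec\bigl(D\X(x)\bigr)\cap i\R \subset \{0\}$. Thus $x$ is hyperbolic exactly when $\ker\AH(x) = \{0\}$. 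A direct calculation identifies $\ker\AH(x)$ with the space of $1$-periodic Jacobi fields along $x$, so $\X$ is Morse precisely when every $x\in\Pc_0(H)$ is non-degenerate in the classical sense that the linearized time-$1$ flow at $x(0)$ does not have $1$ as an eigenvalue.

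To make this condition generic in $H$, I would fix some $k\gg 1$ and study the universal zero set
\[
\mathcal{S}_k = \bigl\{(H,x)\in C^k(\mS^1\times\Tn,\R)\times\M \mi \Ah(x)=0\bigr\}.
\]
The heart of the argument is to show that the full linearization $(\dot H,\xi)\mapsto \AH(x)\xi + j^*\nabla\dot H\bigl(\cdot,x(\cdot)\bigr)$ of the defining section is surjective onto $\Ee$. Given $\eta$ in the cokernel of $\AH(x)$, cut-off perturbations $\dot H$ of the form $\chi(t)\phi(p)$, with $\chi$ localizing near a chosen $t_0\in\mS^1$ and $\phi$ a bump near $x(t_0)\in\Tn$, produce pairings $\langle j^*\nabla\dot H(\cdot,x(\cdot)),\eta\rangle_{H^{1/2}}$ that are nontrivial for suitable choices, forcing $\eta\equiv 0$; this is Floer's standard transversality scheme \cite{floer3}. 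Consequently $\mathcal{S}_k$ is a smooth Banach manifold and the projection $\pi_k:\mathcal{S}_k\to C^k(\mS^1\times\Tn,\R)$ is Fredholm of index $0$. By Sard--Smale the set $\Hr_k$ of regular values of $\pi_k$ is residual, and by construction $H\in\Hr_k$ iff $\AH(x)$ is invertible at every $x\in\rest(\X)$. A standard Taubes-type intersection argument upgrades $\Hr_k$ to a residual subset $\Hr\subset C^\infty(\mS^1\times\Tn,\R)$.

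Finiteness is then immediate: for $H\in\Hr$ each $x\in\rest(\X)=\Pc_0(H)$ is hyperbolic and hence isolated in $\M$ by the inverse function theorem, while the evaluation $x\mapsto x(0)$ embeds $\Pc_0(H)$ bijectively onto the fixed point set of $\varphi_{X_H}(1,\cdot):\Tn\to\Tn$, a closed subset of the compact manifold $\Tn$; an isolated closed subset of a compact Hausdorff space is finite. The main obstacle in the proof is the surjectivity step of Sard--Smale: the cut-off perturbations of $H$ must exhaust the cokernel of $\AH(x)$ even at self-intersection times of $x$, which is dealt with by varying $\dot H$ in both the time and space variables so as to separate distinct $t\in\mS^1$ with coinciding $x(t)$.
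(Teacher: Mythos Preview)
Your argument is correct and follows essentially the same route as the paper: both reduce hyperbolicity of $D\X(x)$ to the classical non-degeneracy condition $\det\bigl(I-D\varphi_{X_H}(1,x(0))\bigr)\neq 0$ via the identification of $\ker\AH(x)$ with $1$-periodic solutions of the linearized Hamilton equation, and then invoke genericity of non-degeneracy. The only difference is that the paper outsources the residual statement to \cite{salhof}, whereas you sketch the Sard--Smale argument directly; one small inaccuracy is that $x\mapsto x(0)$ embeds $\Pc_0(H)$ \emph{into} (not onto) the fixed-point set of $\varphi_{X_H}(1,\cdot)$, but this does not affect the finiteness conclusion.
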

This statement is based on the famous Theorem of Sard whose infinite dimensional version was proven by Smale in \cite{smale}.  
\begin{thm}\label{sard}{\rm (\cite{smale})} {\bf (Sard-Smale Theorem)} Let $X$ and $Y$ be separable Banach spaces and $U \subset X$ be an open set. Suppose that 
$f : U \lo Y$ is a Fredholm map of class $C^l$, where
$$ l \geq \max\li \{1,\ind(f) +1\re\} \,.$$ 
Then the set 
$$ Y_{\reg}(f):=\li\{ y\in Y \mi x \in U, f(x) = y \Longrightarrow \Image (df(x)) = Y \re \} $$
of regular values of $f$ is residual in $Y$ in the sense of Baire, i.e., a countable intersection of open an dense sets.
\end{thm}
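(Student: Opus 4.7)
The plan is to reduce to the classical finite-dimensional Sard theorem via a local normal form for Fredholm maps, and then use separability of $X$ plus a Baire argument to globalize. Write $C(f) = \{x \in U : df(x) \text{ is not surjective}\}$ for the set of critical points; we must show that $f(C(f))$ is meager (first category) in $Y$, so that its complement $Y_{\reg}(f)$ is residual.

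First I would fix $x_0 \in U$ and set up a local reduction. Let $T = df(x_0)$. Being Fredholm, $T$ admits topological splittings $X = K \oplus X_1$ with $K = \ker T$ finite-dimensional and $Y = R \oplus Y_2$ with $R = \Image T$ closed and $Y_2$ a finite-dimensional complement isomorphic to $\coker T$. Writing $f = (f_1, f_2)$ according to the target splitting and $x = (u,v) \in K \oplus X_1$, the partial derivative $\partial_v f_1(x_0) = T|_{X_1} : X_1 \to R$ is a toplinear isomorphism. By the $C^l$ implicit function theorem there is a neighborhood of $x_0$ on which $f_1(u,v) = r$ is uniquely solved by $v = \varphi(u,r)$ with $\varphi$ of class $C^l$. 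Setting $\Phi(u,r) = (u,\varphi(u,r))$ gives a $C^l$-diffeomorphism with
\begin{equation*}
(f \circ \Phi)(u,r) = \big(r,\,g(u,r)\big),\qquad g(u,r) := f_2(u,\varphi(u,r)) \in Y_2.
\end{equation*}
A short computation of the block matrix of $d(f\circ\Phi)$ shows that $(u,r)$ is critical for $f\circ\Phi$ if and only if the finite-dimensional linear map $D_u g(u,r) : K \to Y_2$ fails to be surjective; moreover $(f\circ\Phi)$ sends such a critical point to $(r,g(u,r))$.

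Now apply the classical Sard theorem slice by slice. For each fixed $r$ the map $g(\cdot,r) : K \to Y_2$ is $C^l$ between finite-dimensional spaces of dimensions $k := \dim K$ and $m := \dim Y_2$ with $k - m = \ind f$. The hypothesis $l \geq \max\{1,\ind(f)+1\}$ is exactly what the finite-dimensional Sard theorem requires, so the critical value set $\Sigma_r \subset Y_2$ of $g(\cdot,r)$ has Lebesgue measure zero, and in particular empty interior in $Y_2$. Cover the local chart by countably many products $\bar B_K \times \bar B_R$ where $\bar B_K \subset K$ is a closed (hence compact) ball and $\bar B_R \subset R$ a closed ball. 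On each such product the critical set is closed, and using compactness of $\bar B_K$ and the fact that non-surjectivity of finite-rank linear maps is a closed condition, the image $S := (f\circ\Phi)(C(f\circ\Phi)\cap(\bar B_K\times\bar B_R))$ is a closed subset of $\bar B_R \times Y_2$. Its slice at any $r$ is contained in $\Sigma_r$, hence has empty interior, so an elementary slicing argument (an open box $U_R\times U_{Y_2}\subset S$ would force $U_{Y_2}\subset\Sigma_r$ for every $r\in U_R$) shows $S$ itself has empty interior in $Y$. Thus $S$ is closed and nowhere dense.

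Finally, globalize. Since $X$ is separable and the local normal form is available near every point, $U$ is covered by countably many such charts, and each chart by countably many boxes $\bar B_K \times \bar B_R$; this gives $f(C(f))$ as a countable union of closed nowhere-dense sets, i.e., a meager set, so $Y_{\reg}(f) = Y \setminus f(C(f))$ is residual in the Baire sense. The main subtlety I expect is verifying closedness of the local image $S$ and running the slicing argument cleanly: it is the compactness of the finite-dimensional ball $\bar B_K$ (supplied by the Fredholm structure) together with the sharp smoothness threshold $l \geq \ind(f)+1$ from classical Sard that make everything fit together, and both ingredients are essential—in particular, without the index-dependent smoothness, the slice-wise Sard step fails and the whole reduction collapses.
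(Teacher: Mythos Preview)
The paper does not prove this theorem; it merely states it and cites Smale's original article \cite{smale}, so there is no in-paper argument to compare against. Your proposal is correct and is essentially Smale's original proof: the Fredholm local normal form reduces to a finite-dimensional map $g(\cdot,r):K\to Y_2$ with $\dim K-\dim Y_2=\ind(f)$, the hypothesis $l\ge\max\{1,\ind(f)+1\}$ is precisely the threshold for classical Sard on each slice, and the compactness of the finite-dimensional ball $\bar B_K$ is what makes the local critical-value image closed (the map $(u,r)\mapsto(r,g(u,r))$ is proper on $\bar B_K\times\bar B_R$ because the fibers over $r$ are compact). The closed-with-measure-zero-slices argument for nowhere-denseness and the countable-cover globalization via separability are both standard and correctly stated.
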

Now we prove Proposition \ref{reg1}.
\begin{proof} 
Recall that all critical points $x \in \crit(A_H)$ are smooth and satisfy the Hamilton equation. Clearly the same argument holds for the linearization $\AH(x)$, i.e.,  
for all for $\xi \in \HR$ there holds $\AH(x)\xi =  0$ if and only if 
\begin{equation}\label{nondeg}
 \xi \in C^{\infty}(\mS^1,\Rn) \quad \text{and} \quad  
\dot \xi(t) = J_0\nabla^2H(t,x(t))\xi(t) \,, \forall t \in \mS^1\,.  
\end{equation}
Saying that the condition that  $D\X(x)=-\AH(x)$ is hyperbolic is equivalent to the non-degeneracy condition formulated in Hamiltonian dynamics, i.e., 
the linearized Hamiltonian flow at $x$ possesses no 1-periodic orbits meaning that there are no Floquet-multipliers equal to $1$. Now  it is a well-known fact that 
this nondegeneracy condition is achievable for a residual set of Hamiltonians, see for instance \cite{salhof}. If we finally choose a generic Hamiltonian 
$H \in \Hr$, then the set of contractible critical points $\Pc_0(H)$ of $A_H$ is a family of Hamiltonian orbits of a discrete set of points in $\Tn$. 
Hence $\Pc_0(H)$ has to be finite due the compactness of $\Tn$.
\end{proof}

\subsection{Fredholm pairs and relative dimensions}\label{fredpairs}

In contrast to the case of a smooth Morse function on a finite dimensional manifold, as it will turn out, we have to deal with the 
{\bf strongly indefinite} functional $A_H$ defined on the Hilbert manifold $\M$, i.e., $A_H$ possesses infinite Morse indices and co-indices. 
A. Abbondandolo an P. Majer developed methods in \cite{Al1} which enable them to construct an infinite dimensional Morse-theory for such functionals. \\

We start with some facts about linear Cauchy-problems more  discussed in \cite{Al7}. 
Let $$ A: [-\infty,\infty] \lo \Lc(E)$$ be a continuous  path of bounded linear operators on a Banach space $E$, such that $A(\pm \infty)$ are hyperbolic. 
We denote by  
$$X_A(s) :(-\infty,\infty) \lo \Lc(E)$$
the solution of the linear Cauchy-problem 
$$ \li\{\begin{array}{ccl}
X_A^{\prime}(s) & =  & A(s)X_A(s) \\
X_A(0) & =  & I 
\end{array}\re. \qquad \,.
$$ 
Note that $X_A(s)$ is an isomorphism for every $s$. The inverse $Y(s):=X_A(s)^{-1}$ solves the linear Cauchy-problem 
$$ \li\{\begin{array}{ccl}
Y(s) & =  & -Y(s)A(s) \\
Y(0) & =  & I 
\end{array}\re. \qquad \,.
$$ 
We define the {\bf linear, unstable} and {\bf stable} subspaces of $E$ by  
\begin{align}\label{linstab}
W^u_A &= \li \{ \xi  \in E \mi \lim \limits_{s\rightarrow -\infty}X_A(s)\xi  = 0 \re\} \\
W^s_A &= \li \{ \xi  \in E  \mi \lim \limits_{s\rightarrow +\infty}X_A(s)\xi = 0 \re\}\,.\nonumber
\end{align}
If in particular $A\equiv L$ is constant and hyperbolic then $W^u_L = E^u(L)$ and $W^s_L=E^s(L)$ are direct complements, i.e., 
$W^u_L + W^s_L = E$ and $W^u_L \cap W^s_L =\{ 0\}$. In  general one can show that 
$W^u_A \cong E^u(A(-\infty))$ and $W^s_A \cong E^s(A(+\infty))$. Indeed if $A$ is sufficiently close to $A(+\infty)$ in the $L^{\infty}$-norm, then 
$W^s_A$ is the graph of a bounded operator from $E^s(A(+\infty))$ to $E^u(A(+\infty))$. The statement for general asymptotically hyperbolic operators
then follows from the identity 
\begin{equation}\label{WS}
 W^s_A = X_A(T)^{-1}W^s_{A(T+\cdot)}\,, \quad T \in \R \,. 
\end{equation}
For details see \cite{Al7}.

\begin{prop}\label{FA1}{\bf (\cite{Al1})} Denote by $C^k_0([0,+\infty),E)$ the space of all $C^k$-curves $\xi$ which satisfy 
 $\lim \limits_{s\rightarrow +\infty}\xi^{(l)}(s)=0$ for all $0 \leq l\leq k$ and let $A \in C^0([0,+\infty],\Lc(E))$ be such 
that $A(+\infty)$ is self-adjoint and hyperbolic. 
\begin{enumerate}
 \item  The bounded linear operator 
        \begin{equation*}
         F_A^+ : C^1_0([0,+\infty),E) \lo  C^0_0([0,+\infty),E)\,, \quad \xi \mapsto \xi^{\prime}-A\xi\,,  
        \end{equation*}
        is a left inverse. Moreover $F_A^+$ admits  a right inverse $R_A^+$ such that 
        \begin{equation} \label{FA}
        W^s_A + \li\{R^+_A(\eta)(0) \mi \eta \in C^0_0([0,+\infty),E)\,, \eta(0)=0 \re\} = E 
        \end{equation}
 \item  The evaluation map 
        $$ \ker F_A^+ \lo E\,, \xi \mapsto \xi(0)$$ 
        is a right inverse. Consequently if $A \in C^0([-\infty,0],\Lc(E))$ and $A(-\infty)$ is self-adjoint and hyperbolic then 
 \item  \begin{equation*}
         F_A^- : C^1_0((-\infty,0],E) \lo  C^0_0((-\infty,0]),E)\,, \quad \xi \mapsto \xi^{\prime}-A\xi\,,  
        \end{equation*}
        is a left inverse with right inverse $R_A^-$ such that 
        \begin{equation}\label{FA-} 
         W^u_A + \li\{R^-_A(\eta)(0) \mi \eta \in C^0_0((-\infty,0],E)\,, \eta(0)=0 \re\} = E 
        \end{equation}
\item   $$ \ker F_A^- \lo E\,, \xi \mapsto \xi(0)$$ 
        is a right inverse. 
\end{enumerate}
\end{prop}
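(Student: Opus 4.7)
The plan is to construct the right inverses $R_A^\pm$ explicitly via a variation-of-constants formula built from the fundamental solution $X_A$ introduced just before the statement and from the spectral projections of the asymptotic operator $L:=A(+\infty)$ (respectively $A(-\infty)$ for the minus case).

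First I handle the autonomous case $A\equiv L$. Self-adjointness and hyperbolicity of $L$ yield via the spectral theorem the splitting $E=E^u(L)\oplus E^s(L)$ with spectral projections $P^u,P^s$, and the semigroups $e^{sL}|_{E^s}$ and $e^{-sL}|_{E^u}$ decay exponentially for $s\geq 0$. I then set
\begin{equation*}
(R_L^+\eta)(s):=\int_0^s e^{(s-\tau)L}P^s\eta(\tau)\,d\tau-\int_s^{+\infty}e^{(s-\tau)L}P^u\eta(\tau)\,d\tau,
\end{equation*}
verify $F_L^+R_L^+=\Id$ by differentiation, and use the exponential bounds to see that $R_L^+$ maps $C^0_0$ boundedly into $C^1_0$. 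Evaluation at zero gives $(R_L^+\eta)(0)=-\int_0^{+\infty}e^{-\tau L}P^u\eta(\tau)\,d\tau\in E^u(L)=W^u_L$; by varying $\eta$ on $(0,+\infty)$ the image sweeps out all of $W^u_L$, and since $W^s_L=E^s(L)$ is its direct complement, identity \eqref{FA} is immediate.

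For general $A\in C^0([0,+\infty],\Lc(E))$ the same ansatz works once $e^{(s-\tau)L}$ is replaced by the propagator $X_A(s)X_A(\tau)^{-1}$ and $P^{u/s}$ by suitably chosen continuous time-dependent projections $\Pi^{u/s}(s)$ onto the stable/unstable subspaces $W^{u/s}_{A(s+\cdot)}$. Because $A(s)\to L$, identity \eqref{WS} shows that these subspaces converge in gap-distance to $E^{s/u}(L)$ as $s\to+\infty$, and the cocycle $X_A(s+\sigma)X_A(s)^{-1}$ approximates $e^{\sigma L}$ uniformly on compact $\sigma$-intervals; this provides the exponential decay needed for the integrals to converge absolutely and for $R_A^+$ to be a bounded right inverse of $F_A^+$. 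Property \eqref{FA} then follows exactly as in the autonomous model. To identify $\ker F_A^+$, note that $\xi\in\ker F_A^+$ satisfies $\xi(s)=X_A(s)\xi(0)$ and $\xi(s)\to 0$, hence $\xi(0)\in W^s_A$ by \eqref{linstab}; conversely, for every $\xi_0\in W^s_A$ the curve $s\mapsto X_A(s)\xi_0$ lies in $C^1_0$. Thus $\mathrm{ev}_0$ is a linear bijection $\ker F_A^+\to W^s_A$ and therefore admits a bounded left inverse, which is the content of (ii). Assertions (iii) and (iv) follow from (i) and (ii) by the substitution $s\mapsto -s$, which interchanges $\pm\infty$ and exchanges the roles of $W^s$ and $W^u$ for the reflected operator $\tilde A(s):=-A(-s)$.

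The main technical obstacle lies in the non-autonomous step: one needs sharp comparison estimates between the propagator $X_A(s)$ and the asymptotic semigroup $e^{sL}$ near infinity, together with a careful construction of a continuous family of projections $\Pi^{u/s}(s)$ having the correct limiting behaviour at $+\infty$ (even though $A(s)$ need not be self-adjoint for finite $s$), so that the variation-of-constants integrals converge absolutely on $[0,+\infty)$ and the boundary values $(R_A^+\eta)(0)$ occupy precisely the complement of $W^s_A$ required by \eqref{FA}.
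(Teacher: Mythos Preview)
The paper does not supply its own proof of this proposition: it is stated with a citation to \cite{Al1} and then used as a black box in the proof of Lemma~\ref{FA2} and later results. So there is no in-paper argument to compare against.

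That said, your sketch is the standard approach and is essentially the one carried out in \cite{Al1}: build the right inverse by a variation-of-constants formula, first for the autonomous problem $A\equiv L$ using the spectral projections $P^u,P^s$ of the hyperbolic self-adjoint asymptotic operator, then pass to general $A$ by replacing $e^{(s-\tau)L}$ with the propagator $X_A(s)X_A(\tau)^{-1}$ and the constant projections with a continuous family of dichotomy projections that converge to $P^{u/s}$ at $+\infty$. Your identification of $\ker F_A^+$ with $W^s_A$ via evaluation at zero, and the symmetry argument $s\mapsto -s$ for (iii)--(iv), are exactly right.

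Two small remarks. First, in your verification of \eqref{FA} in the autonomous case you should note explicitly that the constraint is $\eta(0)=0$, so $(R_L^+\eta)(0)$ ranges over $E^u(L)$ as $\eta$ varies in that subspace; you do say this, but the point is that $(R_L^+\eta)(0)\in E^u(L)$ \emph{and} one can hit every vector there, which together with $W^s_L=E^s(L)$ gives the splitting. Second, for (ii) you conclude that $\mathrm{ev}_0:\ker F_A^+\to E$ ``admits a bounded left inverse'' from the fact that it is a bijection onto $W^s_A$; this step uses that $W^s_A$ is closed (which you know from \eqref{WS}) and complemented in $E$, the latter being automatic here because $E$ is a Hilbert space. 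You correctly flag the genuine work --- the exponential comparison of $X_A$ with $e^{sL}$ and the construction of the projections $\Pi^{u/s}(s)$ --- as the technical core; this is precisely what \cite{Al1} carries out in detail.
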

We need a further statement of the operator $F_A$ on the whole real line. 
\begin{lem}\label{FA2}{\bf (\cite{Al1})} Let $A : [-\infty,+\infty] \lo \Lc(E)$ be a path of bounded linear operators with hyperbolic asymptotics 
$A(\pm \infty)$. Denote by $F_A$ the bounded linear operator
$$ F_A :   C^1_0(\R,E) \lo  C^0_0(\R,E)\,, \quad \xi \mapsto \xi^{\prime}-A\xi\,. $$ 
Then  
$$ \ker F_A \cong W^u_A\cap W^s_A \quad \text{and} \quad \coker F_A \cong E/(W^u_A + W^s_A ) \,.$$ 
\end{lem}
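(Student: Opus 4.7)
My plan is to treat the kernel and cokernel isomorphisms separately: the kernel will follow directly from the unique solvability of the linear Cauchy problem $\xi'=A\xi$, while the cokernel requires gluing together the half-line right inverses supplied by Proposition~\ref{FA1}.

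For the kernel, every $\xi\in\ker F_A$ solves $\xi'=A\xi$ and hence equals the trajectory $s\mapsto X_A(s)\xi(0)$. The decay condition $\xi\in C^0_0(\R,E)$ forces $\lim_{s\to\pm\infty}X_A(s)\xi(0)=0$, which by the definitions \eqref{linstab} says precisely that $\xi(0)\in W^u_A\cap W^s_A$. Conversely, for any such $v$ the curve $s\mapsto X_A(s)v$ decays at both ends, and the corresponding decay of its derivative follows from $\xi'=A\xi$ together with boundedness of $A$. Hence evaluation at $s=0$ yields the desired isomorphism $\ker F_A\cong W^u_A\cap W^s_A$.

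For the cokernel I plan to construct a surjection $\Phi:C^0_0(\R,E)\lo E/(W^u_A+W^s_A)$ whose kernel is exactly $\Image F_A$. Given $\eta\in C^0_0(\R,E)$, I split it into restrictions $\eta^\pm$ to the two half-lines, which lie in $C^0_0([0,\pm\infty),E)$, and set $\xi^\pm:=R_A^\pm(\eta^\pm)$ using Proposition~\ref{FA1}(i),(iii). Any primitive $\xi\in C^1_0(\R,E)$ of $\eta$ under $F_A$ must restrict to $\xi^\pm$ modulo elements of $\ker F_A^\pm$, and the same ODE argument as above identifies the evaluation at $0$ on $\ker F_A^+$ with $W^s_A$ and on $\ker F_A^-$ with $W^u_A$. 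A continuous matching at $s=0$ is therefore possible if and only if
$$R_A^+(\eta^+)(0)-R_A^-(\eta^-)(0)\in W^u_A+W^s_A,$$
and I set $\Phi(\eta)$ to be the class of this difference in $E/(W^u_A+W^s_A)$. No separate $C^1$-matching at $s=0$ is required, since once the values agree the relation $\xi^\pm{}'(0)=A(0)\xi^\pm(0)+\eta(0)$ forces the derivatives to agree automatically, so the glued $\xi$ sits in $C^1_0(\R,E)$. This establishes $\ker\Phi=\Image F_A$.

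It remains to show that $\Phi$ is onto, which is where the boundary-value relation \eqref{FA} does the essential work. Given any $v\in E$, \eqref{FA} furnishes $w\in W^s_A$ and $\eta_0\in C^0_0([0,+\infty),E)$ with $\eta_0(0)=0$ such that $v=w+R_A^+(\eta_0)(0)$. Extending $\eta_0$ by zero on $(-\infty,0]$ (which is legitimate because $\eta_0(0)=0$) produces $\tilde\eta\in C^0_0(\R,E)$ with $\Phi(\tilde\eta)\equiv[v]$ in $E/(W^u_A+W^s_A)$, so $\Phi$ descends to the claimed isomorphism $\coker F_A\cong E/(W^u_A+W^s_A)$. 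The main conceptual hurdle I anticipate is precisely this matching step: one needs that $\xi^\pm(0)$ can be shifted freely within $W^s_A$ (resp.\ $W^u_A$) without spoiling the exponential decay at infinity, and it is exactly \eqref{FA}--\eqref{FA-} together with (ii),(iv) of Proposition~\ref{FA1} that makes this possible.
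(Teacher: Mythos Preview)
Your proof is correct and follows essentially the same route as the paper: identify $\ker F_A$ via evaluation at $0$, and for the cokernel build the map $\eta\mapsto [R_A^+(\eta^+)(0)-R_A^-(\eta^-)(0)]\in E/(W^u_A+W^s_A)$, showing its kernel is $\ran F_A$ and that it is onto via \eqref{FA}. Your explicit remark on the automatic $C^1$-matching at $s=0$ and your surjectivity argument (extending $\eta_0$ by zero so that only \eqref{FA} is needed) are slightly more detailed than the paper's version, but the underlying strategy is the same.
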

\begin{proof} The kernel of $F_A$ is the linear subspace 
$$ \ker F_A = \li\{ X_A(s) v \mi v \in W^u_A\cap W^s_A \re \} \subset C^1_0(\R,E) \,.$$
which is therefore by the second statement of Proposition \ref{FA1} isomorphic to $ W^u_A\cap W^s_A$ via the evaluation map $X_A(s)v \mapsto X_A(0)v = v$. 
By the first statement the operators 
\begin{align*}
 F_A^+ :   C^1_0([0,+\infty),E) &\lo  C^0_0([0,+\infty),E)\,, \quad \xi \mapsto \xi^{\prime}-A\xi\,,\\
 F_A^- :   C^1_0((-\infty,0],E) &\lo  C^0_0((-\infty,0],E)\,, \quad \xi \mapsto \xi^{\prime}-A\xi \,,
\end{align*}
have right inverses $R_A^+$ and $R_A^-$. If $\eta \in C^0_0(\R,E)$  and $\xi$ solves $ \xi^{\prime}-A\xi = \eta$, then $\xi$ is of the form  
\begin{align*}
\xi(s) &= X_A(s)\Big(\xi(0) - R_A^+(\eta)(0)\Big) + R_A^+(\eta)(s)\,, \quad \text{for}\quad s\geq 0\,, \\ 
\xi(s) &= X_A(s)\Big(\xi(0) - R_A^-(\eta)(0)\Big) + R_A^-(\eta)(s)\,, \quad \text{for}\quad s\leq 0\,.
\end{align*}
If we require that $\xi \in C^1_0(\R,E)$, then $\xi(0)$ has to be such that 
$$\xi(0)- R_A^+(\eta)(0) \in W^s_A \quad \text{and} \quad \xi(0)- R_A^-(\eta)(0) \in W^u_A \,.$$
In other words, $\eta$ belongs to the range of $F_A$ if and only if the affine subspaces $ R_A^+(\eta)(0) + W^s_A$ and  $ R_A^-(\eta)(0) + W^u_A$ have a non-empty 
intersection, that is if and only if $ R_A^+(\eta)(0) - R_A^-(\eta)(0) \in W^s_A + W^u_A$. So the range of $F_A$ is the linear subspace 
$$ \ran F_A = \li \{ \eta \in C^0_0(\R,E) \mi R_A^+(\eta)(0) -R_A^-(\eta)(0) \in W^s_A + W^u_A \re\} \,.$$
By the remark in \eqref{WS} the spaces $W^s_A$ and $W^u_A$ are closed and so is $\ran F_A$ by the continuity of $R_A^+$, $R_A^-$. Furthermore
due to the properties \eqref{FA} and \eqref{FA-} the operator  
$$ C^0_0(\R,E) \lo E /(W^s_A + W^u_A)\,,\quad  \eta \mapsto [R_A^+(\eta)(0) -R_A^-(\eta)(0)] \,.$$
is onto and factors over $\ran F_A$, proving the isomorphism.
\end{proof}
Clearly, we want to show that the operator $F_A$ is a Fredholm operator, but a priori there is no reason why $\ker F_A$ and $\coker F_A$ should be finite dimensional. 
In particular this will not be true in general. To formulate the correct additional conditions, we use the notion of Fredholm pairs which we briefly 
introduce next. For a deeper presentation see \cite{Al2}.\\

We consider the {\bf Hilbert Grassmannian} $\Gr(E)$ of a separable Hilbert space $E$, which is the set of all closed subspaces of $E$. 
For $V \in \Gr(E)$, we denote by $P_V$ the orthogonal projection onto $V$. The distance between two subspaces $V,W \in \Gr(E)$ shall be defined by
$$ \dist(V,W):=\nl P_V-P_W\nr \,.$$
Indeed $\big(\Gr(E),\dist(\cdot,\cdot)\big)$ becomes a complete metric space with connected components
$$ \Gr_{n,m}(E)=\li\{ V \in \Gr(E) \mi \dim V =n\,, \, \codim V = m \re\}\,,$$
where $n,m \in \N \cup \{\infty\}\,, \, n+m = \infty$
\begin{defn} A pair $(V,W) \in \Gr(E) \times \Gr(E)$ is called a {\bf Fredholm pair} if $V\cap W$ is finite-dimensional and $V+W$ is closed and finite-codimensional. 
In this case the index 
$$ \ind(V,W):= \dim \li(V\cap W\re)-\codim\li(V+W\re)$$
is called the {\bf Fredholm index} of $(V,W)$. 
\end{defn}
In particular an operator $F : X\lo Y$ between Hilbert spaces is Fredholm if and only if $\big(\graph F,X\times\{0\}\big)$ is a Fredholm pair in 
$\Gr(X\times Y)\times \Gr(X\times Y) $. 
The set of all Fredholm pairs, denoted by $\mathrm{Fp}(E)$, is an open subset of $\Gr(E) \times \Gr(E)$ and the Fredholm index is a 
continuous, i.e., locally constant function on it. For details see \cite{kato}.\\

\begin{defn} Let $V,W \in \Gr(E)$. Then $V$ is called a {\bf compact perturbation} of $W$ if the operator $P_V-P_W$ is compact. In this case the pair 
$(V,W^{\perp})$ is Fredholm and its index is defined as the {\bf relative dimension} of $V$ with respect to $W$, i.e.,  
\begin{equation}\label{reldim}
 \dim(V,W):=\ind(V,W^{\perp})= \dim \li(V\cap W^{\perp}\re) -  \dim \li(V^{\perp}\cap W\re) \,.
\end{equation}
\end{defn}
If $(V,W)$ is a Fredholm pair and $U$ is a compact perturbation of $V$, then $(U,W)$ is still a Fredholm pair and its index is given by 
\begin{equation}\label{fred}
 \ind(U,W)=\dim(U,V) +\ind(U,W) \,. 
\end{equation}

\begin{prop}\label{eig}{\rm (\cite{Al6})} Let $F_1$ and $F_2$ be two self-adjoint Fredholm operators on the Hilbert space $E$ with $F_1-F_2$ compact.
Then the positive and negative eigenspaces of $F_1$ are compact perturbations of the positive and negative eigenspaces of $F_2$, respectively.
\end{prop}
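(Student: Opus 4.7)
The plan is to use the continuous functional calculus; by symmetry I only treat the positive eigenspaces. Since each $F_i$ is self-adjoint Fredholm, $0$ lies outside the essential spectrum of both operators, so there exists $\delta > 0$ such that $\spec(F_i) \cap (-\delta,\delta) \subseteq \{0\}$ for $i=1,2$, the value $0$ appearing at most as a finite-multiplicity isolated eigenvalue. Pick a continuous function $f : \R \to \R$ with $f \equiv 1$ on $[\delta,\infty)$, $f \equiv 0$ on $(-\infty,-\delta]$, and $f(0) = 1/2$. By the spectral theorem,
\begin{equation*}
f(F_i) \;=\; P_{E^+(F_i)} + \frac{1}{2}\, P_{\ker F_i},
\end{equation*}
so $f(F_i) - P_{E^+(F_i)}$ is finite rank. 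The problem therefore reduces to showing that $f(F_1) - f(F_2)$ is compact.

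For this I would first verify that $p(F_1) - p(F_2)$ is compact for every polynomial $p$; by linearity this reduces to monomials, for which the telescoping identity
\begin{equation*}
F_1^k - F_2^k \;=\; \sum_{j=0}^{k-1} F_1^j\,(F_1 - F_2)\,F_2^{k-1-j}
\end{equation*}
exhibits $F_1^k - F_2^k$ as a sum of operators containing the compact factor $F_1 - F_2$. Both $F_i$ being bounded, the Stone--Weierstrass theorem yields polynomials $p_n$ with $p_n \to f$ uniformly on an interval $[-M,M] \supseteq \spec(F_1) \cup \spec(F_2)$. Continuous functional calculus then gives $p_n(F_i) \to f(F_i)$ in operator norm, whence $p_n(F_1) - p_n(F_2) \to f(F_1) - f(F_2)$ in norm. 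Since the compact operators form a norm-closed subspace of $\Lc(E)$, the limit is compact, which combined with the previous paragraph proves the proposition.

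The main obstacle is the clean setup of the first step: one must exploit both the uniform spectral gap $\delta$ (provided by the self-adjoint Fredholm hypothesis) and the specific value $f(0) = 1/2$ to ensure that $f(F_i)$ differs from the true spectral projection $P_{E^+(F_i)}$ only by a finite-rank correction, rather than by a merely bounded operator. Once this reduction is in place the approximation argument is routine. In an unbounded setting one would replace polynomials by rational functions of $F_i$ and invoke the resolvent identity
\begin{equation*}
(F_1 - i)^{-1} - (F_2 - i)^{-1} \;=\; (F_1 - i)^{-1}(F_2 - F_1)(F_2 - i)^{-1}
\end{equation*}
together with Stone--Weierstrass applied on the spectrum of the Cayley transforms to reach the same conclusion, but the bounded case treated here does not require this.
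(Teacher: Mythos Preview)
Your proof is correct and follows essentially the same functional-calculus route as the paper: show $p(F_1)-p(F_2)$ is compact for polynomials, pass to continuous functions by Stone--Weierstrass, and read off the spectral projections. The only cosmetic difference is that the paper observes directly that $\chi_{\R^+}$ is already continuous on $\spec(F_1)\cup\spec(F_2)$ (since $0$ is an isolated point there), so no auxiliary $f$ with $f(0)=1/2$ and no finite-rank correction is needed; your detour through $f$ is harmless but unnecessary.
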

\begin{proof}
The operator $p(F_1)-p(F_2)$ is compact for any polynomial $p$, for it belongs to the two-sided ideal spanned by $F_1-F_2$. By density, 
$h(F_1)-h(F_2)$ is then compact for any continuous function on $\spec(F_1) \cup \spec(F_2) \subset \R$. Since $F_1$ and $F_2$ are self-adjoint, 
$0$ is not an accumulation point of $\spec(F_1) \cup \spec(F_2)$. Hence $\chi_{\R^+}$ and $\chi_{\R^-}$ are continuous functions on $\spec(F_1) \cup \spec(F_2)$. 
Then $P_{E^+(F_1)} - P_{E^+(F_2)} = \chi_{\R^+}(F_1) - \chi_{\R^+}(F_2)$ and $P_{E^-(F_1)} - P_{E^-(F_2)} = \chi_{\R^-}(F_1) - \chi_{\R^-}(F_2)$ are compact 
operators as claimed. 
\end{proof}

Using the concepts introduced above we are able to study our non-linear situation as follows.  
\begin{defn}\label{defstab} Let $x \in M$ be a singular point of a $C^1$-vector field $X$ on a Banach manifold $M$ with globally defined flow $\ffi$. We define 
the {\bf unstable} and {\bf stable manifolds} of $x$ by 
\begin{align*}
 \WW^u(x) &= \li \{ p \in M \mi \lim \limits_{s\rightarrow -\infty}\ffi(s,p) = x \re\} \\
 \WW^s(x) &= \li \{ p \in M \mi \lim \limits_{s\rightarrow +\infty}\ffi(s,p) = x \re\}\,.
\end{align*}
Furthemore we define the Jacobian of $X$ at $p\in M$ in the usual way as the homomorphism $DX(p) : T_pM \lo T_pM $  determined by 
\begin{equation*}
 DX(p)[Y(p)](f) = Y(p)(f\circ X)(p)\,,
\end{equation*} 
forall $C^0$- vector fields $Y$ and all $ f\in C^1(TM,\R)$.
\end{defn}
The generalization of the usual {\bf stable / unstable manifold theorem} in this situation is explicitly proven in \cite{Al1}. We only mention it :

\begin{thm}\label{stable}{\bf (\cite{Al1})} Let $x \in M$ be a hyperbolic singular point of a $C^k$-vector field $X$, $k\geq 1$, on a Banach manifold $M$ with 
globally defined flow. Then 
$\WW^u(x)$ and $\WW^s(x)$ are the images of injective $C^k$-immersions of manifolds which are homeomorphic to the unstable and stable eigenspaces 
$E^u_x$ and $E^s_x$ of the Jacobien of $X$ at $x$, respectively. \\

If $M$ is a Hilbert manifold then  $\WW^u(x)$ and $\WW^s(x)$ are actually images of $C^k$-injective immersions of $E^u_x$ and $E^s_x$, respectively. If in addition 
$X$ possesses a Lyapunov function $f$,  non-degenerate at $x$, then these immersions are actually embeddings. 
\end{thm}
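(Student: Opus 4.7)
The plan is to adapt the classical Hadamard--Perron scheme to the Banach setting, globalize by the flow, and upgrade to an embedding using the Lyapunov function. First, I would pass to a chart around $x$ identifying $M$ locally with a ball around $0$ in a Banach space $E$ which carries the splitting $E = E^u_x \oplus E^s_x$ coming from the hyperbolic operator $DX(x)$. In this chart the flow reads $\ffi(s,\xi)=e^{sDX(x)}\xi + N(s,\xi)$, with $e^{sDX(x)}$ contracting on $E^s_x$ as $s\to +\infty$ and on $E^u_x$ as $s\to -\infty$. A Lyapunov--Perron fixed point argument on an exponentially weighted space of curves $[0,\infty)\lo E$ associates to each sufficiently small initial $E^s_x$-datum the unique forward-decaying trajectory of $\ffi$; reading off its $E^u_x$-component at $s=0$ yields a $C^k$ map $\sigma^s : U^s\subset E^s_x\lo E^u_x$ with $\sigma^s(0)=0$ and $D\sigma^s(0)=0$, whose graph is $\WW^s_{\loc}(x)$. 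The symmetric construction on $(-\infty,0]$ produces $\sigma^u$ and $\WW^u_{\loc}(x)$.

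Second, I would globalize using the flow. Set
\[
\WW^u(x) := \bigcup_{s \geq 0} \ffi\bigl(s, \WW^u_{\loc}(x)\bigr),
\]
and build a parametrization $\Psi^u:E^u_x\lo M$ as follows. The linear contraction $e^{-\tau DX(x)}$ restricted to $E^u_x$ places $e^{-\tau DX(x)}v$ inside the graph domain $U^u$ once $\tau=\tau(v)\geq 0$ is large enough, and one sets
\[
\Psi^u(v) := \ffi\bigl(\tau,\,\psi^u\bigl(e^{-\tau DX(x)}v\bigr)\bigr),
\]
with $\psi^u$ the $C^k$ graph embedding of $U^u$. Flow-invariance of $\WW^u_{\loc}(x)$ and the semigroup law make the right-hand side $\tau$-independent, and $\Psi^u$ is then a composition of an injective $C^k$-immersion with the diffeomorphism $\ffi(\tau,\cdot)$, hence an injective $C^k$-immersion itself. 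In the Banach case the source carries the pulled-back $C^k$-structure and is only homeomorphic to $E^u_x$; in the Hilbert case the graph chart can be linearly extended (using the bounded orthogonal projections onto $E^u_x$ and $E^s_x$) to a global $C^k$-parametrization of $\WW^u(x)$ by $E^u_x$ itself. The stable side is symmetric.

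Third, assume $X$ admits a non-degenerate Lyapunov function $f$. Since $\crit(f)=\rest(X)$ one has $Df(x)=0$; the condition $D^2 f(x)[\xi,\xi]\leq -\ee\nl \xi\nr^2$ on $E^u_x$ combined with $D\Psi^u(0)=\Id_{E^u_x}$ yields by Taylor expansion
\[
f(x)-f(\Psi^u(v)) \,\geq\, \frac{\ee}{4}\nl v\nr^2 \qquad \text{for } \nl v\nr \text{ small.}
\]
Strict decrease of $f\circ\ffi(s,\cdot)$ along the flow implies $f(\Psi^u(v))\leq f\bigl(\psi^u(e^{-\tau DX(x)}v)\bigr)$ for every admissible $\tau$; together with the quadratic estimate, any lower bound on $f\circ\Psi^u$ controls $\nl e^{-\tau DX(x)}v\nr$, and the linear expansion of $e^{\tau DX(x)}$ on $E^u_x$ then bounds $\nl v\nr$. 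Hence $\Psi^u$ is proper, and an injective proper $C^k$-immersion between Hilbert manifolds is an embedding.

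The hard part will be the globalization step: verifying $C^k$-regularity of $\Psi^u$ across the patching time $\tau(v)$, its independence from the choices made, and -- in the Hilbert case -- the global linear extension of the graph parametrization to all of $E^u_x$. The local Hadamard--Perron theorem and the final properness-based upgrade from immersion to embedding are by now standard.
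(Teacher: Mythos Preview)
The paper does not give its own proof of this statement; the theorem is quoted verbatim from \cite{Al1}, prefaced by the sentence ``The generalization of the usual stable / unstable manifold theorem in this situation is explicitly proven in \cite{Al1}. We only mention it.'' So there is nothing in the present paper to compare your argument against.

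That said, your outline is the standard route and is in line with what one finds in the Abbondandolo--Majer lectures: a local invariant-manifold theorem via a Lyapunov--Perron contraction on exponentially weighted curves, globalization by flowing the local graph outward, and an embedding upgrade using the Lyapunov function. One technical point deserves attention. In the last step you invoke ``an injective proper $C^k$-immersion between Hilbert manifolds is an embedding,'' but your Lyapunov estimate only shows that sublevel sets of $f\circ\Psi^u$ are \emph{bounded} in $E^u_x$, not precompact; in infinite dimensions this does not yield properness in the sense that preimages of compacts are compact. What you actually need is continuity of $(\Psi^u)^{-1}$ on $\WW^u(x)$ with its subspace topology, and the non-degenerate Lyapunov function delivers this more directly: strict monotonicity of $f$ along non-constant orbits lets one recover the flow-time $\tau$ continuously from the value $f(p)$, so the global inverse factors as $\ffi(-\tau,\cdot)$ followed by the local graph projection onto $E^u_x$. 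With this adjustment your sketch is sound.
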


Though in general $\WW^u(x)$ and $\WW^s(x)$ are infinite dimensional manifolds, one can hope, that for a generic choice of vector fields $X$  or metrics on $M$, 
their intersection is a {\bf finite dimensional} manifold. A. Abbondandolo and P. Majer formulated two conditions which guarantee that we are in this situation. 
We give a version of these conditions matching our setup.
\begin{defn}\label{admissible}
Let $X$ be a $C^1$- Morse vector field on the Hilbert manifold $\M = \HT$. 
If $V \subset \Ee$ is a closed subspace and $\V = \M \times V$ a constant subbundle of $T\M$. 
Then $\V$ is said to be {\bf admissible}  for $X$ if \\
 
{\bf (C1)} for every singular point $x \in \rest (X)$, the unstable eigenspace $E^u\li(DX(x)\re)$ of the Jacobian  of $X$ at $x$ is a compact perturbation of $V$;\\

{\bf (C2)} for all $p\in M$ the operator $\big[ DX(p),P_V \big]$ is compact, where $[S,T]=ST-TS$, $\forall$ $S,T \in \Lc(\Ee)$ denotes the commutator
and $P_V$ the projection onto $V$. \\
\end{defn}
By {\bf C1} we can define the {\bf relative Morse index} of $x\in \rest(X)$ relative to $\V$ by 
\begin{equation}
 m(x,\V):= \dim\Big(E^u\big(DX(x)\big),V\Big)\,.
\end{equation}
Furthermore  the subbundle $\V$ is $\ffi$-invariant, i.e., $D_2\ffi(s,p)V = V$ for all $(s,p)\in \R\times \M$, if and only if $\big[ DX(p),P_V \big]=0$. 
Hence {\bf C2} is equivalent to the fact that $\V$ is {\bf essentially} $\ffi$-invariant in the sense that 
$D_2\ffi(s,p)V$ is a compact perturbation of $V$ for all $(s,p)\in \R\times \M$.  

\begin{lem}\label{findim} Let $H \in \Hr$ be a generic Hamiltonian in the sense of Proposition \ref{reg1} and $\X = -\Ah$ which is therefore a smooth Morse 
vector field on $\M$. Then the subbundle 
$$\M\times \big(\R^n \times \Hm^+\big) \subset   T\M $$ is admissible for $\X$.
\end{lem}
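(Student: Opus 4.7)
The plan is to verify the two conditions \textbf{C1} and \textbf{C2} of Definition \ref{admissible} by direct computation, using the explicit formula for the Jacobian of $\X$ from Corollary \ref{cor1},
\[
D\X(x) \;=\; \PP^+ - \PP^- - j^*\nabla^2 H(\cdot,x(\cdot)), \qquad x\in\M.
\]
The pivotal preliminary observation is that the three orthogonal projections $\PP^0,\PP^+,\PP^-$ of \eqref{pro} associated with the splitting \eqref{split} are mutually commuting, sum to the identity on $\Ee$, and that the orthogonal projection onto $V=\Rn\oplus\Hm^+$ is simply $P_V=\PP^0+\PP^+$. In particular $P_V$ commutes with $\PP^+-\PP^-$.

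For \textbf{C2} this commutation immediately reduces the commutator to
\[
[D\X(p),P_V] \;=\; -[j^*\nabla^2 H(\cdot,p(\cdot)),P_V],
\]
so the only genuine task is to show that $j^*\nabla^2 H(\cdot,p(\cdot))$ is a compact operator on $\HR$; the commutator of any compact operator with any bounded one is then automatically compact. Compactness follows from Proposition \ref{jj}: $j^*$ factors through $H^1(\mS^1,\R^{2n})$, multiplication by the bounded function $\nabla^2 H(\cdot,p(\cdot))$ is bounded on $\LLR$, and the inclusion $H^1(\mS^1,\R^{2n})\hookrightarrow\HR$ is compact.

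For \textbf{C1}, I plan to compare $D\X(x)$ at a singular point $x\in\rest(\X)$ to the bounded self-adjoint reference operator
\[
L \;:=\; \PP^+ - \PP^- + \PP^0
\]
on $\Ee$, which one reads off as being hyperbolic with spectrum $\{-1,+1\}$, positive eigenspace $E^u(L)=\Rn\oplus\Hm^+=V$, and negative eigenspace $E^s(L)=\Hm^-$. The difference
\[
D\X(x) - L \;=\; -\PP^0 - j^*\nabla^2 H(\cdot,x(\cdot))
\]
is compact: $\PP^0$ has rank $2n$ and the second summand is compact by the argument for \textbf{C2}. Because $D\X(x)$ is itself self-adjoint and, by Proposition \ref{reg1}, hyperbolic (hence Fredholm), Proposition \ref{eig} applies and yields that $E^u(D\X(x))$ is a compact perturbation of $E^u(L)=V$, proving \textbf{C1}.

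All the analytic content therefore sits in the compactness of $j^*\nabla^2 H(\cdot,x(\cdot))$ on $\HR$, a structural feature of the $H^{1/2}$-framework supplied by Proposition \ref{jj}; the rest is algebraic manipulation of mutually commuting projections and an appeal to Proposition \ref{eig}. I therefore do not anticipate any substantial obstacle in this lemma, provided one chooses the reference operator $L$ so that its positive eigenspace is exactly $V$.
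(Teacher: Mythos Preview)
Your approach is essentially the paper's: both reduce \textbf{C2} to the compactness of $j^*\nabla^2 H(\cdot,p(\cdot))$ on $\HR$, and both obtain \textbf{C1} by applying Proposition~\ref{eig} to compare the self-adjoint Fredholm operator $D\X(x)$ with a simple reference operator built from $\PP^{\pm}$.

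There is one slip worth correcting. The projection $\PP^0$ maps onto the full space of constants $\R^{2n}$, not onto the half $\R^n\subset V$. Consequently $\PP^0+\PP^+$ is the orthogonal projection onto $\R^{2n}\oplus\Hm^+$, not onto $V=\R^n\oplus\Hm^+$, and for your reference operator $L=\PP^+-\PP^-+\PP^0$ one has $E^u(L)=\R^{2n}\oplus\Hm^+\neq V$. This does not damage the argument: $P_V$ differs from $\PP^0+\PP^+$ by a rank-$n$ operator, so compactness of one commutator implies compactness of the other, and a compact perturbation of $\R^{2n}\oplus\Hm^+$ is automatically a compact perturbation of $\R^n\oplus\Hm^+$. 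The paper handles this in exactly the same way, comparing $D\X(x)$ with $\PP^+-\PP^-$ (positive eigenspace $\Hm^+$) and then noting that a compact perturbation of $\Hm^+$ is also one of $\R^n\times\Hm^+$. Simply adjust your identification of $P_V$ and $E^u(L)$ and insert the one-line finite-rank remark.
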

\begin{proof} Since $\X$ is assumed to be Morse, the operator 
$D \X(x)$ is hyperbolic for all $x \in \rest(\X)$. Hence its spectrum splits into the disjoint sets  $\spec^+\big(D \X(x)\big)$ and $\spec^-\big(D \X(x)\big)$ of positive and negative eigenvalues. Furthermore $D\X(x)$  satisfies the estimate 
\begin{align*}
 \nl \xi \nr_{\HH} & =    \nl \PP^0 \xi + D\X(x) \xi  + j^*\nabla^2H\big(\cdot,x(\cdot)\big)\xi \nr_{\HH} \\
		   & \leq \nl D\X(x) \xi \nr_{\HH} + \nl j^*\PP^0 \xi + j^*\nabla^2H\big(\cdot,x(\cdot)\xi\big) \nr_{\HS}\\
	           & \leq \nl D\X(x) \xi \nr_{\HH} + \big(1+ \nl H \nr_{C^2(\mS^1\times\Tn)}\big)\nl \xi \nr_{\LS}
\end{align*}
for all $\xi \in \Ee$. Now recall that the inclusion $j : \HR \lo \LLR$ from \eqref{j} is compact. Hence the above estimate implies that $D\X(x)$ is a semi-Fredholm
operator as it is well-known in Fredholm analysis, see for instance \cite{duff} or \cite{schwarz-2}. Since $D\X(x)$ is moreover self-adjoint its kernel and co-kernel coincide and 
therefore $D\X(x)$ is indeed Fredholm of index zero. In particular $0$ is not an accumulation point in $\spec(D\X(x))$.
So setting $S_1 =D \X(p) \PP^+$ and $S_2= \PP^+D \X(p)$ for $p \in \M$ we have that 
$$\big[ D \X(p),\PP^+\big] = S_1 -S_2 = j^*\nabla^2H(\cdot,p)\PP^+ -\PP^+j^*\nabla^2H(\cdot,p)$$ 
is compact, which shows {\bf C2}. For $p = x$  we can apply Proposition \ref{eig} to $F_1 = D\X(x)$, $F_2 = \PP^+-\PP^-$ and obtain that the positive eigenspace 
$E^u\big(D\X(x)\big)$ is a compact perturbation of $\Hm^+$ and therefore of $\R^n \times \Hm^+$ proving {\bf C1} .\\
\end{proof}
Note that since $I-\PP^-$ differs from $\PP^+$ by an operator of finite rank, we have that $\big[ D \X(p),\PP^-\big]$ is also compact for all $p \in \M$. In particular this shows that $E^s\big(D\X(x)\big)$ is a compact perturbation of $\Hm^-$ for all $x \in \rest(\X)$. 
Since both spaces $\Hm^+$ and $\Hm^-$ are  infinite dimensional this implies that 
$E^{u}\big(D\X(x)\big)$ and $E^{s}\big(D\X(x)\big)$ are infinite dimensional for any $x \in \rest(\X)$, which in particular shows the 
{\bf strongly indefinite character} of $A_H$, i.e., the usual Morse indices and co-indices are infinite for all singular points implying that by Theorem \ref{stable}  
the stable  and unstable manifolds are infinite dimensional. Further crucial consequences of  the two conditions are the following. 
\begin{prop}\label{man}{\bf (\cite{Al1})} Assume that $X$ is a $C^1$-Morse vector field defined on the Hilbert manifold $\M = \Tn \times \Hm$, 
possesses a Lyapunov function $f$ and an admissible constant subbundle $\V = \M \times V \subset \M\times\Ee$. Then for every $x\in \rest(X)$ there holds :
\begin{enumerate}
 \item  for every $p \in \WW^u(x)$, $T_p\WW^u(x)$ is a compact perturbation of $V$ with 
 $$ \dim(T_p\WW^u(x),V)= m(x,\V)\,. $$
 \item for every $p \in \WW^s(x)$, $\li(T_p\WW^s(x),V\re)$ is a Fredholm pair, with
 $$\ind\big(T_p\WW^s(x),V\big) = -m(x,\V) \,.$$
 \item Let $x,y \in \rest(X)$ and assume that $\WW^u(x)$ and $\WW^s(y)$ meet transversally. Then $\WW^u(x) \cap \WW^s(y)$ is a submanifold of
 dimension 
$$ \dim\big(\WW^u(x) \cap \WW^s(y)\big) = m(x,\V)-m(y,\V)\,.$$  
\end{enumerate}
\end{prop}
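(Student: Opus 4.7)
The plan is to prove (i) and (ii) by propagating admissibility from the fixed point out along the flow, and then to derive (iii) by pure Fredholm-pair arithmetic.

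For (i), fix $p \in \WW^u(x)$ and set $q_T := \ffi(-T,p)$, so $q_T \to x$ as $T \to +\infty$. By Theorem \ref{stable}, $T_x\WW^u(x) = E^u\big(DX(x)\big)$, which by \textbf{C1} is a compact perturbation of $V$ of relative dimension $m(x,\V)$; continuity of the tangent space along the unstable manifold then transfers the same property to $T_{q_T}\WW^u(x)$ for $T$ large. The flow-invariance identity $T_p\WW^u(x) = D_2\ffi(T,q_T)\,T_{q_T}\WW^u(x)$ then reduces the claim to showing that conjugation by the linearized flow preserves compact perturbations of $V$, which is exactly the reformulation of \textbf{C2} recorded just after Definition \ref{admissible}: $D_2\ffi(s,q)V$ is a compact perturbation of $V$ at every $(s,q)$. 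For the dimension count I would observe that $s \mapsto \dim\big(T_{\ffi(s,p)}\WW^u(x),V\big)$ is an integer-valued locally constant function on $\R$ (the relative dimension being a Fredholm index), hence constant with asymptotic value $m(x,\V)$ at $-\infty$.

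For (ii), I would mirror the argument using the forward flow. Flowing $p \in \WW^s(x)$ forward puts $\ffi(T,p)$ close to $x$, and Theorem \ref{stable} together with the graph description of $W^s_A$ in \eqref{WS} and Proposition \ref{FA1} makes $T_{\ffi(T,p)}\WW^s(x)$ a small Grassmannian perturbation of $E^s\big(DX(x)\big)$. First one must verify that $\big(E^s(DX(x)),V\big)$ is a Fredholm pair of index $-m(x,\V)$: the hyperbolic splitting $\Ee = E^u \oplus E^s$ combined with \textbf{C1} gives $P_{E^s} - P_{V^\perp} = P_V - P_{E^u}$ compact, so $E^s$ is a compact perturbation of $V^\perp$, and the antisymmetry $\dim(U^\perp,W^\perp) = -\dim(U,W)$ yields $\ind(E^s,V) = \dim(E^s,V^\perp) = -m(x,\V)$. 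Transferring this pair back to $p$ via $D_2\ffi(T,p)^{-1}$ and invoking \textbf{C2} preserves both the Fredholm status and the index, giving (ii).

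Statement (iii) is then pure Fredholm-pair algebra. By (ii), $(V,T_p\WW^s(y))$ is Fredholm of index $-m(y,\V)$; by (i), $T_p\WW^u(x)$ is a compact perturbation of $V$ with $\dim\big(T_p\WW^u(x),V\big) = m(x,\V)$. Formula \eqref{fred} then produces a Fredholm pair $\big(T_p\WW^u(x),T_p\WW^s(y)\big)$ of index $m(x,\V) - m(y,\V)$. Transversality at $p$ forces $T_p\WW^u(x) + T_p\WW^s(y) = \Ee$, so the cokernel term vanishes and this index equals $\dim T_p\big(\WW^u(x) \cap \WW^s(y)\big)$. Combined with the standard transverse intersection theorem for Banach manifolds, this gives both the submanifold structure and the dimension formula.

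I expect the main obstacle to be the propagation step in (ii). Unlike the unstable case, where the convergence $T_{\ffi(s,p)}\WW^u(x) \to E^u\big(DX(x)\big)$ as $s \to -\infty$ is essentially tautological from the very definition of $\WW^u(x)$, for $\WW^s(x)$ one must quantitatively control the convergence $T_{\ffi(T,p)}\WW^s(x) \to E^s\big(DX(x)\big)$ in the Hilbert Grassmannian. This relies on the local graph representation of the stable manifold near $x$ from the stable manifold theorem together with the asymptotic analysis of $F_A^+$ in Proposition \ref{FA1}, and it is precisely the point where the essential-invariance form of \textbf{C2} becomes indispensable.
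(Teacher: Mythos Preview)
Your proposal is correct and follows the same overall strategy as the paper: use \textbf{C1} at the rest point and propagate via \textbf{C2} along the flow, then compute the intersection dimension by Fredholm-pair arithmetic using \eqref{fred}. Two points are worth flagging.

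First, your concern that the propagation in (ii) is the main obstacle is misplaced. The paper dispatches (ii) in two sentences: openness of $\mathrm{Fp}(\Ee)$ and local constancy of the index give the Fredholm pair $(T_p\WW^s(x),V)$ with index $-m(x,\V)$ near $x$, and then flow-invariance of $T\WW^s(x)$ combined with the essential-invariance form of \textbf{C2} pushes this to all of $\WW^s(x)$. No quantitative Grassmannian convergence is needed beyond what openness already provides. If anything, the paper invests more effort in (i), where it makes the propagation explicit by introducing the auxiliary path $B(s)=A(s)-[A(s),P_W]$ (which leaves $W=E^u(DX(x))$ invariant, hence $W^u_B=W$) and then invokes the fact that a compact perturbation of the path yields a compact perturbation of the linear unstable space. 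Your flow-map argument for (i) via $D_2\ffi$ is equivalent and uses exactly the reformulation of \textbf{C2} recorded after Definition~\ref{admissible}.

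Second, for (iii) you stay pointwise and appeal to the transverse intersection theorem for Banach submanifolds, whereas the paper routes through the path space: it sets up $f:\mathcal{C}^1_{x,y}(\R,\M)\to C^0_0(\R,\Ee)$, $v\mapsto v'-X(v)$, linearizes to $F_A$, and uses Lemma~\ref{FA2} to identify $\ker F_A\cong W^u_A\cap W^s_A$ and $\coker F_A\cong \Ee/(W^u_A+W^s_A)$. Both arguments finish with the same computation $\ind(W^u_A,W^s_A)=\dim(W^u_A,V)+\ind(V,W^s_A)=m(x,\V)-m(y,\V)$ from (i), (ii), and \eqref{fred}. Your route is more elementary; the paper's has the advantage that the operator $F_A$ is the object actually used downstream (e.g.\ in the Morse--Smale genericity argument of Section~\ref{2.3} and the hybrid Fredholm theory of Section~\ref{4}), so it sets up that machinery along the way.
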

\begin{proof} Let $p \in \WW^u(x)$ and let $v(s)=\ffi(s,p)$ be the orbit of $p$. By linearization, using the notation from \eqref{linstab}, we obtain 
\begin{equation}\label{tw} 
 T_p\WW^u(x) = W^u_A
\end{equation}
with $A(s) = DX\big(v(s)\big)$. {\bf C1} implies that $W:=T_x\WW^u(x) = E^u\big(A(-\infty)\big)$ is a compact perturbation of 
$V$. By {\bf C2}, the operator $\big[A(s),P_V\big]$ is compact for every $s$, and so is the operator $\big[A(s),P_W\big]$. Set 
$$ B(s) = A(s) - \big[A(s),P_W\big]\,.$$
Then $B(-\infty) = A(-\infty)=DX(x)$, $E^u\big(B(-\infty)\big)=W$,
$$ P_{W^{\perp}}B(s) = P_{W^{\perp}}\big(A(s)-A(s)P_W\big) =  P_{W^{\perp}}A(s)P_{W^{\perp}}$$ 
and therefore $B(s)W \subset W$ for every $s$. It is readily  seen that these facts imply that 
$$ W^u_B = W \,.$$
Furthermore $B(s)-A(s)$ is compact for every $s$, and thus $W^u_A$ is a compact perturbation of $W^u_B=W$ and therefore of $V$. So we conclude by 
\eqref{tw} that $T_p\WW^u(x)$ is a compact perturbation of $V$ for all $p \in \WW^u(x)$. The dimension formula now follows by continuity and we obtain $(i)$.\\

Since the set $\mathrm{Fp}(E)$ of all Fredholm pairs is open and the index is locally constant {\bf C1} implies that the pair 
$\big(T_p\WW^s(x),V\big)$ is Fredholm of index $-m(x,\V)$ for all $p$ in a neighborhood of $x$. Furthermore the tangent bundle $T\WW^s(x)$ is invariant under the flow $\ffi$ and 
by {\bf C2} the subbundle $\V$ is essentially invariant, i.e., invariant up to compact perturbation. Hence $\big(T_p\WW^s(x),V\big)$ is Fredholm of 
index $-m(x,\V)$ for all $p\in \WW^s(x)$ proving $(ii)$.\\

To prove $(iii)$ let $\mathcal{C}^1_{x,y}(\R,\M)$ be the Banach manifold of all curves $ v \in C^1(\R,\M)$ with $\lim \limits_{s\rightarrow-\infty}v(s) =x$, 
$\lim \limits_{s\rightarrow +\infty}v(s) =y$ and $\lim \limits_{s\rightarrow\pm\infty}v^{\prime}(s) = 0$. We consider the map 
$$f : \mathcal{C}^1_{x,y} \lo  C^0_0(\R,T\M)\,, \quad v \mapsto v^{\prime} - X(v) $$ and its linearization at $v$   
$$ F(v) :C^1_0(\R,\Ee) \lo C^0_0(\R,\Ee)\,, \quad  \xi \mapsto \xi^{\prime} - DX(v)\xi \,.$$
We set $A(s):= DX(v(s))$, then by Lemma \ref{FA2} the operator $F_A:=F(v)$ has range  
$$\ran F_A  = \li \{ \eta \in C^0_0(\R,\Ee) \mi R_A^+(\eta)(0) -R_A^-(\eta)(0) \in W^u_A + W^s_A \re\} \cong W^u_A + W^s_A  \,.$$
and kernel
$$\ker F_A = \li\{X_A(s)w \mi w \in W^u_A\cap W^s_A\re\} \cong W^u_A\cap W^s_A \subsetneq \Ee$$ 
Since we assume that $\WW^u(x)$ and $\WW^s(x)$ meet transversally, 
 $W^s_A + W^u_A = \Ee$, for all $v \in f^{-1}(0)$, which implies that $\coker F_A = \{0\}$.
In this case the dimension formula for the kernel follows from $(i)$, $(ii)$ and \eqref{fred}, i.e.,  
\begin{align*}
\dim(W^u_A \cap W^s_A ) & = \ind (W^u_A,W^s_A) = \dim (W^u_A,V) +  \ind(W^s_A,V) \\
                        & = m(x,\V) -m(y,\V)\,. 
\end{align*}
Hence $f$ is a Fredholm map with regular value $0$ possessing the claimed index. \\
\end{proof}
Note that even if the operator $F_A$ is not onto we have that $\dim(W^u_A,V) +\ind(W^s_A,V)$ is finite by $(i)$ and $(ii)$ and so again by \eqref{fred}  there holds 
\begin{align}
\ind F_A & = \ind(W^u_A,W^s_A) = \dim(W^u_A,V) +\ind(W^s_A,V) \nonumber \\ 
         & = m(x,\V) - m(y,\V) \,. \label{FAind}
\end{align}

\subsection{Genericity of Morse-Smale}\label{2.3}
Though A. Abbondandolo and P. Majer established a transversality result by perturbing the metric on the Hilbert manifold $M$, see \cite{Al1},
in preparation for the transversality result for hybrid type curves we  give an alternative result by  perturbing  $\X=-\Ah$ 
directly by adding a further small compact vector field $K : \M \lo \Ee$. 

\begin{defn} Let $X$ be a  Morse vector field on the Hilbert manifold $\M$  possessing a Lyapunov function $f$ and an admissible constant subbundle $\V$. 
We will say  that $X$ satisfies the {\bf Morse-Smale condition} up to order $k\in \N$ if for every pair of singular points $x,y \in \rest(X)$ with $m(x,\V) -m(y,\V) \leq k$ 
the intersection of the submanifolds $\WW^u(x)$ and $\WW^s(y)$ is transverse. 
\end{defn}
By Proposition \ref{reg1} we can assume that we have chosen $H$ generically such that $\X = -\Ah$ is a Morse vector field. Moreover due to Lemma \ref{findim}, 
$\X$ satisfies the conditions {\bf C1} and  {\bf C2} with respect to the subbundle $\M\times \big(\Hm^+\times\R^n\big)$.
To achieve the transversality result we have to choose a good set of suitable perturbations. That is : \\
 
Let $\Kc(\M,\Ee) \subset C^{3}_b(\M,\Ee)$ be the closed subspace of all $C^3$- vector fields which are compact and bounded on $\M$. 
We choose a function $\theta \in C^1(\M,\R^+)$ such that
\begin{enumerate}
 \item $\theta(x)=0$ for all $x \in \crit(A_H)$ 
 \item $\theta(p) > 0 $ for all $p\in \M \setminus \crit(A_H)$.
 \item $\theta(p) \leq \frac{1}{2}\nl \Ah(p)\nr_{\HS}$ for all $p \in \M$. 
\end{enumerate}
and consider 
\begin{equation*}
\Kc_{\theta}: =\li\{ K\in \Kc(\M,\Ee) \mi  \exists \, c > 0 \,\text{such that}\, \nl K(p)\nr_{H^{1/2}(\mS^1)} \leq c \theta(p)\,, \forall p \in \M \re\}\,,
\end{equation*}
which becomes a Banach space with the norm 
$$ \nl K\nr_{\theta}: = \sup_{p\in\M\setminus \rest(X)}\frac{\nl K(p)\nr_{\HS}}{\theta(p)} + \nl \nabla K\nr_{C^{2}(\M,\Ee)} \,.$$
By $\Kc_{\theta,1}$ we denote the open unit ball in $\Kc_{\theta}$, which as an open subset of a Banach space is a Banach manifold with trivial tangent 
bundle $\Kc_{\theta,1}\times \Kc_{\theta}$. We claim that $\Kc_{\theta,1}$ is a good set of perturbations.
\begin{lem}\label{pert} Let $K \in \Kc_{\theta,1}$ and denote by $\X_K:=-\Ah+K$. Then there holds :
 \begin{enumerate}
 \item $\rest(\X_K) = \crit(A_H)$.
 \item $D\X_K(x) = -D^2A_H(x)$ for all $x \in \crit(A_H)$.
 \item $A_H$ is a Lyapunov function for $\X_K$.
 \item $\V:=\M \times \big(\R^n \times \Hm^+\big) $ is admissible for $\X_K$. 
\end{enumerate}
\end{lem}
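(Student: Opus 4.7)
The entire lemma pivots on a single pointwise estimate. Since $K$ lies in the open unit ball of $\Kc_\theta$, the definition of $\|\cdot\|_\theta$ together with condition (iii) on $\theta$ yields
\begin{equation*}
\|K(p)\|_{\HH} \;\leq\; \|K\|_\theta\, \theta(p) \;\leq\; \tfrac{\|K\|_\theta}{2}\,\|\Ah(p)\|_{\HH}, \qquad p\in\M,
\end{equation*}
together with $K(p)=0$ whenever $\theta(p)=0$, i.e.\ whenever $p\in\crit(A_H)$. With $\|K\|_\theta<1$ fixed, each of (i)--(iv) then collapses to a short verification, and I would dispatch them in order.

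Claim (i) follows immediately from the estimate: one inclusion is $\crit(A_H)\subset\rest(\X_K)$ because both $\Ah$ and $K$ vanish on $\crit(A_H)$, and conversely $\X_K(p)=0$ forces $\|\Ah(p)\|_{\HH} = \|K(p)\|_{\HH}\leq \tfrac{\|K\|_\theta}{2}\|\Ah(p)\|_{\HH}$, hence $\Ah(p)=0$. For (ii), at $x\in\crit(A_H)$ the $C^1$ nonnegative function $\theta$ attains its minimum, so $D\theta(x)=0$; combined with $\|K(q)\|_{\HH}\leq c\,\theta(q)$ and $K(x)=0$, the difference quotient $t^{-1}K(x+tv)$ converges to $0$ as $t\to 0$, so $DK(x)=0$ and hence $D\X_K(x) = -\AH(x) = -D^2A_H(x)$. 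For (iii), I just compute
\begin{equation*}
DA_H(p)\bigl[\X_K(p)\bigr] \;=\; \bigl\langle\Ah(p),\,-\Ah(p)+K(p)\bigr\rangle_{\HH} \;\leq\; -\Bigl(1-\tfrac{\|K\|_\theta}{2}\Bigr)\|\Ah(p)\|_{\HH}^2,
\end{equation*}
which is strictly negative off $\crit(A_H) = \rest(\X_K)$ by Cauchy--Schwarz and the opening bound.

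For (iv), condition \textbf{C1} is immediate from Lemma \ref{findim}: by (i)--(ii), $\rest(\X_K) = \crit(A_H)$ and $D\X_K(x) = -\AH(x)$ at every such point, so the required compact-perturbation statement for $\X_K$ coincides with the one already established for $\X = -\Ah$. Condition \textbf{C2} I split as
\begin{equation*}
\bigl[D\X_K(p),P_V\bigr] \;=\; \bigl[-\AH(p),P_V\bigr] + \bigl[DK(p),P_V\bigr].
\end{equation*}
The first summand is compact by Lemma \ref{findim}, and the second is compact once one knows that $DK(p)$ is a compact operator for every $p\in\M$. I expect this to be the only non-mechanical point in the argument: it rests on the fact that the Fr\'echet derivative of a bounded $C^1$ compact nonlinear map $K:\M\to\Ee$ is itself a compact linear operator at each base point, which one verifies by expressing $DK(p)v$ as the uniform-in-$\|v\|\leq 1$ limit of the difference quotients $t^{-1}(K(p+tv)-K(p))$ and using that $K$ sends bounded sets to relatively compact ones. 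All other steps are routine.
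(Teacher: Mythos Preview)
Your proof is correct and follows essentially the same route as the paper's. Your treatment of (iv) is in fact more explicit than the paper's, which simply asserts that compactness of $K$ together with (i)--(iii) makes Lemma~\ref{findim} carry over verbatim; you spell out the commutator splitting $[D\X_K(p),P_V]=[-\AH(p),P_V]+[DK(p),P_V]$ and supply the standard argument that $DK(p)$ is a compact linear operator, which is exactly the content hidden behind the paper's one-line appeal.
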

\begin{proof} Since $ \theta $ and $d\theta$ vanish on $\crit(A_H)$, we have that $K$ vanishes on $\crit(A_H)$ and obtain $\crit(A_H) \subset \rest(\X_K)$.
Let $x \in \crit(A_H)$ and $y\in \Ee$, then we compute
\begin{align*}
 \nl  DK(x)[hy]  \nr_{\HS} & =    \nl K(x+hy) -K(x)+o(h)\nr_{\HS}\\
			   & =    \nl K(x+hy) \nr_{\HS} +o(h) \\			
		           & \leq \theta(x+hy) + o(h) = \theta(x) +o(h) = o(h)\,.  
\end{align*}
Hence $DK(x) = 0$ proving $(ii)$. To prove $(iii)$ let $p \in \M \setminus \crit(A_H)$, then there holds
\begin{align*}
DA_H(p)[\X_K] & =    -\big<\X(p),\X(p)+K(p)\big>_{\HS} \\
              & \leq -\nl \X(p) \nr_{\HS}^2 + \nl \X(p)\nr_{\HS} \nl K(p)\nr_{\HS} \\
              & \leq -\frac{1}{2} \nl \X(p) \nr_{\HS}^2<0\,
\end{align*}
which implies furthermore that  $\rest(\X_K) \subset \crit(A_H) $ proving $(i)$.
Finally $(i)$ to $(iii)$ imply that the dynamics of $\X_K$ do not qualitatively differ from those of $\X$. Together with the fact that $K$ is compact 
this implies that Lemma \ref{findim} continues to hold for $\X_K$ and we obtain $(iv)$. \\
\end{proof}
The main result of this section is the following Theorem.
\begin{thm}\label{SM} There is a residual subset $\Kc_{\reg}\subset \Kc_{\theta,1}$ of compact vector fields $K : \M \lo \Ee$ such that the perturbed vector field $\X_K = -\Ah+K$  fulfills the Morse-Smale condition up to order 2. 
\end{thm}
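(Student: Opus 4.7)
The plan is to follow the standard Sard--Smale parameter transversality scheme, applied to the universal moduli space of perturbed gradient flow lines. Since $\Pc_0(H) = \crit(A_H)$ is finite by Proposition \ref{reg1}, it suffices to produce, for each ordered pair $(x,y) \in \crit(A_H)\times\crit(A_H)$ with $m(x,\V)-m(y,\V)\le 2$, a residual subset $\Kc_{\reg}(x,y)\subset \Kc_{\theta,1}$ such that $\WW^u_{\X_K}(x)\pitchfork \WW^s_{\X_K}(y)$ for every $K\in \Kc_{\reg}(x,y)$; then $\Kc_{\reg}:=\bigcap_{(x,y)}\Kc_{\reg}(x,y)$ is a finite intersection of residual sets, hence residual.

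Fix such a pair $(x,y)$. As in the proof of Proposition \ref{man}, let $\mathcal{C}^1_{x,y}(\R,\M)$ denote the Banach manifold of $C^1$--curves connecting $x$ to $y$ with vanishing derivative at the asymptotics, and consider the $C^3$--map
\begin{equation*}
\mathcal{F}: \mathcal{C}^1_{x,y}(\R,\M)\times \Kc_{\theta,1} \lo C^0_0(\R,\Ee), \qquad (v,K)\mapsto v'+\Ah(v)-K(v).
\end{equation*}
The linearization in the first variable at $(v,K)\in \mathcal{F}^{-1}(0)$ is precisely the operator $F_A\xi=\xi'-A(s)\xi$ with $A(s)=D\X_K(v(s))$, which by \eqref{FAind} is Fredholm of index $m(x,\V)-m(y,\V)$. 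The full linearization is
\begin{equation*}
D\mathcal{F}(v,K)[\xi,\kappa]=\xi'-D\X_K(v)\xi-\kappa(v).
\end{equation*}
Because $F_A$ already has closed finite--codimensional range, it suffices to show that the image of $D\mathcal{F}(v,K)$ is dense in $C^0_0(\R,\Ee)$; then it is closed and of finite codimension, and moreover surjective, so $0$ is a regular value.

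The key analytic step, and the main obstacle, is the density argument. Suppose $\eta\in C^0_0(\R,\Ee)^*$ annihilates the range of $D\mathcal{F}(v,K)$. Testing against $\xi$ alone gives, in a weak sense, the adjoint equation $\eta'+D\X_K(v)^{*}\eta=0$. Testing against the perturbation part yields
\begin{equation*}
\int_\R \big\langle \eta(s),\kappa(v(s))\big\rangle_{\HH}\,ds=0\qquad \forall\,\kappa\in \Kc_{\theta}.
\end{equation*}
Since $v$ is a non--constant orbit (as $m(x,\V)>m(y,\V)$ or the case is vacuous), the image of $v$ meets the open set $\M\setminus\crit(A_H)$, where $\theta>0$. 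I would use the injectivity of $v$ on some open arc $(s_0-\delta,s_0+\delta)$ (a standard consequence of the Lyapunov property, since $A_H\circ v$ is strictly decreasing) together with a bump--function construction to produce, for any prescribed direction $w\in\Ee$ and any sufficiently small neighborhood $U$ of $v(s_0)$, compact vector fields $\kappa\in \Kc_{\theta}$ supported in $U$ and approximating $\chi_U\cdot w$. Varying $w$ and $s_0$ forces $\eta\equiv 0$ on the arc; unique continuation for the linear ODE $\eta'+D\X_K(v)^{*}\eta=0$ then propagates $\eta\equiv 0$ on all of $\R$.

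With $0$ a regular value of $\mathcal{F}$, the universal space $\MM(x,y):=\mathcal{F}^{-1}(0)$ is a Banach manifold. The projection $\pi:\MM(x,y)\lo \Kc_{\theta,1}$ is Fredholm of index $m(x,\V)-m(y,\V)\le 2$, and its differential admits enough regularity to invoke the Sard--Smale theorem \ref{sard} (the hypothesis $l\ge \max\{1,\mathrm{ind}(\pi)+1\}$ is satisfied since $\mathcal{F}$ is $C^3$). The residual set of regular values is exactly $\Kc_{\reg}(x,y)$, since a regular value of $\pi$ is equivalent to transversality of $\WW^u_{\X_K}(x)$ and $\WW^s_{\X_K}(y)$ along the flow lines in $\MM(x,y)$ (any trajectory in the intersection appears as a point of $\MM(x,y)$, cf.\ Proposition \ref{man}). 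Intersecting over the finitely many relevant pairs $(x,y)$ concludes the proof.
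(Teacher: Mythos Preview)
Your overall strategy mirrors the paper's, and your density argument via the annihilator $\eta$ is a reasonable alternative to the paper's more direct approach (in Lemma \ref{MS} the paper simply observes that $v$ is a $C^1$--embedding and approximates $\rho\circ v^{-1}$ by elements of $\Kc_\theta$). However, there is a genuine gap at the final step: you invoke the Sard--Smale Theorem \ref{sard} for the projection $\pi$, but that theorem as stated requires the spaces to be \emph{separable}, and $\Kc_{\theta,1}$ is an open subset of the Banach space $\Kc_\theta$ of bounded $C^3$ compact vector fields equipped with the $\nl\cdot\nr_\theta$--norm, which is not separable. The paper confronts this explicitly: it replaces Theorem \ref{sard} by a variant (due to Henry \cite{henry}) valid for non-separable targets provided the Fredholm map is $\sigma$--proper, and then establishes $\sigma$--properness of $\pi_{|\mathcal{Z}}$ in Lemma \ref{sigprop}, relying on the precompactness of $\WW^u(x)\cap\WW^s(y)$ from Theorem \ref{compactM}. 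Without this step (or an alternative device such as exhausting $\Kc_{\theta,1}$ by separable Banach submanifolds on which transversality is already generic), your appeal to Sard--Smale is unjustified.

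A secondary point: to conclude that $\mathcal{F}^{-1}(0)$ is a Banach manifold you need $D\mathcal{F}(v,K)$ to be a \emph{split} surjection, not merely surjective; the kernel must be complemented. The paper handles this via Lemma \ref{cker}, exploiting that $F_A$ is Fredholm. You should invoke the same argument.
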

The proof of the theorem needs some preparation. For $x\not = y \in \crit(A_H)$ we consider the map 
 \begin{equation} \label{ppssii}
\psi :  \mathcal{C}^1_{x,y}(\R,\M) \times \Kc_{\theta,1} \lo C^0_0(\R,\Ee)\,, \quad v \mapsto v^{\prime}-\X_K(v)\,,
\end{equation}
The derivative with respect to the first variable is given by 
\begin{equation*}
 D_1\psi(v,K) : C^1_0(\R,\Ee) \lo C^0_0(\R,\Ee)\,, \quad \eta \mapsto \eta^{\prime} -\big(D\X(v) + DK(v)\big)\eta \,.
\end{equation*}
Since $K$ is assumed to be bounded in $C^{3}(\M,\Ee)$,  $D_1\psi(v,K)$ is a bounded linear operator. 
Furthermore it is not hard to show that the operator $F_A =  D_1\psi(v,K)$, with $A = D\X_K(v)$,  is Fredholm of index 
$m(x,\V) - m(y,\V)$, where $\V= \M\times(\R^n\times\Hm^+)$, and by Lemma \ref{FA2} $F_A$ is onto if and only if the unstable and stable manifolds $\WW^u(x)$ and 
$\WW^s(y)$ with respect to $\X_K$ meet transversally.\\

Differentiating $\psi$ with respect to the second variable yields 
$$ D_2\psi(v,K) :  \Kc_{\theta} \lo C^0_0(\R,\Ee)\,, \quad \kappa  \mapsto \kappa (v) \,,$$  
which due to the estimate $\nl \kappa(v)\nr_{C^0} \leq \max \limits_{p \in v(\R)}\theta(p) \nl \kappa\nr_{\theta} $ 
is indeed a bounded linear operator.\\

The key point in the proof of the theorem is the following observation.
\begin{lem}\label{MS} For $\mathcal{Z} =\psi^{-1}(0)$, the operator 
$$D\psi(v,K):C^1_0(\R,\Ee) \times \Kc_{\theta}  \lo     C^0_0(\R,\Ee)$$
with 
$$D\psi(v,K)[\eta,\kappa] = D_1\psi(v,K)[\eta] + D_2\psi(v,K)[\kappa] $$
is a left inverse for all $(v,K)\in \mathcal{Z}$. In particular $\mathcal{Z}$ is a Banach manifold.   
\end{lem}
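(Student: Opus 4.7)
The plan is a standard Sard--Smale style transversality argument, combined with the Fredholm information already available from Lemma \ref{FA2} and \eqref{FAind}. Write $D\psi(v,K)=D_1\psi(v,K)+D_2\psi(v,K)$ and set $A(s):=D\X_K(v(s))$, so that $D_1\psi(v,K)=F_A$. By Lemma \ref{pert}~$(iv)$ the subbundle $\V = \M\times(\R^n\times\Hm^+)$ is admissible for $\X_K$, so Proposition \ref{man} together with \eqref{FAind} shows that $F_A$ is Fredholm of index $m(x,\V)-m(y,\V)$. In particular $\ran F_A$ is closed of finite codimension in $C^0_0(\R,\Ee)$. The lemma therefore reduces to showing that $D\psi(v,K)$ is onto: a bounded right inverse can then be assembled by combining a Fredholm right inverse of $F_A$ on its range with finitely many perturbations $\kappa_1,\dots,\kappa_r\in\Kc_{\theta}$ whose $D_2\psi$-images span a chosen algebraic complement of $\ran F_A$.

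To prove surjectivity, I would argue by duality. Let $\zeta\in C^0_0(\R,\Ee)^*$ annihilate $\ran D\psi(v,K)$. Vanishing of $\zeta$ on $\ran F_A$ makes $\zeta$ a weak solution of the adjoint equation $-\zeta'=A(s)^*\zeta$; a standard bootstrap from the continuity of $A(\cdot)$ and the decay $\zeta(\pm\infty)=0$ upgrades $\zeta$ to a continuous $\Ee$-valued function on $\R$. Vanishing of $\zeta$ on $\ran D_2\psi$ reads
\begin{equation*}
\int_\R \bigl\langle \zeta(s),\,\kappa(v(s))\bigr\rangle_{\HS}\,ds \;=\; 0\qquad\text{for all }\kappa\in\Kc_{\theta}.
\end{equation*}
The goal is to force $\zeta\equiv 0$ from this. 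For any $s_0\in\R$, because $A_H$ strictly decreases along the flow line $v$ and the asymptotics $x\ne y$ are the only critical points it visits, one has $v(s_0)\notin\crit(A_H)=\rest(\X_K)$, hence $\theta(v(s_0))>0$, and the trajectory is injective on $\R$. For any $\xi_0\in\Ee$ I would pick a smooth bump $\rho_\epsilon\colon\M\to[0,1]$ supported in a small neighborhood $U_\epsilon$ of $p_0:=v(s_0)$ with $\rho_\epsilon(p_0)=1$, and define $\kappa_\epsilon(p):=c_\epsilon\rho_\epsilon(p)\xi_0$. Each $\kappa_\epsilon$ has rank one and is therefore compact on $\M$; for $\epsilon$ small enough $\theta$ is bounded below by a positive constant on $U_\epsilon$, so a suitable $c_\epsilon$ guarantees both $\nl\kappa_\epsilon(p)\nr_{\HS}\le c\,\theta(p)$ and a uniform $C^3$-bound, placing $\kappa_\epsilon$ in $\Kc_{\theta}$. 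Injectivity of $v$ and the fact that $v'(s_0)=\X_K(v(s_0))\ne 0$ ensure that $\kappa_\epsilon(v(\cdot))$ is supported in a shrinking interval around $s_0$ and, after normalisation, concentrates as a Dirac functional testing $\zeta(s_0)$ against $\xi_0$. The annihilation identity forces $\langle\zeta(s_0),\xi_0\rangle=0$ for every $\xi_0\in\Ee$, hence $\zeta(s_0)=0$; arbitrariness of $s_0$ gives $\zeta\equiv 0$. Surjectivity of $D\psi(v,K)$ follows, the right inverse is built as above, and the implicit function theorem then shows $\mathcal{Z}=\psi^{-1}(0)$ is a Banach submanifold.

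The main obstacle I expect is the local perturbation step. Everything else is bookkeeping once $F_A$ is known to be Fredholm, but constructing $\kappa_\epsilon$ genuinely inside $\Kc_{\theta}$ requires simultaneously controlling three things: compactness on all of $\M$, the pointwise domination by $\theta$, and a uniform $C^3$ bound. These must be compatible with letting the bump width go to zero while preserving the correct normalisation so that the resulting limit pairs against $\zeta(s_0)$ with a nonzero coefficient. The key enabling fact is that $\theta$ is bounded away from zero on a neighborhood of any non-critical point $p_0\in v(\R)$, which makes the $\theta$-constraint harmless; once this is noted, the rank-one construction goes through and injectivity of the gradient-like trajectory $v$ delivers the Dirac concentration cleanly.
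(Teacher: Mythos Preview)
Your approach is essentially correct and is the standard Sard--Smale transversality argument, but it differs from the paper's proof in one structural respect. The paper does not run a duality/annihilator argument at all. Instead, after observing that $\ran F_A$ is closed of finite codimension, it proves directly that $\ran D_2\psi(v,K)$ is \emph{dense} in $C^0_0(\R,\Ee)$: since $v$ is a $C^1$-embedding, any $\rho\in C^0_0(\R,\Ee)$ gives a map $\rho\circ v^{-1}$ on $v(\R)$ that is bounded with precompact image, and this can be $C^0$-approximated by restrictions $\kappa|_{v(\R)}$ with $\kappa\in\Kc_\theta$. Density of $\ran D_2\psi$ plus finite codimension of $\ran F_A$ immediately gives surjectivity (project to the finite-dimensional quotient). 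The complemented-kernel part is handled by the abstract Lemma~\ref{cker}, rather than by building a right inverse by hand as you suggest.

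Your route is a legitimate alternative, but the step where you ``bootstrap'' the annihilator $\zeta\in C^0_0(\R,\Ee)^*$ to a continuous $\Ee$-valued function is the one place I would flag. The dual of $C^0_0(\R,\Ee)$ consists of $\Ee^*$-valued Radon measures, and the passage from a distributional adjoint equation to a classical solution, while true, is not entirely ``standard'' in this Banach setting; it is exactly here that the paper's density argument buys simplicity by avoiding the dual space altogether. If you want to keep your argument, the cleanest fix is to note that $\coker F_A$ is finite-dimensional, choose representatives $\rho_1,\dots,\rho_r\in C^0_0(\R,\Ee)$ of a complement, and show each $\rho_i$ is hit modulo $\ran F_A$ by some $\kappa_i(v)$---at which point you have essentially reproduced the paper's proof. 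Your bump-function construction and the injectivity of $v$ (which you justify correctly via the Lyapunov property) are the same ingredients the paper uses, just packaged dually.
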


The proof uses the following general linear statement. 
\begin{lem}\label{cker}{\bf (\cite{Al1})} Let $E,F,G$ be Banach spaces and let $A \in \Lc(E,G)$ possess finite codimensional range and complemented kernel. 
Then for every $B \in \Lc(F,G)$ the kernel of the operator 
$$C : E\times F \lo G \,, \quad C(e,f)=Ae-Bf \,, $$ 
is complemented in $E\times F$.
 \end{lem}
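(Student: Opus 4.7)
The plan is to exhibit an explicit closed complement of $\ker C$ by decomposing $E \times F$ as a topological direct sum of four closed subspaces in which $\ker C$ appears as the graph of a bounded linear map.

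By hypothesis, $E = \ker A \oplus K'$ for some closed subspace $K'$. Since $\ran A$ has finite codimension in $G$, it is automatically closed (a standard consequence of the open mapping theorem: a continuous linear image in a Banach space with finite codimension is closed). Next, set $M := B^{-1}(\ran A) \subset F$; this is closed as the preimage of a closed set under $B$, and the induced injection $F/M \hookrightarrow G/\ran A$ shows that $F/M$ is finite dimensional. Hence $M$ is finite codimensional and admits a finite-dimensional (hence closed) complement $M'$ with $F = M \oplus M'$. By the open mapping theorem, $A|_{K'} : K' \to \ran A$ is a topological isomorphism, so
$$T := (A|_{K'})^{-1} \circ (B|_M) : M \lo K'$$
is a bounded linear map.

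Inside the topological direct sum $E \times F = \ker A \oplus K' \oplus M \oplus M'$, the defining equation $Ae = Bf$ forces the $M'$-component of $f$ to vanish (otherwise $Bm'$ would lie outside $\ran A$, contradicting $Ae \in \ran A$), and then pins down the $K'$-component of $e$ to be $T$ applied to the $M$-component of $f$. Consequently,
$$\ker C = \{(k, T(m), m, 0) : k \in \ker A,\, m \in M\}.$$
Taking $L := \{0\} \oplus K' \oplus \{0\} \oplus M'$, which is closed, the map $(k, k', m, m') \mapsto (k, T(m), m, 0)$ is a bounded linear projection onto $\ker C$ with kernel $L$, and one obtains $E \times F = \ker C \oplus L$ topologically.

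The only step that genuinely requires care is the closedness of $\ran A$: it is what allows the open mapping theorem to invert $A|_{K'}$ and what makes $M$ closed of finite codimension in $F$. Once this is granted, the remainder of the argument is essentially direct-sum bookkeeping together with the construction of the lift $T$.
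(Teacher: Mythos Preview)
The paper does not give its own proof of this lemma; it simply cites \cite{Al1} and proceeds. Your argument is correct and self-contained: the decomposition $E\times F = \ker A \oplus K' \oplus M \oplus M'$ together with the bounded lift $T=(A|_{K'})^{-1}\circ B|_M$ exhibits $\ker C$ as a graph, and $L=\{0\}\oplus K'\oplus\{0\}\oplus M'$ is then a closed complement. The one delicate point you flag --- that finite codimension of $\ran A$ forces it to be closed --- is indeed the crux, and your justification via the open mapping theorem (extending $A$ by a finite-dimensional complement to a surjection $E\times\R^n\to G$) is the standard way to see it.
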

Now we prove Lemma \ref{MS}.
\begin{proof}
Since $D_1\psi(v,K)$ is Fredholm we can set  $A =D_1\psi(v,K)$ and $B = -D_2\psi(v,K)$ in Lemma \ref{cker} which proves that $D\psi(v,K)$ possesses 
complemented kernel in $C^1_0(\R,\M) \times \Kc_{\theta}$ and it remains to show that $D\psi(v,K)$ is onto. \\

The range of $D\psi(v,K)$ contains the range of $D_1\psi(v,K)$ which is closed and finite codimensional. So it suffices to show that the range of 
$D_2 \psi(v,K)$ is dense.  If $v$ is a constant flow line of $\varphi_{X_K}$ then transversality is already achieved by the regularity of $H$ 
so we may assume that $v: \R \lo \M$ is a non-constant flow line  and therefore a $C^1$-embedding. Let $\rho  \in C^0_0(\R,\Ee)$; 
then $ \rho \circ v^{-1} : v(\R)\subset \M \lo \Ee $ is bounded and maps bounded sets $U \subset v(\R)$ to precompact sets. Hence we can find 
$\kappa \in \Kc_{\theta}$  
such that $\nl \kappa_{|v(\R)} - \rho \circ v^{-1}\nr_{C^0(v(\R),\Ee)} $ becomes arbitrarily small. This proves the density and therefore the claim. 
\end{proof}
Before we  can finally prove Theorem \ref{SM} we need a further general result. 
\begin{prop}{\bf (\cite{Al1})} Let $E,F,G$ be Banach spaces and $A \in \Lc(E,F)$, $B\in \Lc(E,G)$ be left inverses. Then : 
\begin{enumerate}
 \item  $A_{| \ker B}$ is a left inverse if and only if $B_{| \ker A}$ is a left inverse. 
 \item  $A_{| \ker B}$ is Fredholm if and only if $B_{| \ker A}$ is Fredholm, in which case the indices coincide.
\end{enumerate}
\end{prop}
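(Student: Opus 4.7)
The plan is to link the two restricted operators through the auxiliary bounded linear map $C\colon E\to F\oplus G$ defined by $Ce=(Ae,Be)$, and to compute $\ker$ and $\coker$ of $A|_{\ker B}$ and $B|_{\ker A}$ in terms of $C$. The identification of the kernels is immediate, since $\ker(A|_{\ker B})=\ker A\cap \ker B=\ker(B|_{\ker A})=\ker C$ as subspaces of $E$; in particular these kernels are literally equal, so in any case they have the same dimension.

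The heart of the argument is a common description of the cokernels. Using surjectivity of $A$, any $(f,g)\in F\oplus G$ can be shifted modulo $\ran(C)$ to the form $(0,g-Be)$ by choosing $e$ with $Ae=f$; two such choices differ by an element of $\ker A$, which alters $g-Be$ only by an element of $B(\ker A)$. The assignment $(f,g)\mapsto [g-Be]$ therefore descends to a well-defined linear isomorphism $(F\oplus G)/\ran(C)\cong G/B(\ker A)=\coker(B|_{\ker A})$. The symmetric construction, using surjectivity of $B$, gives $(F\oplus G)/\ran(C)\cong F/A(\ker B)=\coker(A|_{\ker B})$, and composing these yields an explicit isomorphism between the cokernels of the two restrictions.

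From these two ingredients the proposition falls out cleanly. For (i), each restriction is surjective (i.e.\ a ``left inverse'' in the paper's terminology) iff its cokernel vanishes, and the isomorphism transfers this vanishing between the two sides; for (ii), the equality of kernels together with the isomorphism of cokernels gives simultaneous finite-dimensionality and the index identity $\ind(A|_{\ker B})=\ind(B|_{\ker A})$. The one subtle point I expect in (ii) is the closed-range clause in the Fredholm definition: the cokernel identification above is purely algebraic, so one must separately verify that $A(\ker B)\subset F$ is closed iff $B(\ker A)\subset G$ is closed. I would handle this by showing that both are equivalent to $\ran(C)$ being closed in $F\oplus G$, using the coordinate projections together with the open mapping theorem applied to the surjections $A$ and $B$; alternatively one can invoke the standard fact that a bounded operator between Banach spaces whose range has finite algebraic codimension automatically has closed range, thereby promoting the algebraic cokernel isomorphism to the full Fredholm statement.
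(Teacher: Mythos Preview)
The paper does not give a proof of this proposition; it is simply quoted from \cite{Al1}. Your approach via the product map $C=(A,B)\colon E\to F\oplus G$ is natural and essentially correct. The identification $\ker(A|_{\ker B})=\ker A\cap\ker B=\ker(B|_{\ker A})=\ker C$ is immediate, and your shifting argument for the cokernels is right: using a bounded right inverse $R_B$ of $B$, the bounded linear map $(f,g)\mapsto f-AR_Bg$ is a surjection $F\oplus G\to F$ whose preimage of $A(\ker B)$ is exactly $\ran C$, which both gives the algebraic isomorphism $\coker C\cong\coker(A|_{\ker B})$ and (via the open mapping theorem) shows that $A(\ker B)$ is closed iff $\ran C$ is closed; the symmetric argument handles $B|_{\ker A}$. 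Your alternative for the closed-range clause in (ii), namely that finite algebraic codimension already forces closed range, is also valid and is the quickest way to finish.

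There is one genuine omission in your treatment of (i). In the paper's usage an operator ``is a left inverse'' when it admits a bounded right inverse; for bounded linear maps between Banach spaces this means surjective \emph{and} with complemented kernel, not merely surjective. Your cokernel isomorphism transfers surjectivity, but you still have to show that $\ker A\cap\ker B$ is complemented in $\ker B$ if and only if it is complemented in $\ker A$. This is easy once noticed: since $A$ and $B$ are themselves left inverses, $\ker A$ and $\ker B$ are complemented in $E$; and for closed subspaces $X\subset Y\subset E$ with $Y$ complemented in $E$, $X$ is complemented in $Y$ iff it is complemented in $E$ (if $E=X\oplus U$ then $Y=X\oplus(Y\cap U)$; conversely combine a splitting of $Y$ with one of $E$). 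Hence complementability of $\ker A\cap\ker B$ in $\ker A$, in $\ker B$, and in $E$ are all equivalent, and (i) follows. With this sentence added your proof is complete.
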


The Proposition has an immediate consequence. 
\begin{cor}\label{psii}{\bf (\cite{Al1})} Let $M,N,O$ be Banach manifolds and $\phi \in C^1(M,N)$ , $\psi \in C^1(M,O)$ be maps with regular values $p \in N$, $q\in O$. Then :    
 \begin{enumerate}
 \item  $p$ is a regular value of $\phi_{| \psi^{-1}(q)}$ if and only if $q$ is a regular value for  $\psi_{| \phi^{-1}(p)}$. 
 \item   $\phi_{| \psi^{-1}(q)}$  is a Fredholm-map if and only if $\psi_{| \phi^{-1}(p)}$ is Fredholm-map, in which case the indices coincide.
\end{enumerate}
\end{cor}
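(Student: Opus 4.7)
The plan is to reduce Corollary \ref{psii} to the preceding linear Proposition by standard linearization at each intersection point. Fix any point $m\in \phi^{-1}(p)\cap \psi^{-1}(q)$ (if this intersection is empty, both parts of the corollary hold vacuously). Set $E=T_mM$, $F=T_pN$, $G=T_qO$, and let
\begin{equation*}
A:=D\phi(m)\in \Lc(E,F),\qquad B:=D\psi(m)\in \Lc(E,G).
\end{equation*}
Because $p$ and $q$ are regular values of $\phi$ and $\psi$, the operators $A$ and $B$ are each left inverses in the sense of the preceding proposition, i.e.\ surjective with closed complemented kernel. The implicit function theorem in Banach manifolds then yields that $\phi^{-1}(p)$ and $\psi^{-1}(q)$ are smooth submanifolds near $m$, with tangent spaces
\begin{equation*}
T_m\phi^{-1}(p)=\ker A,\qquad T_m\psi^{-1}(q)=\ker B.
\end{equation*}

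The next step is to identify the linearizations of the two restricted maps. Since $m\in \psi^{-1}(q)$, the derivative of $\phi_{|\psi^{-1}(q)}$ at $m$ is simply the restriction $A|_{\ker B}\colon \ker B \lo F$; symmetrically, the derivative of $\psi_{|\phi^{-1}(p)}$ at $m$ is $B|_{\ker A}\colon \ker A \lo G$. By the definition of a regular value, $p$ is a regular value of $\phi_{|\psi^{-1}(q)}$ at $m$ precisely when $A|_{\ker B}$ is a left inverse, and analogously for $B|_{\ker A}$. Part $(i)$ of the preceding proposition therefore applied pointwise over all $m\in \phi^{-1}(p)\cap \psi^{-1}(q)$ gives part $(i)$ of the corollary.

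For part $(ii)$, Fredholmness of the restricted maps is by definition Fredholmness of their derivatives at every preimage point, which at $m$ amounts to Fredholmness of $A|_{\ker B}$ versus $B|_{\ker A}$. Part $(ii)$ of the preceding proposition then gives both the equivalence of Fredholmness and the equality of the indices, and this carries over verbatim to the restricted maps; because the Fredholm index is locally constant, it is enough to read it off at a single point in each component of $\phi^{-1}(p)\cap\psi^{-1}(q)$.

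I do not expect a serious obstacle here: the only thing to be careful about is that, in the Banach-manifold setting used throughout the paper, \emph{regular value} genuinely means ``surjective derivative with complemented kernel'' (equivalently, a left inverse in the terminology of Proposition \ref{FA1}), so that the implicit function theorem applies and the kernel splittings needed by the preceding proposition are actually available. Once this convention is fixed, the proof is a pure restatement of the linear result at the linearized level.
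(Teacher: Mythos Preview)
Your proof is correct and is precisely the argument the paper has in mind: the corollary is stated there without proof, merely as an ``immediate consequence'' of the preceding linear proposition, and your linearization at each $m\in\phi^{-1}(p)\cap\psi^{-1}(q)$ is exactly how one passes from the linear statement to the manifold statement. One small remark: your reference to ``Proposition~\ref{FA1}'' in the closing paragraph points to the wrong label (that proposition concerns the operators $F_A^\pm$); you mean the unnumbered proposition immediately preceding the corollary.
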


Now we can state the {\bf proof of Theorem \ref{SM}:} 
\begin{proof} Recall that by Lemma \ref{MS} $\mathcal{Z}= \psi^{-1}(0)$ with $\psi$ from \eqref{ppssii}  is a Banach manifold. 
Let $\pi : \mathcal{C}^1_{x,y}(\R,\M) \times \Kc_{\theta,1} \lo \Kc_{\theta,1}$ be the projection onto the second 
factor. We claim that $\pi_{|\mathcal{Z}}$ is Fredholm of index $m(x,\V)-m(y,\V)$. Indeed by Corollary \ref{psii} applied to $\psi$ from 
$M = \mathcal{C}_{x,y}(\R,\M)\times \Kc_{\theta,1}$ to $O = C^0_0(\R,\Ee)$  and $\phi = \pi $ from $M$ to $N=\Kc_{\theta,1}$ we obtain the claimed result. 
Assume that $m(x,\V) -m(y,\V) \leq 2$ and denote by $\Kc_{\reg}(x,y) \subset \Kc_{\theta,1}$ the regular values of $\pi_{|\mathcal{Z}}$. Again by Corollary \ref{psii} this is the set 
such that  
$$\psi: \mathcal{C}^1_{x,y}(\R,\M) \times \Kc_{\reg}(x,y) \lo C^0_0(\R,\Ee)$$ possesses regular value $0$, i.e.,  $\WW^u(x)$ and $\WW^s(y)$ with respect to 
$\X_K$, $K \in \Kc_{\reg}(x,y)$ meet transversally. Now we would like to apply the Sard-Smale Theorem \ref{sard} but we are in the delicate situation where the manifold 
$\Kc_{\theta,1}$ is not separable. In this case, due to \cite{henry} Theorem \ref{sard} still holds if $\pi_{|\mathcal{Z}}$ is {\bf $\sigma$ - proper}, i.e.,   there 
exists a countable family of sets $\li\{M_k\re\}_{k\in\N}$ which covers $\mathcal{Z}$ such that $\pi_{| M_k}$ is proper. We prove this additional property 
in Section \ref{compinter} (see Lemma \ref{sigprop}) and finish the proof now as follows. Since $\pi_{|\mathcal{Z}}$ is  $\sigma$ - proper and of class $C^3$ 
the Sard-Smale Theorem \ref{sard} tells us 
that  $\Kc_{\reg}(x,y)$ is residual in $\Kc_{\theta,1}$ and so is the set 
$$\Kc_{\reg}:= \bigcap_{\genfrac{}{}{0pt}{}{x,y \in \rest(X)\,,}{0<m(x,\V) -m(y,\V) \leq 2 \,,}}\Kc_{\reg}(x,y) \,.$$
\end{proof}

\subsection{Compact intersections}\label{compinter}

In this section we state the compactness result of A. Abbondandolo and P. Majer proven in \cite{Al5}. In particular it turns out that the unstable manifold 
is essentially vertical in the sense that the projection of the sub-level sets of the unstable manifold  onto $\Hm^-$ is precompact. 
This crucial fact allows us to prove the compactness result for the hybrid type curves in Section 4. 
Here is the precise result : 

\begin{thm}\label{compactM}{\bf (\cite{Al5})} Let $\X_K = -\Ah +K$, $K \in \Kc_{\theta,1}$, $x,y \in \crit(A_H)$ and
$\WW^u(x),\WW^s(y)$ be the unstable, stable manifolds of $x,y$ with respect to $\X_K$ and $c,d \in \R$  be regular values of $A_H$ 
with $c<A_H(x)<d$.
\begin{enumerate}
 \item The intersection $\WW^u(x)\cap \WW^s(y) \subset \M$ is precompact.\\
 \item The sets $\PP^+\Big(\WW^s(x) \cap \{A_H \leq d \}\Big)$ and $\PP^-\Big(\WW^u(x) \cap \{A_H \geq c\}\Big)$ are precompact. 
\end{enumerate} 
\end{thm}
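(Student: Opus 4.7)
The strategy is to decompose $\X_K = L + N$ with $L := \PP^+ - \PP^-$ the hyperbolic linear part and $N(v) := -j^*\nabla H(v) + K(v)$ an essentially compact nonlinearity: by Proposition \ref{jj} the set $\{j^*\nabla H(v):v\in\M\}$ is bounded in $H^1(\mS^1,\R^{2n})$, hence precompact in $\HH$ by the compact embedding $H^1 \hookrightarrow \HH$; and $K$ is compact on bounded subsets of $\M$ by construction of $\Kc_\theta$. The bound $\nl K(v)\nr_{\HH} \leq \frac{1}{2}\nl\Ah(v)\nr_{\HH}$ yields the sharpened Lyapunov inequality
\begin{equation*}
\frac{d}{ds}A_H(v(s)) = -\nl\Ah(v)\nr_{\HH}^2 + \li\langle \Ah(v), K(v)\re\rangle \leq -\frac{1}{2}\nl\Ah(v(s))\nr_{\HH}^2,
\end{equation*}
so that for any $\X_K$-orbit $v$ in $\WW^u(x)\cap\{A_H\geq c\}$ with $p = v(s_*)$ we obtain the energy bound $\int_{-\infty}^{s_*}\nl\Ah(v(\tau))\nr_{\HH}^2\, d\tau \leq 2(A_H(x)-c)$.

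For the unstable half of (ii), set $w = v - x$; since $\X_K(x) = 0$, $w$ satisfies $\dot w - Lw = N(v) - N(x)$. Projecting to $\Hm^-$ gives $\dot(\PP^-v) + \PP^-v = \PP^- N(v)$, and since $\PP^-v(s_0)$ stays bounded as $s_0 \to -\infty$, the homogeneous contribution $e^{-(s_*-s_0)}\PP^-v(s_0)$ vanishes in the limit, yielding the Duhamel representation
\begin{equation*}
\PP^-(v(s_*)-x) = \int_{-\infty}^{s_*} e^{-(s_*-\tau)}\, \PP^-\li[N(v(\tau)) - N(x)\re]\, d\tau.
\end{equation*}
The $j^*\nabla H$-part of the integrand is uniformly bounded in $\HH$ by $2\nl\nabla H\nr_{L^\infty}$, while the $K$-part is bounded by $\frac{1}{2}\nl\Ah(v(\tau))\nr_{\HH}$, which is $L^2$ in $\tau$ over $(-\infty,s_*]$ by the energy bound. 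Cauchy--Schwarz then gives a uniform $\HH$-bound on $\PP^-v(s_*)$ depending only on $H$, $x$, and $c$. Combined with the identity $\frac{1}{2}(\nl\PP^+v\nr_{\HH}^2 - \nl\PP^-v\nr_{\HH}^2) = -A_H(v) + \int_0^1 H(t,v(t))\,dt$ and the constraint $A_H(v)\geq c$, this bootstraps to a uniform $\HH$-bound on $v$ over $\WW^u(x)\cap\{A_H\geq c\}$. On this now-bounded subset of $\M$, $K(v)$ ranges in a precompact set $K_0\subset\HH$ and $j^*\nabla H(v)$ in a bounded subset of $H^1$, so the integrand above takes values in a fixed precompact subset of $\Hm^-$; after the substitution $\sigma = s_*-\tau$, the measure $e^{-\sigma}d\sigma$ on $[0,\infty)$ is a probability measure, so the integral lies in the closed convex hull of a compact set, which is itself compact by Mazur's theorem. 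Hence $\PP^-v(s_*)$ ranges in a precompact subset of $\Hm^-$. The stable statement $\PP^+(\WW^s(x)\cap\{A_H\leq d\})$ follows by reversing time: $-\X_K$ has $\WW^s(x)$ as its unstable manifold and interchanges the roles of $\PP^+$ and $\PP^-$.

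For (i), any $p\in\WW^u(x)\cap\WW^s(y)$ lies on an $\X_K$-orbit running from $x$ to $y$, so $A_H(y)\leq A_H(p)\leq A_H(x)$. Applying (ii) at the equilibrium $x$ with some $c<A_H(y)$ shows $\PP^-p$ lies in a precompact subset of $\Hm^-$, while (ii) applied at $y$ with some $d>A_H(x)$ shows $\PP^+p$ lies in a precompact subset of $\Hm^+$; since $\PP^0 p\in\Tn$ is automatic by compactness of the torus, the intersection is precompact in $\M = \Tn\times\Hm$. The main obstacle lies in the bootstrap inside (ii): the Duhamel formula by itself only delivers an $\HH$-bound on $\PP^-v$, and to promote boundedness to \emph{precompactness} one must first invoke the strongly indefinite structure of $A_H$ --- which controls $\nl\PP^+v\nr$ in terms of $\nl\PP^-v\nr$ and the $A_H$-level --- to obtain an $\HH$-bound on $v$ itself, and only then exploit compactness of $K$ on this bounded region. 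This interplay between the hyperbolic splitting of $L$, the Lyapunov energy bound from the sharpened gradient estimate, and the compact factorization of $j^*$ through $H^1$ is the heart of the argument.
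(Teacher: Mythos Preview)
The paper does not give a proof of this theorem: it is quoted from \cite{Al5} (Abbondandolo--Majer), with the further remark that part~(ii) ``is not explicitly proven in \cite{Al5}'' either but that ``its proof is similar to the one of $(i)$ and therefore omitted here.'' So there is no in-paper argument to compare against.

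Your argument is correct and self-contained. The scheme --- Duhamel representation of $\PP^-v$ against the explicit hyperbolic linear part $L=\PP^+-\PP^-$, a first pass yielding a uniform $H^{1/2}$-bound via the sharpened Lyapunov estimate $\frac{d}{ds}A_H\le -\tfrac12\nl\Ah\nr^2$ and the action identity, then a second pass upgrading boundedness to precompactness via Mazur once $K$ and $j^*\nabla H$ are seen to range in precompact sets on the now-bounded region --- is a clean direct route that exploits the flat structure of the torus. The abstract treatment in \cite{Al5} works instead through the essential invariance of the subbundle $\V$ (condition \textbf{C2}) and is formulated so as to apply to general Hilbert manifolds; your approach trades that generality for explicitness. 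One point worth stating aloud for the reader: when you pass to the precompactness step, the integrand involves $v(\tau)$ for $\tau\le s_*$, and you are implicitly using that $A_H(v(\tau))\ge A_H(v(s_*))\ge c$ along the backward orbit, so that the entire half-orbit stays inside the bounded set established in the first pass --- this is what makes the compact set containing the integrand independent of both $s_*$ and the particular orbit, and hence rules out any circularity. The time-reversal reduction for the stable half is legitimate since $-K\in\Kc_{\theta,1}$ and the roles of $\PP^\pm$ and the sign of $A_H$ swap symmetrically.
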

So $(ii)$ says that $\WW^u(x)$ is {\bf essentially vertical}, i.e., vertical up to a precompact set,  and $ \WW^s(x)$ is {\bf essentially horizontal}  
with respect to the splitting $\Hm^-\oplus \Hm^+ = \Hm$. Though $(ii)$ is not explicitly proven in \cite{Al5} its proof is similar to the one of 
$(i)$ and therefore omitted here. Next we state the compactness result for gradient-like trajectories. 

\begin{defn} Let $X$ be a $C^1$- Morse vector field on a Banach manifold $M$ and $x,y \in \rest(X)$. A {\bf broken trajectory} from $x$ to $y$ is a set 
$$ S= S_1 \cup \dots \cup S_k\,, \quad k \geq 1 \,,$$
and $S_i$ is the closure of a flow line from $z_{i-1}$ to $z_i$, where $ x = z_0 \not = z_1 \not = \dots \not =  z_{k-1} \not = z_k = y $ are all singular points.    
\end{defn}
When $k=1$, a broken trajectory is just the closure of a genuine trajectory.
\begin{prop}\label{comp}{\bf (\cite{Al5})} Assume that the Morse vector field $X$ on a Banach manifold $M$  has a Lyapunov function $f$ and that $x,y \in \rest(X)$ are such that
$\overline{\li(\WW^u(x) \cap \WW^s(y)\re)}$ is compact. Let $ p_n \in \WW^u(x) \cap \WW^s(y)$ be a sequence and set 
$S_n:= \varphi_X(\R \times \{p_n\}) \cup \{x,y\}$, where $\varphi_X$ denotes the flow of $X$. Then $S_n$ has a subsequence which converges to a broken flow line from $x$ to $y$, in the Hausdorff-distance. 
\end{prop}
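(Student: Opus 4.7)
The plan is to parameterize each trajectory by a value of the Lyapunov function, extract a convergent subsequence of ``centers'' via the compactness hypothesis, and then iterate at successively later (and earlier) times to assemble a broken flow line from $x$ to $y$.

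First I would fix parameterizations. Since $f$ is a Lyapunov function, $s \mapsto f(\varphi_X(s,p_n))$ is strictly decreasing with $\lim_{s\to-\infty} f(\varphi_X(s,p_n)) = f(x)$ and $\lim_{s\to+\infty} f(\varphi_X(s,p_n)) = f(y)$. Choose a regular value $c_0 \in (f(y), f(x))$ of $f$, and let $\sigma_n \in \R$ be the unique time with $f(\varphi_X(\sigma_n, p_n)) = c_0$. Set $q_n := \varphi_X(\sigma_n, p_n) \in \WW^u(x)\cap\WW^s(y)$. By compactness of $\overline{\WW^u(x)\cap\WW^s(y)}$, up to a subsequence $q_n \to q_\infty$ in $M$, and continuous dependence of the flow on initial conditions yields $\varphi_X(\cdot, q_n) \lo \varphi_X(\cdot, q_\infty)$ uniformly on every compact interval. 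Let $u_\infty(s) := \varphi_X(s,q_\infty)$; since $f\circ u_\infty$ is monotone and bounded (every value $u_\infty(s)$ lies in the compact set $\overline{\WW^u(x)\cap\WW^s(y)}$), the asymptotic limits $z^{\pm} := \lim_{s\to\pm\infty} u_\infty(s)$ exist and must lie in $\rest(X)$, because $f\circ u_\infty$ stabilizes only at points where $X$ vanishes.

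Next I would iterate. If $z^- = x$ and $z^+ = y$, the single segment $\overline{u_\infty(\R)} \cup \{x,y\}$ is the claimed broken trajectory. Otherwise suppose, say, $z^+ \neq y$, so $f(z^+) > f(y)$. Pick a regular level $c_1 \in (f(y), f(z^+))$ and let $\sigma_n^{(1)}$ be the (later) time with $f(\varphi_X(\sigma_n^{(1)}, p_n)) = c_1$. The shift differences must satisfy $\sigma_n^{(1)} - \sigma_n \to +\infty$, since otherwise a sub-subsequential limit of $\sigma_n^{(1)}-\sigma_n$ would force $u_\infty$ to reach the level $c_1$, contradicting $f(z^+) > c_1$. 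A fresh application of the compactness hypothesis to $\varphi_X(\sigma_n^{(1)}, p_n)$ and the same continuous-dependence argument yields a new limit flow line, whose left-hand asymptote is necessarily $z^+$ (by monotonicity of $f$ and the fact that the trajectory has left every neighborhood of $z^+$ by time $\sigma_n^{(1)}-\sigma_n$) and whose right-hand asymptote is another singular point. Applying the analogous procedure on the left at $z^-$ yields additional segments. Since $\X$ is Morse, singular points are isolated, and those contained in the compact set $\overline{\WW^u(x)\cap\WW^s(y)}$ form a finite collection, so the iteration terminates after finitely many steps and produces $S = S_1 \cup \dots \cup S_k$ with $x = z_0 \neq z_1 \neq \dots \neq z_k = y$.

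Finally I would upgrade locally uniform convergence of trajectory segments to Hausdorff convergence of the sets $S_n$ to $S$. Uniform convergence on each compact time interval takes care of any bounded arc of each piece $S_i$. The remaining tails lie inside arbitrarily small sublevel neighborhoods of the breaking singularities $z_i$; their diameters (measured in $M$) can be forced to be small by shrinking the chosen regular levels $c_i$ toward $f(z_i)$, using that such neighborhoods are relatively compact subsets of $\overline{\WW^u(x)\cap\WW^s(y)}\cup \rest(X)$.

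The main obstacle I expect is the iteration bookkeeping: ensuring that the subsequences chosen at each intermediate level can be refined consistently and that the shift differences $\sigma_n^{(i+1)}-\sigma_n^{(i)}$ diverge so that distinct segments are genuinely produced. This requires a careful diagonal extraction together with the observation, crucial here, that $f$ descends strictly in a uniform manner outside any neighborhood of $\rest(X)$, which is where the compactness of $\overline{\WW^u(x)\cap\WW^s(y)}$ and the hyperbolicity of the singularities combine to yield finiteness.
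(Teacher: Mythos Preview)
The paper does not give its own proof of this proposition; it is quoted from \cite{Al5} and used as a black box. Your outline is exactly the standard argument that appears there: reparameterize each $S_n$ by a fixed level $c_0$ of the Lyapunov function, use the assumed compactness of $\overline{\WW^u(x)\cap\WW^s(y)}$ to extract a convergent ``center'', identify the limiting orbit's $\alpha$- and $\omega$-limits as rest points via monotonicity of $f$, and then iterate at intermediate levels until the chain terminates in $x$ and $y$. The termination step is correct for the reason you give: $\rest(X)$ is discrete because $X$ is Morse, hence only finitely many rest points lie in the compact set $\overline{\WW^u(x)\cap\WW^s(y)}$, and $f$ strictly decreases across each nontrivial segment.

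Two small points worth tightening. First, you do not need $c_0$ to be a regular value of $f$ in the Sard sense; any $c_0$ not equal to $f(z)$ for some $z\in\rest(X)$ suffices, since $s\mapsto f(\varphi_X(s,p_n))$ is strictly monotone away from rest points. Second, the claim that the left asymptote of the new limit segment at level $c_1$ is exactly $z^+$ (and not some other rest point with the same $f$-value) needs one more line: the reparameterized orbits $\varphi_X(\sigma_n^{(1)}+\cdot,p_n)$ coincide with $\varphi_X(\sigma_n+\cdot,q_n)$ up to a divergent shift, and on any bounded negative interval they therefore approach the forward orbit of $q_\infty$, which accumulates only at $z^+$. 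With these clarifications your sketch is complete and matches the argument in \cite{Al5}.
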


\begin{cor}\label{onedim} Let the assumptions of Proposition \ref{comp} be fulfilled and assume furthermore that $X$
 satisfies the Morse-Smale condition up to order 1 and that the intersection of $\WW^u(x)$  and $\WW^s(y)$ is 1-dimensional,
i.e., $m(x,\V)-m(y,\V)=1$. Then $\WW^u(x) \cap \WW^s(y)$ is already compact.
\end{cor}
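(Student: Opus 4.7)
The plan is to show that the closure $\overline{\WW^u(x)\cap\WW^s(y)}$ picks up no limit points beyond $\WW^u(x)\cap\WW^s(y)$ and the two asymptotes $\{x,y\}$; by Theorem \ref{compactM}(i) this closure is already compact, so the statement really just says that no broken trajectories arise. Concretely, I would take any $p\in\overline{\WW^u(x)\cap\WW^s(y)}\setminus\{x,y\}$ and an approximating sequence $p_n\in\WW^u(x)\cap\WW^s(y)$ with $p_n\to p$. Applying Proposition \ref{comp} to the flow-line closures $S_n := \varphi_X(\R\times\{p_n\})\cup\{x,y\}$ produces a subsequence that Hausdorff-converges to a broken trajectory $S = S_1\cup\cdots\cup S_k$ through rest points $x = z_0,z_1,\dots,z_{k-1},z_k = y$, and the goal is to force $k=1$ via a relative-index count.

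The heart of the argument is that count. Each piece $S_i$ contains a genuine $1$-dimensional flow line, so $\WW^u(z_{i-1})\cap\WW^s(z_i)$ is non-empty and at least $1$-dimensional. The Morse--Smale hypothesis up to order $1$ forces transversality whenever $m(z_{i-1},\V)-m(z_i,\V)\leq 1$, and Proposition \ref{man}(iii) then pins the dimension of this intersection to $m(z_{i-1},\V)-m(z_i,\V)$. This immediately excludes a zero drop (which would yield a $0$-dimensional discrete intersection with no room for a flow line) and a negative drop (which would yield an empty intersection). In the complementary range $m(z_{i-1},\V)-m(z_i,\V)\geq 2$ the desired lower bound is trivial. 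So $m(z_{i-1},\V)-m(z_i,\V)\geq 1$ in every case, and telescoping gives
\[ 1 = m(x,\V)-m(y,\V) = \sum_{i=1}^k\bigl(m(z_{i-1},\V)-m(z_i,\V)\bigr)\geq k, \]
forcing $k=1$ and identifying $S$ with the closure of a single flow line $\gamma$ from $x$ to $y$.

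To conclude, Hausdorff convergence together with $p_n\in S_n$ and $p_n\to p$ forces $p\in S = \gamma\cup\{x,y\}$; since by assumption $p\neq x,y$, we obtain $p\in\gamma\subset\WW^u(x)\cap\WW^s(y)$, finishing the proof. The main obstacle is the index bookkeeping in the middle paragraph, namely the need to exclude both zero-drop and strictly negative-drop intermediate indices simultaneously, for which the Morse--Smale hypothesis up to order $1$ combined with the sharp dimension formula of Proposition \ref{man}(iii) is indispensable. Everything else is essentially automatic from the structure of Hausdorff convergence.
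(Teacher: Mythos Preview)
Your proof is correct and follows essentially the same approach as the paper: both apply Proposition~\ref{comp} to extract a broken trajectory in the limit and then rule out genuine breaking via the relative-index telescoping argument, forcing $k=1$. Your case analysis for the index drop (excluding zero and negative drops via Proposition~\ref{man}(iii)) is spelled out in more detail than in the paper, but the underlying idea is identical.
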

\begin{proof} By our assumptions there are no constant trajectories in $\WW^u(x) \cap \WW^s(y)$, which implies that the  $\R$-action 
$$ \R\times  \WW^u(x) \cap \WW^s(y) \lo  \WW^u(x) \cap \WW^s(y)\,, \quad (s,p) \mapsto \varphi_X(s,p) $$ 
acts freely and properly on  the  1-dimensional manifold $\WW^u(x) \cap \WW^s(y)$, which is therefore the union of orbits of 
a discrete set of points. Assume there is a sequence of orbits converging to a broken trajectory with at least on intermediate point $z$. Then by transversality 
$m(x,\V)-m(z,\V),m(z,\V)-m(y,\V)\geq 1$ and therefore $m(x,\V)-m(y,\V)\geq 2$; a contradiction. Hence $\WW^u(x) \cap \WW^s(y)$ is compact and consists not of 
orbits of a discrete set of points but of orbits of finitely many points.
\end{proof}

As promised we prove the additional property of {\bf $\sigma$ - properness} used to prove the transversality result in Theorem \ref{SM}. A proof in a 
more general setup will come up in \cite{albi2}. 
\begin{lem}\label{sigprop} Let $x,y \in \Pc_0(H)$ and $\psi$ be the map from \eqref{ppssii}. Consider furthermore the Banach manifold
$\mathcal{Z} = \psi^{-1}(0) \subset \mathcal{C}^1_{x,y}(\R,\M)\times\Kc_{\theta,1}$ and denote by $\pi :\mathcal{C}^1_{x,y}(\R,\M)\times\Kc_{\theta,1} \lo \Kc_{\theta,1}$ 
the projection to the second factor. Then the restriction of $\pi$  to $\mathcal{Z}$ is $\sigma$ - proper. 
\end{lem}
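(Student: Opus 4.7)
The strategy is to exhibit an explicit countable family of closed subsets $M_k\subset\mathcal{Z}$, $k\in\N$, whose union is $\mathcal{Z}$ and on each of which $\pi$ is proper. The two sources of non-compactness in the fibres of $\pi$ that must be neutralized are (a) the $\R$-action of time translation, which survives at every $K$, and (b) breaking of flow lines at intermediate critical points. The Lyapunov function $A_H$ controls both phenomena.

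\textbf{Normalization and the cover.} Since $x\ne y$ lie in the finite set $\Pc_0(H)=\crit(A_H)$, I fix a regular value $c_0\in\li(A_H(y),A_H(x)\re)$ which is not a critical value of $A_H$. By Lemma~\ref{pert}(iii), $A_H\circ v$ is a strictly decreasing $C^1$-diffeomorphism from $\R$ onto $\li(A_H(y),A_H(x)\re)$ for every $(v,K)\in\mathcal{Z}$, so there is a unique $s(v)\in\R$ with $A_H\li(v(s(v))\re)=c_0$. Set
\begin{equation*}
M_k := \li\{(v,K)\in\mathcal{Z} \mi |s(v)|\leq k,\ \nl v\nr_{C^1(\R,\M)}\leq k\re\}.
\end{equation*}
Each $M_k$ is closed in $\mathcal{Z}$ (continuity of $s(\cdot)$ at a regular level, and closedness of $C^1$-balls), and $\mathcal{Z}=\bigcup_k M_k$ because $|s(v)|<\infty$ and $\nl v\nr_{C^1}<\infty$ for every $(v,K)\in\mathcal{Z}$.

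\textbf{Properness on $M_k$.} Let $C\subset\Kc_{\theta,1}$ be compact and let $(v_n,K_n)\in M_k\cap\pi^{-1}(C)$. After passing to a subsequence, $K_n\to K_\infty\in C$ and $s_n:=s(v_n)\to s_\infty\in[-k,k]$. The translates $w_n(\cdot):=v_n(\cdot+s_n)$ satisfy $w_n(0)\in\WW^u_{K_n}(x)\cap\WW^s_{K_n}(y)\cap\{A_H=c_0\}$. A $K$-uniform version of Theorem~\ref{compactM}(i) on the compact parameter set $C\cup\{K_\infty\}$ yields precompactness of $\{w_n(0)\}$ in $\M$, so $w_n(0)\to p$ along a further subsequence. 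Continuous dependence of the ODE $v^\prime=\X_K(v)$ on initial data and parameter, available via the uniform Lipschitz bound on $\Ah$ (Corollary~\ref{cor1}) and the built-in $C^3$-control on $K\in\Kc_{\theta,1}$, then gives $w_n\to w$ in $C^1_{\loc}(\R,\M)$, where $w$ is the $\X_{K_\infty}$-flow line through $p$. The energy identity $\int_\R\nl\X_{K_n}(v_n)\nr_{\HS}^2\,ds=A_H(x)-A_H(y)$ together with the uniform bound $\nl v_n\nr_{C^1}\leq k$ forbids the limit from breaking at an intermediate critical point, so $w$ still runs from $x$ to $y$. Undoing the translation by $s_\infty$ yields a limit $(v_\infty,K_\infty)\in M_k$ as required.

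\textbf{Main obstacle.} The delicate ingredient is the $K$-uniform compactness statement invoked above, i.e.\ precompactness of $\bigcup_{K\in C}\li(\WW^u_K(x)\cap\WW^s_K(y)\cap\{A_H=c_0\}\re)$ in $\M$ for $C\subset\Kc_{\theta,1}$ compact. This requires revisiting the proof of Theorem~\ref{compactM} in \cite{Al5} and verifying that its key estimates---which rest on the essentially vertical/horizontal character of $\WW^u$ and $\WW^s$ from Theorem~\ref{compactM}(ii)---depend only on $A_H$ (fixed) and on the $C^3$-norm of $K$ (uniformly bounded on $C$). Once this uniformity is in hand, the remainder of the argument is a routine ODE/continuous-dependence package, and the $\sigma$-properness of $\pi|_\mathcal{Z}$ follows.
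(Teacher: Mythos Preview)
Your cover $M_k$ does not rule out breaking, and the sentence claiming that the energy identity together with the bound $\nl v_n\nr_{C^1}\leq k$ ``forbids the limit from breaking at an intermediate critical point'' is the gap. Suppose there is a $z\in\Pc_0(H)$ with $A_H(x)>A_H(z)>c_0$ and, for the limiting $K_\infty$, a $z$--$y$ flow line $w$ with $A_H\li(w(0)\re)=c_0$. Nothing in your hypotheses $|s(v_n)|\leq k$ and $\nl v_n\nr_{C^1}\leq k$ prevents the $v_n$ from spending longer and longer negative time travelling from $x$ to a neighbourhood of $z$ and then following (an approximation of) $w$; the $C^1_{\loc}$ limit is then $w$, not an $x$--$y$ trajectory. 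The $C^1$ bound is satisfied throughout (near $z$ the speed is small, elsewhere it is controlled by the trajectory segments), and the total energy is still $A_H(x)-A_H(y)$ for each $n$---the ``missing'' energy $A_H(x)-A_H(z)$ simply escapes to $s=-\infty$, which the $C^1_{\loc}$ topology does not see. So your limit $(v_\infty,K_\infty)$ need not lie in $\mathcal{C}^1_{x,y}$ at all.

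The paper's cover is designed precisely to block this phenomenon: one fixes $\ee>0$ small and takes $M_{N_1,N_2,\ee}$ to consist of those $(v,K)\in\mathcal{Z}$ with $v(s)\in B_\ee(x)$ for $s\leq-N_1$, $v(s)\in B_\ee(y)$ for $s\geq N_1$, and $\nl v'(s)\nr_{\HS}\geq 1/N_2$ on $[-N_1,N_1]$. The first two conditions pin the trajectory near the correct endpoints outside a \emph{fixed} time window, and the third forbids lingering near any intermediate rest point inside it; together they make the asymptotics rigid under $C^1_{\loc}$ limits. A second advantage of the paper's cover is that it lets one invoke Theorem~\ref{compactM} only for the \emph{limiting} $K$, and then transfer to the $v_n$ by continuous dependence on a compact time interval; your route instead requires the $K$-uniform precompactness of $\bigcup_{K\in C}\li(\WW^u_K(x)\cap\WW^s_K(y)\cap\{A_H=c_0\}\re)$, which you rightly flag as an additional burden but which the paper's argument avoids.
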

\begin{proof} We have to show that there exists a countable family of sets $\{M_k\}_{k\in\N}$ which cover $\mathcal{Z}$ such that $\pi_{|M_k}$ is proper. 
The main problem which occurs is that there can be breaking. The $M_k$ are chosen in a way such that breaking does not happen. That is, for 
$\ee>0$ we choose balls $B_{\ee}(x)$ and $B_{\ee}(y)$ of radius $\ee$ centered at $x$ and $y$ and for $N_1,N_2 \in \N$  we denote by $M_{N_1,N_2,\ee}$ 
the set which consists of those $(v,K) \in \mathcal{Z}$ such that
\begin{enumerate}
 \item $v(s) \in B_{\ee}(x)$ for all $s \leq - N_1$
 \item $v(s) \in B_{\ee}(y)$ for all $s \geq  N_1$
 \item $\nl v^{\prime}(s) \nr_{\HS} \geq \frac{1}{N_2}$ for all $s \in [-N_1,N_1]$
\end{enumerate}
Obviously for each fixed $\ee$ we have that $$\mathcal{Z} = \bigcup \limits_{(N_1,N_2)\in \N^2}M_{N_1,N_2,\ee}\,.$$ Furthermore we can assume that $\ee$ is chosen small 
enough such that the balls at $x$ and $y$ contain no other critical points than $x$ or $y$ respectively. Now let $(v_n,K_n) \in M_{N_1,N_2,\ee}$ and assume that 
$K_n$ converges to $K \in \pi(M_{N_1,N_2,\ee}) \subset \Kc_{\theta,1}$ then we claim that a subsequence of $v_n$ converges to some $v \in \mathcal{C}^1_{x,y}$. To see this let 
$$\mathbb{W}_{N_1,N_2,\ee} = \Big(\WW^u_{\X_K}(x)\cap \WW^s_{\X_K}(y)\Big) \cap \li\{ u(s) \mi (u,K) \in M_{N_1,N_2,\ee}\,,  s\in \R \re \} $$ 
where $\WW^u_{\X_K}(x),\WW^s_{\X_K}(y)$ denote the unstable / stable manifolds of $x$ and $y$ with respect to $\X_K = \X +K$. Since the set $M_{N_1,N_2,\ee}$ is closed and 
$\WW^u_{\X_K}(x)\cap \WW^s_{\X_K}(y)$ is precompact due to Theorem \ref{compactM}, we have that $\mathbb{W}_{N_1,N_2,\ee}$ is compact. Now each $v_n$ solves 
$ v_n^{\prime}(s) = \X_{K_n}\big(v_n(s)\big)$ for all $ s \in [-N_1,N_2] $ and 
$$ v_n(s) \in B_{\ee}(x) \quad \text{for} \quad s \leq -N_1 \quad v_n(s) \in B_{\ee}(y) \quad \text{for} \quad s \geq N_2\,. $$
So by the continuous dependence of ODE's on the initial data see for instance  \cite{lang}, we have that $\dist\big(v_n(\R),\mathbb{W}_{N_1,N_2,\ee}\big)$ becomes arbitrary small as long as 
$\ee$ is chosen small enough and $n$ large enough such that $K_n$ becomes close enough to $K$. In other words we can find a sequence of trajectories 
$w_n \in  \mathcal{C}^1_{x,y}(\R,\M)$ with respect to $\X_K$ with trace in $\mathbb{W}_{N_1,N_2,\ee}$ such that $\dist\big(v_n,w_n\big)$ becomes infinitesimal.
Since the sequence $w_n$ is compact we can choose a subsequence $v_{n_k}$ which converges in $C^0$ to a curve $v \in  \mathcal{C}^0_{x,y}(\R,\M)$. 
Since all curves $v_{n_k}$ are trajectories  of a $C^3$- vector field we finally obtain the claimed convergence in $\mathcal{C}^1_{x,y}(\R,\M)$.
\end{proof}

\subsection{The boundary operators}
By the previous sections we can assume that the perturbed vector field $\X_K = -\Ah +K$, $ K \in \Kc_{\reg}$, is a $C^3$- Morse vector field and satisfies the Morse-Smale 
condition up to order 2. Furthemore the compactness statements of the previous section enable us to construct a Morse complex for the action functional $A_H$. 
Since the functional $A_H$ is strongly indefinite and therefore the stable and unstable manifolds are infinite dimensional they cannot be oriented. Nevertheless
it is possible to introduce an oriantion setup by orienting the Fredholm pairs $\big(T_x\WW^s(x), \V(x)\big)$, $x \in \crit(A_H)$, where $\V= \M\times(\R^n\times\Hm^+)$.
Since this paper focuses on the Fredholm property and the compactness statements for the hybrid type curves we omit a discussion of orientation here and restrict ourselves 
to the case of $\Z_2$- coefficiants.\\

Let $x,y \in \crit(A_H)$, $\V = \M \times (\R^n \times\Hm^+)$ the constant subbundle which is admissible for $\X_K$, $K \in \Kc_{\theta,1}$ in the sense of 
Definition \ref{admissible} and assume that $m(x,\V)-m(y,\V) = 1$. Hence due to Corollary \ref{onedim} the intersection $\WW^u(x) \cap \WW^s(y)$ is 1-dimensional, compact
and consists of all the connecting trajectories between $x$ and $y$. If we denote by 
$$\rho(x,y):= \# \Big(\big(\WW^u(x)\cap\WW^s(y)\big)/\R \Big)\mod 2 $$
then $\rho(x,y)$  is well defined for all $x,y \in \Pc_0(H)$ and the 
{\bf Morse complex} for the action functional $A_H$ on the Hilbert manifold $\M$ can be constructed in the usual way. We consider the abelian groups 
$$C_k(H) = \bigoplus_{x \in \crit_k(A_H)}\Z_2 x \,,$$
where $\crit_k(A_H) $ is the set of all $x \in \crit(A_H)$ with $m(x,\V) =k$
and define the boundary operators  $\p^M_k : C_k(H) \lo C_{k-1}(H)$ by requiring that 
\begin{equation*}
 \p^M_k x = \sum_{y\in \crit_{k-1}(A_H)}\rho(x,y)y
\end{equation*}
for all $x \in \crit(A_H)$ with $m(x,\V) = k$. Now a gluing statement detailed proven in \cite{Al5} by using the graph transform method shows that indeed 
$\p^M_{k-1}\circ \p^M_k = 0$ holds for all $k \in \Z$ and therefore the above setup describes a complex. The homology of this complex is given by 
$$ HM_k:= \frac{\ker \p^M_k}{\ran \p^M_{k+1}} $$   
and is called the {\bf  Morse homology }  of $\M$ with respect to $(A_H,\X_K)$. If we replace $\X_K$ by another vector field 
$\X_{K^{\prime}}$, $K,K^{\prime} \in \Kc_{\reg}$, which therefore still satisfies conditions 
{\bf C1} and {\bf C2} with respect to the same bundle $\V= \M\times(\R^n\times\Hm^+)$ leads to an isomorphic chain complex. The argument is similar to the one 
in Theorem 2.26 in \cite{Al1}. Thus the homology is independent of the chosen perturbation $K$  and can be denoted by $HM_*(A_H)$. By the same argument
the homology is independent of the particular choice of $H$, which asserts why we could expect all critical points of $A_H$ to be contractible.

\section{The Floer complex in the $H^1$-setup} \label{Floer}

The construction of the Floer complex on a {\bf compact symplectic} and {\bf monotone manifold} was the essential tool of Floer's proof of the {\bf Arnold conjecture} 
and we refer to \cite{floer3} for further details. Since we use the $H^1$-setup instead of a $W^{1,p}, p >2$ setup we find it convenient to restrict ourselves to 
the case of a constant, almost complex structure $J_0$ and a generically chosen Hamiltonian $H$ such that we achieve transversality. In this situation we can prove an 
elliptic regularity statement which enables us to show that the moduli spaces are compact in the usual $C^{\infty}_{\loc}$- Gromov sense. 
As a preparation for the next chapter we  prove an alternative $H^1_{\loc}$-compactness statement.

\subsection{The setup}\label{floer1.1}

By Proposition \ref{reg1} we can assume we have chosen a smooth non-degenerate Hamiltonian $H \in \Hr \subset C^{\infty}(\mS^1\times\Tn,\R)$  on the $2n$-dimensional 
torus $\Tn=\Rn /\Zn$, $n\in\N$ with associated Hamiltonian vector field  $X_H$  defined with respect to the standard symplectic structure 
$\omega_0 = \sum \limits_{i=1}^n dy_i\wedge dx_i$. Moreover we denote by
\begin{equation}\label{JJO}
J_0 =\li ( \begin{array}{rc}
 0 & 1 \\
-1 & 0
\end{array}\re)
\end{equation}
the special complex structure on $\Tn$  such that $(\Tn,\omega_0,J_0)$ becomes a K\"ahler manifold with 
$$ \omega_0(\cdot,J_0\cdot) = \li<\cdot,\cdot\re>\,.$$
Instead of studying gradient-like trajectories of the Hamiltonian action 
$$A_H : \M \lo \R $$
defined by 
$$A_H(x)= -\frac{1}{2}\li(\nl \PP^+x \nr^2_{\HH} - \nl \PP^-x \nr^2_{\HH}\re) +\int_0^1 H\big(t,x(t)\big)dt\,,$$
which possesses the same smooth and contractible critical points, denoted by $\Pc_0(H)$, as its restriction to 
component of contractible loops of the free loop space $\Lambda_0(\Tn)$, introduced in \eqref{tildeAH}, we are  now  interested in solutions 
of a Cauchy-Riemann type partial differential equation. As it will turn out, the solutions  $u = u(s,t) : \R\times \mS^1 \lo \Tn $ of the {\bf Floer-equation}, 
which since the tangent bundle of $\Tn$ is trivial, $\mathrm{T}\Tn \cong \Tn \times \Rn$,  can be written as  
\begin{equation}\label{dh}
\dH \big(u(s,t)\big) = \p_su(s,t) +J_0\big(\p_t u(s,t) -X_H(t,u(s,t))\big) = 0\,,
\end{equation}
give rise to  a set of countable moduli spaces. To give a precise definition of these moduli spaces we first have to make some remarks on the chosen analytical setting. 
As usual,  for a domain  $\Omega \subset \R^2$, we denote by $W^{k,p}(\Omega,\Rn)$ the Sobolev-spaces of all $L^p$-curves possessing weak derivatives up to order $k$ 
and denote by 
$$ \nl u \nr_{W^{k,p}(\Omega)} = \li(\sum_{|\alpha|\leq k} \nl D^{\alpha}u\nr_{L^p(\Omega)}^p  \re)^{1/p} \,, \quad \alpha \in \N^2 $$
the usual norm. By $C^k_0(\R\times\mS^1,\Rn)$ we denote the Banach space of all $C^k$-curves $u$ with 
$$ \lim_{s \rightarrow \pm\infty} D^{\alpha} u(s,t) = 0 \,, \quad \text{for all}  \quad  t \in \mS^1 \,,  \alpha \in \N^2\,, \quad \text{with} \quad |\alpha|\leq k  \,.$$ 
Let $u \in \WRR $ since 
\begin{equation*}\label{HLLH}
 \WRR = L^2(\R,H^1(\mS^1,\Rn))\cap H^1(\R,L^2(\mS^1,\Rn))
\end{equation*}
the Fourier representation of $u$ is given by 
$$ u(s,t) = u_0(s) + \sum_{k \not = 0 }e^{2\pi k J_0t}u_k(s)\,, \quad u_k \in H^1(\R,\Rn) \,.$$
As a consequence of the Sobolev embedding theorem, see \cite{adams-2}, similarly to Proposition \ref{hof1}, all the $u_k$ belong already to $C^0_0(\R,\Rn)$ and satisfy 
\begin{equation}\label{H1}
|u_0(s)|^2 + 4\pi^2\sum_{k\in\Z}|k|^2|u_k(s)|^2< \infty 
\end{equation}
almost everywhere. Now we identify those curves whose image on the torus is the same and denote by 
$$\WRTl:= \WRRl/\Zn \,.$$
Consider the evaluation map 
$$ \widetilde{\ev}_{0} : \CoR \lo C^{\infty}(\mS^1,\Rn) \,, \quad u(\cdot,\cdot) \mapsto u(0,\cdot) \,.$$ 
Then there holds:
\begin{lem}\label{ev} The extension of $\widetilde{\ev}_{0}$ to 
$$\ev_{0} : H^1([0,\infty)\times \mS^1,\Rn) \lo \Ee\,, \quad u(\cdot,\cdot) \mapsto u(0,\cdot) \,,\quad  \Ee=H^{1/2}(\mS^1,\Rn)$$ 
is a well defined smooth submersion. In particular there holds
$$ \nl \ev_{0} (u) \nr_{\HH} \leq \sqrt{2}  \nl u \nr_{H^1([0,\infty)\times\mS^1)} \,.$$ 
 \end{lem}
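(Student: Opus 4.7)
The plan is to reduce the trace estimate to a family of one-dimensional trace estimates for $H^1([0,\infty),\Rn)$ via Fourier expansion in the $t$-variable, and then to exhibit an explicit Poisson-type extension that realises the right inverse needed for the submersion property. For $u\in \WZR$, writing $u(s,t)=\sum_{k\in\Z}e^{2\pi kJ_0 t}u_k(s)$ gives by Parseval
\begin{equation*}
\nl u\nr^2_{\WZR}=\sum_{k\in\Z}\int_0^{\infty}\Big(|u_k(s)|^2+|u_k^{\prime}(s)|^2+4\pi^2k^2|u_k(s)|^2\Big)ds,
\end{equation*}
and each $u_k\in H^1([0,\infty),\Rn)\hookrightarrow C^0_b$ by the one-dimensional Sobolev embedding. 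Since both $|u_k|^2$ and its derivative $2\li<u_k,u_k^{\prime}\re>$ lie in $L^1([0,\infty))$, one has $|u_k|^2\in W^{1,1}([0,\infty))$, hence $|u_k(s)|^2\to 0$ as $s\to\infty$, so that the fundamental theorem of calculus yields
\begin{equation*}
|u_k(0)|^2=-2\int_0^{\infty}\li<u_k,u_k^{\prime}\re>ds\leq 2\int_0^{\infty}|u_k||u_k^{\prime}|ds.
\end{equation*}

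Next, I apply the weighted AM--GM inequality $2ab\leq \alpha a^2+\alpha^{-1}b^2$ with $\alpha=2\pi|k|$ for $k\neq 0$ and $\alpha=1$ for $k=0$, which gives
\begin{align*}
2\pi|k||u_k(0)|^2 &\leq \int_0^{\infty}\Big(4\pi^2k^2|u_k|^2+|u_k^{\prime}|^2\Big)ds \quad (k\neq 0),\\
|u_0(0)|^2 &\leq \int_0^{\infty}\Big(|u_0|^2+|u_0^{\prime}|^2\Big)ds.
\end{align*}
Summing over $k\in\Z$ and using $\int_0^{\infty}|u_0|^2ds\leq \nl u\nr^2_{L^2(Z)}$ (a consequence of Cauchy--Schwarz applied to the $t$-average defining $u_0$) yields $\nl u(0,\cdot)\nr^2_{\HH}\leq \nl u\nr^2_{\WZR}$, which is stronger than the claimed bound. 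Since $\CoR$ is dense in $\WZR$, $\widetilde{\ev}_0$ extends by continuity to a bounded linear map $\ev_0:\WZR\lo \Ee$; being continuous and linear between Hilbert spaces, it is automatically $C^{\infty}$ with $D\ev_0(u)=\ev_0$ at every $u$.

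For the submersion property I exhibit a bounded right inverse $E:\Ee\lo \WZR$ in Fourier coordinates by
\begin{equation*}
E(g)(s,t)=e^{-s}g_0+\sum_{k\neq 0}e^{-2\pi|k|s}e^{2\pi kJ_0 t}g_k,
\end{equation*}
for $g=g_0+\sum_{k\neq 0}e^{2\pi kJ_0 t}g_k\in\Ee$. A direct Parseval computation using $\int_0^{\infty}e^{-4\pi|k|s}ds=(4\pi|k|)^{-1}$ bounds $\nl E(g)\nr_{\WZR}$ by a universal multiple of $\nl g\nr_{\HH}$, while $\ev_0\circ E=\Id_{\Ee}$ by inspection. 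Consequently $D\ev_0(u)$ is surjective and the topological splitting $\WZR=\Image(E)\oplus \ker \ev_0$ provides a closed complement, which is the submersion property in the Banach sense. The technical heart of the argument is the weighting $\alpha=2\pi|k|$ in the AM--GM step, precisely calibrated so that the summed estimates reproduce the fractional $H^{1/2}$-norm on the trace; the rest is soft analysis, and the explicit $E$ could equally well be replaced by the harmonic extension to the half-cylinder with no qualitative change.
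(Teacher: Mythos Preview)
Your argument is correct. Both your proof and the paper's rest on the same fundamental-theorem-of-calculus identity $|u_k(0)|^2=-\int_0^\infty \p_s|u_k(s)|^2\,ds$, but the executions diverge after that point. The paper keeps the computation packaged at the level of the $H^{1/2}$ inner product, using the identity
\[
\li<x,y\re>_{\HS}=\li<x_0,y_0\re>+\li<x,J_0(\p_t\PP^-y-\p_t\PP^+y)\re>_{\LS}
\]
to convert $\int_0^\infty\p_s\nl v(s,\cdot)\nr_{\HS}^2\,ds$ directly into an $L^2$ pairing of $\p_s v$ against $J_0\p_t$-type terms, and then bounds by Cauchy--Schwarz to arrive at the constant $\sqrt 2$. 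You instead unpack into Fourier modes and apply the weighted AM--GM inequality $2ab\le \alpha a^2+\alpha^{-1}b^2$ with $\alpha=2\pi|k|$, which is more elementary, makes no use of the symplectic structure, and incidentally yields the sharper constant~$1$. For the submersion part both proofs write down the same Poisson-type extension; your version $E(g)(s,t)=e^{-s}g_0+\sum_{k\neq 0}e^{-2\pi|k|s}e^{2\pi kJ_0 t}g_k$ is in fact more careful than the paper's, since the paper's formula $\sum_k e^{-s|k|}e^{2\pi kJ_0 t}x_k$ leaves the zero mode undamped and hence not in $H^1$ when $x_0\neq 0$. One cosmetic point: your parenthetical about bounding $\int_0^\infty|u_0|^2\,ds$ via Cauchy--Schwarz is unnecessary, since that term already appears verbatim as the $k=0$ summand of $\nl u\nr_{H^1(Z)}^2$.
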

Though one can find several generalizations of this statement, for example in \cite{conny}  or \cite{Taylor}, we  give 
a short proof for our particular situation.
\begin{proof} 
Let $v \in \CoR$ and  $v =\sum \limits_{k\in\Z}e^{2\pi kJt}v_k(s)$ be its Fourier series. Observe that 
\begin{equation}
\li< x, y \re>_{H^{1/2}(\mS^1)} = \li<x_0,y_0\re> + \li< x, J_0\big(\p_t \PP^-y-\p_t \PP^+ y\big) \re>_{L^{2}(\mS^1)} 
\end{equation}
for all $x,y \in C^{\infty}(\mS^1,\Rn)$, where $\PP^{\pm}$ denote the projections from \eqref{pro}.
Hence
\begin{align*}
\nl v(0,\cdot)\nr^2_{\Ee} 
&=     \Big| \int_{0}^{\infty} \p_s \nl v(s,\cdot) \nr^2_{\HS}ds\Big|\\
&=    2\Big|\int_{0}^{\infty}  \li< \p_s v, v \re>_{H^{1/2}(\mS^1)}ds\Big| \\
&=    2\Big|\int_{0}^{\infty} \int_0^1\Big(\li<\p_s v_0,v_0\re> + \li< \p_s v, J_0\big(\p_t \PP^- v-\p_t \PP^+ v\big) \re>\Big)dtds\Big| \\
&\leq 2\nl v\nr^2_{H^1([0,\infty)\times \mS^1)} \,.
\end{align*}
By density  the estimate has to hold for all $u \in H^1([0,\infty)\times\mS^1,\Rn)$  and enables us to extend $\widetilde{\ev}_{0}$ to $\ev_{0}$.
Moreover $\ev_0$ is a linear operator which shows continuity and the existence of all Fr$\acute{\mathrm{e}}$chet derivatives. Now, if 
$x = \sum \limits_{k\in \Z}e^{2\pi kJ_0 t}x_k \in \Ee$ then it is readily seen that for instance the curve 
$$\xi(s,t) = \sum \limits_{k\in \Z}e^{2\pi kJ_0 t}e^{-s|k|}x_k$$
belongs to $H^1([0,\infty)\times \mS^1,\Rn)$ and is therefore a pre-image under $D\ev_0$ of $x$.
\end{proof}
Now we can define the { \bf moduli spaces of Floer-cylinders}. That is : 
\begin{defn}\label{defF} Let $H\in \Hr$ and  $x^-,x^+ \in \Pc_0(H)$  and consider the following set
\begin{align*}
 \MM_F(x^-,x^+,H,J_0):=\Big\{u \in \WRTl \mi & \dH (u) = 0 \,\, \text{with} \\
					    & \lim_{s\rightarrow \pm \infty} u(s,\cdot) = x^{\pm} \Big\}\,,
\end{align*}
with $\dH$ from \eqref{dh}. Here $ \dH (u) = 0$ should be  understood in the weak sense, and the convergence to the orbits at infinity should be understood as follows.
Let $\dot\Omega_T: = (-T,T)\times\mS^1$ and write
$$u(s,t) = [u_0(s)] + \sum_{k \not = 0 }e^{2\pi k J_0t}u_k(s,t)\,, $$
where $[u_0] \in C^0((-T,T),\Tn)$, $u_k \in C^0((-T,T),\Rn)\,, |k|\geq 1 $ be the Fourier representation for the restriction of $u$ to $\dot\Omega_T \subset \R\times\mS^1$. 
Let $ c_{x^-,x^+} \in \CC$ be a reference cylinder such that  
\begin{equation}\label{vv}
 c_{x^-,x^+}(s,\cdot) = \li\{
\begin{array}{ll}
 x^-\,, & \text{for} \quad s\leq-1\\
 x^+\,, & \text{for} \quad s \geq  1\quad.
\end{array}\re.
\end{equation}
Then using the additive structure of $\Tn$ and the evaluation map from Lemma \ref{ev} we say that $\lim \limits_{s\rightarrow \pm \infty} u(s,\cdot) = x^{\pm}$ 
if and only if 
\begin{equation}\label{con1}
 \lim_{s\rightarrow \pm \infty}(\PP^0)^{\perp}\ev_s(u -  c_{x^-,x^+}) = 0 \quad \text{in} \,\, \Hm\,,
\end{equation}
where $\PP^0$ denotes the restriction to the constant loops defined in \eqref{pro} and 
$$ \lim_{s\rightarrow \pm \infty}\PP^0\ev_s(u -  c_{x^-,x^+}) = 0 \quad \text{on} \,\, \Tn \,.$$
\end{defn}
Next we show that the solutions $u \in  \MM_F(x^-,x^+,H,J_0)$ are actually smooth. For this,  we use the inner regularity property of the Laplace operator which is 
proven by using the Calderon-Zygmund inequality. For details see \cite{duff} (Appendix B).
\begin{lem} \label{el1}{\bf (local regularity)} Let $u \in \WRTl$ be a weak solution of $\dH(u)=0$ on every open and bounded domain 
$\Omega \subset \R\times\mS^1$. Then $u \in C^{\infty}(\R\times \mS^1,\Tn)$.
\end{lem}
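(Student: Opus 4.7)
The plan is to reduce the manifold-valued regularity problem to a standard elliptic bootstrap in Euclidean space, exploiting that the target $\Tn$ is a flat quotient of $\Rn$ and that $J_0$ is constant. First I lift: working on any bounded $\Omega \subset \R \times \mS^1$, the representative $\tilde u \in H^1(\Omega,\Rn)$ (well-defined up to translation by $\Zn$ by the very definition $\WRTl = \WRRl/\Zn$) satisfies, in the weak sense,
\begin{equation*}
  \p_s \tilde u + J_0 \p_t \tilde u = J_0 X_{\tilde H}(t, \tilde u),
\end{equation*}
where $\tilde H$ is the $\Zn$-periodic lift of $H$ to $\mS^1 \times \Rn$. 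Because $J_0$ is constant with $J_0^2 = -\Id$, the operator $\p_s - J_0\p_t$ factors the Laplacian: $(\p_s - J_0\p_t)(\p_s + J_0\p_t) = \p_s^2 + \p_t^2 = \Delta$. Applying it distributionally to the equation yields
\begin{equation*}
  \Delta \tilde u = (\p_s - J_0\p_t)\bigl[J_0 X_{\tilde H}(t, \tilde u)\bigr] = J_0 \p_s\bigl[X_{\tilde H}(t,\tilde u)\bigr] + \p_t\bigl[X_{\tilde H}(t,\tilde u)\bigr].
\end{equation*}

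The bootstrap then proceeds by iterated use of two facts: the chain rule for Sobolev compositions with smooth maps having bounded derivatives of all orders (which holds for $X_{\tilde H}$ since it is $\Zn$-periodic in the second variable and smooth), and the inner $H^{k+2}$-regularity for $\Delta$ on bounded planar domains via Calderon-Zygmund, as cited from \cite{duff}. Starting from $\tilde u \in H^1_{\loc}(\Omega,\Rn)$, the chain rule gives $X_{\tilde H}(t,\tilde u) \in H^1_{\loc}$, hence $\Delta \tilde u \in L^2_{\loc}$, so inner regularity upgrades to $\tilde u \in H^2_{\loc}$. Inductively, if $\tilde u \in H^k_{\loc}$ with $k\geq 2$ (where the Sobolev embedding $H^2 \hookrightarrow C^0$ in dimension two makes all subsequent chain-rule manipulations uncontroversial), then $X_{\tilde H}(t,\tilde u) \in H^k_{\loc}$, hence $\Delta \tilde u \in H^{k-1}_{\loc}$, and inner regularity pushes $\tilde u$ into $H^{k+1}_{\loc}$.

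Iterating yields $\tilde u \in \bigcap_{k\in\N} H^k_{\loc}(\Omega,\Rn)$, and the Sobolev embedding $H^k_{\loc} \hookrightarrow C^m_{\loc}$ for $k$ large with respect to $m$ gives $\tilde u \in C^\infty(\Omega,\Rn)$. Projecting back to $\Tn$ and covering $\R\times\mS^1$ by bounded open sets establishes $u \in C^\infty(\R\times\mS^1,\Tn)$ as claimed. The one subtle point I expect to be the main obstacle is the very first step of the induction: since $H^1$ in two dimensions does not embed into $C^0$, one must justify carefully that the weak chain rule $\nabla[X_{\tilde H}(t,\tilde u)] = (\p_t X_{\tilde H})(t,\tilde u) + (dX_{\tilde H})(t,\tilde u)\cdot \nabla \tilde u$ is a genuine $L^2$ identity and that the resulting distributional Laplacian actually lies in $L^2_{\loc}$; however, this follows from the uniform Lipschitz bound for $X_{\tilde H}$ guaranteed by smoothness and periodicity in the manifold variable, so no further input is needed.
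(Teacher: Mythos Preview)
Your proof is correct and follows essentially the same route as the paper: factor the Laplacian as $(\p_s - J_0\p_t)(\p_s + J_0\p_t)$ using that $J_0$ is constant, rewrite the Floer equation as $\Delta \tilde u = (\p_s - J_0\p_t)\bigl[J_0 X_{\tilde H}(t,\tilde u)\bigr]$, and bootstrap via inner elliptic regularity for $\Delta$ together with the chain rule for $X_{\tilde H}\circ \tilde u$. If anything, your treatment is more careful than the paper's on the first step of the induction (the chain rule in $H^1$ before continuity is available) and on the lifting to $\Rn$.
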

\begin{proof}
Since $u \in \WRTl$ and $H \in C^{\infty}(\mS^1,\Tn)$ we have that $J_0X_H(u)$ is of class $H^1_{\loc}$. Denote by $\p_{J_0} = \p_s -J_0(\cdot)\p_t$ and by 
$\dd = \p_s +J_0\p_t$. Then $u$ solves  
$$ \Delta u = \p_{J_0}\circ \dd (u) = \p_{J_0} \circ  \big(J_0(u) X_H(u)\big) =:f \in L_{\loc}^{2}(\R\times\mS^1,\Rn) $$ 
in the weak sense on every open and bounded domain $\Omega \subset \R \times \mS^1$. Since every $L^2_{\loc}$-curve is certainly of class $L^1_{\loc}$, any lift of
$u$ to a curve in $\WRR$ has to be  be  a $H_{\loc}^{1,2}$-curve due to the regularity property of the Laplace operator. Hence $u$ is of class $H^1$, which implies that 
$J_0X_H(u)$ is of class $H^1$  and therefore $u$ is of class $H^{3}$. Iterating this argument shows that $u$ is a $H_{\loc}^{\infty}$-curve and therefore, by 
Sobolev embedding, of class $C^{\infty}$.
\end{proof}
\begin{defn} Let $u : \R \times \mS^1 \lo \Tn$ be a smooth solution of \eqref{dh}. We define the {\bf energy} of $u$ by 
\begin{equation*}
 E(u):= \frac{1}{2}\int_{-\infty}^{\infty}\int_0^1 \li(\li|\frac{\p u}{\p s}\re|^2 + \li|\frac{\p u}{\p t} - X_H(u)\re|^2\re)dsdt \,.
\end{equation*}
\end{defn}
For $T>0$ one easily verifies that
\begin{equation*}
E_T(u) 
:=
\int_{-T}^{T}\int_0^1 |\p_su(s,t)|^2dtds 
= 
A_H(u(-T,\cdot)-A_H(u(T,\cdot))\,. 
\end{equation*}
Now recall that the action $A_H$ is smooth on $\M$ and therefore
\begin{align*}
E(u)  & = \lim_{T\rightarrow-\infty} A_H\big(\ev_T(u)\big) -\lim_{T\rightarrow+\infty} A_H\big(\ev_T(u)\big)\nonumber \\
      & = A_H(x^-)-A_H(x^+)  <\infty\,.    
\end{align*}
Now by Lemma \ref{el1} every $u \in \MM_F(x^-,x^+,H,J_0)$ is smooth and satisfies $E(u)< \infty$, which as it is well-known, see \cite{sal}, is equivalent to the fact that  there exist constants $c=c(u),\delta=\delta(u) >0$ such that
\begin{equation}\label{en}
  |\p_su(s,t)| \leq ce^{-\delta|s|}
\end{equation}
for all $(s,t)\in \R\times \mS^1$,
which again is equivalent to the fact that the limits 
\begin{equation}\label{en2}
\lim_{s\rightarrow \pm \infty} u(s,\cdot)= x^{\pm}\,, \qquad \lim_{s\rightarrow \pm \infty} \p_su(s,\cdot)= 0
\end{equation}
are uniform in $t$.
Thus in particular our choice of the convergence behavior at infinity turns out to be equivalent to 
the classical uniform convergence. \\ 

The next result states that the moduli spaces are affine translations of subsets of $\WRR$ in $\WRTl$, which will be crucial 
especially for our compactness statement in Theorem \ref{compact}.  
\begin{prop} \label{Pfi} Let $c_{x^-,x^+}$ be as in \eqref{vv}. Then the non-linear operator
\begin{align} 
 \Phi_{c_{x^-,x^+}} : \WRR &\lo \LRR \,, \label{phi}\\ 
w &\mapsto \dH(w + c_{x^-,x^+}) \,. \nonumber
\end{align}
is well defined, smooth, and there holds
\begin{equation}\label{aff}
 \MM_F(x^-,x^+,H,J_0) = c_{x^-,x^+} +  \Phi^{-1}_{c_{x^-,x^+}}(0) \subset \WRTl\,. 
\end{equation} 
\end{prop}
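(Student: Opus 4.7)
My plan is to verify the claims by a standard reference-cylinder splitting that converts the problem into one about a nonlinear map between Hilbert spaces.

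\textbf{Well-definedness and smoothness.} I would decompose
\begin{align*}
\Phi_{c_{x^-,x^+}}(w) = \p_s w + J_0\p_t w - J_0\bigl[X_H(\cdot, w+c_{x^-,x^+}) - X_H(\cdot, c_{x^-,x^+})\bigr] + \dH(c_{x^-,x^+}).
\end{align*}
The two linear terms lie in $\LRR$ because $w \in \WRR$. Since $c_{x^-,x^+}$ equals the $1$-periodic Hamiltonian orbits $x^\pm$ for $|s|\geq 1$, the term $\dH(c_{x^-,x^+})$ is smooth and compactly supported in $s$, hence in $\LRR$. The nonlinear term is controlled in $\LRR$ by $\|DX_H\|_\infty\|w\|_{\LRR}$ via the mean value theorem, using the uniform bound on $DX_H$ from $H \in C^\infty(\mS^1\times \Tn,\R)$ and compactness of $\mS^1 \times \Tn$. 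Smoothness of $\Phi$ then reduces to smoothness of the Nemitsky operator $w \mapsto X_H(\cdot, w+c_{x^-,x^+})$ from $\WRR$ to $\LRR$; its $k$-th Fréchet derivative is the $k$-linear multiplication operator by $D^kX_H(\cdot, w+c_{x^-,x^+})$, bounded on $(\WRR)^k$ by uniform bounds on every $D^k X_H$ over $\mS^1 \times \Tn$, and continuous in $w$ by dominated convergence via the continuous embedding $\WRR \hookrightarrow \LRR$.

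\textbf{The set equality.} For $c_{x^-,x^+} + \Phi^{-1}(0) \subseteq \MM_F(x^-,x^+,H,J_0)$: given $w \in \Phi^{-1}(0)$, the curve $u := w + c_{x^-,x^+}$ weakly solves $\dH u = 0$ and so is smooth by Lemma~\ref{el1}. To verify \eqref{con1}, I would reprise the energy computation from the proof of Lemma~\ref{ev} on $(s,\infty)\times\mS^1$ instead of $[0,\infty)\times\mS^1$, obtaining $\|\ev_s(w)\|_\Ee^2 \leq 2\|w\|_{H^1((s,\infty)\times\mS^1)}^2$; the right-hand side tends to $0$ as $s\to +\infty$ by dominated convergence, and the analogous estimate covers $-\infty$. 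Since $(\PP^0)^\perp \ev_s(u - c_{x^-,x^+}) = (\PP^0)^\perp \ev_s(w)$, this yields the required $\Hm$-convergence, and the constant part $\PP^0 \ev_s(w) = w_0(s)$ lies in $H^1(\R,\R^{2n}) \hookrightarrow C_0(\R,\R^{2n})$, giving the $\Tn$-convergence. Conversely, given $u \in \MM_F(x^-,x^+,H,J_0)$, Lemma~\ref{el1} yields smoothness of $u$, and the finiteness of $E(u) = A_H(x^-) - A_H(x^+)$ forces the exponential decay $|\p_s u(s,t)| \leq ce^{-\delta|s|}$ of \eqref{en}. Integrating in $s$ shows $|u(s,t) - x^\pm(t)| \leq Ce^{-\delta|s|}$ for $|s|\geq 1$, and the Floer equation together with $\p_t c_{x^-,x^+} = X_H(c_{x^-,x^+})$ on $\{|s|\geq 1\}$ gives $|\p_t(u - c_{x^-,x^+})| \leq C'(|u - c_{x^-,x^+}| + |\p_s u|)$, also exponentially decaying. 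Combined with smoothness on the bounded strip $|s|\leq 1$, an appropriate lift of $u - c_{x^-,x^+}$ therefore lies in $\WRR$, and $\Phi(u - c_{x^-,x^+}) = \dH u = 0$.

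The main obstacle is the reverse inclusion: turning the essentially pointwise asymptotic information of Definition~\ref{defF} together with finite energy into genuine $H^1$-integrability of the difference $u - c_{x^-,x^+}$ on the entire cylinder. The exponential decay \eqref{en}, a consequence of standard finite-energy analysis for the Floer equation on $\R\times\mS^1$, is the bridge between the single-slice convergence built into the definition and global Sobolev integrability.
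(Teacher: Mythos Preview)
Your proof is correct and follows essentially the same approach as the paper. The paper's well-definedness argument likewise splits off the compactly supported term $\dH(c_{x^-,x^+})$ and uses the mean value theorem on the nonlinear Hamiltonian term over the cylindrical ends; for smoothness the paper simply invokes the substitution-operator argument of Theorem~\ref{hof3} where you spell out the Nemitsky picture, and for the reverse inclusion the paper uses exactly the exponential decay \eqref{en} to control first $\p_s w$, then $w$ via integration, and finally $\p_t w$ via the Floer equation. The only differences are cosmetic: you treat the forward inclusion $c_{x^-,x^+}+\Phi^{-1}(0)\subset\MM_F$ explicitly via the trace estimate of Lemma~\ref{ev}, whereas the paper leaves that direction implicit and focuses on the harder reverse inclusion.
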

\begin{proof} We show that $\Phi_{c_{x^-,x^+}}$ is well defined. For that we use the estimate 
\begin{align*}
 \nl  \Phi_{c_{x^-,x^+}}(w) \nr^2_{L^2(\R\times\mS^1)} & \leq \nl \p_s (w+c_{x^-,x^+})\nr^2_{L^2(\R\times\mS^1)}\\ 
                                                       & +    \nl \p_t (w+c_{x^-,x^+})-X_H(w+c_{x^-,x^+})\nr^2_{L^2(\R\times\mS^1)}  
\end{align*}
and show that both terms are bounded. By our assumptions on $c_{x^-,x^+}$, we have that for $T >1$ and $S_T = [T,+\infty) \times\mS^1$ there holds
$$ \nl \p_s\big(w+c_{x^-,x^+}\big)\nr_{L^2(S_T)} = \nl \p_s w\nr_{L^2(S_T)}  \leq \nl w\nr_{H^1(\R\times\mS^1)} \,.$$ 
Now recall that $ X_H = J_0 \nabla H$; by the mean value theorem we obtain by setting $u = w +c_{x^-,x^+}$
\begin{align*}
\nl \p_tu -X_H(u)\nr_{L^2(S_T)} & =  \nl \p_tw+\p_t x^+ -X_H(w+x^+)\nr_{L^2(S_T)}\\
							    &\leq \nl \p_tw \nr_{L^2} + \nl H\nr_{C^2(\mS^1\times\Tn)}\nl w\nr_{L^2(S_T)}\,.
\end{align*}
The analogous estimates for $T<-1$  show that both terms are bounded on the cylindrical ends and therefore on the whole cylinder.
Since smoothness can be proven by using the same argument as used in the proof of Theorem \ref{hof3} we omit a detailed discussion here.\\

Finally since the linear operator $\dd : \WRR \lo \LRR$ is smooth, we have that $\Phi_{c_{x^-,x^+}}$ is smooth.\\

It remains to  prove \eqref{aff}. Therefore let $ u \in \MM_F(x^-,x^+,H,J_0)$ and set $w = u -c_{x^-,x^+}$. Recall that by Lemma \ref{el1} $u$ and $c_{x^-,x^+}$ 
are smooth and so is $w$. By the decay in \eqref{en} this leads to 
$\p_s w \in \LRR$. With $ w(s,t) = -\int \limits_s^{\infty}\p_{\tau}w(\tau,t)d\tau $ we  again use \eqref{en} and observe that at the cylindrical ends,
i.e., for $R >>1$ sufficiently large there holds 
\begin{align*}
\int_{R}^{\infty}\int_0^1|w(s,t)|^2dtds &   =  \int_{R}^{\infty}\int_0^1\Big|\int_s^{\infty}\p_{\tau}w(\tau,t)d\tau\Big|^2dtds \\
                                        & \leq \int_{R}^{\infty}\int_0^1\li(\,\int_s^{\infty}|\p_{\tau}w(\tau,t)|d\tau\re)^2 dt ds\\
                                        & \leq \frac{c^2}{\delta^2}\int_{R}^{\infty}e^{-2|s|\delta}ds<\infty \,, 
\end{align*}
and analogously on the other end. Hence $w \in \LRR$. Now for all $(s,t) \in \R \times \mS^1$ with $|s|>1 $ we get 
\begin{align*}
|\p_t w(s,t)|  &  =   \li|\p_s u(s,t) + J_0\Big(\p_t x^{\pm}(t) - X_H\big(w(s,t) +x^{\pm}(t)\big)\Big)\re| \\
               &\leq \li|\p_s u(s,t)\re| +\nl H\nr_{C^2(\mS^1\times \Tn)}\li|w(s,t)\re|\,.
\end{align*}
Hence $\p_t w \in \LRR$ and every $u \in  \MM_F(x^-,x^+,H,J_0)$ can be written as 
$$u  = w+c_{x^-,x^+}\,, $$
for some $w \in \Phi^{-1}_{c_{x^-,x^+}}(0)\subset \WRR$. 
\end{proof}

\subsection{The Fredholm-problem and Transversality}\label{transF}\label{secfred}

In this section we prove that for 1-periodic solutions $x^-,x^+ \in \Pc_0(H)$ the associated maps $\Phi_{c_{x^-,x^+}}$ are Fredholm, i.e.,  
their differentials are Fredholm operators. Furthermore for a generic set of smooth Hamiltonians $H$,
zero will be a regular value for $\Phi_{c_{x^-,x^+}}$. \\

Consider the operator $F : H^1(\R \times \mS^1, \Rn)\lo L^{2}(\R \times \mS^1, \Rn)$ defined by 
\begin{equation}\label{F}
 F\xi = \p_s\xi +J_0\p_t\xi + S\xi\,, \quad J_0\in\J \quad \text{as in} \,\, \eqref{JJO}\,,
\end{equation}
where $S$ is assumed to be a continuous symmetric matrix valued function, i.e.,  $S \in C^0(\R\times\mS^1,\Rn\times\Rn)$, with
\begin{equation}\label{konver0}
\lim \limits_{s\rightarrow+\infty }S(s,t) = S^+(t) \in C^0(\mS^1,\Rn\times\Rn) 
\end{equation}
uniformly in $t$. We define a symplectic path $\Psi^+ \in C^1([0,1],\Sp(2n,\R))$ by 
\begin{equation*}
 \p_t \Psi^+(t) = J_0S^+(t)\Psi^+(t)\,, \quad \Psi^+(0) = \Id\,. 
\end{equation*}
 and recall that the  {\bf Conley-Zehnder index} $\mu(\Psi^+)$ is well defined if 
\begin{equation}\label{gen}
\det\big(I-\Psi^+(1)\big) \not = 0\,. 
\end{equation}
We will not give a further discussion here, 
for details see \cite{salduff},\cite{salrob},\cite{salzeh}. For given nondegenerate Hamiltonian $H$ and $x\in \Pc_0(H)$ we abbreviate by 
$\mu(x)$ the index of the associated symplectic path of $\nabla^2H\big(\cdot,x(\cdot)\big)$.\\

For the next well-known statement we refer to \cite{salzeh}; alternative proofs can be found, for instance, in \cite{floer2},\cite{locke}, \cite{schwarz}.

\begin{thm}\label{floerfred}{\bf (\cite{salzeh})} Let $S \in C^0(\R^2,\Rn\times\Rn)$ and assume that  \eqref{konver0} and \eqref{gen} hold. Then 
$F$ is a Fredholm operator with index 
$$ \ind (F) = \mu(\Psi^-) - \mu(\Psi^+)\,.$$
\end{thm}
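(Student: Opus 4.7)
The plan is to split the result into two pieces: the Fredholm property of $F$, and the index formula.

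For the Fredholm property, I would first note that the asymptotic operators $A^{\pm}\xi := J_0\p_t\xi + S^\pm(t)\xi$ on $H^1(\mS^1,\Rn)$ are self-adjoint compact perturbations of $J_0\p_t$. Their kernels consist of loops of the form $\xi(t) = \Psi^{\pm}(t)\xi(0)$ with $\Psi^{\pm}(1)\xi(0) = \xi(0)$; hypothesis \eqref{gen} rules this out, so $A^{\pm}$ is invertible. On a cylindrical end $[T,+\infty)\times \mS^1$ with $T$ sufficiently large, the operator $F$ differs from $\p_s + A^+$ by an operator of arbitrarily small $L^{\infty}$-norm in the coefficient, and a Fourier-series calculation in the $t$ variable shows that $\p_s + A^+ : H^1 \lo L^2$ is an isomorphism on such a half-cylinder. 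The analogous statement holds on the negative end. On the compact middle piece $[-T,T]\times \mS^1$, elliptic regularity for $\p_s + J_0\p_t$ combined with the compact embedding $H^1 \hookrightarrow L^2$ makes the zero-order contribution a compact perturbation. Patching these pieces with a cutoff argument produces the coercive estimate
\[
\nl \xi \nr_{H^1(\R\times\mS^1)} \leq C\bigl(\nl F\xi \nr_{L^2(\R\times\mS^1)} + \nl \xi \nr_{L^2(K)}\bigr)
\]
for a compact $K \subset \R\times\mS^1$. This is the standard semi-Fredholm criterion, giving closed range and finite-dimensional kernel; applying the same argument to the formal adjoint $F^*\eta = -\p_s\eta + J_0\p_t\eta + S\eta$, which has the same structure with the direction of $s$ reversed, yields finite-dimensional cokernel.

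For the index formula, I would exploit homotopy invariance of the Fredholm index. The space of admissible symbols $S$ decomposes into path-components on which the Conley-Zehnder indices $\mu(\Psi^{\pm})$ stay constant, since $\det\bigl(I - \Psi^{\pm}_\lambda(1)\bigr)\neq 0$ is preserved along any admissible deformation. Hence $\ind(F)$ depends only on the pair $\bigl(\mu(\Psi^-),\mu(\Psi^+)\bigr)$, and it suffices to verify the formula on a convenient representative in each stratum: one first homotopes $S^{\pm}(t)$ to constant-in-$t$ symmetric matrices realizing the prescribed Conley-Zehnder indices, and then homotopes the full symbol $S(s,t)$ to a $t$-independent interpolation $\hat S(s)$. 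Such reduced operators decouple in a symplectic basis into scalar-valued problems on $\R$, whose Fredholm index can be computed directly and is well known to equal the spectral flow of the family $s \mapsto A(s)$.

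The main obstacle is the identification of this spectral flow with $\mu(\Psi^-)-\mu(\Psi^+)$; this is the content of the Robbin-Salamon / Salamon-Zehnder analysis and is not elementary. The cleanest route is to cite \cite{salzeh} directly after verifying that our continuity hypothesis on $S$ together with \eqref{konver0}--\eqref{gen} matches their framework. A self-contained argument would proceed via additivity of the spectral flow under concatenation, reducing to the case of a single simple eigenvalue crossing of $A(s)$ through zero, which amounts to an explicit two-dimensional calculation relating the sign of the crossing to a single jump of the Conley-Zehnder index; this is where the normalization conventions need to be tracked carefully.
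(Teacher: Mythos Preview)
The paper does not supply its own proof of this theorem: it is quoted as a known result with the remark ``For the next well-known statement we refer to \cite{salzeh}; alternative proofs can be found, for instance, in \cite{floer2}, \cite{locke}, \cite{schwarz}.'' Your outline is precisely the standard argument one finds in those references --- the semi-Fredholm estimate via invertibility of the asymptotic operators $A^{\pm}$ combined with a cutoff to a compact piece, applied to $F$ and to its formal adjoint, and then the index formula by homotopy reduction to a model operator whose index equals the spectral flow, identified with the difference of Conley-Zehnder indices by the Salamon--Zehnder analysis. So there is no discrepancy to report: your proposal matches the approach the paper defers to, and you are right that the only genuinely nontrivial ingredient is the spectral-flow/Conley--Zehnder identification, which one ultimately cites rather than reproves.

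One small point: the paper's displayed hypotheses \eqref{konver0} and \eqref{gen} are written only for the $+\infty$ end, but the theorem and its use in Corollary~\ref{cor2} clearly require the analogous conditions at $-\infty$ as well (otherwise $\Psi^-$ is undefined). You tacitly assumed this, which is correct; it is a minor omission in the paper's statement rather than in your argument.
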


\begin{cor}\label{cor2} Let $x^-,x^+ \in \Pc_0(H)$ be two contractible, 1-periodic solutions, $w\in \WRR$, $c_{x^-,x^+}$ as in \eqref{vv} and $H \in \Hr$. 
Then the linearized operator  
\begin{equation} \label{Phiop}
D\Phi_{c_{x^-,x^+}}(w): \WRR \lo \LRR 
\end{equation}
$$D\Phi_{c_{x^-,x^+}}(w)\xi =   \p_s\xi +J_0\p_t\xi  + \nabla^2H(\cdot,w+ c_{x^-,x^+})\xi $$ 
is a Fredholm operator with index 
$$ \ind \big(D\Phi_{c_{x^-,x^+}}(w)\big) = \mu(x^-) - \mu(x^+)\,.$$
\end{cor}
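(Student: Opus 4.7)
The strategy is to identify $D\Phi_{c_{x^-,x^+}}(w)$ as an instance of the operator \eqref{F} and then invoke Theorem~\ref{floerfred} directly. First I would differentiate. Rewriting $\dH u = \p_s u + J_0\p_t u - J_0 X_H(\cdot,u)$ and applying the chain rule to $w \mapsto \dH(w+c_{x^-,x^+})$ gives
$$D\Phi_{c_{x^-,x^+}}(w)\xi = \p_s\xi + J_0\p_t\xi - J_0\, DX_H\bigl(\cdot,\, w+c_{x^-,x^+}\bigr)\xi.$$
The K\"ahler identity $\omega_0(\cdot,J_0\cdot) = \langle \cdot,\cdot\rangle$ together with $i_{X_H}\omega_0 = -dH$ forces $X_H = J_0 \nabla H$, hence $-J_0 DX_H(\cdot,u)\xi = \nabla^2 H(\cdot,u)\xi$, giving exactly the formula \eqref{Phiop}. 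Setting $S(s,t) := \nabla^2 H(t, w(s,t)+c_{x^-,x^+}(s,t))$ identifies $D\Phi_{c_{x^-,x^+}}(w)$ with the operator \eqref{F}; symmetry of $S$ is automatic and boundedness of $S$ follows from the compactness of $\Tn$ together with $H \in C^\infty(\mS^1\times\Tn)$.

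Second I would verify the asymptotic hypotheses of Theorem~\ref{floerfred}. By \eqref{vv}, for $\pm s \geq 1$ one has
$$S(s,t) - \nabla^2 H\bigl(t,x^{\pm}(t)\bigr) = \nabla^2 H\bigl(t,w(s,t)+x^{\pm}(t)\bigr) - \nabla^2 H\bigl(t,x^{\pm}(t)\bigr),$$
whose size is controlled by $|w(s,t)|$ via the Lipschitz constant of $\nabla^2 H$ in its second argument. Using the decomposition $\WRR = L^2(\R,H^1(\mS^1)) \cap H^1(\R,L^2(\mS^1))$ together with the trace estimate of Lemma~\ref{ev} yields continuity of $s\mapsto w(s,\cdot) \in \Ee$ with vanishing asymptotics as $s\to\pm\infty$, which then forces $S(s,\cdot) \to S^{\pm}(\cdot)$ in the sense demanded by \eqref{konver0}. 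The non-degeneracy condition \eqref{gen} holds because the associated symplectic path $\Psi^{\pm}$ is exactly the linearized Hamiltonian flow along $x^{\pm}$, and by $H\in\Hr$ the orbits $x^{\pm}$ possess no Floquet multiplier equal to $1$, which is precisely \eqref{nondeg}.

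Applying Theorem~\ref{floerfred} then yields both the Fredholm property and the index formula
$$\ind\bigl(D\Phi_{c_{x^-,x^+}}(w)\bigr) = \mu(\Psi^-) - \mu(\Psi^+) = \mu(x^-) - \mu(x^+).$$
The \textbf{main obstacle} is the asymptotic analysis in the second step: $\WRR$ sits at the borderline for the Sobolev embedding into $C^0(\R\times\mS^1)$, so the \emph{uniform-in-$t$} convergence $S(s,\cdot)\to S^{\pm}$ required by \eqref{konver0} cannot be extracted from $w\in H^1$ by naive pointwise arguments. A robust workaround is to first establish the Fredholm property and the index formula for $w$ smooth with compact support in $s$, where $S$ is visibly continuous with uniform limits, and then extend to arbitrary $w\in\WRR$ via the local constancy of the Fredholm index, exploiting that $w \mapsto \nabla^2 H(\cdot,w+c_{x^-,x^+})$ depends continuously on $w$ as a multiplier in $\Lc(\WRR,\LRR)$.
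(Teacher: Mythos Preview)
Your proposal is correct and follows essentially the same route as the paper: approximate $w$ by a smooth curve, apply Theorem~\ref{floerfred} to the resulting operator with continuous $S$, and then transfer the Fredholm property and index to $D\Phi_{c_{x^-,x^+}}(w)$ via the openness of the Fredholm set and the local constancy of the index, using the smoothness of $\Phi_{c_{x^-,x^+}}$. You have also correctly identified the borderline-Sobolev obstacle and the appropriate workaround, which is precisely what the paper does.
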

\begin{proof}Let $w \in \WRR$ then we can approximate $w$ by a smooth curve $v$ and consider the operator 
$$ \widetilde{D\Phi}_{c_{x^-,x^+}}(v)\xi =   \p_s\xi + J_0(v+c_{x^-,x^+})\p_t\xi  + \nabla^2H(\cdot,v+c_{x^-,x^+})\xi $$ 
Since $H \in \Hr$  the operator $F$ in \eqref{F} with 
$S= \nabla^2H(\cdot,v+c_{x^-,x^+})$ satisfies condition \eqref{gen} and coincides with $\widetilde{D\Phi}_{c_{x^-,x^+}}(v)$. Now $F$ satisfies the assumptions of 
Theorem \ref{floerfred} and is therefore Fredholm of index $\mu(x^-) - \mu(x^+)$. Since $\Phi_{c_{x^-,x^+}}$ is smooth, the set of Fredholm operators 
is open with respect to the uniform operator topology and the index is locally constant,
see \cite{duff}, we have that $D\Phi_{c_{x^-,x^+}}(w)$ is also Fredholm and of the same index as $F$.
\end{proof}
The statement has an immediate consequence. 
\begin{cor} Let $H_0 \in \Hr \subset C^{\infty}(\mS^1\times\Tn,\R)$ in the sense of  
Proposition \ref{reg1}. Then there exists a residual set $\Hrr(H_0)  \subset C^{\infty}(\mS^1\times\Tn,\R)$ of Hamiltonians that coincide with $H_0$ on $\Pc_0(H)$ 
up to second order such that for all $x^-,x^+ \in \Pc_0(H_0)$ with $x^-\not = x^+$ the moduli spaces 
$\MM_F(x^-,x^+,H,J_0)$ are smooth manifolds of dimension 
$$ \dim\big(\MM_F(x^-,x^+,H,J_0)\big) = \mu(x^-)-\mu(x^+) \,.$$ 
\end{cor}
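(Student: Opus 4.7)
The plan is to apply an infinite-dimensional Sard--Smale argument to a universal moduli space parametrized by the Hamiltonian. By Proposition \ref{Pfi} it suffices to show that for a generic Hamiltonian $H$ near $H_0$ the zero set $\Phi_{c_{x^-,x^+}}^{-1}(0)\subset\WRR$ is a smooth submanifold, and by Corollary \ref{cor2} this submanifold will automatically have the claimed dimension $\mu(x^-)-\mu(x^+)$ since $0$ being a regular value makes $\Phi_{c_{x^-,x^+}}$ a submersion there.

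First I would build a Banach manifold of admissible perturbations. Let $\mathcal{B}(H_0)$ denote a separable Banach space of smooth functions $h\in C^\infty(\mS^1\times\Tn,\R)$ whose $2$-jet vanishes along every $x\in\Pc_0(H_0)$, completed with respect to a Floer $\varepsilon$-norm $\nl h\nr_\varepsilon=\sum_k\varepsilon_k\nl h\nr_{C^k}$ with $\varepsilon_k\searrow 0$ fast enough so that $\mathcal{B}(H_0)$ is dense in the space of such $C^\infty$ perturbations. Writing $H=H_0+h$, the non-degeneracy condition \eqref{gen} at every $x\in\Pc_0(H_0)$ is preserved, so $\Pc_0(H)\supset\Pc_0(H_0)$ (with the same linearized orbits). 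Now I introduce the universal section
\begin{equation*}
\widetilde\Phi:\WRR\times\mathcal{B}(H_0)\lo\LRR,\quad(w,h)\mapsto\dHH\bigl(w+c_{x^-,x^+}\bigr),
\end{equation*}
where $J=J_0$ is fixed. Smoothness of $\widetilde\Phi$ follows as in Proposition \ref{Pfi}, and the partial derivative in $w$ at a zero $(w,h)$ is the Fredholm operator from Corollary \ref{cor2}.

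The heart of the proof is surjectivity of $D\widetilde\Phi(w,h)$ at every zero, which reduces to showing that no nonzero $\eta\in\LRR$ can annihilate simultaneously $\ran D_w\widetilde\Phi(w,h)$ and all variations of the form $\delta h\mapsto J_0\nabla(\delta h)(\cdot,u)$, where $u=w+c_{x^-,x^+}$. A pairing against $D_w\widetilde\Phi(w,h)\xi$ shows that such $\eta$ weakly satisfies the formal adjoint Cauchy--Riemann equation, hence by Lemma \ref{el1} (elliptic regularity, applied to the adjoint) $\eta$ is smooth and satisfies a linear first-order system with smooth coefficients. The standard Aronszajn unique continuation theorem for such Cauchy--Riemann type equations then implies that $\eta$ vanishes identically as soon as it vanishes on a nonempty open set. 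To force such a vanishing set, I would use that $x^-\neq x^+$: the solution $u$ cannot be $s$-independent, so by the asymptotic decay \eqref{en} combined with the $\R$-translation argument there exists a regular point $(s_0,t_0)$ at which $\p_s u(s_0,t_0)\neq 0$ and $u(s_0,t_0)\notin\{x^-(\mS^1)\cup x^+(\mS^1)\}$; in particular $u(s_0,\cdot)$ is injective in a neighbourhood of $t_0$, and on a small cylinder around $(s_0,t_0)$ the map $(s,t)\mapsto u(s,t)$ is a local diffeomorphism onto its image in $\mS^1\times\Tn$. Localized bump perturbations $\delta h$ supported in that image can then produce any prescribed $L^2$-pairing with $\eta$, forcing $\eta\equiv 0$ on a neighbourhood and hence everywhere. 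The main obstacle is precisely this somewhere-injectivity step; because the target is a torus and not $\R^{2n}$, one must genuinely exploit the non-constancy of $u$ rather than trivial injectivity.

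With surjectivity established, the universal zero set $\widetilde{\mathcal{Z}}:=\widetilde\Phi^{-1}(0)$ is a Banach manifold. The projection $\pi:\widetilde{\mathcal{Z}}\lo\mathcal{B}(H_0)$ is Fredholm of index $\mu(x^-)-\mu(x^+)$ by the same argument as in Corollary \ref{psii}. The Sard--Smale theorem (Theorem \ref{sard}), applicable since $\mathcal{B}(H_0)$ is separable and $\widetilde\Phi$ is smooth enough by choice of the $\varepsilon_k$, furnishes a residual set of regular values $h\in\mathcal{B}(H_0)$. Taking the countable intersection over all pairs $(x^-,x^+)\in\Pc_0(H_0)\times\Pc_0(H_0)$ with $x^-\neq x^+$ yields a residual set, and a standard Taubes-type argument transfers residuality from the Banach $\mathcal{B}(H_0)$ to $C^\infty(\mS^1\times\Tn,\R)$. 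The resulting $\Hrr(H_0)$ has the required property, and the dimension of $\MM_F(x^-,x^+,H,J_0)$ is then $\ind D\Phi_{c_{x^-,x^+}}(w)=\mu(x^-)-\mu(x^+)$ by Corollary \ref{cor2}.
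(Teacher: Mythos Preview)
Your argument is correct and follows the same route as the paper: the paper simply cites the transversality result of Floer--Hofer--Salamon \cite{FHS} and then applies the implicit function theorem, whereas you have reproduced the content of that citation (universal section, annihilator satisfying the adjoint equation, unique continuation, Sard--Smale, Taubes trick). One cautionary remark: the somewhere-injectivity step you flag as the main obstacle needs more than local embeddedness of $u$ near $(s_0,t_0)$---you need a point where $(t,u(s,t))$ is hit only once globally, not just locally, since a bump $\delta h$ supported near $(t_0,u(s_0,t_0))\in\mS^1\times\Tn$ will otherwise contribute at every preimage; this is exactly what \cite{FHS} establishes (density of regular points for non-constant Floer cylinders), and the paper's remark after the proof explains why their $W^{1,p}$ argument transfers to the present $H^1$ setting on the torus.
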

\begin{proof}
By the transversality result of A. Floer, H. Hofer and D. Salamon, see \cite{FHS},
there exists a residual set of Hamiltonians $\Hr(H_0) \subset C^{\infty}(\mS^1\times \M,\R)$  which coincide with $H_0$ up to second order such that $(H,J_0)$ is a {\bf regular pair} for all $H \in \Hr(H_0)$, i.e.,
there are no degenerate 1-periodic solutions and the associated operator $F_{J_0,H}(u)$ from \eqref{F} with $S= \nabla^2H(\cdot,u)$
is onto for every $u \in \MM_F(x^-,x^+,H,J_0)$ with $x^- \not = x^+$. Hence we conclude the proof by the implicit function theorem, see \cite{duff}.
\end{proof}

To be precise, the proof in \cite{FHS}  uses  the Sard-Smale-Theorem \ref{sard}, which requires a Banach manifold setting. 
For this reason the  result   is proven with respect to the  $W^{1,p}$, $p>2$ setup to assure that the  spaces $W^{1,p}(\R\times\mS^1,M)$ are Banach manifolds, 
which would not hold if one loses the embedding into $C^0(\R\times\mS^1,M)$. In the special case where $M$ possesses a trivial tangent bundle, as in our case,
the spaces $H^1(\R\times\mS^1,M)$ and $L^{2}(\R\times\mS^1,M)$ are Hilbert manifolds. Thus the above result is not sensitive to the choice of 
the functional spaces and can therefore be applied.\\

Finally we have to treat the case when $x^- = x^+$. That is : 

\begin{lem} Assume we have chosen a regular pair $(H,J_0)$ and  $x \in \Pc_0(H)$. Then the moduli space $\MM_{F}(x,x,H,J_0)$ 
is a zero dimensional manifold which consists only of the constant solution. Moreover the linearized operator 
$$D\Phi_{c_{x,x}}(w)\xi =   \p_s\xi +J_0\p_t\xi  + \nabla^2H(\cdot,w+ c_{x,x})\xi \,, \quad c_{x,x}(s,\cdot) =x \quad \forall s \in \R \,,  $$ with $w= x-c_{x,x} = 0 \in  H^1(\R\times\mS^1,\Rn)$, 
is an isomorphism. 
\end{lem}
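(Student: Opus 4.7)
The plan has two independent steps: rule out non-constant elements of $\MM_F(x,x,H,J_0)$ via an energy argument, then show the linearization at the constant loop is an isomorphism using the hyperbolicity of the Hessian of $A_H$ at $x$.

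For the first step, I would use the energy identity established just before this lemma. Any $u \in \MM_F(x,x,H,J_0)$ is smooth by Lemma~\ref{el1}, and the computation preceding Definition~\ref{defF} gives
\begin{equation*}
E(u) \;=\; A_H(x) - A_H(x) \;=\; 0.
\end{equation*}
Since $E(u)=\frac{1}{2}\int_{-\infty}^{\infty}\int_0^1(|\p_s u|^2+|\p_t u-X_H(u)|^2)\,dt\,ds$, both integrands vanish identically. Thus $\p_s u \equiv 0$, so $u(s,t)=y(t)$ is independent of $s$, and $y$ solves $\dot y=X_H(\cdot,y)$. Combined with the asymptotic condition \eqref{con1} this forces $y=x$, so the moduli space reduces to the constant solution.

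For the second step, the linearized operator at $w=0$ has the constant-in-$s$ form $D\Phi_{c_{x,x}}(0)=\p_s + A$ with
\begin{equation*}
A\xi \;=\; J_0\p_t\xi + \nabla^2H(\cdot,x)\xi.
\end{equation*}
I would first record the properties of $A$ as an unbounded operator on $L^2(\mS^1,\R^{2n})$: it is self-adjoint (the $J_0\p_t$ part via integration by parts using $J_0^T=-J_0$, and the multiplication part because $\nabla^2H$ is symmetric), it has compact resolvent, hence discrete real spectrum, and it is hyperbolic — a kernel element would integrate via the linearized Hamiltonian flow to a fixed vector of $\Psi(1)$, contradicting the nondegeneracy condition $\det(I-\Psi(1))\neq 0$ contained in the assumption $H\in \Hr$. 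Therefore $\spec(A)\subset \R\setminus\{0\}$ splits into positive and negative parts bounded away from zero, with orthonormal eigenbasis $\{\phi_k\}$ and eigenvalues $\lambda_k$.

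Next I would analyze $\ker D\Phi_{c_{x,x}}(0)$ by Fourier decomposition. Expanding $\xi(s,t)=\sum_k c_k(s)\phi_k(t)$, the equation $\p_s\xi + A\xi=0$ becomes $c_k'(s)+\lambda_k c_k(s)=0$, so $c_k(s)=c_k(0)e^{-\lambda_k s}$. Membership in $H^1(\R\times\mS^1,\R^{2n})$ requires each $c_k$ to lie in $L^2(\R)$, which is impossible unless $c_k\equiv 0$, since $e^{-\lambda_k s}$ is unbounded on one of the half-lines (recall $\lambda_k\neq 0$). Hence $\ker D\Phi_{c_{x,x}}(0)=\{0\}$. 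Combined with the fact that Corollary~\ref{cor2} gives Fredholm index $\mu(x)-\mu(x)=0$, this forces the cokernel to be trivial as well, so the operator is an isomorphism.

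The only genuine technicality is the spectral decomposition step: one must justify that $H^1(\R\times\mS^1,\R^{2n})$ decomposes cleanly along the eigenbasis of $A$ so that the obstruction $c_k\in L^2(\R)$ is meaningful. This follows by viewing elements of $H^1(\R\times\mS^1,\R^{2n})$ as $L^2(\R,H^1(\mS^1))\cap H^1(\R,L^2(\mS^1))$ as in the identity preceding \eqref{H1} and expanding in the $L^2(\mS^1)$-eigenbasis $\{\phi_k\}$ of $A$; the argument is otherwise routine.
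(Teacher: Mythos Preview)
Your proposal is correct and follows essentially the same route as the paper: the energy identity forces the moduli space to be the singleton constant solution, and the linearization is shown to be an isomorphism by combining the index-zero Fredholm property from Corollary~\ref{cor2} with triviality of the kernel. The paper's kernel argument is phrased more tersely as $\xi(s,t)=e^{-(J_0\partial_t+S)s}\xi(0,t)$ together with the two-sided decay $\xi(s,\cdot)\to 0$ as $s\to\pm\infty$; your eigenfunction expansion is the explicit unpacking of that same step.
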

\begin{proof} Let $u \in H^1_{\loc}(\R\times\mS^1,\Tn)$ be a solution connecting $x$ with itself. Then we have that 
$$ E(u) = \int_{-\infty}^{\infty}\nl \p_s u(s,\cdot)\nr_{\LS}^2ds = A_H(x) - A_H(x) = 0 $$
which implies that $u$ is constant as well proving that $\MM_{F}(x,x,H,J_0)$ consists only of the constant 
solution and is therefore a submanifold of $H^1_{\loc}(\R\times\mS^1,\Tn)$. Now $D\Phi_{c_{x,x}}(w)$ is Fredholm of index zero due to Corollary \ref{cor2}. 
Let $\xi \in \ker D\Phi_{c_{x,x}}(w)$ then we have that 
$$ \xi(s,t) = e^{-(J_0\p_t+S)s}\xi(0,t)\,, \quad \text{with} \quad S(t) =\nabla^2 H(t,x(t))\,.$$
Using the fact that $\xi(s,t) \lo 0 $ for $s \lo \pm\infty $
we observe that $\xi = 0$.
Hence the kernel of $ D\Phi_{c_{x,x}}(w)$ is trivial implying that $ D\Phi_{c_{x,x}}(w)$ is an isomorphism, which proves the claimed result.
\end{proof}

\subsection{Compactness}

As seen in section \ref{floer1.1} our choice of the $H^1$-setup does not alter the fact that the moduli spaces consist of smooth curves.
Thus the classical elliptic bootstrapping methods for elliptic operators can be applied and the usual $C^{\infty}_{\loc}$-Gromov-compactness results hold.
In preparation for the compactness statements in chapter 4 we  give an alternative proof. \\

Since we are in the special situation of the torus the following {\bf crucial formulas} can be applied. 
\begin{lem}\label{formform} Let $T>0$, $\Omega_T = [-T,T]\times\mS^1$ and let $u \in H^1(\Omega_T,\Rn)$. Denote furthermore by 
$\overline{\p}_{J_0} = \p_s +J_0\p_t$, $\p_{J_0} = \p_s -J_0\p_t$ with $J_0$ from \eqref{JJO} and by $\PP^{\pm}$ the projections from \eqref{pro} then there holds :  
\begin{align}
 \nl \overline{\p}_{J_0} u \nr^2_{L^2(\Omega_T)} &= \nl \p_su \nr^2_{L^2(\Omega_T)} + \nl \p_t u \nr^2_{L^2(\Omega_T)} \nonumber \\ 
						 &  + \nl \PP^-u(T,\cdot)\nr^2_{\HS} -\nl \PP^+u(T,\cdot)\nr^2_{\HS}\nonumber\\
					         &- \nl \PP^-u(-T,\cdot)\nr^2_{\HS} +\nl \PP^+u(-T,\cdot)\nr^2_{\HS}\,,\label{delbarform}
\end{align}
\begin{align}
 \nl \p_{J_0} u \nr^2_{L^2(\Omega_T)}  &= \nl \p_su \nr^2_{L^2(\Omega_T)} + \nl \p_t u \nr^2_{L^2(\Omega_T)} \nonumber \\
				       &+ \nl \PP^+u(T,\cdot)\nr^2_{\HS} -\nl \PP^-u(T,\cdot)\nr^2_{\HS}\nonumber\\
				       &- \nl \PP^+u(-T,\cdot)\nr^2_{\HS} +\nl \PP^-u(-T,\cdot)\nr^2_{\HS}\,.\label{delbarformm}
\end{align}
\end{lem}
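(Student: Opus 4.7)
My plan is to expand the squared $L^2$-norms, observe that the pure-square terms give exactly $\nl\p_s u\nr^2 + \nl\p_t u\nr^2$ (since $J_0$ is orthogonal), and identify the cross term with the boundary expression on the right-hand sides of \eqref{delbarform} and \eqref{delbarformm}. By continuity of both sides in $H^1(\Omega_T,\R^{2n})$ — using that the trace map $H^1(\Omega_T,\R^{2n})\lo \HR$ is bounded, so the $H^{1/2}$-norms of $u(\pm T,\cdot)$ depend continuously on $u$ — it suffices to verify the identity for smooth $u$.

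For smooth $u$, I would first show the clean intermediate identity
\begin{equation*}
\frac{d}{ds}\int_0^1 \li< u(s,t), J_0\p_t u(s,t)\re>dt = 2\int_0^1 \li<\p_s u, J_0 \p_t u\re>dt.
\end{equation*}
Differentiating under the integral sign produces two terms; in the term $\int_0^1\li<u,J_0\p_t\p_s u\re>dt$ I would integrate by parts in $t$ (the boundary contribution vanishes by 1-periodicity) and then use antisymmetry of $J_0$ to move it into the form $\int_0^1\li<\p_s u, J_0\p_t u\re>dt$. Integrating in $s$ from $-T$ to $T$ then converts the cross term in $\nl\bar\p_{J_0}u\nr^2_{L^2(\Omega_T)}$ into
\begin{equation*}
\int_0^1 \li<u(T,t),J_0\p_t u(T,t)\re>dt - \int_0^1\li<u(-T,t),J_0\p_t u(-T,t)\re>dt.
\end{equation*}

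Second, I would match each boundary term to the $H^{1/2}$-projection norms via Fourier analysis. For $v\in \HR$ expanded as $v=\sum_k e^{2\pi k J_0 t}v_k$, the identity $J_0 \p_t v = -\sum_k 2\pi k\, e^{2\pi k J_0 t}v_k$ together with the $L^2$-orthogonality of the family $\{e^{2\pi k J_0 t}\}_{k\in\Z}$ (exactly as used in the proof of Lemma \ref{ev}) yields
\begin{equation*}
\int_0^1\li<v,J_0\p_t v\re>dt = -2\pi\sum_{k\in\Z}k|v_k|^2 = \nl \PP^- v\nr^2_{\HH} - \nl \PP^+ v\nr^2_{\HH}.
\end{equation*}
Applied to $v=u(\pm T,\cdot)$, this reproduces exactly the four boundary terms of \eqref{delbarform}. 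For \eqref{delbarformm} the same argument applies with $\bar\p_{J_0}$ replaced by $\p_{J_0}$: all cross terms pick up a minus sign, which flips the signs of the $\PP^\pm$ contributions and yields the stated formula.

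There is no real obstacle here — this is essentially an integration-by-parts bookkeeping combined with one Fourier computation. The only point requiring any care is the density step, where I must ensure that both sides of the identity are genuinely continuous in $u\in H^1(\Omega_T,\R^{2n})$; this is handled by the continuity of the trace map into $\HR$, which makes the $H^{1/2}$-norms on the right-hand side continuous functionals of $u$.
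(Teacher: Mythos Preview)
Your proof is correct and follows essentially the same strategy as the paper: expand the square, then identify the cross term $2\li<\p_s u, J_0\p_t u\re>_{L^2(\Omega_T)}$ with the boundary expression in $\PP^\pm$ via Fourier. The only cosmetic difference is in the order of operations: you do a real-variable integration by parts first (showing $\frac{d}{ds}\int_0^1\li<u,J_0\p_t u\re>dt=2\int_0^1\li<\p_su,J_0\p_tu\re>dt$) and invoke Fourier only at the boundary, whereas the paper passes to the $H^{1/2}$ Fourier picture immediately, writing $\li<\p_s u, J_0\p_t u\re>_{L^2(\mS^1)}=\li<\p_s u,\PP^-u\re>_{\HS}-\li<\p_s u,\PP^+u\re>_{\HS}$ and then integrating $\p_s\nl\PP^\pm u(s,\cdot)\nr^2_{\HS}$. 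Your explicit density step via continuity of the trace is a clean addition that the paper leaves implicit.
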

\begin{proof} By Lemma \ref{ev} for any $s_0 \in [-T,T]$ the evaluation $u(s_0,\cdot)\in \M$  of $u$ is a loop which can be represented with respect to $J_0$ by  
$$ u(s_0,t) = [u_0(s)]+ \sum_{k \not = 0}e^{2\pi k J_0 t}u_k(s_0)\,, \quad  [u_0(s)] \in \T^n\,, u_k(s_0) \in \Rn  $$
Hence 
\begin{align*}
2\li<\p_su,J_0\p_tu\re>_{L^2(\Omega_T)}  &= 2\int_{-T}^T \li<\p_su(s,\cdot), \PP^-u(s,\cdot)\re>_{\HS}ds \\
					 &- 2\int_{-T}^T \li<\p_su(s,\cdot), \PP^+u(s,\cdot)\re>_{\HS}ds \\
                                         &=  \int_{-T}^T \p_s\nl \PP^-u(s,\cdot)\nr^2_{\HS}ds\\
                                         &-  \int_{-T}^T \p_s\nl \PP^+u(s,\cdot)\nr^2_{\HS}ds \\
                                         &=  \nl \PP^-u(T,\cdot)\nr^2_{\HS} -\nl \PP^+u(T,\cdot)\nr^2_{\HS}\\
                                         &-  \nl \PP^-u(-T,\cdot)\nr^2_{\HS} +\nl \PP^+u(-T,\cdot)\nr^2_{\HS}
\end{align*}
and  we conclude the proof by the identities 
\begin{align*}
 \nl \overline{\p}_{J_0} u \nr^2_{L^2(\Omega_T)}
&=&
\nl \p_su \nr^2_{L^2(\Omega_T)} + \nl \p_t u \nr^2_{L^2(\Omega_T)} + \li<\p_su,J_0\p_tu\re>_{L^2(\Omega_T)} \\
\nl \p_{J_0} u \nr^2_{L^2(\Omega_T)}
&=&
\nl \p_su \nr^2_{L^2(\Omega_T)} + \nl \p_t u \nr^2_{L^2(\Omega_T)} - \li<\p_su,J_0\p_tu\re>_{L^2(\Omega_T)} \,.
\end{align*}
\end{proof}

Recall that, by Proposition \ref{Pfi} for any $x^-,x^+ \in \Pc_0(H)$ the moduli space is 
the affine subset 
\begin{equation}\label{moduliF}
\Phi_{ c_{x^-,x^+}}^{-1}(0) + c_{x^-,x^+} = \MM_F(x^-,x^+,H,J_0) \subset \WRTl\,. 
\end{equation}
We wish to show  that $\Phi_{ c_{x^-,x^+}}^{-1}(0)$ is a $H^1_{\loc}$- precompact set. For this we need the following result.
\begin{lem}\label{wbound}
Let $x^-,x^+ \in \Pc_0(H)$, $w \in \Phi_{c_{x^-,x^+}}^{-1}(0)$. Then the constant part $\PP^0w(s,\cdot)$ of $w$ is continous in $s$ and  
there is a constant $C=C(x^-,x^+)>0$ such that
\begin{equation*}
 \nl \PP^0w\nr_{C^0(\R)} \leq C \quad\text{for all}\quad w \in \Phi_{c_{x^-,x^+}}^{-1}(0)\,.
\end{equation*}
\end{lem}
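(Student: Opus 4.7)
Continuity of $\PP^0 w : \R \lo \Rn$ is immediate from $w \in \WRR$: by the trace estimate of Lemma \ref{ev} applied at varying base-points, $s \mapsto w(s,\cdot) \in \HR$ is continuous, and $\PP^0$ is a bounded linear projection onto $\Rn$.

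For the uniform $C^0$-bound, set $w_0(s) := \PP^0 w(s)$. The Fourier decomposition of $w$ gives $w_0 \in H^1(\R,\Rn)$ with $\nl w_0\nr_{L^2(\R)} \leq \nl w\nr_{L^2(\R\times\mS^1)}$ and $\nl \p_s w_0\nr_{L^2(\R)} \leq \nl \p_s w\nr_{L^2(\R\times\mS^1)}$. The $\p_s$-factor is uniformly bounded by the energy identity:
\[ \nl \p_s w\nr_{L^2}^2 \leq 2\nl \p_s u\nr_{L^2}^2 + 2\nl \p_s c_{x^-,x^+}\nr_{L^2}^2 \leq 4\bigl(A_H(x^-) - A_H(x^+)\bigr) + C(c_{x^-,x^+}). \]
Since $w_0(\pm\infty)=0$, the identity $|w_0(s)|^2 = -2\int_s^{\infty}\li<w_0,\p_\tau w_0\re>d\tau$ together with Cauchy-Schwarz yields
\[ \nl w_0\nr_{C^0(\R)}^2 \leq 2\,\nl w_0\nr_{L^2(\R)}\,\nl \p_s w_0\nr_{L^2(\R)}, \]
so everything reduces to a uniform $L^2$-bound on $w_0$, equivalently on $\nl w\nr_{L^2(\R\times\mS^1)}$.

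This uniform $L^2$-bound is the main obstacle. My plan is to derive it from the identity $\nl \dd w\nr_{L^2(\R\times\mS^1)}^2 = \nl \p_s w\nr_{L^2}^2 + \nl \p_t w\nr_{L^2}^2$ of Lemma \ref{formform} (the boundary terms at $\pm\infty$ vanish as $w\to 0$ in $\HR$), combined with the Poincar\'e inequality $\nl w - w_0\nr_{L^2(\R\times\mS^1)} \leq \frac{1}{2\pi}\nl \p_t w\nr_{L^2}$ for the mean-zero-in-$t$ part of $w$. Using the Floer equation $\dd u = -\nabla H(\cdot,u)$ together with $\dd c_{x^-,x^+} = -\nabla H(\cdot,x^\pm)$ for $|s|\geq 1$, one has $\dd w = -\bigl(\nabla H(\cdot,u) - \nabla H(\cdot,x^\pm)\bigr)$ on $|s|\geq 1$, pointwise bounded by $\nl H\nr_{C^2}|w(s,t)|$, while on $[-1,1]\times\mS^1$ the term $\dd w$ is uniformly bounded in terms of $\nl H\nr_{C^1}$ and $c_{x^-,x^+}$. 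Chaining these estimates should yield an inequality of the form $\nl w\nr_{L^2}^2 \leq C_1 + C_2\nl w\nr_{L^2}^2 + C_3\nl w_0\nr_{L^2}^2$; closing this bootstrap, and thereby the full chain back to the Sobolev inequality above, is the delicate point, potentially requiring a localization or small-Lipschitz-constant argument. A more geometric fallback would exploit that the relative homotopy class of $u$ between $x^\pm$ is locally constant on $\Phi^{-1}_{c_{x^-,x^+}}(0)$, so that the $\Rn$-lift of $u$ is bounded uniformly on each connected component by a constant depending only on $x^\pm$ and the class.
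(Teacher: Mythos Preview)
Your bootstrap does not close, and in fact cannot close along the lines you sketch. Every inequality you write controls either $\nl \p_s w\nr_{L^2}$, $\nl \p_t w\nr_{L^2}$, or $\nl w - w_0\nr_{L^2}$ (via Poincar\'e in $t$), but none of them touches $\nl w_0\nr_{L^2}$ itself: the zero-Fourier-mode is invisible to $\p_t$, and the estimate $|\dd w|\leq \nl H\nr_{C^2}|w|$ feeds $\nl w_0\nr_{L^2}$ back into the right-hand side with no smallness. Requiring $\nl H\nr_{C^2}$ small would be an illegitimate hypothesis, and localisation in $s$ does not help because the problem is precisely a global drift of $w_0$ over the whole line. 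Your geometric fallback is also circular: the lift $w\in \WRR$ is already uniquely fixed by the condition $w(\pm\infty)=0$, there is no additional homotopy-class freedom to exploit, and ``locally constant on each component with a bound depending on the component'' is not a uniform bound --- indeed, precompactness of $\Phi^{-1}_{c_{x^-,x^+}}(0)$ is exactly what this lemma is used to establish in Theorem \ref{compact}.

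The paper's argument bypasses the $L^2$-route entirely. Applying $\PP^0$ to the Floer equation and using that the target is the \emph{compact} torus gives the pointwise bound
\[
\big|\p_s w_0(s)\big| \;=\; \big|\PP^0\nabla H(\cdot,u(s,\cdot)) + \PP^0\p_s c_{x^-,x^+}(s,\cdot)\big|\;\leq\; \nl H\nr_{C^1(\mS^1\times\Tn)} + \nl \p_s c_{x^-,x^+}\nr_{C^0}\,,
\]
uniformly in $s$ and in $w$. One now argues by contradiction: if $\lambda_n:=\nl \PP^0 w_n\nr_{C^0}\to\infty$, then $\tilde w_n:=\lambda_n^{-1}\PP^0 w_n$ is a sequence of continuous loops on $\R\cup\{\infty\}$ with $\nl\tilde w_n\nr_{C^0}=1$, $\tilde w_n(\pm\infty)=0$, and $\nl\p_s\tilde w_n\nr_{C^0}\to 0$. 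Arzel\`a--Ascoli produces a uniform limit which is simultaneously constant and vanishes at infinity, contradicting $\nl\cdot\nr_{C^0}=1$. The key input you were missing is the uniform $C^0$-bound on $\p_s w_0$ coming from compactness of $\Tn$, not an $L^2$-bound coming from the energy.
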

\begin{proof} That $\PP^0w \in C^0(\R,\Rn)$ was already mentioned in our remark in \eqref{H1}. To prove the existence of $C$ we argue by contradiction. 
Assume there is a sequence $w_n \in \Phi_{c_{x^-,x^+}}^{-1}(0)$ such that 
$$ \lambda_n:=  \nl \PP^0w_n\nr_{C^0(\R)} \lo \infty \quad \text{for} \quad n \lo \infty\,.$$ 
We set $\tilde w_n = \frac{1}{\lambda_n}\PP^0w_n$ then $\tilde w_n$ defines    
a sequence of continuous loops $\tilde w_n : \R\cup \{\infty\} \lo \Rn$ starting at $0$, with $\nl \tilde w_n \nr_{C^0(\R \cup \{\infty\})} = 1 $ for all $n \in \N$. 
Now each $\tilde w_n$ solves the equation 
\begin{equation}\label{w0}
 \p_s \tilde w_n = - \frac{1}{\lambda_n}\PP^0\big(\nabla H(w_n +c_{x^-,x^+}) + \p_sc_{x^-,x^+}\big) 
\end{equation}
in the weak sense. By a standard bootstrapping argument each $\tilde w_n$ is actually a strong solution and by \eqref{w0} we obtain even more: 
$$ \lim_{n\rightarrow \infty}\nl \p_s \tilde w_n\nr_{C^0(\R\cup\{\infty\})} = 0\,.$$
Due to the Arzel\`a-Ascoli Theorem the sequence $\tilde w_n$ is compact. Thus there is a subsequence $\tilde w_{n_k}$ which converges uniformly on 
$\R\cup\{\infty\}$ to a constant loop $\tilde w_{\infty}$ with  $\nl \tilde w_{\infty} \nr_{C^0(\R \cup \{\infty\})} = 1 $ and $\tilde w_{\infty}(\infty) = 0$, a
contradiction. Hence the sequence $\lambda_n$ is bounded, which proves the claim.
\end{proof}

\begin{thm}\label{compact} Let $x^-,x^+ \in \Pc_0(H)$, $H\in \Hr$. Then the moduli space of Floer cylinders $\MM_F(x^-,x^+,H,J_0)$ is a  $H^1_{\loc}$-precompact set.
\end{thm}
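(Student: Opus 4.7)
The plan is to show that an arbitrary sequence $\{u_n\}\subset\MM_F(x^-,x^+,H,J_0)$ admits an $H^1_{\loc}$-convergent subsequence. Writing $u_n = c_{x^-,x^+} + w_n$ with $w_n \in \Phi^{-1}_{c_{x^-,x^+}}(0) \subset \WRR$ via Proposition \ref{Pfi}, it suffices to extract an $H^1_{\loc}$-convergent subsequence from $\{w_n\}$ in $\WRRl$. The strategy has three steps: first, derive a uniform $H^1$-bound on every truncated cylinder $\li[-T,T\re]\times\mS^1$; second, bootstrap this to uniform $H^2_{\loc}$-bounds via interior elliptic regularity; third, invoke Rellich compactness together with a diagonal extraction along an exhaustion $T_k \to \infty$.

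For the first step, I would bound the three quantities $\nl \p_s w_n\nr_{L^2}$, $\nl \p_t w_n\nr_{L^2(\li[-T,T\re]\times\mS^1)}$, and $\nl w_n\nr_{L^2(\li[-T,T\re]\times\mS^1)}$ separately. The energy identity established in Section \ref{floer1.1}, namely $\int_{\R\times\mS^1}|\p_s u_n|^2 = A_H(x^-) - A_H(x^+)$, gives a uniform $L^2(\R\times\mS^1)$-bound on $\p_s u_n$, and since $\p_s c_{x^-,x^+}$ is smooth with compact support in $s$ the same bound holds for $\p_s w_n$. Solving the Floer equation \eqref{dh} for the time-derivative, $\p_t u_n = X_H(t,u_n) + J_0\p_s u_n$, and using that $X_H$ is uniformly bounded on $\mS^1\times\Tn$, yields a uniform $L^2(\li[-T,T\re]\times\mS^1)$-bound on $\p_t u_n$ and hence on $\p_t w_n$. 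Finally, decomposing $w_n = \PP^0 w_n + \hat w_n$, Lemma \ref{wbound} provides a uniform $C^0(\R,\Rn)$-bound on the constant part $\PP^0 w_n$, while the Poincaré--Wirtinger inequality applied slice-wise on $\mS^1$ gives $\nl \hat w_n(s,\cdot)\nr_{\LS} \leq \frac{1}{2\pi}\nl \p_t w_n(s,\cdot)\nr_{\LS}$ since $\hat w_n(s,\cdot)$ has zero mean; integrating over $s$ closes the $L^2$-bound on $w_n$.

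For the second step, I would rerun the bootstrap of Lemma \ref{el1} in a quantitative, $n$-uniform form. The weak identity $\Delta u_n = \p_{J_0}\dd (u_n) = -\p_{J_0}\big(\nabla H(\cdot,u_n)\big)$, valid on every bounded $\Omega \subset \R\times\mS^1$, shows that the right-hand side lies in $L^2(\Omega)$ with norm controlled by $\nl H\nr_{C^2(\mS^1\times\Tn)}$ together with the $H^1(\Omega)$-norm of $u_n$ from step one. The interior Calderón--Zygmund estimate for $\Delta$ then yields a uniform $H^2$-bound on every compactly contained subdomain. By Rellich's compact embedding $H^2 \hookrightarrow H^1$ on bounded domains, combined with the standard diagonal trick along the exhaustion $\li[-T_k,T_k\re]\times\mS^1$, one extracts an $H^1_{\loc}$-convergent subsequence, proving precompactness.

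The main obstacle --- and the reason Lemma \ref{wbound} was established beforehand --- is controlling the constant part $\PP^0 w_n$: the $\p_t$-derivative bound is blind to constant loops, so without an independent argument one cannot close the $L^2$-bound on $w_n$ itself. Everything else is the standard interplay between the Floer energy, the Cauchy--Riemann type equation, and interior elliptic regularity, made straightforward here by the trivial tangent bundle of $\Tn$.
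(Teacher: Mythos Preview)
Your proof is correct, and your Step~1 matches the paper's Step~1 almost verbatim. The difference lies in how compactness is extracted. You bootstrap from the uniform $H^1_{\loc}$ bounds to uniform $H^2_{\loc}$ bounds via the interior Calder\'on--Zygmund estimate for $\Delta$, then invoke the Rellich embedding $H^2\hookrightarrow H^1$. The paper instead stays entirely at the $H^1$ level: using a cutoff and the explicit identity of Lemma~\ref{formform} it proves the difference estimate $\nl\delta w\nr_{H^1(\Omega_{T'})}\le C\,\nl\delta w\nr_{L^2(\Omega_T)}$ for $\delta w=w_1-w_2$ and any $T>T'>0$, and then applies Rellich $H^1\hookrightarrow L^2$ on $\Omega_T$ to upgrade $L^2$-convergence there to $H^1$-convergence on $\Omega_{T'}$. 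Your route is the standard elliptic-regularity argument; the paper's is more elementary in that it avoids Calder\'on--Zygmund altogether and relies only on the first-order formula~\eqref{delbarform}. The real payoff of the paper's choice appears later: in Theorem~\ref{compact2} the same difference estimate is rerun on the half-cylinder $Z=[0,\infty)\times\mS^1$, where the boundary term $\nl\PP^-\delta w(0,\cdot)\nr_{\HS}$ produced by~\eqref{delbarform} is controlled via Theorem~\ref{compactM}. Your interior $H^2$ bootstrap, by contrast, does not reach the boundary $\{0\}\times\mS^1$, so adapting it to the hybrid setting would require an additional argument there.
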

\begin{proof} We show that $\Phi_{ c_{x^-,x^+}}^{-1}(0)$ is a  $H^1_{\loc}$-precompact set. Then the claim follows immediately from \eqref{moduliF}. \\

{\bf Step 1:}  Let $T>0$, $\Omega_T=[-T,T]\times\mS^1$ and $w \in \Phi_{c_{x^-,x^+}}^{-1}(0)$. We show that $\nl w \nr_{H^1(\Omega_T)}$ is uniformly bounded  
in terms of the energy and of the reference cylinder  $c_{x^-,x^+}$.
Set $u = w +c_{x^-,x^+}$. Then we compute
\begin{align*}
 \nl \p_s w \nr_{L^2(\Omega_T)} & \leq  \nl \p_s u \nr_{L^2(\Omega_T)}  + \nl \p_s c_{x^-,x^+} \nr_{L^2(\Omega_T)}  \\
				& \leq \sqrt{E(u)} + \nl \p_s c_{x^-,x^+} \nr_{L^2(\Omega_T)}  =: c_0(x^-,x^+,c_{x^-,x^+}) \,. 
\end{align*}
and 
\begin{align*}
\nl \p_t w \nr_{L^2(\Omega_T)} &\leq \nl \p_t u -X_H(u) \nr_{L^2(\Omega_T)} +  \nl X_H(u) \nr_{L^2(\Omega_T)} + \nl \p_t c_{x^-,x^+} \nr_{L^2(\Omega_T)}\\ 
                               &\leq \sqrt{E(u)} +  2T\nl H \nr_{C^1(\mS^1\times\Tn)} + \nl \p_t c_{x^-,x^+} \nr_{L^2(\Omega_T)} \\
                               &=:c_1(x^-,x^+,T,H,c_{x^-,x^+})\,.
\end{align*}
Observe that 
\begin{align*}
\nl (\PP^0)^{\perp} w \nr^2_{L^2(\Omega_T)}  & = \int_{-T}^T \sum_{k \not = 0}|w_k(s)|^2 ds \leq 4\pi^2\int_{-T}^T \sum_{k\not = 0}|k|^2|w_k(s)|^2 ds  \\
                                             & = \nl \p_t w \nr^2_{L^2(\Omega_T)} \leq c_1^2 \,,
\end{align*}
where $\PP^0$ denotes the restriction to the $0$-order coefficients. Hence the $t$-derivative already bounds the non-constant part of $w$. 
Denote by $w_0 = \PP^0w$ the constant part of $w$. Then by Lemma \ref{wbound} there is a constant $C>0$ such that $|w_0(0)| \leq C$. Writing 
$$ w_0(s) = \int_{0}^s \p_{\tau} w_0(\tau)d\tau + w_0(0) $$
and applying  H\"older's inequality, we see that 
\begin{align*}
 \nl w_0 \nr_{L^2(\Omega_T)}^2 
&= 
\int_{-T}^{T}\int_{0}^{1}\Big| \int_0^s \p_{\tau} w_0(\tau)d\tau + w(0)\Big|^2dtds \\
&\leq
      2\int_{-T}^{T}\li[|w(0)|^2 + \li( \int_0^s |\p_{\tau} w_0(\tau)|d\tau\re)^2\re]ds \\
&\leq 
4TC^2 + 2 T\int_{-T}^{T} \int_0^s|\p_{\tau} w_0(\tau)|^2d\tau ds \\
&\leq
 4TC^2  + 2 T\int_{-T}^{T}  \nl \p_s w \nr^2_{L^2(\Omega_T)} ds\\
&\leq 
4TC^2 + 4T^2 c^2_0 
\end{align*}
Hence there is a constant $c_2= c_2(x^-,x^+, T,H,c_{x^-,x^+},n)$ bounding $\nl w \nr_{L^2(\Omega_T)}$. 
Setting  $c = (c_0,c_1,c_2)$  we obtain $\nl w \nr_{H^1(\Omega_T)} \leq |c|$ 
and we are done with step 1.\\

{\bf Step 2:} Let $T>T^{\prime}>0$,  $w_1,w_2 \in \Phi_{c_{x^-,x^+}}^{-1}(0)$, set $\delta w = w_1-w_2$ and $u_i = w_i + c_{x^-,x^+}$,
$i=1,2$. We claim there is a constant $C_1 = C_1(T, T^{\prime},H)>0$ such that 
\begin{equation}\label{ester}
\nl \nabla \delta w \nr^2_{L^2(\Omega_{T^{\prime}})} \leq C_1\Big(\nl \dd(u_1) - \dd(u_2) \nr^2_{L^2(\Omega_{T})} + \nl\delta w \nr^2_{L^2(\Omega_{T})}\Big)  \,,
\end{equation}
where 
$$\nl \nabla \delta w \nr^2_{L^2(\Omega_{T^{\prime}})} = \nl \p_s\delta w \nr^2_{L^2(\Omega_{T^{\prime}})} +\nl \p_t\delta w \nr^2_{L^2(\Omega_{T^{\prime}})}\,.$$
Assume that $\eqref{ester}$ holds. Since $\dH (u_i) = 0$ we obtain by the mean value theorem that 
\begin{align}
\nl \dd(u_1) -\dd(u_2)  \nr_{L^2(\Omega_{T})} &  =  \nl  J_0\big(X_H(u_1) -X_H(u_2)\big) \nr_{L^2(\Omega_{T})} \nonumber\\
                                              &\leq \nl H\nr_{C^2(\mS^1\times\Tn)} \nl \delta w \nr_{L^2(\Omega_{T})}\,.  \nonumber
\end{align}
So we can find another constant $C_2 = C_2(T, T^{\prime},H) >0$ such that   
\begin{equation}\label{ester2}
 \nl  \delta w \nr_{H^1(\Omega_{T^{\prime}})} \leq C_2 \nl  \delta w \nr_{L^2(\Omega_{T})}  
\end{equation}
By step 1  the restriction of any sequence $(w_i) \in \Phi_{c_{x^-,x^+}}^{-1}(0)$ to $\Omega_T$ is uniformly bounded. 
Since the embedding of $H^1(\Omega_T,\Rn)$ into $L^2(\Omega_T,\Rn)$ is compact $(w_i)$ possesses a subsequence convergent in $ L^2(\Omega_T,\Rn)$, which by  
\eqref{ester2} has to converge already in $H^1(\Omega_{T^{\prime}},\Rn)$. This shows that $ \Phi_{c_{x^-,x^+}}^{-1}(0)$ is $H^1_{\loc}$-precompact 
and so is $\MM_F(x^-,x^+,H,J_0)$. Thus it remains to prove \eqref{ester}. For this purpose let $\beta$ be a smooth function such that 
$$ 
\beta(s) = \li\{
\begin{array}{crl}
 1 &,& |s| \leq T^{\prime} \\
 0 &,& s \geq T^{\prime} + |T-T^{\prime}|/2\\
 0 &,& s \leq -T^{\prime} - |T-T^{\prime}|/2
\end{array}\re.\,.
$$
Now we apply  the formula \eqref{delbarform} of Lemma \ref{formform} to $\beta \delta w$ and obtain  
\begin{align*}
\nl \p_s\delta w \nr^2_{L^2(\Omega_{T^{\prime}})} +\nl \p_t\delta w \nr^2_{L^2(\Omega_{T^{\prime}})} 
&\leq \nl \p_s\beta\delta w \nr^2_{L^2(\Omega_{T})} +\nl \p_t\beta\delta w \nr^2_{L^2(\Omega_{T})} \\ 
&   = \nl \overline{\p}_{J_0}(\beta \delta w) \nr^2_{L^2(\Omega_{T})}
\end{align*}
We set $C_1 =  2(1+\nl \beta^{\prime}\nr^2_{C^0(\R)})$ to finally get 
$$
\nl \overline{\p}_{J_0}(\beta \delta w) \nr^2_{L^2(\Omega_{T})} \leq C_1 \Big(\nl \big(\overline{\p}_{J_0}(u_1) 
                                                                  -    \overline{\p}_{J_0}(u_2)\big) \nr^2_{L^2(\Omega_{T})} 
								  +  \nl \delta w \nr^2_{L^2(\Omega_T)}\Big)
$$
which proves \eqref{ester} and we conclude the proof.
\end{proof}
As usual we define the notian of {\bf broken trajectories} as follows. 
\begin{defn} Let $x^-,x^+ \in \Pc_0(H)$, $u \in \WRTl$ and $\hat u \in \WRTl/\R$ be the equivalence class of $u$ under the $\R$-action 
$$ \R \times \WRTl \lo \WRTl\,, \quad  (\tau,u) \mapsto u(\cdot +\tau,\cdot) \,.$$ 
Then $\hat u$ is called a {\bf broken trajectory} of order $r$ connecting $x^-$ and $x^+$ if and only if
there are finitely many 1-periodic solutions
$x_i \in \Pc_0(H)$, $i=0,\dots,r$ and associated reparametrization times $\tau_i \in \R$ such that 
$$ u(\cdot +\tau_i,\cdot) \in \MM_F(x_i,x_{i+1},H,J_0)\,, \quad \forall \, i = 0,\dots,r\,, \quad x_0=x^-\,,x_{r+1} =x^+\,.$$
\end{defn}
Using standard methods established in \cite{schwarz-2} and \cite{schwarz}, Theorem \ref{compact} has an immediate consequence. 
\begin{prop} \label{braking} Let $(H,J_0)$ be a regular pair and let furthermore 
$x^-,x^+ \in \Pc_0(H)$ with $x^- \not =x^+$. 
Then the quotient under the $\R$-action denoted by
$$\widehat{\MM}_F(x^-,x^+,H,J_0):=\MM_F(x^-,x^+,H,J_0)/\R$$
is a $\mu(x^-)-\mu(x^+)-1$ dimensional $H^1_{\loc}$-precompact manifold in the sense that for any sequence $(\hat u_n)_{n\in \N}$ in $\widehat{\MM}_F(x^-,x^+,H,J_0)$ 
there exists subsequence $(\hat u_{n_k})_{k\in \N}$ converging to some broken trajectory.
\end{prop}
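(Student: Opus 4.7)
The plan is to combine the smooth manifold result from the previous subsection with the $H^1_{\loc}$-compactness of Theorem \ref{compact}, and then to run a standard breaking analysis driven by energy accounting.

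First I would establish the quotient manifold structure. Since $x^- \neq x^+$, no element $u \in \MM_F(x^-,x^+,H,J_0)$ can be fixed by a nontrivial translation, because translational symmetry together with the exponential decay \eqref{en} would force $u$ to be $s$-independent and therefore $x^- = x^+$. Hence the $\R$-action is smooth and free, and using that $s \mapsto A_H(u(s,\cdot))$ is strictly decreasing between the two finite asymptotic values it is also proper. The quotient $\widehat{\MM}_F(x^-,x^+,H,J_0)$ is therefore a smooth manifold of dimension $\mu(x^-)-\mu(x^+)-1$.

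For the convergence statement I would normalize representatives by a slicing. Pick a regular value $c \in (A_H(x^+), A_H(x^-))$ of $A_H$; strict monotonicity of $A_H \circ u$ gives a unique $s_n \in \R$ for each $u_n \in \hat u_n$ with $A_H(u_n(s_n,\cdot)) = c$, and after shifting $u_n \mapsto u_n(\cdot + s_n, \cdot)$ the gauge is fixed. The uniform bound $E(u_n) = A_H(x^-) - A_H(x^+)$ combined with Theorem \ref{compact} applied on each $\Omega_T = [-T,T] \times \mS^1$ and a diagonal argument yields a subsequence converging in $H^1_{\loc}(\R\times\mS^1,\Tn)$ to some $u_\infty \in \WRTl$. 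The limit solves $\dH(u_\infty) = 0$ weakly, hence smoothly by Lemma \ref{el1}, and Fatou's lemma gives $E(u_\infty) \leq A_H(x^-) - A_H(x^+)$, so by the standard finite-energy characterization $u_\infty$ has asymptotic limits $y^\pm \in \Pc_0(H)$ with $A_H(y^-) \geq c \geq A_H(y^+)$, the convergence being uniform in $t$ due to exponential decay.

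The main obstacle, and the part I expect to require most care, is the inductive recovery of the full broken trajectory from any energy that escapes to the ends. Writing $\delta = (A_H(x^-) - A_H(x^+)) - E(u_\infty) \geq 0$, if $\delta = 0$ and $(y^-,y^+) = (x^-,x^+)$ the sequence already converges to a single (unbroken) trajectory; otherwise energy has concentrated at one or both cylindrical ends, and one iterates. Concretely, at the left end choose another regular value $c' \in (A_H(y^-), A_H(x^-))$ and times $\sigma_n \to -\infty$ with $A_H(u_n(\sigma_n,\cdot)) = c'$, then rerun the extraction on the translates $u_n(\cdot + \sigma_n,\cdot)$ to produce a further Floer trajectory ending at $y^-$; analogously at the right end. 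Since $\Pc_0(H)$ is finite by Proposition \ref{reg1}, there are only finitely many critical values and the minimum positive gap $\delta_{\min} = \min\{A_H(z)-A_H(z') : z,z' \in \Pc_0(H),\, A_H(z) > A_H(z')\}$ is strictly positive; every piece extracted consumes at least $\delta_{\min}$ of the total energy $A_H(x^-)-A_H(x^+)$, so the iteration terminates in at most $\lceil (A_H(x^-)-A_H(x^+))/\delta_{\min} \rceil$ steps, producing a broken trajectory $x^- = z_0, z_1, \dots, z_r = x^+$ of the required form. The delicate point throughout is to verify that no energy is lost in the interior (which would require a bubbling analysis, here absent because the target $\Tn$ is aspherical so that sphere bubbles cannot form) and that $H^1_{\loc}$-convergence on each piece, after translation, really is the correct notion of convergence to the broken trajectory.
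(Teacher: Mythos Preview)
Your proposal is correct and follows precisely the standard breaking argument that the paper has in mind: the paper does not prove this proposition at all but simply states that it is an immediate consequence of Theorem~\ref{compact} ``using standard methods established in \cite{schwarz-2} and \cite{schwarz}.'' Your outline---free proper $\R$-action, action-level slicing, $H^1_{\loc}$-extraction via Theorem~\ref{compact}, iterative recovery of broken pieces with termination guaranteed by the finiteness of $\Pc_0(H)$, and exclusion of bubbling via $\pi_2(\Tn)=0$---is exactly this standard method, written out in more detail than the paper itself provides.
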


\subsection{The Floer complex}

In this section we follow \cite{sal} to give a short summary of the construction of the Floer complex. The main ingredients are the compactness and transversality 
results proven in the previous sections and Floer's gluing theorem. \\ 

Let $H \in \Hr$ be given then by the results in section \ref{transF} we can assume that up to a small perturbation which leaves the periodic orbits fixed, $H$ is such that 
$(H,J_0)$ is a regular pair. Denote by 
$C_k(H)$ the Abelian group generated by the elements $x \in \Pc_0(H)$ with Conley-Zehnder index $\mu(x)=k \in \Z$, i.e.,  
$$ C_k(H) = \bigoplus_{\genfrac{}{}{0pt}{}{x \in \Pc_0(H)\,,}{\mu(x)=k }}\Z_2 x \,.$$
We set 
$$\nu(x,y)= \# \big(\MM_F(x,y,H,J_0)/ \R\big)\quad \mod 2  $$
which is  a well defined finite number due to compactness and transversality.
The boundary operator is then defined on the generators by 
$$ \p^F_k(H,J_0) : C_k(H) \lo C_{k-1}(H)\,, \quad \p^F_k(H,J_0) x = \sum_{\genfrac{}{}{0pt}{}{y \in \Pc_0(H)\,,}{\mu(x)-\mu(y)=1 }}\nu(x,y)y \,. $$
The fact that indeed $\p^F_{k-1}(H,J_0)\circ \p^F_{k}(H,J_0) = 0$ is a consequence of Floer's gluing theorem stated in \cite{floer}. 
Therefore the above setup describes a complex
which we call the {\bf Floer - complex} of $(H,J_0)$. The homology of such a complex is called the {\bf Floer homology} of $(H,J_0)$ : 
$$ HF_k:= \frac{\ker \p^F_k(H,J_0)}{\ran \p^F_{k+1}(H,J_0)}$$  
Due to \cite{floer3} this homology is known to be chain isomorphic to the singular homology of the torus. A consequence of this fact 
was Floer's proof of the Arnold conjecture. A more detailed discussion can also be found in \cite{hof}.

\section{The chain isomorphism}\label{4}

The construction of the isomorphism is based on considering the {\bf moduli spaces of hybrid type curves}. This involves stating a {\bf new non-Lagrangian boundary value problem} for a type of 
Cauchy-Riemann equations 
and is the essential reason for the non-triviality of Theorem 1. In this section we develop new methods to prove that this new coupling condition leads to a 
well posed Fredholm problem and to the fact that the moduli spaces are precompact. It is crucial for this approach that the torus is compact, possesses trivial tangent 
bundle and an additive structure.

\subsection{Moduli spaces of  hybrid type curves}
$ $ \\
Since we must perturb the vector field $\X = -\Ah$ to achieve transversality 
we formulate our results in a more general setting. That is, we consider a smooth Morse vector field  
$X$ with globally defined flow such that $A_H$ is a Lyapunov function for $X$ and the subbundle $\V = \M\times\big(\R^n \times\Hm^+\big)$ is admissible for $X$ according to
Definition \ref{admissible}.\\

For the whole section  $Z$ will denote the half cylinder $Z:=[0,\infty)\times\mS^1$. 

\begin{defn} Given a Hamiltonian  $H \in \Hr$, and a $C^3$- Morse vector field $X$ on $\M=\HT$ with globally defined flow
such that $A_H$ is a Lyapunov function for $X$ then we have $\rest(X)=\crit(A_H)$ and for contractible 1-periodic orbits $x^{\pm} \in \rest(X)$ we abbreviate by 
$\MM_{\hyb}:=\MM_{\hyb}(x^-,x^+,H,J_0,X)$ the {\bf moduli-spaces of hybrid type curves} which we define as the following set 
\begin{align*}
\MM_{\hyb}:= \Big \{ u \in H^1_{\loc}(Z,\Tn) \mi   \dH (u) = 0 \,,\,  u(0,\cdot)\in& \WW^u_X(x^-)\,, \\
                                                    & u(+\infty,\cdot) = x^+\Big\}\,, 
\end{align*}
where $\WW^u_X(x^-)$ denotes the unstable manifold of $x^-$ with respect to $X$ and $\dH (u) = 0 $ should be understood in the weak sense. \\

To state concretely the boundary conditions we recall  that by Lemma \ref{ev}  the evaluation $u \mapsto u(s,\cdot)$ is smooth in $u$ and therefore the condition 
$u(0,\cdot)\in \WW^u_X(x^-)$ is well-posed. Similarly to Definition \ref{defF} we choose  a smooth reference cylinder
$ c_{x^+} \in C^{\infty}([0,\infty)\times\mS^1,\Tn)$ satisfying 
\begin{equation}\label{cx}
 c_{x^+}(s,\cdot) = \li\{
\begin{array}{ll}
 [0]\,, & \text{for} \quad s \leq 1\\
 x^+\,, & \text{for} \quad s\geq 2\quad
\end{array}\re.
\end{equation}
and recall that by our observation in \eqref{HLLH} each $u \in H^1_{\loc}(Z,\Tn)$ can be written as 
$$u(s,t) = [u_0(s)] + \sum_{k \not = 0 }e^{2\pi k J_0t}u_k(s)\,, $$ 
with Fourier-coefficients that dependent continuously on $s$. So we say that
$\lim \limits_{s\rightarrow +\infty} u(s,\cdot) = x^{+}$  if and only if 
\begin{align*}
 \lim_{s\rightarrow + \infty}(\PP^0)^{\perp}\ev_s(u -  c_{x^+}) &= 0 \quad  \text{in} \quad \Hm\,,\\
 \lim_{s\rightarrow +\infty}\PP^0\ev_s(u -  c_{x^+}) &= 0  \quad  \text{on} \quad  \Tn \,,
\end{align*}
where $\PP^0$ denotes the restriction to the constant loops defined in \eqref{pro}. 
\end{defn}
We note some crucial facts : \\

\begin{enumerate}
\item By the inner regularity property, see Lemma \ref{el1}, the hybrid type curves $u \in \MM_{\hyb}(x^-,x^+,H,J_0,X)$ turn out to be smooth on $(0,\infty)\times\mS^1$. 
Since the evaluation map is smooth in $u$ but not even continuous in $s$, the loop  $u(0,\cdot)$ needs not be continuous, 
and can be any arbitrary element of $\M$.\\

\item Let $T>0$, $u \in\MM_{\hyb}(x^-,x^+,H,J_0,X)$. Then on easily verifies 
\begin{align*}
E_T(u)&:=\int^T_{1/T}\int_0^1 |\p_su(s,t)|^2dtds \\
&= A_H\big(u(1/T,\cdot)\big)- A_H\big(u(T,\cdot)\big) < \infty \,.
\end{align*}
By continuity of $A_H$ on $\M$  we get $E(u) =  A_H\big(u(0,\cdot)\big)- A_H(x^+) $. If furthermore $A_H$ is a Lyapunov function for $X$ 
then 
$$A_H\big(u(0,\cdot)\big) \leq A_H(x^-)$$
holds.\\

\item By our remarks in  \eqref{en} and \eqref{en2} we have that $ \lim \limits_{s\rightarrow  \infty} u(s,\cdot) = x^{+}$
and $\lim \limits_{s\rightarrow \infty} \p_s(u,\cdot)=0$ are uniform in $t$ and for $T>0$ large enough and every $u\in\MM_{\hyb}(x^-,x^+,H,J_0,X)$, there exist constants $c=c(T,u),\delta=\delta(T,u) >0$ such that 
$$ |\p_su(s,t)| \leq ce^{-\delta|s|}  $$
for all $(s,t)\in [T,\infty)\times \mS^1$.
\end{enumerate}
As in the previous chapter we want to understand the moduli spaces as the affine translation by the reference cylinder $c_{x^+}$ of the zero set of a certain map. 
In this regard we need the following statement. 
\begin{lem}\label{Hilbertmfd}
Let $H$ be a nondegenerate Hamiltonian, and let $X$ be a $C^k$- Morse vector field, $k\geq 1$,  on $\M$ with globally defined flow, such that the Hamiltonian action $A_H$ is a Lyapunov function for $X$ and furthermore
$\V = \M\times\big(\R^n\times\Hm^+\big)$ is admissible for $X$ as in Definition \ref{admissible}. Then the set 
\begin{equation*}
 H^1_{\WW^u_X(x)}(Z,\Rn) := \li\{ w \in H^1(Z,\Rn) \mi [0] + w(0,\cdot)  \in \WW^u_X(x)\re\}\,, 
\end{equation*}
$ [0] \in \Tn $, $x \in \rest(X)$ is a $C^k$- Hilbert submanifold of $\WZR$ with inner product $\big<\cdot,\cdot\big>_{H^1(Z)}$.
\end{lem}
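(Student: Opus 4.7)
The plan is to realize $H^1_{\WW^u_X(x)}(Z,\R^{2n})$ as the regular preimage of the unstable submanifold $\WW^u_X(x) \subset \M$ under the smooth boundary-evaluation map, and then invoke the submanifold theorem for submersions in Hilbert spaces.

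First I would set up the evaluation map into the full loop space. Define
\begin{equation*}
\Phi : H^1(Z,\R^{2n}) \lo \M\,, \quad w \mapsto [0] + w(0,\cdot)\,,
\end{equation*}
understood as the composition of the boundary evaluation $\ev_0 : H^1(Z,\R^{2n}) \lo \Ee$ from Lemma \ref{ev} with the canonical projection $\Ee = \R^{2n}\times\Hm \lo \Tn\times\Hm = \M$ induced by the covering $\R^{2n} \lo \Tn$. By Lemma \ref{ev}, $\ev_0$ is a smooth surjective submersion; the covering projection is a smooth local diffeomorphism. Hence $\Phi$ is a smooth submersion, and by construction $H^1_{\WW^u_X(x)}(Z,\R^{2n}) = \Phi^{-1}\bigl(\WW^u_X(x)\bigr)$.

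Next I would certify that $\WW^u_X(x)$ is a genuine $C^k$-submanifold of $\M$ (not merely an immersed image), so that the preimage theorem applies. Since $H$ is nondegenerate we have $x \in \rest(X) = \crit(A_H)$ hyperbolic, and by hypothesis $A_H$ is a Lyapunov function for $X$. The nondegeneracy of $A_H$ at $x$ (the Hessian $D^2A_H(x)$ is negative definite on $E^u_x$ and positive definite on $E^s_x$, inherited from the hyperbolicity of $D\X(x)$ and the admissibility of $\V$) is precisely the hypothesis of the Hilbert-manifold part of Theorem \ref{stable}. Thus $\WW^u_X(x)$ is a $C^k$-embedded submanifold of $\M$, modeled on $E^u_x$.

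Finally I would apply the regular-value / submersion preimage theorem for Hilbert manifolds. Because $\Phi$ is a submersion and $\WW^u_X(x) \subset \M$ is a $C^k$-submanifold, $\Phi$ is automatically transverse to $\WW^u_X(x)$ along every preimage point $w$, and the tangent space
\begin{equation*}
T_w H^1_{\WW^u_X(x)}(Z,\R^{2n}) = D\Phi(w)^{-1}\bigl(T_{\Phi(w)}\WW^u_X(x)\bigr)
\end{equation*}
is a closed subspace of the Hilbert space $H^1(Z,\R^{2n})$ (closed as the preimage under the bounded operator $D\Phi(w)$ of a closed subspace). In a Hilbert space every closed subspace splits via its orthogonal complement, so the abstract submanifold theorem yields that $\Phi^{-1}(\WW^u_X(x))$ is a $C^k$-Hilbert submanifold of $\bigl(H^1(Z,\R^{2n}),\langle\cdot,\cdot\rangle_{H^1(Z)}\bigr)$, which is the claim. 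The only subtlety worth double-checking is the surjectivity of $D\Phi(w)$ onto $T_{\Phi(w)}\M$, but this follows directly from the submersion statement of Lemma \ref{ev} together with the fact that the covering projection $\R^{2n} \lo \Tn$ is a local diffeomorphism, so no extra work is required.
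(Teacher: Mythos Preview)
Your proof is correct and takes a genuinely different, more direct route than the paper's. Both arguments begin by invoking Theorem~\ref{stable} to know that $\WW^u_X(x)$ sits in $\M$ as a $C^k$-embedded submanifold. From there you simply pull $\WW^u_X(x)$ back along the smooth submersion $\Phi(w) = [0] + w(0,\cdot)$ and appeal to the preimage theorem, using the Hilbert-space fact that every closed subspace splits orthogonally to guarantee the required complemented tangent space. The paper instead works with a parametrization $F^u : E^u \to \M$ of the unstable manifold, forms the map $\lambda(w,v) = w(0,\cdot) - F^u(v)$ on the product $H^1(Z,\R^{2n}) \times E^u$, shows $[0]$ is a regular value (here the paper explicitly invokes admissibility of $\V$ to get complementedness of $T_p\WW^u_X(x)$), and then embeds $\lambda^{-1}([0])$ back into $H^1(Z,\R^{2n})$ via the first-factor projection. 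Your approach is more economical and makes clear that admissibility of $\V$ is not strictly needed for this particular lemma once one knows $\WW^u_X(x)$ is embedded in a Hilbert manifold; the paper's product-space construction, on the other hand, yields explicit charts via $F^u$ and would adapt to a Banach setting where orthogonal complements are unavailable.

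One small remark: your parenthetical justification that $A_H$ is a \emph{nondegenerate} Lyapunov function (needed for the embedding conclusion in Theorem~\ref{stable}) is a bit telegraphic, but the paper's own proof is equally brief on this point, and in the intended application $X = \X_K$ the nondegeneracy is immediate from $D\X_K(x) = -\AH(x)$.
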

\begin{proof} Since we are in a Hilbert manifold setting and $A_H$ is a Lyapunov function for $X$ by Theorem \ref{stable},  there is a $C^k$-embedding  
$$ F^u : E^u \lo \M\,, \quad \text{with} \quad F^u(E^u) = \WW^u_X(x) \,,$$  
where $E^u$ denotes the unstable eigenspaces of $DX(x)$. We consider the map 
\begin{equation*}
 \lambda : \WZR \times E^u \lo \M\,,\quad  (w,v) \mapsto w(0)-F^u(v)\,,
\end{equation*}
Observe that 
\begin{equation}\label{lambda}
 \lambda^{-1}([0])=\li\{ (w,v) \in  H^1(Z,\Rn) \times E^u \mi w(0)- F^u(v) = [0]\re\} \,. 
\end{equation}
The derivative at $(w,v)$ is given by 
$$  D\lambda(w,v) : \WZR \times E^u\lo \HR\,,\quad  (\xi,\eta) \mapsto \xi(0,\cdot)-DF^u(v)\eta\,,$$
which is obviously onto. Furthermore if $p = F^u(v)$ then 
$$ \ker D\lambda(w,v) \cong   H^1_{T_p\WW^u_X(x)}(Z,\Rn) $$ 
is complemented in $\WZR \times E^u$ if $T_p\WW^u_X(x)$ is complemented in $\Ee$. The latter is certainly true since $\V$ is admissible for $X$ and therefore $T_p\WW^u_X(x)$ is a 
compact perturbation of $\V$ which is complemented by the infinite dimensional space $\R^n \times \Hm^-$. Hence $\lambda^{-1}([0])$ is a $C^k$- Hilbert manifold with 
inner product. Let 
$$\pi : \WZR \times E^u \lo \WZR$$
be the projection onto the first factor, then, by \eqref{lambda}, the restriction of $\pi$  to $\lambda^{-1}([0])$ 
is a $C^k$- embedding into $\WZR$ with 
$$\pi\big(\lambda^{-1}([0])\big) = H^1_{\WW^u_X(x)}(Z,\Rn)\,,$$ which proves the claim. 
\end{proof}
\begin{prop}\label{affine trans} Assume the assumptions of Lemma \ref{Hilbertmfd} be fulfilled. Then the non-linear operator
$$ \Theta_{x^-,c_{x^+}} : H^1_{\WW^u_X(x^-)}(Z,\Rn) \lo L^2(Z,\Rn)\,, \quad w \mapsto \dH(w+c_{x^+}) \,. $$
is well defined, of class $C^k$, and  
\begin{equation*}
 \MM_{\hyb}(x^-,x^+,H,J_0,X) = c_{x^+} +\Theta_{x^-,c_{x^+}}^{-1}(0) \subset \WZTl   \,. 
\end{equation*}
\end{prop}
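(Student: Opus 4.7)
My approach mirrors Proposition \ref{Pfi}, adapting it to the half-cylinder $Z$ with the non-Lagrangian boundary condition at $s=0$ replacing the convergence condition at $s = -\infty$. The three required verifications are: that $\Theta_{x^-,c_{x^+}}$ takes values in $L^2(Z,\Rn)$; that it is of class $C^k$; and the affine identification of the moduli space.

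For the first two, I would decompose
\[
\dH(w+c_{x^+}) = \p_s w + \p_s c_{x^+} + J_0\bigl(\p_t w + \p_t c_{x^+} - X_H(\cdot,w+c_{x^+})\bigr).
\]
The $\p_s w$ and $\p_t w$ terms lie in $L^2$ by definition of $H^1_{\WW^u_X(x^-)}(Z,\Rn) \subset H^1(Z,\Rn)$, and $\p_s c_{x^+}$ is supported on $[1,2] \times \mS^1$, hence $L^2$. Crucially, since $x^+$ is a $1$-periodic Hamiltonian orbit, for $s \geq 2$ one has $\p_t c_{x^+} = \dot x^+ = X_H(\cdot,x^+)$, so the Hamiltonian contribution on the infinite end collapses to $X_H(\cdot,w+x^+) - X_H(\cdot,x^+)$, which the mean value theorem on the compact torus bounds pointwise by $\nl H \nr_{C^2(\mS^1\times\Tn)}\,|w|$; this is $L^2$ because $w$ is. On the bounded remainder $[0,2] \times \mS^1$ all quantities are in $L^2$ by boundedness of $\nabla H$. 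Smoothness of $\Theta_{x^-,c_{x^+}}$ then follows by subtracting the continuous linear map $w \mapsto \p_s w + J_0\p_t w$ and arguing that $w \mapsto X_H(\cdot,w+c_{x^+})$ is $C^k$ from $H^1$ to $L^2$, exactly as in the proof of Theorem \ref{hof3}.

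For one inclusion of the identification, given $w \in \Theta^{-1}_{x^-,c_{x^+}}(0)$ I set $u := w + c_{x^+}$. The equation and the boundary condition $u(0,\cdot) = [0] + w(0,\cdot) \in \WW^u_X(x^-)$ hold by construction. The asymptotic condition at $+\infty$ is obtained from Lemma \ref{ev}: applied to $w$ on $[s_0,\infty)\times\mS^1$ it yields $\nl w(s_0,\cdot) \nr_{\HH} \leq \sqrt{2}\,\nl w \nr_{H^1([s_0,\infty)\times\mS^1)} \to 0$ as $s_0 \to \infty$, which gives convergence of both the zero-frequency component (on $\Tn$) and the non-zero-frequency component (in $\Hm$) of $\ev_{s_0}(u - c_{x^+})$.

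The converse inclusion is the main technical point: given $u \in \MM_{\hyb}$, one must show that, after choosing a compatible lift to $\Rn$, the curve $w := u - c_{x^+}$ belongs to $H^1(Z,\Rn)$, not merely to $H^1_{\loc}(Z,\Rn)$. On $[0,3]\times\mS^1$ this follows from the inner regularity of Lemma \ref{el1} and the smoothness of $c_{x^+}$. On $[3,\infty)\times\mS^1$, where $c_{x^+}\equiv x^+$, I would invoke the exponential decay estimate \eqref{en}, whose hypothesis $E(u)<\infty$ is supplied by the continuity of $A_H$ on $\M$ together with the Lyapunov inequality $A_H(u(0,\cdot)) \leq A_H(x^-)$. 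From $|\p_s u(s,t)| \leq Ce^{-\delta s}$ one immediately obtains $\p_s w = \p_s u \in L^2$; integration from $+\infty$ yields $|w(s,t)| = |u(s,t) - x^+(t)| \leq (C/\delta)e^{-\delta s}$, hence $w \in L^2$; and the Floer equation combined with the mean value theorem gives $|\p_t w| \leq |\p_s u| + \nl H \nr_{C^2}\,|w|$, so $\p_t w \in L^2$ as well. The essential hurdle is keeping these $L^2$-bounds alive on the infinite end, which works only because $x^+$ solves the Hamilton equation, forcing $\p_t c_{x^+} - X_H(c_{x^+})$ to vanish for $s \geq 2$.
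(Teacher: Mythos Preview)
Your argument is correct and is precisely the detailed execution of what the paper's one-line proof defers to: the method of Proposition~\ref{Pfi}, adapted to the half-cylinder $Z$ with the unstable-manifold boundary condition replacing the $s=-\infty$ asymptotic. The only minor cleanups are that on the compact piece you need merely $u\in H^1_{\loc}$ (inner regularity is not required there), and the threshold for the exponential decay should be taken as the $T$ supplied by remark~(iii) following the definition of $\MM_{\hyb}$ rather than the fixed value~$3$.
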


\begin{proof} The proof is completely analogous to the proof of Proposition \ref{Pfi}.
\end{proof}

\subsection{The Fredholm problem and Transversality}

In this section we prove that if the Morse vector field $X$ on $\M$ is of the form $\X_K = -\Ah +K$ with $K\in \Kc_{\theta,r}$, $r>0$, as in section \ref{2.3} and is sufficiently close 
to $-\Ah$ then for all 1-periodic solutions $x^-,x^+ \in \Pc_0(H)$ the associated maps $\Txx$ are Fredholm. Furthermore, we extend our transversality result from section \ref{2.3} 
to show that there is a residual set of compact maps $\widehat \Kc_{\reg} \subset\Kc_{\theta,r}$ such that for $K \in \widehat \Kc_{\reg}$,  $0$ becomes a regular value for 
$\Theta_{x^-,c_{x^+}}$ and $\X_K$ satisfies the Morse-Smale condition up to order 2. So both, the intersections of 
stable and unstable manifolds with respect to $\X_K$ and the moduli spaces of hybrid type curves are manifolds. If, in addition, $(H,J_0)$ is a regular pair then 
also the moduli spaces of Floer cylinders are  manifolds, which shows that for generic data all three problems can be treated simultaneously.\\

Let $r=1$  and $\X_K:=-\Ah+K$, $K \in \Kc_{\theta,r}$ then in   Lemma \ref{pert} we have shown that 
 \begin{enumerate}
 \item $\rest(\X_K) = \crit(A_H)$.
 \item $D\X_K(x) = -D^2A_H(x)$ for all $x \in \crit(A_H)$.
 \item $A_H$ is a  Lyapunov function for $\X_K$.
\end{enumerate}
So the dynamics of $\X_K$ do not qualitatively differ from those of $-\Ah$.
\begin{thm}\label{fred2} Let $H \in \Hr$, $r>0$ and $K \in \Kc_{\theta,r}$. 
 Then, if $r$ is chosen  sufficient small, for any $x^{\pm} \in \Pc_0(H)$ and any $w \in H^1_{\WW^u_{\X_K}(x^-)}(Z,\Rn)$,  
the linearized operator 
\begin{equation}\label{Dhyb}
 D\Txx(w) : H^1_{T_p\WW^u_{\X_K}(x^-)}(Z,\Rn)\lo  L^2(Z,\Rn)  
\end{equation}
with 
$$  D\Txx(w)\eta = \p_s\eta + J_0\p_t\eta +  \nabla^2H(\cdot,w+c_{x^+})\eta $$
and $p:= [0] + w(0,\cdot)$, is a Fredholm operator of index 
$$ \ind  D\Txx(w)  = m(x^-,\V)-\mu(x^+)\,,$$
where  $m(x^-,\V)$ denotes the relative Morse index of $x^-$ relative to $\V=\M\times\big(\R^n\times\Hm^+\big)$, and $\mu(x^+)$ the Conley-Zehnder index of $x^+$.
\end{thm}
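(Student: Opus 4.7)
The approach I will take is to view the linearization as a Cauchy-Riemann boundary-value problem on the half-cylinder $Z$, to reduce it by a compact perturbation of the boundary condition to a model problem with Hardy-type boundary $V := \R^n \times \Hm^+$, and to compute the model index by extending across the boundary $s=0$ to the full cylinder $\R \times \mS^1$.

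\textbf{Setup and reduction.} Writing $u = w + c_{x^+}$ and $S(s,t) := \nabla^2 H(t,u(s,t))$, the operator $D\Txx(w)\eta = \p_s\eta + J_0\p_t\eta + S\eta$ has uniform asymptotic $S(s,\cdot) \to S^+(t) := \nabla^2 H(t,x^+(t))$ (by the exponential decay \eqref{en}), and the associated symplectic path $\Psi^+$ is non-degenerate with Conley-Zehnder index $\mu(x^+)$. Its domain $H^1_W(Z,\Rn)$, with $W := T_p\WW^u_{\X_K}(x^-)$, sits as a closed subspace of $H^1(Z,\Rn)$ via the submersive trace $\ev_0$ of Lemma \ref{ev}. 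For $r$ sufficiently small, Lemma \ref{pert} ensures $\V$ is admissible for $\X_K$, so Proposition \ref{man}$(i)$ gives that $W$ is a compact perturbation of $V$ with relative dimension $\dim(W,V) = m(x^-,\V)$. Lifting through $\ev_0$, the subspaces $H^1_W(Z,\Rn), H^1_V(Z,\Rn) \subset H^1(Z,\Rn)$ are compact perturbations of one another inside the Grassmannian of $H^1(Z,\Rn)$; a Fredholm-pair argument applying the index-addition formula \eqref{fred} yields that $D\Txx(w)$ is Fredholm if and only if the model operator $D_V : H^1_V(Z,\Rn) \to L^2(Z,\Rn)$ (same formula, boundary in $V$) is, with
$$\ind D\Txx(w) = \ind D_V + m(x^-,\V).$$

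\textbf{Index of the model.} The condition $\eta(0,\cdot) \in V$ is exactly the trace of bounded solutions of $\dd \tilde\eta = 0$ on the opposite half-cylinder $Z^- = (-\infty,0]\times\mS^1$: the mode-wise equation $\p_s\hat\eta_k = 2\pi k\,\hat\eta_k$ forces the $\Hm^+$-part of $\eta(0,\cdot)$ to decay exponentially as $s\to -\infty$, while the $\R^n$-part is extended as a constant. Glueing at $s=0$ and extending $S$ by an arbitrarily small perturbation on $Z^-$ that breaks the intrinsic degeneracy of the constant asymptotic orbit at $-\infty$ (in a convention where $\mu(\Psi^-)=0$), one obtains an honest Floer operator on $\R\times\mS^1$ whose index, by Theorem \ref{floerfred}, is $\mu(\Psi^-) - \mu(x^+) = -\mu(x^+)$. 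A direct check that the glueing preserves the Fredholm index yields $\ind D_V = -\mu(x^+)$, hence $\ind D\Txx(w) = m(x^-,\V) - \mu(x^+)$ as claimed.

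The main obstacle will be this model-index step: the Hardy-type boundary condition $V$ is infinite-dimensional and non-Lagrangian, so the glueing to a closed Floer operator on $\R\times\mS^1$ demands a careful identification of the stable/unstable subspaces at the seam $s=0$, and the Conley-Zehnder contribution of the constant summand $\R^n \subset V$ must be controlled via a small perturbation breaking its intrinsic degeneracy. This is precisely the novel non-Lagrangian Cauchy-Riemann boundary-value problem flagged in the introduction as the new analytical input of the paper.
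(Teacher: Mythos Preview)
Your overall strategy---reduce the boundary subspace $W=T_p\WW^u_{\X_K}(x^-)$ to the model $V=\R^n\times\Hm^+$ by a compact perturbation, then compute the model index by extending across $s=0$ to the full cylinder and invoking Theorem~\ref{floerfred}---is different from the paper's and is in principle viable, but as written it has a genuine gap in the model step.

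The problem is the claim that one can perturb $S$ on $Z^-$ so that the asymptotic path satisfies $\mu(\Psi^-)=0$ while the trace at $s=0$ of the bounded solutions is exactly $V=\R^n\times\Hm^+$. With a symmetric perturbation $S^-=\epsilon I$, the Fourier mode $k$ of a solution of $(\overline\p_{J_0}+\epsilon I)\tilde\eta=0$ behaves like $e^{(2\pi k-\epsilon)s}$ on $Z^-$, so the boundary trace of the decaying solutions is $\{k\ge 1\}=\Hm^+$ for small $\epsilon>0$ and $\{k\ge 0\}=\R^{2n}\times\Hm^+$ for small $\epsilon<0$; it is never $\R^n\times\Hm^+$, because a Hamiltonian asymptotic cannot split the constant block $\R^{2n}$ in half. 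Correspondingly, by the formula \eqref{czdiag} one gets $\mu(\Psi^-)=\mp n$, not $0$. Your argument can be repaired by reducing instead to $V'=\Hm^+$ or $V''=\R^{2n}\times\Hm^+$, but then the first reduction step contributes an additional $\pm n$ (since $\dim(V',V)=-n$ and $\dim(V'',V)=n$), and you must track that this cancels against $\mu(\Psi^-)=\mp n$. As stated, the two normalizations in your proof do not match.

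A second, more structural, gap is that the sentence ``a direct check that the glueing preserves the Fredholm index'' hides essentially all of the analytical content: you need to know that the half-cylinder operator with the Hardy-type boundary condition is Fredholm in the first place, and that kernel and cokernel match those of the glued full-cylinder operator. This is precisely what the paper proves by other means. The paper does not reduce to the full-cylinder Floer operator at all; instead it establishes Fredholmness of $\overline D_W=\overline\p_{J_0}+S$ on $H^1_W(Z,\Rn)$ directly via the semi-Fredholm estimate coming from formula~\eqref{delbarform} (the boundary term $\nl\PP^-\xi(0,\cdot)\nr^2_{\HS}$ is compact exactly because $W$ is a compact perturbation of $\R^n\times\Hm^+$), and identifies the cokernel by an elliptic regularity argument (Proposition~\ref{ann}). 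It then reformulates the problem via the hybrid operator $\overline D_{\hyb}$ (Lemma~\ref{DDhyb}), which couples the Cauchy--Riemann half to the linearized Morse ODE on $(-\infty,0]$ rather than to another Cauchy--Riemann half; this is the natural coupling because the relative Morse index $m(x^-,\V)$ lives on the ODE side. The index is then computed explicitly by Fourier analysis in the constant-coefficient case $S_a=aI$, $S_b=bI$ (Lemma~\ref{indform}), and the general case follows by homotopy.
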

The proof splits into several results. First we prove the Fredholm-property, then we compute the index in a special situation and in the end use a homotopy 
argument to obtain the desired statement. \\

As in section \ref{secfred} we consider a continuous symmetric matrix valued function  $S \in C^0(Z,\Rn\times\Rn)$, and assume that 
\begin{equation}\label{Scon}
\lim \limits_{s\rightarrow+\infty }S(s,t) = S^+(t) \in C^0(\mS^1,\Rn\times\Rn) 
\end{equation}
uniformly in $t$. We define a symplectic path $\Psi^+ \in C^1([0,1],\Sp(2n,\R))$ by 
\begin{equation}\label{cz}
 \p_t \Psi^+(t) = J_0S^+(t)\Psi^+(t)\,, \quad \Psi^+(0) = \Id\,. 
\end{equation}
 and recall that the  Conley-Zehnder index $\mu(\Psi^+)$ is well defined if 
\begin{equation}\label{detpsi}
\det\big(I-\Psi^+(1)\big) \not = 0\,. 
\end{equation}
\begin{lem} \label{Splus} Let $S \in C^0(Z,\Rn\times\Rn)$ such that $\det\big(I-\Psi^+(1)\big) \not = 0$ holds for $\Psi^+$ as in \eqref{cz}. Then the operator 
\begin{equation*}
\overline{D}_{J_0} + S^+  = \p_s +J_0\p_t + S^+ : \WRR \lo \LRR
\end{equation*}
and its formal adjoint  $D_{J_0}+S^+= -\p_s +J_0\p_t + S^+ $ are isomorphisms. In particular there are constants  
$c_1,c_2>0$ such that 
\begin{align*}
\nl \xi \nr_{H^1(\R\times\mS^1)} &\leq c_1 \nl (\overline{D}_{J_0} + S^+)\xi \nr_{L^{2}(\R\times\mS^1)}\,, \\
\nl \xi \nr_{H^1(\R\times\mS^1)} &\leq c_2 \nl (D_{J_0} + S^+)  \xi \nr_{L^{2}(\R\times\mS^1)}\,.  
\end{align*}
for all $\xi \in \WRR$. 
\end{lem}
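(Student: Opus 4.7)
The strategy is to recognise that $\overline{D}_{J_0} + S^+ = \p_s + A$, where $A := J_0\p_t + S^+(t)$ is an unbounded, $s$-independent operator on $\LLR$ with domain $H^1(\mS^1,\Rn)$, and then to separate variables via the spectral decomposition of $A$. First I would check that $A$ is self-adjoint: $J_0\p_t$ is self-adjoint on $\LLR$ because integration by parts combined with $J_0^T = -J_0$ cancels the two sign changes, and $S^+(t)$ is pointwise symmetric. The compact Sobolev embedding $H^1(\mS^1,\Rn) \hookrightarrow \LLR$ then implies that $A$ has compact resolvent, so its spectrum is real, discrete, and accumulates only at $\pm\infty$.

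The decisive step is linking $0\in\spec(A)$ to the hypothesis. An element $\xi_0\in\ker A$ is a smooth loop solving $\xi_0' = J_0S^+(t)\xi_0$, so $\xi_0(t) = \Psi^+(t)\xi_0(0)$ with $\Psi^+(1)\xi_0(0) = \xi_0(0)$; this forces $\xi_0 = 0$ by $\det\big(I-\Psi^+(1)\big) \ne 0$. Discreteness of the spectrum then yields a gap
\begin{equation*}
c_0 := \dist\big(0,\spec(A)\big) > 0.
\end{equation*}

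With an $L^2$-orthonormal eigenbasis $\{e_k\}_{k\in\Z}$ of $A$ and eigenvalues $|\lambda_k|\ge c_0$, writing $\xi(s,t) = \sum_k c_k(s)\,e_k(t)$ decouples $(\overline{D}_{J_0}+S^+)\xi = f$ into the scalar ODEs $c_k' + \lambda_k c_k = f_k$ on $\R$. Fourier transform in $s$ gives $\hat c_k(\sigma) = \hat f_k(\sigma)/(i\sigma+\lambda_k)$, whence
\begin{equation*}
\lambda_k^2\,\nl c_k\nr_{L^2(\R)}^2 + \nl c_k'\nr_{L^2(\R)}^2 \;\le\; 2\,\nl f_k\nr_{L^2(\R)}^2.
\end{equation*}
Summing in $k$ and invoking the norm equivalence $\nl \xi\nr_{H^1(\R\times\mS^1)}^2 \simeq \nl\xi\nr_{L^2}^2 + \nl\p_s\xi\nr_{L^2}^2 + \nl A\xi\nr_{L^2}^2$ (which holds because $\nl\p_t\xi\nr_{L^2} \le \nl A\xi\nr_{L^2} + \nl S^+\nr_{C^0}\nl\xi\nr_{L^2}$) produces the desired estimate $\nl\xi\nr_{H^1} \le c_1\,\nl(\overline{D}_{J_0}+S^+)\xi\nr_{L^2}$. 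Injectivity is immediate, since $C_k e^{-\lambda_k s}\notin L^2(\R)$ whenever $\lambda_k\ne 0$; surjectivity follows from the explicit Fourier formula. Thus $\overline{D}_{J_0}+S^+$ is an isomorphism, and the formal adjoint $D_{J_0}+S^+ = -\p_s + A$ is handled verbatim by the substitution $s\mapsto -s$ in the Fourier step, yielding the bound with $c_2$.

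The only conceptually nontrivial point is the spectral identification of $\ker A$ with fixed vectors of $\Psi^+(1)$ via the linearised Hamilton equation, which is precisely where the Conley--Zehnder-type nondegeneracy hypothesis enters; everything else is Plancherel and standard spectral calculus for a self-adjoint operator with compact resolvent.
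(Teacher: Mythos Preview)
Your argument is correct and is precisely the standard proof found in the references the paper cites (Salamon's lecture notes and Schwarz's thesis): the paper itself does not supply a proof of this lemma but simply refers to \cite{schwarz} and \cite{sal}. Your spectral decomposition of $A = J_0\p_t + S^+$ combined with Fourier transform in $s$ is exactly the approach taken there, and the identification of $\ker A$ with the fixed vectors of $\Psi^+(1)$ is the standard way the nondegeneracy hypothesis enters.
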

Detailed proofs can be found in \cite{schwarz} or \cite{sal}.
The most difficult part in the proof of the Fredholm property in Theorem \ref{fred2} is to show  that the cokernel is finite dimensional. Therefore we need a certain 
elliptic regularity result, which replaces the already known result for Floer half cylinders with Lagrangian boundary condition, see \cite{duff} or \cite{schwarz}. 
The result  is established in step 2 of the proof of the following statement, using again the notion of compact perturbations introduced in section \ref{fredpairs}. 
\begin{prop}\label{ann}  
Let $W\subset \Ee $ be a compact perturbation of $\R^n\times\Hm^+$ and $S \in C^0(Z,\Rn\times\Rn)$ be symmetric such that \eqref{Scon} and \eqref{detpsi} hold. Then 
$$ 
\overline{D}_{W}:=\overline{D}_{J_0} + S : H^1_{W}(Z,\Rn) \lo  L^2(Z,\Rn)\,,
$$
with $\overline{D}_{J_0}  = \p_s+ J_0\p_t$, is a Fredholm operator and its cokernel is given by
$$\coker (\overline{D}_{W}) = \li\{ \rho \in H^1(Z,\Rn) \mi (D_{J_0} +S)  \rho = 0 \,, \quad \rho(0,\cdot)\in 
W^{\perp}\subset \Ee\re\}\,.$$
\end{prop}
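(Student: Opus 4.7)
The strategy is to establish semi-Fredholmness of $\overline{D}_W$ via an elliptic estimate adapted to the non-Lagrangian boundary condition, then to identify the cokernel through a weak adjoint computation plus boundary regularity, and finally to use asymptotic invertibility to bound its dimension. Throughout I write $V = \R^n \times \Hm^+$, so by hypothesis $P_W - P_V$ is a compact operator on $\Ee$.

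The first target is the elliptic estimate
\begin{equation*}
\nl \xi \nr_{H^1(Z)} \leq c \big( \nl \overline{D}_W \xi \nr_{L^2(Z)} + \nl \xi \nr_{L^2(Z_T)} \big)
\end{equation*}
for some $T,c>0$ and all $\xi \in H^1_W(Z,\R^{2n})$, with $Z_T = [0,T]\times\mS^1$. On the tail $[T,\infty)\times\mS^1$ I would multiply $\xi$ by a cut-off, extend by zero to $\R\times\mS^1$, and apply Lemma \ref{Splus} to $\overline{D}_{J_0}+S^+$ combined with a perturbation estimate absorbing $S-S^+$ once $T$ is large. On $Z_T$ I would invoke identity \eqref{delbarform} of Lemma \ref{formform}, whose boundary contribution at $s=0$ is $\nl\PP^+\xi(0,\cdot)\nr^2_{\HS} - \nl\PP^-\xi(0,\cdot)\nr^2_{\HS}$. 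Since $\xi(0,\cdot) \in W$, the identity
\begin{equation*}
\PP^-\xi(0,\cdot) = (I-P_V)P_W\xi(0,\cdot) = \PP^-(P_W-P_V)\xi(0,\cdot)
\end{equation*}
shows $\nl\PP^-\xi(0,\cdot)\nr_{\HS}$ is the action of a compact operator on $\xi(0,\cdot)$, hence absorbable modulo the $L^2(Z_T)$ error by Lemma \ref{ev} and Rellich. Coupled with Rellich compactness on $Z_T$, this yields closed range and finite-dimensional kernel.

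To identify the cokernel, let $\rho \in L^2(Z,\R^{2n})$ satisfy $\li<\rho,\overline{D}_W\xi\re>_{L^2(Z)}=0$ for all $\xi \in H^1_W(Z,\R^{2n})$. Testing against $\xi \in C^{\infty}_c((0,\infty)\times\mS^1,\R^{2n}) \subset H^1_W$ gives $(D_{J_0}+S)\rho = 0$ in the distributional sense (using symmetry of $S$ and antisymmetry of $J_0$), where $D_{J_0} = -\p_s + J_0\p_t$. The interior bootstrap of Lemma \ref{el1} then places $\rho \in C^{\infty}((0,\infty)\times\mS^1,\R^{2n})$, while Lemma \ref{Splus} applied to $D_{J_0}+S^+$ forces exponential decay at $+\infty$, so $\rho \in H^1([T_0,\infty)\times\mS^1,\R^{2n})$ for every $T_0>0$. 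A local boundary regularity argument, exploiting that the model adjoint problem with boundary condition in $\Hm^-$ is solvable by explicit Fourier series on $\mS^1$, promotes $\rho$ to $H^1(Z,\R^{2n})$. Integrating by parts against a general $\xi \in H^1_W$ now gives
\begin{equation*}
0 = \li< \rho, \overline{D}_W \xi\re>_{L^2(Z)} = \li<(D_{J_0}+S)\rho, \xi\re>_{L^2(Z)} - \int_0^1 \li< \rho(0,t), \xi(0,t)\re> dt\,,
\end{equation*}
so the boundary integral vanishes for every $\xi(0,\cdot) \in W$ (the evaluation $H^1_W \to W$ is surjective by the construction in Lemma \ref{Hilbertmfd}). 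As the model calculation $V^\perp = \Hm^-$ shows, for compact perturbations of $V$ the $L^2$-annihilator of $W$ inside $\Ee$ coincides with the $H^{1/2}$-orthogonal complement $W^\perp$. Finite-dimensionality of the cokernel then follows from an analogous elliptic estimate applied to the formal adjoint with boundary $\rho(0,\cdot) \in W^\perp$, which is itself a compact perturbation of $\Hm^-$.

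The hard part will be the boundary elliptic estimate at $s=0$: unlike the classical totally real (Lagrangian) boundary value problem for Cauchy--Riemann operators, $W$ is non-Lagrangian, so the standard Lopatinskii--Shapiro theory does not directly apply. The essential input is that $W$ is a \emph{compact} perturbation of the explicit model $V = \R^n \times \Hm^+$, which via the Fourier identities of Lemma \ref{formform} gives the exact boundary contribution for the model, while the compact correction $P_W-P_V$ only contributes lower-order terms absorbable by Rellich.
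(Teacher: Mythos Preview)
Your outline coincides with the paper's proof almost step for step. Step~1 (semi-Fredholm) is identical: the paper also combines the tail estimate from Lemma~\ref{Splus} with identity \eqref{delbarform} via a cut-off, arriving at
\[
\nl \xi\nr_{H^1(Z)} \leq c\Big(\nl (\overline{D}_{J_0}+S)\xi\nr_{L^2(Z)} + \nl\xi\nr_{L^2([0,R+1]\times\mS^1)} + \nl\PP^-\xi(0,\cdot)\nr_{\HS}\Big),
\]
and then invokes exactly your observation that $\PP^-|_W = \PP^-(P_W-P_V)$ is compact. For the cokernel, the paper likewise takes an $L^2$-annihilator $\eta$, reads off $(D_{J_0}+S)\eta=0$ weakly, upgrades $\eta$ to $H^1(Z)$, and closes with the dual semi-Fredholm estimate on $H^1_{W^\perp}$.

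The one place where the paper is more concrete than your sketch is the boundary regularity you flag as ``the hard part.'' Instead of a model-problem perturbation argument, the paper works modewise: writing $\eta_k(s)=e^{-2\pi ks}\big[\int_0^s e^{2\pi k\tau}f_k(\tau)\,d\tau + \eta_k(0)\big]$ with $f=S\eta$, one bounds $|\eta_k(0)|$ by $e^{2\pi kT}|\eta_k(s)|$ plus an $L^2$-term, and after integrating in $s$ this controls $\nl\PP^+_N\eta(0,\cdot)\nr_{\HS}$ by $\nl\eta\nr_{L^2(Z_T)}$ for each fixed $N$. The compact-perturbation hypothesis now enters a second time: since $\PP^+|_{W^\perp}$ is compact, one chooses $N$ with $\nl(\PP^+-\PP^+_N)|_{W^\perp}\nr\le\tfrac12$, absorbs, and feeds the resulting bound on $\nl\PP^+\eta(0,\cdot)\nr_{\HS}$ into \eqref{delbarformm} to get $\nl\nabla\eta\nr_{L^2(Z)}\le C\nl\eta\nr_{L^2(Z)}$. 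This is the substantive content behind your phrase ``solvable by explicit Fourier series.''

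One genuine caution: your assertion that the $L^2(\mS^1)$-annihilator of $W$ inside $\Ee$ coincides with the $H^{1/2}$-orthogonal complement $W^\perp$ is not automatic for an arbitrary compact perturbation of $V$---the two inner products differ by the Fourier multiplier $\max(2\pi|k|,1)$, which does not preserve a general $W$. The integration-by-parts boundary term is an $L^2$-pairing, so what you actually obtain is $\rho(0,\cdot)$ in the $L^2$-annihilator of $W$ within $\Ee$. The paper is itself imprecise on this point, but the distinction does not affect the Fredholm conclusion: either complement is a compact perturbation of $\R^n\times\Hm^-$, so $\PP^+$ restricted to it is compact and the dual semi-Fredholm estimate goes through.
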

\begin{proof}{\bf Step 1:} 
By the formula in  \eqref{delbarform} we have that
\begin{align*}
 \nl \p_s \xi\nr_{L^{2}(Z)}^2 + \nl \p_t \xi  \nr_{L^{2}(Z)}^2 & = \nl \overline{D}_{J_0} \xi  \nr_{L^{2}(Z)}^2 \\
                                                               & - \nl \PP^+\xi(0,\cdot)  \nr_{\HS}^2 + \nl \PP^- \xi(0,\cdot)  \nr_{\HS}^2  \,. 
\end{align*}
We recall that $\nl S \nr_{C^0(Z)}>0$. Therefore we can choose a constant $c_1\geq 1+1/\nl S \nr_{C^0(Z)}$ and obtain  
\begin{align}
\nl  \xi\nr_{H^1(Z)}  \leq c_1\Big(\nl (\overline{D}_{J_0} +S) \xi  \nr_{L^{2}(Z)}  & + \nl S\nr_{C^0(Z)}\nr \xi\nl_{L^2(Z)}\nonumber \\
                         & + \nl \PP^- \xi(0,\cdot)  \nr_{\HS}^2\Big)\,. \label{f1}
\end{align}
Since by Lemma \ref{Splus} the operator $\overline{D}_{J_0} +S^+ :H^1(Z,\Rn) \lo L^2(Z,\Rn)\big)$ is an isomorphism there is $R>0$ and $c_2>0$ such that 
\begin{equation}\label{f2}
 \nl \xi \nr_{H^1([R,+\infty)\times \mS^1  )} \leq c_2 \nl (\overline{D}_{J_0} +S) \xi \nr_{L^{2}([R,+\infty)\times \mS^1  )}\,, 
\end{equation}
for all $\xi$ with $\supp \xi \subset [R,\infty) \times \mS^1$.
We choose a smooth cut-off function $\beta$ with 
$$\beta(s) = \li\{\begin{array}{rl}
1 \,, &  s \leq R  \\
0 \,, &  s \geq R +1 
 \end{array}\re.
$$
use \eqref{f1} for $\beta \xi $ and \eqref{f2} for $(1-\beta)\xi$ and compute 
\begin{align}
 \nl \xi \nr_{H^1(Z)} & \leq \nl \beta \xi \nr_{H^1(Z)} + \nl (1-\beta) \xi \nr_{H^1(Z)}  \nonumber \\ 
                          & \leq c_1\Big(\nl (\overline{D}_{J_0} +S)\beta \xi  \nr_{L^{2}(Z)}  + \nl S\nr_{C^0(Z)}\nl \beta \xi\nr_{L^2(Z)} \nonumber \\
                          &  \qquad \qquad\qquad\qquad\qquad\qquad + \nl \PP^- \xi(0,\cdot)  \nr_{\HS}^2\Big)\nonumber  \\
                          & + c_2\nl(\overline{D}_{J_0} +S)(1-\beta)\xi \nr_{L^{2}(Z)} \nonumber \\ 
                          & \leq c_3\Big(\nl (\overline{D}_{J_0} +S) \xi \nr_{L^{2}(Z)} + \nl  \xi\nr_{L^2([0,R+1]\times \mS^1)}\nonumber \\ 
                          &  \qquad \qquad\qquad\qquad\qquad\quad\,\, + \nl \PP^- \xi(0,\cdot)  \nr_{\HS}^2\Big) \label{haha} \,.
\end{align} 
Since $W$ is assumed to be a compact perturbation of $\R^n\times\Hm^+$, the operator $\PP^-:  W \lo \Hm^-$ is compact, furthermore the restriction
$H^1(Z) \lo L^2([0,R+1]\times \mS^1)$ is also compact. Hence \eqref{haha} implies that $\overline{D}_{J_0} +S$ is a semi-Fredholm operator, i.e., it is a bounded 
operator with finite dimensional kernel and closed range, see \cite{schwarz-2}\,.\\

{\bf Step 2:} We have to show that the cokernel is finite dimensional. Therefore let 
$\eta \in L^2(Z,\Rn)$ be an annihilator of the range of 
$$ 
\overline{D}_{J_0} + S : H^1_{W}(Z,\Rn) \lo  L^2(Z,\Rn)\,,
$$ 
i.e.,
$\li <\overline{D}_{J_0} + S \xi,\eta\re>_{L^2(Z)} = 0 $ for all 
$$\xi \in H^1_{W}(Z,\Rn) = \li\{\rho \in H^1(Z,\Rn) \mi \rho(0,\cdot) \in W\re\}\,.$$
Then $\eta$ is a weak solution of $D_{J_0}\eta = -S\eta =:-f$ on $Z$, i.e., for all $\phi \in C^{\infty}_0(Z,\Rn)$ with $\phi(0,\cdot) \in W $
\begin{equation}\label{weak}
   \big <\overline{D}_{J_0}\phi,\eta\big>_{L^2(Z)} =-\big <\phi,f\big>_{L^2(\Omega)}\,, 
\end{equation} 
with $\nl f\nr_{L^2(Z)} \leq \nl S\nr_{C^0(Z)}\nl\eta\nr_{L^2(Z)}\,. $
We claim that  $\eta$ is of class $H^1$ and more precisely
$$\eta \in \li\{  \rho \in H^1(Z,\Rn) \mi (D_{J_0} +S)   \rho = 0 \,, \quad  \rho(0,\cdot)\in 
W^{\perp}\subset \HR\re\}\,.$$
We write $\eta(s,t) = \sum \limits_{k\in \Z} e^{2\pi k J_0t}\eta_k(s)$ and $f(s,t) = \sum \limits_{k\in \Z} e^{2\pi k J_0t}f_k(s) $ as Fourier series. 
Then the right candidates for the weak derivatives  are 
\begin{align*}
 \p_t \eta(s,t) & = 2\pi\sum_{k\in \Z} J_0ke^{2\pi k J_0t}\eta_k(s)\,,  \\
\p_s \eta(s,t)  & = \sum_{k\in \Z}e^{2\pi k J_0t}f_k(s) - 2\pi\sum_{k\in \Z} ke^{2\pi k J_0t}\eta_k(s) \,. 
\end{align*}
We have to show that they are bounded in the $H^1$-norm. 
Since  $W$ is assumed to be a compact perturbation of $\R^n\times\Hm^+$ the spaces $\PP^-W$ and $\PP^+W^{\perp}$ are finite dimensional. Let $N\in \N$ and denote by 
$\PP^+_N$ the projection onto the positive frequencies bounded by $N$, i.e.,   
$$\PP^+_Nv =  \sum_{0<k\leq N} e^{2\pi k Jt}v_k\,, \quad \text{for all} \quad v\in \Ee\,. $$ 
Then we can approximate  $\PP^+$ by $\PP^+_N$ on $W^{\perp}$, i.e., for all $\ee >0$ there exists 
$N(\ee)\in \N$ such that 
\begin{equation}\label{proapp}
\nl (\PP^+ - \PP^+_N)q \nr_{\HS} \leq \ee\,, \quad \text{for all} \quad q \in W^{\perp}\,.   
\end{equation}
Assume that $\eta$ is smooth, then \eqref{weak} implies that $\eta(0,\cdot) \in W^{\perp}$, and since $D_{J_0}\eta =-f$ 
by the formulas for the weak derivatives, we observe that 
\begin{equation*}
 \eta_k(s) = e^{-2\pi ks}\li[\int_0^s e^{2\pi k\tau}f_k(\tau)d\tau + \eta_k(0)\re]\,.
\end{equation*}
Hence $\eta_k(0) =  e^{2\pi ks}  \eta_k(s) - \int \limits_0^s e^{2\pi k\tau}f_k(\tau)d\tau $ and for $T>0$ 
\begin{equation*}
 | \eta_k(0) | \leq  e^{2\pi kT}|\eta_k(s) | + \sqrt{\frac{e^{4\pi kT}-1}{4\pi k}} \nl f_k \nr_{L^2([0,T])} \,, \quad \text{for} \quad 0\leq s \leq T\,,
\end{equation*}
where we used H\"older's inequality for the second term. Hence there is $d_1=d_1(N,T)>0$ such that 
\begin{align*}
\nl \PP^+_N\eta(0,\cdot)\nr^2_{\HS} & = \sum_{0<k\leq N}2\pi k |  \eta_k(0)  |^2 \\
                                    & \leq d_1 \Big( \sum_{0<k\leq N}|\eta_k(s) |^2 + \nl f \nr^2_{L^2(Z_T)}\Big) \,, \quad \text{for} \quad 0\leq s \leq T\,,
\end{align*}
where $ Z_T=[0,T]\times \mS^1$. Integrating over $[0,T]$ yields 
\begin{equation*}
T\nl \PP^+_N\eta(0,\cdot)\nr^2_{\HS}  \leq d_1 \Big( \nl P^+_N\eta\nr_{L^2(Z_T)}^2 + T\nl f \nr^2_{L^2(Z_T)}\Big)\,.
\end{equation*} 
Choose $\ee = 1/2$ in \eqref{proapp} and  $N=N(\ee)$ then clearly there is a constant $d_2=d_2(N,T)$ such that 
\begin{align*}
\nl \PP^+ \eta(0,\cdot)) \nr_{\HS}^2  & \leq 2 \nl \PP^+_N \eta(0,\cdot)) \nr_{\HS}^2 \\ 
                                      & \leq d_2\Big(\nl \eta \nr_{L^2(Z_T)}^2 + \nl f \nr_{L^2(Z_T)}^2\Big)\,. 
\end{align*}
Now by  \eqref{delbarformm} and \eqref{weak} we can find a further constant $d_3 = d_3(N,T,S) $ such that  
\begin{align}
\nl \p_s \eta\nr_{L^{2}(Z)}^2 + \nl \p_t \eta \nr_{L^{2}(Z)}^2  & =   \nl D_{J_0} \eta \nr_{L^{2}(Z)}^2 \nonumber\\ 
                                                                & +   \nl \PP^+\eta \nr_{\HS}^2 - \nl \PP^- \eta \nr_{\HS}^2 \nonumber \\
                                                                &\leq \nl f \nr_{L^{2}(Z)}^2 \nonumber \\
                                                                & +   d_2\Big(\nl \eta \nr_{L^2(Z_T)}^2 + \nl f \nr_{L^2(Z_T)}^2\Big)  \nonumber \\
                                                                &\leq d_3\nl \eta \nr_{L^2(Z)}^2 \,. \nonumber 
\end{align}
By density the weak derivatives are bounded which shows that indeed $\eta \in H^1(Z,\Rn)$ and we obtain the formula of $\ker(D_{J_0}+S)$ by \eqref{weak}.
Finally we use the cut-off function $\beta$ and \eqref{delbarformm} as in Step 1 to establish the estimate 
$$\nl \eta\nr_{H^1(Z)} \leq d_4\Big( \nl (D_{J_0}+S)\eta \nr_{L^{2}(Z)} + \nl \eta\nr_{L^2([0,R+1]\times\mS^1)} + \nl \PP^+\eta \nr_{\HS}\Big)\,, $$
with $d_4>0$, which shows that the operator 
$$D_{J_0}+S  : H^1_{W^{\perp}}(Z,\Rn) \lo  L^2(Z,\Rn)$$
is a semi-Fredholm operator whose finite dimensional kernel coincides with the 
cokernel of $\overline{D}_{J_0}+S$, which is therefore indeed a Fredholm operator. 
\end{proof}
It remains to calculate the index. To do this we reformulate the Fredholm-problem studied so far in an equivalent way.
Recall that $W^u_A,W^s_A \subset \Ee$ denote the linear unstable and stable  space of the Cauchy-problem 
$$ X_A^{\prime}(s)  =   AX_A(s) \,, \quad X_A(0)  =  I \quad \text{with}\quad A \in C^0([-\infty,+\infty],\Ee)\,,$$ 
where $A(\pm \infty)$ is assumed to be hyperbolic. We denote by $P_{W^{u}_A}, P_{W^{s}_A}$ the associated projections and by $ C^k_0((-\infty,0],\Ee)$, $k \in \N$ 
the space of all $C^k$-curves $\eta$ with $\lim \limits_{s\rightarrow-\infty}\eta^{(l)}(s)=0$ for all $0\leq l\leq k$.
\begin{lem}\label{DDhyb} Let $A :[-\infty,0]\lo \Lc(\Ee)$ be a continuous path of bounded linear operators such that $A(-\infty)$ is hyperbolic.  Assume that $W^u_A$ is 
a compact perturbation of $\R^n\times\Hm^+$ and let $S \in C^0(Z,\Rn\times\Rn)$ be symmetric such that \eqref{Scon} and \eqref{detpsi} hold. 
Then the operator
$$ \overline{D}_{\hyb} : C^1_0((-\infty,0],\Ee)\times \WZR \lo  C^0_0((-\infty,0],\Ee)\times \LZR \times \Ee $$
defined by 
$$  \overline{D}_{\hyb}(\eta,\xi) = \Big(\eta^{\prime}-A\eta,(\overline{D}_{J_0}+S)\xi, \eta(0)-\xi(0,\cdot)\Big)\,,$$
with $\overline{D}_{J_0}  = \p_s+ J_0\p_t$, is Fredholm and its index coincides with the index of the operator 
$$\overline{D}_W:= \overline{D}_{J_0} + S : H^1_{W}(Z,\Rn) \lo  L^2(Z,\Rn)$$ 
with $W = W^u_A$.
\end{lem}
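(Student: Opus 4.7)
The plan is to match both the kernel and the cokernel of $\overline{D}_{\hyb}$ with those of $\overline{D}_W$, where $W = W^u_A$ is the linear unstable subspace. Since $\overline{D}_W$ is already known to be Fredholm by Proposition \ref{ann}, this will yield the Fredholm property and the index equality simultaneously. The key structural tool is Proposition \ref{FA1}: it supplies a right inverse $R_A^-$ for the bulk operator $F_A^-$ and identifies $\ker F_A^-$ with $W$ through the evaluation map $\eta \mapsto \eta(0)$.

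For the kernel, suppose $(\eta,\xi) \in \ker \overline{D}_{\hyb}$. Then $F_A^-\eta = 0$, so $\eta \in \ker F_A^-$ is uniquely determined by $\eta(0)$, which lies in $W$. The boundary-matching condition forces $\xi(0,\cdot) = \eta(0) \in W$, while the second component equation gives $(\overline{D}_{J_0}+S)\xi = 0$; hence $\xi \in \ker \overline{D}_W$. Conversely, any $\xi \in \ker \overline{D}_W$ lifts uniquely back to a pair in $\ker \overline{D}_{\hyb}$, yielding $\ker \overline{D}_{\hyb} \cong \ker \overline{D}_W$.

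For the cokernel, I decompose $\eta = R_A^-(\alpha_0) + \tilde\eta$ where $\alpha_0 := F_A^-\eta$ and $\tilde\eta \in \ker F_A^-$ corresponds to $w := \tilde\eta(0) \in W$. Under the resulting parametrization $(\alpha_0, w, \xi) \in C^0_0((-\infty,0],\Ee)\times W \times H^1(Z,\Rn)$, the operator takes the form
\begin{equation*}
\overline{D}_{\hyb}(\eta,\xi) = \big(\alpha_0,\, (\overline{D}_{J_0}+S)\xi,\, R_A^-\alpha_0(0) + w - \xi(0,\cdot)\big).
\end{equation*}
Since the first component is a direct summand projection onto $C^0_0((-\infty,0],\Ee)$, the cokernel of $\overline{D}_{\hyb}$ coincides with that of
\begin{equation*}
T \colon W \times H^1(Z,\Rn) \lo L^2(Z,\Rn) \times \Ee,\quad (w,\xi) \mapsto \big((\overline{D}_{J_0}+S)\xi,\, w - \xi(0,\cdot)\big).
\end{equation*}
Noting that $\{0\}\times W \subset \Image T$ (take $\xi=0$), composition with the surjection $(a,b)\mapsto (a,-[b])$ from $L^2(Z,\Rn)\times \Ee$ onto $L^2(Z,\Rn)\oplus \Ee/W$ identifies $\coker T$ with $\coker \Psi$, where $\Psi(\xi) := \big((\overline{D}_{J_0}+S)\xi,\,[\xi(0,\cdot)]\big)$.

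Finally, the identification $\coker \Psi \cong \coker \overline{D}_W$ follows from the snake lemma applied to the commutative diagram whose rows are the short exact sequences $0 \to H^1_W(Z,\Rn) \to H^1(Z,\Rn) \to \Ee/W \to 0$ (right-hand surjectivity coming from the evaluation submersion in Lemma \ref{ev}) and $0 \to L^2(Z,\Rn) \to L^2(Z,\Rn)\oplus \Ee/W \to \Ee/W \to 0$, with vertical maps $\overline{D}_W$, $\Psi$, and the identity. Chaining all the identifications gives $\coker \overline{D}_{\hyb} \cong \coker \overline{D}_W$, so $\overline{D}_{\hyb}$ is Fredholm of the same index as $\overline{D}_W$. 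The main obstacle is the cokernel bookkeeping: one must carefully verify that the extra parameter $w \in W$ absorbs precisely the boundary mismatch, leaving no obstruction beyond that measured by $\overline{D}_W$; once the decomposition via $R_A^-$ is in place, the remaining steps are essentially diagram chases.
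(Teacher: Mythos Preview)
Your proof is correct and your kernel argument matches the paper's exactly. For the cokernel, however, you take a genuinely different route.

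The paper argues by direct construction: it first observes the inclusions
\[
\{0\}\times \coker(\overline{D}_W)\times\{0\}\subset \coker(\overline{D}_{\hyb}),\qquad
\{\kappa:\kappa(0)=0\}\times \ran(\overline{D}_W)\times\Ee\subset \ran(\overline{D}_{\hyb}),
\]
and then, for a general $(\kappa,\rho,p)$ with $\rho\in\ran\overline{D}_W$, writes down an explicit $\eta$ built from the fundamental solution $X_A(s)$ and the projectors $P_W,P_{W^\perp}$ so that $\overline{D}_{\hyb}(\eta,\xi)=(\kappa,\rho,p)$. This shows the first inclusion is an equality.

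Your approach is algebraic rather than constructive: you use the right inverse $R_A^-$ to split the domain as $C^0_0\times W\times H^1$, peel off the first factor, then quotient by $W$ and invoke the snake lemma on the short exact sequence $0\to H^1_W\to H^1\to \Ee/W\to 0$. This is clean and generalizes readily to analogous ``hybrid'' operators, whereas the paper's explicit formula for $\eta$ has the advantage of making the preimage concrete (useful later when one needs to manipulate such solutions, e.g.\ in the transversality argument of Theorem~\ref{res}). One small point worth making explicit in your write-up: the reduction from $\overline{D}_{\hyb}$ to $T$ is not literally ``first component is a projection,'' since $R_A^-\alpha_0(0)$ couples the first and third slots; rather, the section $\alpha\mapsto(\alpha,0,R_A^-\alpha(0))$ lands in $\ran\overline{D}_{\hyb}$, which is what makes the reduction go through.
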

\begin{proof} We show that the respective kernels and cokernels are isomorphic. If $(\eta,\xi)\in \ker \overline{D}_{\hyb}$ then  
$\eta^{\prime}-A\eta =0$ and $\lim \limits_{s\rightarrow-\infty} \eta(s) = 0 $ imply that  
$\eta(0)\in W^u_A$. Furthermore, $\eta$ is uniquely determined by $\eta(0)=\xi(0,\cdot)$. Hence 
$$\ker(\overline{D}_{\hyb}) \cong \li\{\xi \in \WZR \mi (\overline{D}_{J_0} + S)\xi =0\,,\quad \xi(0,\cdot) \in W\re\} = 
\ker (\overline{D}_{W})\,.$$
By Proposition \ref{FA1} the operator $F_A^- : C^1_0\big((-\infty,0],\Ee\big) \lo C^0_0((-\infty,0],\Ee)$, $\eta \mapsto \eta^{\prime}-A\eta$ is a left inverse with 
right inverse $R_A^-$ such that 
\begin{equation*} \label{scheiss}
 W^u_A + \li\{R^-_A(\kappa)(0) \mi \kappa \in C^0_0((-\infty,0],\Ee)\,, \kappa(0)=0 \re\} = \Ee \,.
\end{equation*} Moreover
$\ker(F_A^-) = \li\{X_A(s)v \mi v \in W\re\}$, $s\in\R$ which readily implies the inclusions
\begin{equation}\label{subco}
\li\{0\re\}\times \coker(\overline{D}_W) \times \{0\} \subset \coker(\overline{D}_{\hyb})
\end{equation}
and
$$ \li\{ \kappa \in C^0_0((-\infty,0],\Ee)\mi \kappa(0)=0 \re\}\times \ran(\overline{D}_W) \times \Ee \subset \ran(\overline{D}_{\hyb})\,. $$ 
Therefore, it remains to consider the case where $(\kappa,\rho,p)$ are in the complement of $\li\{0\re\}\times \coker(\overline{D}_W) \times \{0\}$ with 
$\kappa(0)\not = 0$ and $\supp \kappa \subset (-\infty,0]$ is compact. Then we can find $\xi \in H^1_{W}(Z,\Rn)$ with $\overline{D}_{W}\xi = \rho$ and set  
\begin{align*}
\eta(s) & = X_A(s)P_W\Big(\int_0^s X_A(\tau)^{-1}\kappa(\tau) d\tau + p+\xi(0,\cdot)\Big)\\
        & - X_{-A}(s)P_{W^\perp}\Big(\int_s^0 X_{-A}(\tau)^{-1}\kappa(\tau) d\tau - p\Big)\,.
\end{align*}
Since $\kappa$ possesses compact support $\eta$ belongs to  $ C^1_0((-\infty,0],\Ee)$ and therefore $\overline{D}_{\hyb}(\eta,\xi) = (\kappa,\rho,p)$. Thus  
\eqref{subco} is an equality and we obtain $\coker(\overline{D}_{W}) \cong \coker(\overline{D}_{\hyb})$ 
concluding the proof.
\end{proof}

Before we compute the Fredholm index in a special situation, recall that  
$j^* : \LLR\lo \HR$ denotes the adjoint of the embedding $j :\HR \lo \LLR$ from \eqref{j}. Furthermore, note that the Conley-Zehnder index with respect to 
$$ J_0 =\li ( \begin{array}{rc}
 0 & 1 \\
-1 & 0
\end{array}\re)\quad \text{from}\quad \eqref{JJO}
$$
of the symplectic path $ \Psi(t)=e^{J_0S_{\lambda}t}\in C^0\big([0,1],\Sp(2n,\R)\big)$ with $S_{\lambda}=\lambda I$, $\lambda \in \R$ is given by 
\begin{equation}\label{czdiag}
 \mu(\Psi)= -2n\li\lfloor\frac{\lambda}{2\pi}\re\rfloor- n \,,
\end{equation}
where $\li\lfloor x \re\rfloor := \max \li\{k\in \Z \mi k\leq x\re\}$, $x\in\R$ denotes the Gau\ss-bracket. Explicit calculations can be found in \cite{salzeh}.

\begin{lem}\label{indform} Let $a,b \in \R$, $a,b \not \in 2\pi \Z$  and $S_a:= aI\,, S_b:= bI \in \mathrm{Gl}(2n,\R)$. Denote by  $W=W^u_{S_a}\subset \HR$  the linear unstable space of the Cauchy-problem 
$$ X_A^{\prime}(s)  =   AX_A(s) \,, \quad X_A(0)  =  I \quad \text{with}\quad A=\PP^+-\PP^- -j^*S_a$$ 
and by $\Psi_b \in C^1([0,1],\Sp(2n))$ the symplectic path defined in \eqref{cz} with $S=S_b$. Then the linear operator
$$\overline{D}_{a,b} : C^1\big((-\infty,0],\Ee\big)\times \WZR \lo  C^0((-\infty,0],\Ee)\times \LZR \times \Ee $$
defined by 
$$ \overline{D}_{a,b}(\eta,\xi)= \Big(\eta^{\prime}-(\PP^+-\PP^--j^*S_a)\eta,(\overline{D}_{J_0}+S_b)\xi, \eta(0)-\xi(0,\cdot)\Big)$$
is Fredholm of finite index 
\begin{equation*}
 \dim (W^u_{S_a},\R^n\times\Hm^+) - \mu(\Psi_b) \,,
\end{equation*}
where  $\dim (W^u_{S_a},\R^n \times \Hm^+) = \dim\big(W^u_{S_a} \cap(\R^n\times \Hm^+)^{\perp}\big)-\dim\big((W^u_{S_a} )^{\perp}\cap( \R^n\times\Hm^+)\big)$ denotes the 
relative dimension of $W^u_{S_a}$ with respect to $\R^n\times\Hm^+$ introduced in \eqref{reldim}. In particular, if  $a=b$, then $ \overline{D}_{a,b}$ is an isomorphism.
\end{lem}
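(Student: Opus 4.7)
The key observation is that since $S_a$ and $S_b$ are scalar multiples of the identity, the dynamical generator $A:=\PP^+-\PP^--j^*S_a$ and the Cauchy-Riemann operator $\overline{D}_{J_0}+S_b$ are simultaneously diagonal with respect to the Fourier frame $\{e^{2\pi kJ_0 t}\}_{k\in\Z}$. I would first apply Lemma \ref{DDhyb} to reduce the Fredholm analysis of $\overline{D}_{a,b}$ to that of the half-cylinder operator
\begin{equation*}
\overline{D}_W:=\overline{D}_{J_0}+S_b:H^1_W(Z,\Rn)\lo \LZR,\qquad W:=W^u_{S_a}.
\end{equation*}
Checking the hypotheses of that lemma amounts to computing $A$ explicitly on each Fourier mode: by formula \eqref{jstar}, $A$ acts on mode $k$ as the real scalar $\lambda_k=\mathrm{sign}(k)-a/(2\pi|k|)$, with $\lambda_0=-a$. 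Because $a\notin 2\pi\Z$, no $\lambda_k$ vanishes, and $\lambda_k\to\pm 1$ as $|k|\to\infty$, so $A$ is hyperbolic. Moreover $W^u_{S_a}$ is the Hilbert direct sum of the $\Rn$ factors on which $\lambda_k>0$, namely those $k>a/(2\pi)$, which differs from $\R^n\times\Hm^+$ only on the constants and on finitely many non-zero modes, hence is a compact perturbation of it.

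With this reduction, $\overline{D}_W$ splits as an orthogonal direct sum of scalar ODE operators $L_k:H^1_{W_k}([0,\infty),\Rn)\lo L^2([0,\infty),\Rn)$, $\xi\mapsto \xi^{\prime}+(b-2\pi k)\xi$, where $W_k=\Rn$ if $k>a/(2\pi)$ and $W_k=\{0\}$ otherwise. Each $L_k$ is an elementary first-order Fredholm problem for the constant coefficient $\mu_k:=b-2\pi k$: exponentials $e^{-\mu_k s}$ lie in $H^1$ iff $\mu_k>0$, and, by Proposition \ref{ann}, $\ker L_k\cong \Rn$ iff $\mu_k>0$ and $W_k=\Rn$, while $\coker L_k\cong \Rn$ iff $\mu_k<0$ and $W_k=\{0\}$; in all other cases $L_k$ is an isomorphism. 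Summing over modes, the only surviving contributions come from integers $k$ strictly between $a/(2\pi)$ and $b/(2\pi)$, giving
\begin{equation*}
\ind\overline{D}_W=2n\bigl(\lfloor b/(2\pi)\rfloor-\lfloor a/(2\pi)\rfloor\bigr).
\end{equation*}

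Finally I would compare this with the claimed formula. By \eqref{czdiag} applied to $\Psi_b(t)=e^{bJ_0 t}$ one has $\mu(\Psi_b)=-2n\lfloor b/(2\pi)\rfloor-n$. The relative dimension $\dim(W^u_{S_a},\R^n\times\Hm^+)$ is read off from the explicit Fourier description of both subspaces: they agree on all but finitely many non-zero modes and on the constants (which lie entirely in $W^u_{S_a}$ iff $a<0$). A short case analysis on the sign of $a$ — the only piece of bookkeeping that requires care — yields the uniform expression $-n-2n\lfloor a/(2\pi)\rfloor$, so that $\dim(W^u_{S_a},\R^n\times\Hm^+)-\mu(\Psi_b)$ matches the computed Fredholm index. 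For the isomorphism claim, when $a=b$ the conditions ``$\mu_k>0$ and $k>a/(2\pi)$'' defining a non-trivial kernel of $L_k$ are mutually incompatible (and likewise for the cokernel conditions), so every $L_k$ is an isomorphism; therefore $\overline{D}_W$ and, via Lemma \ref{DDhyb}, $\overline{D}_{a,a}$ are isomorphisms.
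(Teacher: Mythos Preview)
Your argument is correct and follows essentially the same route as the paper: reduce via Lemma~\ref{DDhyb} to the half-cylinder operator $\overline{D}_W$ with $W=W^u_{S_a}$, identify $W$ explicitly as the span of Fourier modes $k\geq\lfloor a/(2\pi)\rfloor+1$, and then compute kernel and cokernel mode by mode using Proposition~\ref{ann}. The only cosmetic difference is that you package the Fourier computation as a direct sum of scalar operators $L_k$, whereas the paper writes out the Fourier coefficients $\xi_k(s)=e^{(2\pi k-b)s}\xi_k(0)$ and $\rho_k(s)=e^{-(2\pi k-b)s}\rho_k(0)$ directly; the resulting conditions $\lfloor a/(2\pi)\rfloor+1\leq k\leq\lfloor b/(2\pi)\rfloor$ for the kernel and $\lfloor b/(2\pi)\rfloor+1\leq k\leq\lfloor a/(2\pi)\rfloor$ for the cokernel are identical.
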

\begin{proof}In this particular situation we obviously have that 
$$ X_A(s) = e^{(\PP^+ -\PP^--j^*S_a)s}\,, \quad \forall \,s\in\R\,.$$ 
Recall that, by Proposition \ref{jj}, for all $q \in L^2(\mS^1,\Rn)$ 
$$j^*(q)(t)= q_0 + \sum \limits_{k\not = 0}e^{2\pi kJ_0t}\frac{1}{2\pi|k|}q_k\,.$$ 
Since  the matrices $S_a,S_b$ possess no null-direction  
$\lim \limits_{s\rightarrow-\infty} X_A(s)p = 0$ holds if and only if 
\begin{equation*}
p \in \li\{p \in \HR  \mi p(t) = \sum_{k\geq K}e^{2\pi kJ_0t}p_k\,, \quad 
K= \li\lfloor\frac{a}{2\pi}\re\rfloor +1\re\}=W^u_{S_a} \,.
\end{equation*}
So $W^u_{S_a}$ is a compact perturbation of 
$\R^n\times\Hm^+$ with
\begin{equation}\label{rdim}
\dim (W^u_{S_a}, \R^n\times\Hm^+) = -2n\li\lfloor\frac{a}{2\pi}\re\rfloor-n \,.
\end{equation}
Since $\Psi_a,\Psi_b$ satisfy \eqref{detpsi} if and only if $a,b \not \in 2\pi \Z$, we can apply Proposition \ref{ann} and Lemma \ref{DDhyb}
to obtain that  $ \overline{D}_{a,b}$ is Fredholm of finite index. Let $0\not=(\eta,\xi) \in \ker( \overline{D}_{a,b})$, then by Lemma \ref{DDhyb} 
we have that the kernel is isomorphic to 
$$\li\{ \xi \in H^1(Z,\Rn) \mi (\overline{D}_{J_0}+S_b) \xi = 0\,, \quad \xi(0,\cdot)\in W^u_{S_a}\re\}\,.$$
Therefore we can use the Fourier representation 
$$\xi(s,t) = \xi_0(s) + \sum \limits_{k\in \Z}e^{2\pi kJ_0t}\xi_k(s)$$ 
to observe that
$ \dot\xi_k(s)-(2\pi k-b)\xi_k(s)=0\,.$
Hence 
$$\xi_k(s) = e^{(2\pi k-b)s}\xi_k(0) \,.$$
Now, since we must have $\xi_k(s) \lo 0$ for $s\rlo+\infty$ and $\xi(0,\cdot)\in W^u_{S_a}$, we obtain 
\begin{equation}\label{K1}
\li\lfloor\frac{a}{2\pi}\re\rfloor+1 \leq k\leq \li\lfloor\frac{b}{2\pi}\re\rfloor\,.
\end{equation}
On the other hand, by  Proposition \ref{ann} and Lemma \ref{DDhyb},
the cokernel of $ \overline{D}_{a,b}$ is  canonically isomorphic to 
$$\li\{ \rho \in H^1(Z,\Rn) \mi (D_{J_0}+S_b) \rho = 0\,, \quad \rho(0,\cdot)\in (W^u_{S_a})^{\perp}\re\}\,.$$
If $0\not = \rho $ the Fourier-coefficients satisfy $\dot\rho_k(s)+(2\pi k-b)\rho_k(s)=0$. Hence 
$$\rho_k(s) = e^{-(2\pi k-b)s}\rho_k(0)$$
and we have to require that 
\begin{equation}\label{K2}
\li\lfloor\frac{b}{2\pi}\re\rfloor+1 \leq k\leq \li\lfloor\frac{a}{2\pi}\re\rfloor
\end{equation}
to assure that  $\rho_k(s) \lo 0$ for $s\rlo+\infty$.
So if $a=b$  none of the conditions  \eqref{K1} or \eqref{K2} can be fulfilled, which implies that  kernel and cokernel are trivial and therefore 
$\overline{D}_{a,b}$  is an isomorphism in this case. If $a,b$ are such that  either \eqref{K1} or \eqref{K2} applies, then we obtain, by \eqref{czdiag} and \eqref{rdim}
$$\ind(\overline{D}_{a,b}) = -2n\li\lfloor\frac{a}{2\pi}\re\rfloor +2n\li\lfloor\frac{b}{2\pi}\re\rfloor =  \dim (W^u_{S_a},\R^n \times \Hm^+ ) - \mu(\Psi_b) \,.$$
\end{proof}
{\bf Proof of Theorem \ref{fred2}}:
\begin{proof}
We approximate $w \in H^1_{\WW^u_{\X_K}(x^-)}(Z,\Rn)$ by a curve $\tilde w$ that is smooth in the interior $(0,\infty)\times \mS^1$, 
and satisfies $\tilde w(0,\cdot) =w(0,\cdot)$ and consider the operator 
$$\widetilde{D\Theta}_{x^-,c_{x^+}}(\tilde w)\xi = \p_s\xi + J_0\p_t\xi + \nabla^2 H(\cdot,\tilde w +c_{x^+})\xi\,.$$
In this situation \eqref{Scon} and \eqref{detpsi} are satisfied and, moreover, by Lemma \ref{pert}, we have that $W:= T_p\WW^u_{\X_K}(x^-)$ with $p = w(0,\cdot)$ is a 
compact perturbation of $\R^n \times \Hm^+$. Thus all assumptions of Proposition \ref{ann} hold and therefore 
$$ \overline{D}_{J_0} +S : H^1_W(Z,\Rn) \lo L^2(Z,\Rn)\,, \quad S = \nabla^2 H(\cdot,\tilde w +c_{x^+})$$ is a Fredholm operator, which implies 
that $\widetilde{D\Theta}_{x^-,c_{x^+}}(\tilde w)$
is also Fredholm and of equal index. Moreover, $\Txx$ is smooth, and therefore $D\Txx(w)$ is also Fredholm possessing the same index as $\overline{D}_{J_0} +S$. 
By Lemma \ref{DDhyb} the Fredholm operator $\overline{D}_{\hyb}$ with $A = D\X_K(v)$ for the trajectory $v$ of $\ffiXK$ with $v(0) = w(0,\cdot) \in \WW^u_{\X_K}(x^-)$ 
possesses the same index as $D\Txx(w)$. Hence, we can apply Lemma \ref{indform} to compute the index.\\

Observe that if we choose $A^{\prime}=D\X(v)$ instead of $A = D\X_K(v)$, then this changes the operator  $\overline{D}_{\hyb}$ only by a small perturbation. 
Hence the index does not change as long as we have chosen $r>0$ small enough. 
Now set $S_1 = \nabla^2H(\cdot,v)$, $S_2 = \nabla^2H(\cdot,\tilde w +c_{x^+})$, then surely we can find a homotopy sending $S_1$ to $S_1^-=  \nabla^2H(\cdot,x^-)$ and 
$S_2$ to $S_2^+ = \nabla^2H(\cdot,x^+)$. Since $x^-,x^+$ are nondegenerate, the Conley-Zehnder indices of the symplectic paths induced by $S_1^-$ and $S_2^+$ are well 
defined. Moreover, the indices of two symplectic paths coincide if and only if they are homotopic, see \cite{salrob}. Thus we can find numbers $a,b\in \R$
and a homotopy from $S_1^-$ to $S_a = aI$ and $S_2^+$ to $S_b = bI$ without changing the indices,
which, by Lemma \ref{indform} and the fact that the Fredholm index is a locally constant function proves that the operator 
$$ \overline{D}_{\hyb}(\eta,\xi)	= \Big(\eta^{\prime}-(\PP^+-\PP^-+S_1)\eta,(\overline{D}_{J_0}+S_2)\xi, \eta(0)-\xi(0,\cdot)\Big)\,,$$
possesses the claimed index formula and thus so does  $D\Theta_{x^-,c_{x^+}}(w)$.
\end{proof}
In particular the above argument shows that $m(x,\R^n \times \Hm^+) = \mu(x)$ for non degenerate singular points which was already  known in \cite{Al8}. 
Nevertheless, we decided to follow \cite{AlSchw} and calculate the index directly in this situation.\\

In order to prove the transversality result for the hybrid type curves, we apply the Carleman similarity principle, which is detaily discussed in \cite{hof}.  

\begin{thm}\label{res} Let $H \in \Hr$, $r>0$. If $r$ is chosen small enough there exists a residual set $\widehat{\Kc_{\reg}}\subset \Kc_{\theta,r}$ of compact vector 
fields $K$ such that for all $x^-,x^+ \in \Pc_0(H)$ with $x^-\not = x^+$ and $ m(x^-,\V) - \mu(x^+) \leq 2$, $\V = \R^n\times\Hm^+$, and any 
$u \in \MM_{\hyb}(x^-,x^+,H,J_0,\X_K)$ the operator 
\begin{equation*}
 D\Txx(w) :  H^1_{T_p\WW^u_{\X_K}(x^-)}(Z,\Rn)\lo  L^2(Z,\Rn)  
\end{equation*}
from \eqref{Dhyb} with $w= u-c_{x^+} \in  H^1_{\WW^u_{\X_K}(x^-)}(Z,\Tn)$, $p = u(0,\cdot) \in \WW^u_{\X_K}(x^-)$ and $c_{x^+}$ as in \eqref{cx}, 
is onto and the vector field $\X_K = -\Ah +K$ satisfies the Morse-Smale condition up to order $2$. 
\end{thm}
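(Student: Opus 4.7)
The plan is to combine the Morse--Smale transversality already supplied by Theorem \ref{SM} with a new transversality statement for the hybrid moduli spaces and then intersect the two residual subsets. Explicitly I would produce a residual $\widehat{\Kc}^{\hyb}_{\reg}\subset \Kc_{\theta,r}$ on which the surjectivity of $D\Txx(w)$ holds for every relevant pair $x^-,x^+$, and then set $\widehat{\Kc_{\reg}} := \widehat{\Kc}^{\hyb}_{\reg}\cap\Kc_{\reg}$.

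For a fixed pair $x^-,x^+\in\Pc_0(H)$ with $x^-\neq x^+$ and $m(x^-,\V)-\mu(x^+)\leq 2$, I would introduce the universal moduli space
\begin{align*}
\mathcal{Z}(x^-,x^+):=\Big\{ & (w,K)\in H^1(Z,\Rn)\times\Kc_{\theta,r} \mi \dH(w+c_{x^+})=0\,,\\
& [0]+w(0,\cdot)\in\WW^u_{\X_K}(x^-)\Big\}
\end{align*}
and the full linearization $D\widehat{\Theta}(w,K)[\xi,\kappa]$ whose first component is $D\Txx(w)[\xi]$---Fredholm of the stated index by Theorem \ref{fred2}---and whose second component encodes how a variation $\kappa\in\Kc_\theta$ moves the boundary constraint $u(0,\cdot)\in\WW^u_{\X_K}(x^-)$. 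The decisive step is to show that this combined operator is surjective at every point of $\mathcal{Z}(x^-,x^+)$.

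Surjectivity I would prove by contradiction. Given $\eta\in L^2(Z,\Rn)$ annihilating the range of $D\widehat{\Theta}(w,K)$, Proposition \ref{ann} forces $\eta\in H^1(Z,\Rn)$ to solve the formal adjoint equation $(D_{J_0}+\nabla^2H(\cdot,w+c_{x^+}))\eta=0$ on $(0,\infty)\times\mS^1$, with $\eta(0,\cdot)\in(T_p\WW^u_{\X_K}(x^-))^{\perp}$ for $p=[0]+w(0,\cdot)$. The Carleman similarity principle for Cauchy--Riemann type operators then supplies unique continuation: if $\eta(0,\cdot)\equiv 0$, then $\eta\equiv 0$. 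If instead $\eta\not\equiv 0$ I would exploit the freedom in $\kappa$: since $p$ lies on the $\ffiXK$-orbit through $x^-$ but is not itself a singular point, perturbations $\kappa$ supported in a small neighbourhood of this orbit translate $T_p\WW^u_{\X_K}(x^-)$ in arbitrary directions inside $\Ee$ via the graph-transform description of the stable/unstable manifold theorem. Choosing $\kappa$ whose induced variation pairs non-trivially with $\eta(0,\cdot)$ yields the desired contradiction.

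Once surjectivity is in hand, $\mathcal{Z}(x^-,x^+)$ is a Banach manifold and the projection $\widehat\pi:\mathcal{Z}(x^-,x^+)\to\Kc_{\theta,r}$ is Fredholm of index $m(x^-,\V)-\mu(x^+)$. A $\sigma$-properness argument modelled on the proof of Lemma \ref{sigprop}---using Theorem \ref{compactM} on the Morse side together with a matching $H^1_{\loc}$-compactness statement for the hybrid half-cylinders to be established in the next section---lets one invoke Henry's non-separable Sard--Smale theorem to obtain a residual set $\widehat{\Kc}^{\hyb}_{\reg}(x^-,x^+)$ of regular values of $\widehat\pi$. Intersecting over the finitely many pairs with $0<m(x^-,\V)-\mu(x^+)\leq 2$ and then with $\Kc_{\reg}$ produces $\widehat{\Kc_{\reg}}$. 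The main obstacle will be the surjectivity step above: the smallness of $r$ enters precisely to keep $T_p\WW^u_{\X_K}(x^-)$ a compact perturbation of $\R^n\times\Hm^+$ so that Theorem \ref{fred2} remains available, while the delicate point is transferring the Abbondandolo--Majer perturbation freedom through the admissibility conditions \textbf{C1}--\textbf{C2} so that the variations of $K$ span enough of $\Ee$ at $p$ to detect every non-zero $\eta(0,\cdot)\in(T_p\WW^u_{\X_K}(x^-))^{\perp}$.
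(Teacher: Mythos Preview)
Your overall architecture is correct and matches the paper's: form a universal moduli problem, prove surjectivity of the full linearization, apply Sard--Smale in its $\sigma$-proper form, and intersect with $\Kc_{\reg}$. The use of Proposition~\ref{ann} to identify the annihilator with a solution of the adjoint equation with boundary value in $(T_p\WW^u_{\X_K}(x^-))^{\perp}$, followed by Carleman for the case $\eta(0,\cdot)=0$, is exactly what the paper does.

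The substantive difference is in how you set up the universal problem. You work with pairs $(w,K)$ and encode the boundary condition $[0]+w(0,\cdot)\in\WW^u_{\X_K}(x^-)$ as a $K$-dependent constraint on $w$; the paper instead works with \emph{triples} $(v,w,K)$, where $v\in\mathcal{C}^1_{x^-}((-\infty,0],\M)$ is the Morse half-trajectory, and imposes the three decoupled equations $v'-\X_K(v)=0$, $\dH(w+c_{x^+})=0$, $v(0)=[0]+w(0,\cdot)$. This buys a great deal: the linearization of your boundary constraint with respect to $K$ requires differentiating $\WW^u_{\X_K}(x^-)$ in $K$ via graph transform, which is what you gesture at but do not carry out; in the paper's formulation the $K$-variation enters linearly as the inhomogeneous term $L(v)$ in the linearized ODE $\zeta'-A\zeta+L(v)=\varrho$. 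Since $F_A^-$ is a left inverse (Proposition~\ref{FA1}) and $v$ is an embedding, one can prescribe $\zeta(0)$ arbitrarily by choosing $L$ along the trajectory. This is precisely the step making rigorous your assertion that ``perturbations $\kappa$ translate $T_p\WW^u_{\X_K}(x^-)$ in arbitrary directions,'' and it reduces the cokernel from $\coker(\overline D_W)$ to $\coker(\overline D_{\Ee})=\{\rho:(D_{J_0}+S)\rho=0,\ \rho(0,\cdot)=0\}$, where Carleman (after Schwarz reflection through the totally real boundary $\rho(0,\cdot)=0$) finishes the job.

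So nothing in your plan is wrong, but the surjectivity step as you state it is an assertion rather than a proof; the paper's triple formulation via Lemma~\ref{DDhyb} is what turns that assertion into a clean linear-algebra computation.
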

\begin{proof} We denote by $\mathcal{C}_{x^-}\big((-\infty,0],\M \big)$ the Banach manifold of all $C^1$-curves $v$ satisfying  $\lim \limits_{s \rlo -\infty}v(s) = x^-$,
$\lim \limits_{s \rlo -\infty}v^{\prime}(s) = 0$, and set 
\begin{align*}
M &=\mathcal{C}_{x^-}\big((-\infty,0],\M \big)\times H^1(Z,\Rn)\times\Kc_{\theta,r} \,, \quad  N= \Kc_{\theta,r} \,,\\
O &= C^0_0((-\infty,0],\Ee)\times \LZR \times \M  
\end{align*}
and consider the map $\sigma : M\lo  O $
$$\sigma(v,w,K) =\Big(v^{\prime}-\X_K(v),\dH(w+c_{x^+}), v(0)- ([0] + w(0,\cdot))\Big)\,.$$
{\bf Step 1 :} We claim that $0$ is a regular value of $\sigma$. Indeed, the linearization $D\sigma = D\sigma(v,w,K)$ at some point $(v,w,K)\in \sigma^{-1}(0)$ is given by  
$$ D\sigma : C^1_0\big((-\infty,0],\Ee \big)\times H^1(Z,\Rn)\times\Kc_{\theta}  \lo  C^0_0((-\infty,0],\Ee)\times \LZR \times \Ee $$ 
with 
$$ D\sigma(v,w,K)[\eta,\xi,L] =  \Big(\eta^{\prime}-D\X_K(v)\eta + L(v),F_{J_0,H}(w+c_{x^+})\xi, \eta(0)-\xi(0,\cdot)\Big)\,,$$ 
where 
$$F_{J_0,H}(w+c_{x^+})\xi  =  \p_s\xi +J_0\p_t\xi + \nabla^2H(\cdot,w+c_{x^+})\xi\,.$$
We show that $D\sigma(v,w,K)$ is a left inverse. Note that the operator 
$$D_1\sigma(v,w,k) +D_2\sigma(v,w,k)$$ 
coincides with the operator $\overline{D}_{\hyb}$ with $S= \nabla^2H(\cdot,u)$, which, 
due to Lemma \ref{DDhyb} and Theorem \ref{fred2}, is Fredholm of index $m(x^-\V)-\mu(x^+)$ if $r$ is chosen small enough. 
So by Lemma \ref{cker} $D\sigma(v,w,k)$ has complemented kernel and it remains to show that $D\sigma(v,w,k)$ is onto. We set 
$W = W^u_A\,, A = D\X_K(v)$ 
and use the fact that both sides of \eqref{subco} are equal, as proved in Lemma \ref{DDhyb},  to observe that 
\begin{equation}\label{co1}
\coker\big(D\sigma(v,w,K)\big)  \subset \{0\}\times \coker(\overline{D}_W)\times\{0\} =\coker(\overline{D}_{\hyb})  \,.
\end{equation}
Since $\coker(\overline{D}_W)$ is finite dimensional, we can choose a basis $\{\rho_i\}_{i=1,\dots,N}$
and consider the problem
\begin{equation}\label{prob} \li\{ \begin{array}{rcl} \zeta^{\prime} -A\zeta + L(v) &=& \varrho \\
          			                       \zeta (0)                       &=& \rho_i(0,\cdot)\,, \quad 1\leq i \leq N\,,
         \end{array}\re.
\end{equation}
for given $\varrho\in C^0_0\big((-\infty,0],\Ee\big)$. Now, by Proposition \ref{FA1}, we have that $F_A^-\zeta = \zeta^{\prime} -A\zeta $ is a left inverse  
and therefore we can choose $\tilde \zeta \in C^1_0\big((-\infty,0],\Ee\big)$ such that $\tilde\zeta^{\prime} -A\tilde \zeta = \varrho$. 
Assume that $(v,w,K)\in \sigma^{-1}(0)$ and $v$ is a constant flow line of $\X_K$, then $u(0,\cdot)=x^-$ holds, i.e., $u$ runs into a critical point of $A_H$ in 
finite time. Though it seems obvious, it is a  nontrivial fact that $u$ is constant in this case, see \cite{hof} (section 6.4). By assumption we have that $x^-\not =x^+$ and 
therefore this  case cannot occur. Thus  $v$ is a non-constant flow line, i.e., a $C^{1}$-embedding and  $v^{-1} : v(\R) \subset \M \lo (-\infty,0]$ is a compact map.
Accordingly, for each $\zeta_i \in C^1_0\big((-\infty,0],\Ee\big)$, that possesses compact support and satisfies $\zeta_i(0) = \rho_i(0,\cdot)-\tilde \zeta(0)$, we can choose 
$L_i \in  \Kc_{\theta}$ such that 
$$ \big(\zeta_i^{\prime}- A\zeta_i\big)\circ \big(v^{-1}\big)(p) + L_i(p)=0 \,, \quad \forall\, p \in v(\R)\subset \M\,.  $$
Hence we can solve \eqref{prob} and therefore \eqref{co1} actually becomes
$$ \coker \big(D\sigma(v,w,K)\big)  \subset \{0\}\times \coker(\overline{D}_{\Ee})\times\{0\}$$
with 
\begin{align*}
 \coker(\overline{D}_{\Ee}) = \Big\{ \rho \in \WZR \mi \p_s\rho - J_0\p_t\rho - \nabla^2H(\cdot,u)\rho & = 0 \,,\\
                                                                                                        \rho(0,\cdot) &=0\Big\}\,.
\end{align*}
Since $\rho(0,\cdot) =0$ is obviously a totally real boundary condition, we can use Schwarz-reflection
and a standard elliptic regularity argument, see \cite {duff}, to obtain that all $\rho \in  \coker(\overline{D}_{\Ee})$ are smooth. 
Furthermore, we can apply the Carleman similarity principle, see \cite{hof}, to the reflected curves which therefore all vanish identically on an open 
neighborhood $U$ of $\{0\}\times\mS^1 \subset \R\times \mS^1$ implying that $\rho \equiv 0$ holds for all $\rho \in \coker(\overline{D}_{\Ee})$ on some  
open neighborhood $V$ of $ \{0\}\times\mS^1  \subset Z$. Thus for $\rho \in \coker(\overline{D}_{\Ee})$ the set 
\begin{align*}
 \Sigma = \Big\{(s,t) \in Z \mi \rho(s,t)=0 \, \text{and } \, \exists \, (s_m,t_m) \rlo (s,t) \,, (s_m,t_m) \not = (s,t) \\
                                                                 \text{satisfying} \,\, \rho(s_m,t_m)=0\Big\}  \,. 
\end{align*}
is closed and by our previous discussion, non-empty. If $(s_0,t_0) \in \Sigma$ we can  apply the previous argument once again and deduce that $(s_0,t_0)$ 
is an interior point of $\Sigma$. Hence $\Sigma = Z$ and consequently $\coker\big(D\sigma(v,w,k)\big) = \{0\}$ for all $(v,w,k) \in \sigma^{-1}(0)$, proving Step 1.\\

{\bf Step 2 :}  By step 1 we have that  $\mathcal{M}:=\sigma^{-1}(0)$ is a Banach manifold.  Consider the projector
$$\tau : \mathcal{C}^1_{x^-}\big((-\infty,0],\M \big)\times H^1_{x^+}(Z,\Tn)\times\Kc_{\theta,r}  \lo \Kc_{\theta,r}  \,.$$ 
In analogy to the proof of Theorem \ref{SM} we claim that the restriction of $\tau$ to $\sigma^{-1}(0)$ is Fredholm of index  
$m(x^-,\R^n\times\Hm^+) - \mu(x^+)$. Everything follows from Corollary \ref{psii} with 
$M,N,O$ as defined above and $\psi = \sigma$, $\phi = \tau $. Denote by $\widehat\Kc_{\reg}(x^-,x^+) \subset \Kc_{\theta,r}$ the regular values of $\tau$, then again by 
Corollary \ref{psii} this is the set such that 
$$D_1\sigma(v,w,K) + D_2\sigma(v,w,K) $$ 
is a left inverse for all 
$(v,w,K)\in \pi^{-1}\big(\widehat\Kc_{\reg}(x^-,x^+)\big)$ and as a consequence of Lemma \ref{DDhyb} so is the operator $D\Txx(w)$ for small $r$ and all $w \in \Txx^{-1}(0)$. 
Now, by a similar argument as in Lemma \ref{sigprop}, combining the results of section \ref{hybcompactness}, one can show that the map $\tau_{|\mathcal{M}}$ is $\sigma$ - proper. 
Hence the Sard-Smale Theorem \ref{sard} applies and 
tells us that  $\widehat\Kc_{\reg}(x^-,x^+) $ is residual in $\Kc_{\theta,r}$. Thus so is the set 
$$\widehat \Kc_{\reg}:= \bigcap_{\genfrac{}{}{0pt}{}{x^-,x^+ \in \Pc_0(H)\,,}{0<m(x^-,\V)-\mu(x^+)\leq 2}}\widehat\Kc_{\reg}(x^-,x^+) \cap \Kc_{\reg} \,,$$
where $\Kc_{\reg}$ is the residual set of compact vector fields $K$ from Theorem \ref{SM}, such that $\X_K$ satisfies the Morse-Smale condition up to order 2. 
Hence  $\widehat \Kc_{\reg}$ has the required properties and we conclude the proof.
\end{proof}

So far we have not treated the case of $x^- = x^+$. This is done next.  

\begin{lem}\label{zero} Assume we have chosen a regular pair $(H,J_0)$ and  $K\in \widehat \Kc_{\reg}$. Let $x \in \Pc_0(H)$  then the moduli space $\MM_{\hyb}(x,x,H,J_0,\X_K)$ 
is a zero dimensional manifold which consists of the constant solution only. Moreover, the linearized operator 
\begin{equation*}
 D\Txx(w) :  H^1_{T_x\WW^u_{\X_K}(x)}(Z,\Rn)\lo  L^2(Z,\Rn)  
\end{equation*}
from \eqref{Dhyb} with $w= x-c_{x} \in  H^1_{\WW^u_{\X_K}(x)}(Z,\Rn)$ and $c_{x}$ as in \eqref{cx}, 
is an isomorphism. 
\end{lem}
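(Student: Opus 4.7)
The proof splits into two independent parts.

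For the first claim I use an energy argument. Let $u\in\MM_{\hyb}(x,x,H,J_0,\X_K)$. The energy identity recorded right after Lemma \ref{Hilbertmfd} gives $E(u) = A_H(u(0,\cdot)) - A_H(x)$, while $E(u)\ge 0$ is automatic. Because $u(0,\cdot)\in\WW^u_{\X_K}(x)$ and $A_H$ is a strict Lyapunov function for $\X_K$, the backward $\X_K$-trajectory through $u(0,\cdot)$ converges to $x$ with $A_H$ strictly increasing, so $A_H(u(0,\cdot))\le A_H(x)$ with equality only when $u(0,\cdot)=x$. Combining forces $E(u)=0$ and $u(0,\cdot)=x$, and the vanishing energy then gives $\p_s u\equiv 0$, hence $u(s,t)\equiv x(t)$.

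For the isomorphism claim, Theorem \ref{fred2} provides (for $r$ sufficiently small) that $D\Txx(w)$ is Fredholm of index $m(x,\V)-\mu(x)$, and the remark immediately after that theorem's proof records $m(x,\V)=\mu(x)$ at a nondegenerate orbit; so the Fredholm index is zero and it is enough to show $\ker D\Txx(w)=\{0\}$. An element $\eta$ of the kernel solves $\p_s\eta+A\eta=0$ on $Z$ with $A:=J_0\p_t+S$ and $S(t):=\nabla^2 H(t,x(t))$, viewed as the self-adjoint hyperbolic operator on $L^2(\mS^1,\Rn)$ with domain $H^1(\mS^1,\Rn)$ (nondegeneracy of $x$ gives $\ker A=\{0\}$). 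Writing $\eta(s,\cdot)=e^{-sA}\eta(0,\cdot)$ via the spectral calculus, the requirement $\eta\in L^2(Z,\Rn)$ forces $\eta(0,\cdot)\in E^+(A)$, the positive spectral subspace, while the boundary condition imposes $\eta(0,\cdot)\in T_x\WW^u_{\X_K}(x)=E^u(D\X_K(x))$.

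The remaining point is $E^u(D\X_K(x))\cap E^+(A)=\{0\}$, which I would deduce from a quadratic-form identity. By Lemma \ref{pert}, $D\X_K(x)=-D^2A_H(x)$, so \eqref{Jac} gives $D\X_K(x)=\PP^+-\PP^--j^*S$, and a short Fourier computation shows $\PP^+-\PP^-=-j^*J_0\p_t$ on $\HR$; hence
$$
D\X_K(x) = -j^*A.
$$
The defining property of $j^*$ then yields, for every $v\in\HR$,
$$
\li<D\X_K(x)v,v\re>_{\HH} \;=\; -\li<Av,v\re>_{\LS},
$$
interpreted through $H^{-1/2}$--$H^{1/2}$ duality when $v\notin H^1(\mS^1,\Rn)$. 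On $E^u(D\X_K(x))\setminus\{0\}$ the left-hand side is strictly positive (by positivity of $D\X_K(x)$ on its positive spectral subspace), and on $E^+(A)\setminus\{0\}$ the right-hand side is strictly positive, so a nonzero common element would make the same real number both positive and negative, a contradiction. Hence $\eta(0,\cdot)=0$, and then $\eta\equiv 0$. The main subtlety I anticipate is making this quadratic-form matching rigorous on elements of two spectral subspaces defined with respect to inner products on different Hilbert spaces; the explicit formula \eqref{jstar} exhibiting $j^*$ as an isometry from $H^{-1/2}$ onto $\HR$ is what makes this routine.
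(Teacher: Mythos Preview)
Your first part (the energy/Lyapunov argument) is exactly what the paper does. For the isomorphism claim, however, your route is genuinely different from the paper's. The paper works with the equivalent operator $\overline{D}_{\hyb}$ of Lemma~\ref{DDhyb}, obtains index zero via Lemma~\ref{indform}, and then computes the kernel by reducing to the case where $p=\eta(0)=\xi(0,\cdot)$ is an eigenvector of $S$ with eigenvalue $\lambda$: the Fourier modes of $\eta$ and $\xi$ then force the contradictory pair of inequalities $k\ge\lfloor\lambda/2\pi\rfloor+1$ and $k\le\lfloor\lambda/2\pi\rfloor$. Your argument bypasses this mode-by-mode analysis entirely with the single identity $D\X_K(x)=-j^*A$ and its consequence $\langle D\X_K(x)v,v\rangle_{\HH}=-\langle Av,v\rangle_{\LS}$, which turns the intersection $E^u(D\X_K(x))\cap E^+(A)=\{0\}$ into a direct sign contradiction. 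This is more conceptual and does not rely on any diagonal structure of $S$; it also makes transparent \emph{why} the two spectral conditions are incompatible, namely because the two Hessians (in the $H^{1/2}$ and the $L^2$ picture) have opposite signs. The form-domain issue you flag is real but, as you note, harmless: $v=\eta(0,\cdot)\in H^{1/2}$ lies in the form domain of $A$, so both pairings are finite and strictly signed on the relevant spectral subspaces. Your approach is correct and arguably cleaner than the paper's.
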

\begin{proof} Let $(v,u)\in C^1_x\big((-\infty,0],\M\big)\times H^1_{\loc}(Z,\Tn)$ be a hybrid type solution connecting $x$ with itself, i.e.,  
$$ v^\prime -\X_K(v) = 0\,, \quad \dH(u) = 0\,, \quad v(0) = u(0,\cdot)\,,$$ 
and
$$ v(-\infty) = x\,,\quad  u(+\infty,\cdot) = x  \,.$$
Since $A_H$ is a Lyapunov function for $\X_K$ we have that 
$A_H(x) \geq A_H\big(v(0)\big)$. On the other hand 
$$ E(u) = \int_0^{\infty}\nl \p_s u(s,\cdot)\nr_{\LS}^2ds = A_H\big(u(0,\cdot)\big) - A_H(x) \geq 0 \,. $$
Hence $A_H\big(v(0)\big) = A_H(x)$, which implies that $v(0) \in \rest(\X_K)$. Thus $v(0)=x$ and 
we have that $v$ is constant. Since $E(u) = 0$, we have that $u$ is constant as well. Hence $\MM_{\hyb}(x,x,H,J_0,\X_K)$ consists only of the constant 
solution and is therefore a submanifold of $H^1_{\loc}(Z,\Tn)$. 
Now, since $DK$ vanishes on $\Pc_0(H)$, the linearization $\overline{D}_{\hyb}:=\overline{D}_{\hyb}(v,u)$ at $(v,u)$ is of the form 
$$ \overline{D}_{\hyb}: C^1_0((-\infty,0],\Ee)\times \WZR   \lo  C^0_0((-\infty,0],\Ee)\times \LZR \times \Ee $$
with 
$$ \overline{D}_{\hyb}(\eta,\xi) = \Big(\eta^{\prime}-(\PP^+-\PP^--j^*S)\eta,\p_s\xi+J_0\p_t \xi  +S\xi, \eta(0)-\xi(0,\cdot)\Big)\,,$$
where $S=\nabla^2H(\cdot,x)$. Then, by Lemma \ref{DDhyb}, this is a Fredholm operator and, by Lemma \ref{indform}, $\overline{D}_{\hyb}(v,u)$ 
is of index $0$. Now let $(\eta,\xi) \in \ker \overline{D}_{\hyb}(v,u)$ and assume that $\eta(0) = p = \xi(0,\cdot)$ is an eigenvector of 
$S$ with respect to the eigenvalue $\lambda \in \R$. Then we have that 
$$\eta(s) = e^{(\PP^+-\PP^- -j^*\lambda I )s}p \,.$$
Using the Fourier representation $\eta(s) = e^{(\PP^+-\PP^- -j^*\lambda I )s} \sum \limits_{k\in \Z}e^{2\pi J_0kt}p_k$ and the fact that $\eta(s) \lo 0 $ for $s \lo -\infty $
we observe that 
$$ k \geq \li\lfloor \frac{\lambda}{2\pi} \re\rfloor+ 1 \,, $$
where $\li\lfloor \cdot \re\rfloor$ denotes the Gau\ss-bracket. On the other hand we have that 
$$ \xi(s,t) = e^{-(J_0\p_t+\lambda I)s}p(t)\,.$$
Using again the Fourier representation $\xi(s,t) = e^{-(J_0\p_t+\lambda I)s}\sum \limits_{k\in \Z}e^{2\pi J_0kt}p_k$ and the fact that $\xi(s,t) \lo 0 $ for $s \lo +\infty $
we observe that 
$$k \leq \li\lfloor \frac{\lambda}{2\pi} \re\rfloor \,.$$
Hence the kernel of $\overline{D}_{\hyb}(v,u)$ is trivial, which shows that $\overline{D}_{\hyb}(v,u)$ is an isomorphism. The result now follows from Lemma \ref{DDhyb}.
\end{proof}
By the implicit function theorem we obtain the following corollary.
\begin{cor}\label{mani} Let the assumptions of Theorem \ref{res} be fulfilled and $K \in \widehat \Kc_{\reg}$. Then, even if $x^- = x^+$, we have that the moduli spaces of 
hybrid type curves $\MM_{\hyb}(x^-,x^+,H,J_0,\X_K)$ are $C^3$-manifolds of dimension 
$$m(x^-,\V) - \mu(x^+)\,.$$ 
\end{cor}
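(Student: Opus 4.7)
The plan is to split the statement into the two cases $x^-\neq x^+$ and $x^-=x^+$ and assemble them from results already established. By Proposition \ref{affine trans}, the moduli space $\MM_{\hyb}(x^-,x^+,H,J_0,\X_K)$ is the affine translate $c_{x^+} + \Txx^{-1}(0)$ of the zero set of the $C^3$ map
\[
\Txx : H^1_{\WW^u_{\X_K}(x^-)}(Z,\Rn) \lo L^2(Z,\Rn)\,,
\]
defined on a $C^3$-Hilbert submanifold in the sense of Lemma \ref{Hilbertmfd}. Thus it suffices to apply the implicit function theorem to $\Txx$, and the question reduces to verifying surjectivity of its linearization at every zero.

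In the case $x^-\neq x^+$ (within the index range for which Theorem \ref{res} applies), Theorem \ref{fred2} asserts that $D\Txx(w)$ is Fredholm of index $m(x^-,\V)-\mu(x^+)$ at every $w\in\Txx^{-1}(0)$, while Theorem \ref{res} guarantees surjectivity whenever $K\in\widehat\Kc_{\reg}$. The implicit function theorem for $C^3$-Fredholm maps between Banach manifolds then implies that $\Txx^{-1}(0)$ is a $C^3$-submanifold of
$H^1_{\WW^u_{\X_K}(x^-)}(Z,\Rn)$ with tangent space $\ker D\Txx(w)$ at $w$, whose dimension equals the Fredholm index $m(x^-,\V)-\mu(x^+)$. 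Translating by $c_{x^+}$ carries this structure back to $\MM_{\hyb}(x^-,x^+,H,J_0,\X_K)$.

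In the case $x^-=x^+=x$, Lemma \ref{zero} supplies two facts: the set $\MM_{\hyb}(x,x,H,J_0,\X_K)$ consists of the single constant solution $u\equiv x$, and the corresponding linearization $D\Theta_{x,c_x}(w)$ at $w = x - c_x$ is in fact an isomorphism, hence a fortiori onto. Consequently the implicit function theorem again applies and produces a $C^3$-manifold structure near this point, of dimension equal to the Fredholm index. The remark following the proof of Theorem \ref{fred2}, namely $m(x,\V)=\mu(x)$ for nondegenerate singular points, shows that this dimension is zero, which matches the fact that the moduli space is a single point and confirms the formula $m(x^-,\V)-\mu(x^+)$ also in this degenerate case.

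Given that the heavy lifting (the Fredholm property in Theorem \ref{fred2}, transversality for $x^-\neq x^+$ in Theorem \ref{res}, and automatic surjectivity for $x^-=x^+$ in Lemma \ref{zero}) has already been done, no real obstacle remains; the only care needed is to invoke the implicit function theorem on the Hilbert manifold $H^1_{\WW^u_{\X_K}(x^-)}(Z,\Rn)$ rather than on a linear space, which is valid because this manifold was shown in Lemma \ref{Hilbertmfd} to be $C^k$ for $k\geq 3$ under our assumption that $\X_K$ is a $C^3$-vector field, and to verify that the regularity class of $\Txx$ matches.
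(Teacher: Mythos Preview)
Your proposal is correct and follows exactly the approach the paper takes: the paper's entire proof is the single sentence ``By the implicit function theorem we obtain the following corollary,'' and you have spelled out precisely how that application works by combining Theorem~\ref{fred2}, Theorem~\ref{res}, and Lemma~\ref{zero}. Your additional care about the $C^3$-manifold structure of the domain from Lemma~\ref{Hilbertmfd} and the case split $x^-\neq x^+$ versus $x^-=x^+$ is a faithful and complete unpacking of what the paper leaves implicit.
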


\subsection{Compactness}\label{hybcompactness}

In this section we generalize the already given $H^1_{\loc}$-compactness statement for the moduli spaces of Floer-cylinders to the case 
of moduli spaces of hybrid type curves. The compactness results of chapter \ref{Morse} and \ref{Floer} are used to handle our non-Lagrangian boundary condition. \\

As in Lemma \ref{wbound}  we use an indirect argument to achieve a uniform bound on the curves of constant loops. 
\begin{lem}\label{wbound2}
Let $x^-,x^+ \in \Pc_0(H)$, $K \in  \Kc_{\theta,1}$ and recall that  $\Theta_{x^-,c_{x^+}}$ denotes the non linear operator  
$$  \Theta_{x^-,c_{x^+}} : H^1_{\WW^u_{\X_K}(x^-)}(Z,\Rn) \lo L^2(Z,\Rn)\,, \quad w \mapsto \dH(w+c_{x^+}) \,. $$
Let $w \in\Theta_{x^-,c_{x^+}}^{-1}(0)$ and denote by $\PP^0w(s,\cdot) \in H^1([0,\infty),\Rn)$ the constant part of $w$. 
Then $\PP^0w \in C^0([0,\infty),\Rn)$ and there is a 
constant $C=C(x^-,x^+)>0$ such that
\begin{equation*}
 \nl \PP^0w\nr_{C^0([0,\infty)} \leq C \quad\text{for all}\quad w \in \Theta_{x^-,c_{x^+}}^{-1}(0)\,.
\end{equation*}
\end{lem}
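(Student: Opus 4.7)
I will adapt the contradiction scheme from Lemma \ref{wbound} to the half-cylinder $Z = [0,\infty) \times \mS^1$. Continuity of $\PP^0 w$ is immediate: since $w \in H^1(Z, \Rn)$, the zeroth Fourier coefficient $\PP^0 w$ belongs to $H^1([0, \infty), \Rn)$, and the one-dimensional Sobolev embedding gives $\PP^0 w \in C^0([0, \infty), \Rn)$ with $\lim_{s \to \infty} \PP^0 w(s) = 0$. Hence $\PP^0 w$ extends continuously to the one-point compactification $[0, \infty) \cup \{\infty\}$ with value $0$ at $\infty$.

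For the uniform bound I argue by contradiction. Assume there is a sequence $w_n \in \Theta^{-1}_{x^-, c_{x^+}}(0)$ with $\lambda_n := \nl \PP^0 w_n \nr_{C^0([0,\infty))} \to \infty$, and set $\tilde w_n := \PP^0 w_n / \lambda_n$, so that $\nl \tilde w_n \nr_{C^0} = 1$ and $\tilde w_n(\infty) = 0$. Writing $u_n = w_n + c_{x^+}$, the equation $\dH(u_n) = 0$ reads $\p_s u_n + J_0 \p_t u_n + \nabla H(\cdot, u_n) = 0$; averaging over $t \in \mS^1$ (so that $\PP^0 J_0 \p_t u_n = J_0 \PP^0 \p_t u_n = 0$) gives
\[
\p_s \tilde w_n = -\frac{1}{\lambda_n} \PP^0 \bigl(\nabla H(\cdot, w_n + c_{x^+}) + \p_s c_{x^+}\bigr).
\]
Since $\nabla H$ is bounded on $\mS^1 \times \Tn$ and $\p_s c_{x^+}$ has compact $s$-support, the right-hand side is uniformly bounded by $C/\lambda_n$, hence $\nl \p_s \tilde w_n \nr_{C^0} \to 0$. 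A standard bootstrap then upgrades each $\tilde w_n$ to a classical $C^1$ solution.

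Assuming equicontinuity at $\infty$ (see below), Arzelà--Ascoli on the compact space $[0, \infty) \cup \{\infty\}$ produces a subsequence of $\tilde w_n$ converging uniformly to some $\tilde w_\infty$ with $\p_s \tilde w_\infty = 0$ and $\tilde w_\infty(\infty) = 0$, whence $\tilde w_\infty \equiv 0$, contradicting $\nl \tilde w_n \nr_{C^0} = 1$. The main obstacle, absent in Lemma \ref{wbound} where both ends were Floer ends, is the equicontinuity of $(\tilde w_n)$ at $\infty$, i.e.\ the requirement $\tilde w_n(s) \to 0$ uniformly in $n$ as $s \to \infty$; the uniform Lipschitz bound alone does not yield this. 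To remedy, I combine the uniform energy bound $E(u_n) \leq A_H(x^-) - A_H(x^+)$, which follows from the Lyapunov property of $A_H$ for $\X_K$ together with $A_H(u_n(0, \cdot)) \leq A_H(x^-)$, with the hyperbolicity of the asymptotic operator at the nondegenerate orbit $x^+$. Standard Floer asymptotic analysis then yields an exponential decay estimate $|\p_s u_n(s, t)| \leq C e^{-\delta s}$ for $s \geq S_0$ with constants $C, \delta, S_0$ independent of $n$, and integrating from $s$ to $\infty$ gives $|\tilde w_n(s)| \leq (C/\delta\lambda_n) e^{-\delta s}$ for $s \geq S_0$, which is the required uniform tail decay.
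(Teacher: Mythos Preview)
Your approach diverges from the paper's at a key point: the paper does \emph{not} stay on the half-line. Instead it glues each Floer half-cylinder $w_n$ to the Morse half-trajectory $v_n:(-\infty,0]\to\M$ which connects $x^-$ to $u_n(0,\cdot)\in\WW^u_{\X_K}(x^-)$. Writing $v_n=u_n+c_{x^-}$ with a reference curve $c_{x^-}$, one sets
\[
p_n(s)=\begin{cases}\PP^0 u_n(s),& s\le 0,\\ \PP^0 w_n(s),& s>0,\end{cases}
\]
so that $p_n(\pm\infty)=0$; the projected flow equation gives a uniform bound on $|\p_s p_n|$ on all of $\R$. Now $\tilde p_n=p_n/\lambda_n$ is a loop on $\R\cup\{\infty\}$, and the argument is structurally identical to Lemma~\ref{wbound}, with the Morse half-trajectory playing the role of the missing left Floer end. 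The point of the concatenation is precisely to manufacture the second asymptotic endpoint that you lose when passing from the cylinder to the half-cylinder.

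Your substitute for this --- invoking ``standard Floer asymptotic analysis'' to get $|\p_s u_n(s,t)|\le Ce^{-\delta s}$ for $s\ge S_0$ with $C,\delta,S_0$ \emph{independent of $n$} --- is the real gap. The rate $\delta$ is indeed uniform (it comes from the spectral gap at $x^+$), but the threshold $S_0$ is not: the standard statement gives exponential decay only once the remaining energy $A_H(u_n(S_0,\cdot))-A_H(x^+)$ drops below a fixed $\epsilon_0$, and nothing in the uniform bound $E(u_n)\le A_H(x^-)-A_H(x^+)$ prevents this from happening arbitrarily late in $s$ (think of a sequence breaking along an intermediate orbit). Obtaining a uniform $S_0$ is exactly the kind of conclusion one draws \emph{from} compactness, and Lemma~\ref{wbound2} is an input to the compactness theorem~\ref{compact2}, so appealing to it here is circular. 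Incidentally, the equicontinuity-at-$\infty$ issue you flag is not special to the half-cylinder; it is equally present in the proof of Lemma~\ref{wbound}, so whatever you accept there carries over verbatim once you adopt the paper's concatenation.
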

\begin{proof} We choose a sequence  $w_n \in \Theta_{x^-,c_{x^+}}^{-1}(0)$ and denote furthermore  by   
$v_n \in C^{1}_{x^-}((-\infty,0],\M)$ the sequence of half trajectories connecting $x^-$ with $w_n(0,\cdot)$, i.e., 
$$ v_n^{\prime} - \X_K(v_n) = 0 \quad \text{and} \quad v_n(-\infty) = x^-\,, \quad v_n(0)= [0] + w_n(0,\cdot)\,.$$ 
Let $c_{x^-} \in  C^{\infty}_{x^-}((-\infty,0],\M)$ be such that 
\begin{equation*}
 c_{x^-}(s) = \li\{
\begin{array}{ll}
 [0]\,, & \text{for} \quad s \geq -1\\
 x^-\,, & \text{for} \quad s\leq -2 
\end{array}\re.
\end{equation*}
Let $u_n\in C^1_0((-\infty,0],\Ee)$ be the sequence such that $v_n = u_n +c_{x^-}$. Then we set 
\begin{equation*}
 p_n(s) =  \li\{
\begin{array}{ll}
 \PP^0u_n(s) \,, & \text{for} \quad s \leq  0 \\
 \PP^0w_n(s,t) \,, & \text{for} \quad s > 0 
\end{array}\re.\,.
\end{equation*}
Since $p_n(\pm \infty ) = 0$, the $p_n$'s are a sequence of continuous loops $p_n : \R\cup\{\infty\} \lo \Rn$ starting at $0$. Now assume that 
$$ \lambda_n:= \nl p_n \nr_{C^0(\R)} \lo \infty \quad \text{for} \quad n \lo \infty \,.$$ 
We set $\tilde p_n:= \frac{1}{\lambda_n}p_n$ so that $\lim \limits_{s\rightarrow \pm \infty}\tilde p_n(s) = 0$. Thus we have 
a sequence of continuous loops $\tilde p_n : \R\cup \{\infty\} \lo \Rn$ starting at $0$ with $\nl \tilde p_n \nr_{C^0(\R \cup \{\infty\})} = 1 $ for all $n \in \N$. 
Now each $\tilde p_n$
solves the equation 
\begin{align}
 \p_s \tilde p_n &= - \frac{1}{\lambda_n}\PP^0\big(\nabla H(u_n +c_{x^-}) + \p_sc_{x^-}\big)\,, \text{for} \quad s \leq 0 \nonumber \\
 \p_s \tilde p_n &= - \frac{1}{\lambda_n}\PP^0\big(\nabla H(w_n +c_{x^+}) + \p_sc_{x^+}\big)\,, \text{for} \quad s > 0  \label{w02}
\end{align}
in the corresponding sense. 
By a standard bootstrapping argument each $\tilde p_n$ is actually smooth and by \eqref{w02} we obtain
$$ \lim_{n\rightarrow \infty}\nl \p_s \tilde p_n\nr_{C^0(\R\cup\{\infty\})} = 0\,.$$
Due to the Arzel\`a-Ascoli Theorem the sequence $\tilde p_n$ is compact. So there is a subsequence $\tilde p_{n_k}$ which converges uniformly on 
$\R\cup\{\infty\}$ to a constant loop $\tilde p_{\infty}$ with  $\nl \tilde p_{\infty} \nr_{C^0(\R \cup \{\infty\})} = 1 $ and $\tilde p_{\infty}(\infty) = 0$; a
contradiction. Hence the sequence $\lambda_n$ is bounded proving the claim.
\end{proof}
\begin{thm}\label{compact2} Let $x^-,x^+ \in \Pc_0(H)$ and $K \in  \Kc_{\theta,1}$. Then we have that $\MM_{\hyb}(x^-,x^+,H,J_0,\X_K)$, 
with $\X_K = -\Ah +K$, is a  $H^1_{\loc}$-precompact set.
\end{thm}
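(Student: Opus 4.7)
The plan is to mimic the two-step strategy of Theorem \ref{compact}: first a uniform $H^1(Z_T)$-bound on $\Theta_{x^-,c_{x^+}}^{-1}(0)$, and then a difference-quotient inequality of the form $\nl\nabla\delta w\nr_{L^2(Z_{T'})}^2 \leq C\bigl(\nl\delta w\nr_{L^2(Z_T)}^2 + \text{boundary contribution at }s=0\bigr)$, with the new obstacle being the boundary term at $s=0$ created by working on the half cylinder. By Proposition \ref{affine trans} it suffices to show that $\Theta_{x^-,c_{x^+}}^{-1}(0) \subset \WZR$ is $H^1_{\loc}$-precompact. The case $x^-=x^+$ is already settled by Lemma \ref{zero}, so we may assume $x^-\neq x^+$.

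For Step 1, fix $T>0$ and let $w\in\Theta_{x^-,c_{x^+}}^{-1}(0)$, $u=w+c_{x^+}$. Since $A_H$ is a Lyapunov function for $\X_K$ and $u(0,\cdot)\in\WW^u_{\X_K}(x^-)$, the energy identity gives
\begin{equation*}
E(u) = A_H\bigl(u(0,\cdot)\bigr) - A_H(x^+) \leq A_H(x^-) - A_H(x^+)\,,
\end{equation*}
which uniformly bounds $\nl\p_s u\nr_{L^2(Z_T)}$ and hence $\nl\p_s w\nr_{L^2(Z_T)}$; the Floer equation then bounds $\nl\p_t w\nr_{L^2(Z_T)}$ in terms of $E(u)$, $\nl H\nr_{C^1}$, and $\nl\p_t c_{x^+}\nr_{L^2}$. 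The estimate $\nl(\PP^0)^{\perp}w\nr_{L^2(Z_T)}\leq \nl\p_t w\nr_{L^2(Z_T)}$ carries over verbatim from Theorem \ref{compact}, and Lemma \ref{wbound2} controls $\PP^0 w$ in $C^0([0,\infty))$, yielding a uniform $H^1(Z_T)$-bound.

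For Step 2, choose a cutoff $\beta\in C^{\infty}([0,\infty))$ with $\beta\equiv 1$ on $[0,T']$ and $\beta\equiv 0$ on $[T'+|T-T'|/2,\infty)$. Let $w_1,w_2\in\Theta_{x^-,c_{x^+}}^{-1}(0)$, set $\delta w=w_1-w_2$ and $u_i=w_i+c_{x^+}$. Applying formula \eqref{delbarform} on $Z_T$ to $\beta\,\delta w$, the boundary contribution at $s=T$ vanishes because $\beta(T)=0$, leaving
\begin{equation*}
\nl\p_s\beta\delta w\nr^2_{L^2(Z_T)} + \nl\p_t\beta\delta w\nr^2_{L^2(Z_T)} \leq \nl\dd(\beta\delta w)\nr^2_{L^2(Z_T)} + \nl\PP^-\delta w(0,\cdot)\nr^2_{\HS}\,,
\end{equation*}
where the sign of the $\PP^+$ boundary term at $s=0$ works in our favor and has been discarded. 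Combining with $\dd(u_1)-\dd(u_2)=-J_0\bigl(X_H(u_1)-X_H(u_2)\bigr)$ and the mean value estimate $\nl\dd(u_1)-\dd(u_2)\nr_{L^2}\leq \nl H\nr_{C^2}\nl\delta w\nr_{L^2}$, we obtain
\begin{equation*}
\nl\delta w\nr_{H^1(Z_{T'})}^2 \leq C\Bigl(\nl\delta w\nr_{L^2(Z_T)}^2 + \nl\PP^-\delta w(0,\cdot)\nr_{\HS}^2\Bigr)
\end{equation*}
for some $C=C(T,T',H,\beta)$.

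The main obstacle is the new boundary contribution $\nl\PP^-\delta w(0,\cdot)\nr_{\HS}^2$, absent in the Floer cylinder case; it is handled precisely by the essentially vertical property of the unstable manifold, Theorem \ref{compactM}$(ii)$. Indeed, for any sequence $w_n\in\Theta_{x^-,c_{x^+}}^{-1}(0)$ the energy bound forces $A_H\bigl(u_n(0,\cdot)\bigr)\geq A_H(x^+)$, so $\{u_n(0,\cdot)\}\subset \WW^u_{\X_K}(x^-)\cap\{A_H\geq c\}$ for any regular value $c<A_H(x^+)$. By Theorem \ref{compactM}$(ii)$, $\PP^-u_n(0,\cdot)$ has a convergent subsequence in $\Hm^-$. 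Combined with Step 1 and the compact embedding $H^1(Z_T)\hookrightarrow L^2(Z_T)$, we may pass to a subsequence with both $w_n\to w_\infty$ in $L^2(Z_T)$ and $\PP^-w_n(0,\cdot)\to \PP^-w_\infty(0,\cdot)$ in $\Hm^-$. The displayed estimate then shows that this subsequence is Cauchy in $H^1(Z_{T'})$ for every $T'<T$, proving $H^1_{\loc}$-precompactness.
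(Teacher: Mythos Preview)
Your proof is correct and follows essentially the same approach as the paper: the same two-step strategy with uniform $H^1(Z_T)$-bounds from the energy and Lemma \ref{wbound2}, followed by the cutoff argument based on formula \eqref{delbarform}, yielding the key estimate with the extra $\nl\PP^-\delta w(0,\cdot)\nr_{\HS}$ boundary term, which is then absorbed using Theorem \ref{compactM}$(ii)$. Your explicit remark that the $\PP^+$ boundary term at $s=0$ has the favorable sign and can be dropped is exactly the observation the paper uses implicitly; the separate handling of $x^-=x^+$ via Lemma \ref{zero} is harmless but unnecessary, since the general argument covers that case as well.
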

\begin{proof} 
{\bf Step 1 :} Let $w \in \Theta_{x^-,c_{x^+}}^{-1}(0)$. We show that for any  bounded domain  
$Z_T= [0,T] \times \mS^1 \subset Z$ the restriction 
of the curve $w$ to $Z_T$ is uniformly bounded. 
Set $u = w +c_{x^+}$ then the energy of $u$ is given by 
$$E(u)= A_H\big(u(0,\cdot)\big) - A_H(x^+) \leq A_H(x^-) - A_H(x^+)\,.$$ 
Therefore we compute
\begin{align*}
 \nl \p_s w \nr_{L^2(Z_T)} &\leq \nl \p_s u \nr_{L^2(Z_T)}  + \nl \p_s c_{x^+} \nr_{L^2(Z_T)} \\
				&\leq \sqrt{E(u)} + \nl \p_s c_{x^+} \nr_{L^2(Z_T)} \\
				&\leq \sqrt{A_H(x^-) - A_H(x^+)} + \nl \p_s c_{x^+} \nr_{L^2(Z_T)}\\ 
                                & =:  c_0(x^-,x^+,c_{x^+}) \,.
\end{align*}
and 
\begin{align*}
\nl \p_t w \nr_{L^2(Z_T)} &\leq \nl \p_t u -X_H(u) \nr_{L^2(Z_T)} +  \nl X_H(u) \nr_{L^2(Z_T)} + \nl \p_t c_{x^+} \nr_{L^2(Z_T)}\\ 
                               &\leq \sqrt{E(u)} + T\nl H \nr_{C^1(\mS^1\times\Tn)} + \nl \p_t c_{x^+} \nr_{L^2(Z_T)} \\
                               &\leq \sqrt{A_H(x^-) - A_H(x^+)} + T\nl H \nr_{C^1(\mS^1\times\Tn)} + \nl \p_t c_{x^+} \nr_{L^2(Z_T)}\\
                               &=:c_1(x^-,x^+,T,H,c_{x^+})\,.
\end{align*}
Observe that 
\begin{align*}
\nl (\PP^0)^{\perp} w \nr^2_{L^2(Z_T)} & =  \int_{0}^T \sum_{k \not = 0  }|w_k(s)|^2 ds \leq \int_{0}^T \sum_{k\not = 0 }|k|^2|w_k(s)|^2 ds \\
		                            & =  \nl \p_t w \nr^2_{L^2(Z_T)} \leq c_1^2 \,,
\end{align*}
where $\PP^0$ denotes the restriction to the $0$-order coefficients. This shows that the $t$-derivative already bounds the non-constant part of $w$. 
Due to Lemma \ref{wbound2} we get the estimate $|w_0(0)| \leq C$ with $C=C(x^-,x^+)> 0$. Thus by writing 
$$ w_0(s) = \int_{0}^s \p_{\tau} w_0(\tau)d\tau + w_0(0) $$
and by H\"older's inequality, we obtain that 
\begin{align*}
 \nl w_0 \nr_{L^2(Z_T)}^2 &=       \int_{0}^{T}\int_{0}^{1}\Big| \int_0^s \p_{\tau} w_0(\tau)d\tau + w(0)\Big|^2dtds \\
                               &\leq   2\int_{0}^{T}\li[|w(0)|^2 + \li( \int_0^s |\p_{\tau} w_0(\tau)|d\tau\re)^2\re]ds \\
                               &\leq   2TC^2 + 2T\int_{0}^{T} \int_0^s|\p_{\tau} w_0(\tau)|^2d\tau ds \\
                               &\leq   2TC^2 + 2T\int_{0}^{T}\nl \p_s w \nr^2_{L^2(Z_T)}ds \\
                               &\leq   2TC^2 + 2T^2c_0^2   
\end{align*}
Hence there is a constant $c_2= c_2(x^{\pm},T,H,c_{x^+},n)$ bounding $\nl w \nr_{L^2(Z_T)}$ and we are done with step 1.\\

{\bf Step 2 :} Let $T>T^{\prime}>0$, $w_1,w_2 \in \Theta_{x^-,c_{x^+}}^{-1}(0)$. Then set $\delta w = w_1-w_2$. We claim
that there is a constant $C=C(T^{\prime},T,H)$ such that
\begin{equation}\label{est0}
\nl \delta w\nr_{H^1(Z_{T^{\prime}},\Rn)} \leq C \Big( \nl \delta w\nr_{L^2(Z_T,\Rn)} + \nl \PP^-\delta w(0,\cdot)\nr_{\HS}\Big) \,. 
\end{equation}
Assume that \eqref{est0} holds, then by step 1 we have that the restriction of any sequence 
$(w_n)_{n\in\N} \subset \Theta_{x^-,c_{x^+}}^{-1}(0)$ to $Z_{T}$ is uniformly bounded 
in $H^1(Z_{T},\Rn)$. Since the embedding 
$$H^1_{\WW^u(x^-)}(Z_{T},\Rn) \hookrightarrow L^2(Z_T,\Rn)$$ 
is compact due to the Sobolev embedding Theorem, see \cite{adams-2}, and 
due to Theorem \ref{compactM} the set $\PP^-\Big(\WW^u(x^-)\cap \big\{A_H \geq A_H(x^+)\big\}\Big)$ is precompact in $\M$. The estimate \eqref{est0} implies that 
the restriction of $(w_n)_{n\in\N}$ to $Z_{T^{\prime}}$ possesses an in 
$H^1(Z_{T^{\prime}},\Rn)$ convergent subsequence which proves the  claimed result.     
So it remains to prove \eqref{est0}. Therefore let $\beta$ be a smooth function such that 
$$ 
\beta(s) = \li\{
\begin{array}{crl}
 1 &,& s \leq T^{\prime} \\
 0 &,& s \geq T^{\prime} + (T-T^{\prime})/2
\end{array}\re.\,.
$$
Now we apply  formula \eqref{delbarform} of Lemma \ref{formform} to $\beta \delta w= \beta(w_1-w_2)$, $w_1,w_2 \in \Theta_{x^-,c_{x^+}}^{-1}(0)$ and obtain  
\begin{align*}
\nl \nabla  (\beta \delta w)  \nr^2_{L^2(Z_{T})} &=    \nl \overline{\p}_{J_0}(\beta \delta w) \nr^2_{L^2(Z_{T})} \\ 
                                                      &+    \nl \PP^-\delta w(0,\cdot)\nr^2_{\HS} - \nl \PP^+\delta w(0,\cdot)\nr^2_{\HS}\\
                                                      &\leq  c\Big(\nl \overline{\p}_{J_0}\delta w \nr^2_{L^2(Z_{T})} + \nl \delta w \nr^2_{L^2(Z_T)} \\ 
                                                      & \qquad\qquad\qquad\qquad\quad \,  +  \nl \PP^-\delta w(0,\cdot)\nr^2_{\HS}\Big)   
\end{align*}
with $c = 2(1+\nl \beta^{\prime}\nr^2_{C^0(\R)})$. Now by the mean value theorem we compute
\begin{align*}
\nl \overline{\p}_{J_0}\delta w \nr_{L^2(Z_{T})} & =   \nl \overline{\p}_{J_0}u_1 -  \overline{\p}_{J_0}u_2  \nr_{L^2(Z_{T})} 
                                                        =   \nl X_H(u_1) -  X_H(u_2)  \nr_{L^2(Z_{T})} \\
                                                      &\leq \nl H\nr_{C^2(\mS^1\times \Tn)} \nl \delta w\nr_{L^2(Z_{T})} \,.
\end{align*}
Therefore we can find a constant $C^{\prime} = C^{\prime}(T^{\prime},T,H)$ such that     
\begin{align*}
\nl \nabla  (\beta \delta w)  \nr^2_{L^2(Z_{T})} & = \nl \p_s  (\beta \delta w)  \nr^2_{L^2(Z_{T})}  + \nl \p_t  (\beta \delta w)  \nr^2_{L^2(Z_{T})} \\
                                                      &\leq C^{\prime}\Big(   \nl \delta w\nr^2_{L^2(Z_{T})} +  \nl \PP^-\delta w(0,\cdot)\nr^2_{\HS} \Big)   \,,
\end{align*}
which implies that there is $C = C(T^{\prime},T,H)$ such that \eqref{est0} holds. 
\end{proof}
Combining the results of Propositions \ref{comp} and \ref{braking} we obtain the following statement. 
\begin{prop} Let $H\in \Hc_{\reg}$ be a nondegenerate Hamiltonian and $K\in \Kc_{\theta, 1}$. 
Let $x^-,x^+ \in \Pc_0(H)$ and let $(u_n)_{n\in\N} \in  \MM_{\hyb}(x^-,x^+,H,J_0,\X_K)$ be a sequence of curves and $v_n = \varphi_{\X_K}\big(s,u_n(0.\cdot)\big)$, 
$s<0$ be the trajectories through $u_n(0,\cdot) \in \M$.
Then there exist $x_0,\dots,x_{r_1},y_0,\dots,y_{r_2} \in \Pc_0(H) $ with $r_1,r_2 \in\N$ and 
$$A_H(x_0)> A_H(x_1)> \dots >A_H(x_{r_1})>A_H(y_0)>A_H(y_1)>\dots >A_H(y_{r_2})$$
connecting trajectories 
$$V_1 \subset \WW^u(x_0)\cap\WW^s(x_1),\dots,V_{r_1} \subset  \WW^u(x_{r_1-1})\cap\WW^s(x_{r_1})\,,$$
and curves
\begin{align*}
U_1 &\in \MM_{\hyb}(x_{r_1},y_0,H,J_0,\X_K)\,,\\
U_2 &\in \MM_{F}(y_0,y_1,H,J_0),\dots, U_{r_2} \in \MM_{F}(y_{r_2-1},y_{r_2},H,J_0) 
\end{align*}
such that a subsequence $(v_{n_k},u_{n_k})$ converges to $(V_1,\dots,V_{r_1},U_1,\dots,U_{r_2})$ in the following sense. There exist reparametrization times
$(\tau_k^j)_{k\in\N} \subset (-\infty,0]$, $j\in \{1,\dots,r_1\}$ and $(s_k^j)_{k\in\N} \subset [0,\infty), j\in \{1,\dots,r_2\}$ such that 
$$ \varphi_{\X_K}(\tau_k^1,v_{n_k}) \lo V_1,\dots ,\varphi_{\X_K}(\tau_k^{r_1}, v_{n_k}) \lo V_{r_1} \quad \text{in the Hausdorff-distance} $$
and 
$$u_{n_k} \lo U_1, u_{n_k}(\cdot+s_k^1,\cdot)\lo U_2, \dots , u_{n_k}(\cdot+s_k^{r_2},\cdot)\lo U_{r_2} \quad \text{in} \quad H^1_{\loc} \,.  $$    
\end{prop}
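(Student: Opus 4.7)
\emph{Proof plan.} The idea is to exhaust energy simultaneously on both sides of the boundary loop $u_n(0,\cdot)$: bubble off Floer cylinders at $+\infty$ of $u_n$ and Morse trajectories at $-\infty$ of $v_n$, with the interface controlled by $H^{1/2}$-continuity of the evaluation map at $s=0$.

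\emph{Step 1 (Floer bubbling).} By Theorem \ref{compact2} the sequence $(u_n)$ is $H^1_{\loc}$-precompact, so after passing to a subsequence $u_{n_k} \to U_1$ in $H^1_{\loc}(Z,\Tn)$. The limit is a weak (hence smooth, by Lemma \ref{el1}) solution of $\dH(U_1)=0$ with finite energy and thus possesses a $+\infty$-asymptote $y_0 \in \Pc_0(H)$. If $y_0 = x^+$ I set $r_2 = 0$ and stop; otherwise $A_H(y_0) > A_H(x^+)$ and I iterate the Floer-bubbling procedure of Proposition \ref{braking}: choose translation times $s_k^1 \to +\infty$, apply Theorem \ref{compact2} and $\R$-translation invariance to $u_{n_k}(\cdot + s_k^1,\cdot)$, and extract $U_2 \in \MM_F(y_0,y_1,H,J_0)$ for some $y_1$ strictly below $y_0$ in action. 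Since $A_H(\Pc_0(H))$ is finite and strictly decreases along the chain, the procedure terminates at $y_{r_2} = x^+$.

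\emph{Step 2 (Morse bubbling).} By Lemma \ref{ev} evaluation at $s=0$ is continuous $H^1(Z,\Rn) \to \Ee$, so $u_{n_k}(0,\cdot) \to U_1(0,\cdot)$ in $H^{1/2}$. The sequence $u_{n_k}(0,\cdot)$ sits inside $\WW^u_{\X_K}(x^-) \cap \{A_H \geq A_H(x^+)\}$, whose closure is compact in $\M$ by Theorem \ref{compactM}. Because $A_H$ is a Lyapunov function for $\X_K$ and $\rest(\X_K) = \Pc_0(H)$ is finite (Proposition \ref{reg1}), the backward $\X_K$-orbit through $U_1(0,\cdot)$ accumulates at a critical point $x_{r_1}$, so $U_1(0,\cdot) \in \WW^u_{\X_K}(x_{r_1})$. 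I then apply Proposition \ref{comp} successively on the precompact intersections $\WW^u_{\X_K}(x_{j-1}) \cap \WW^s_{\X_K}(x_j)$ from Theorem \ref{compactM}(i), for $j=1,\ldots,r_1$ with $x_0 := x^-$, to obtain reparametrization times $\tau_k^j \in (-\infty,0]$ with $\varphi_{\X_K}(\tau_k^j, v_{n_k}) \to V_j$ in the Hausdorff distance.

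\emph{Main obstacle.} The delicate step is the matching at the interface: verifying that the terminal critical point $x_{r_1}$ of the Morse breaking really coincides with the left asymptote of $U_1$, with no hidden energy loss in between. This hinges on the essential verticality of $\WW^u_{\X_K}(x^-)$ provided by Theorem \ref{compactM}(ii), which upgrades the convergence of $u_{n_k}(0,\cdot)$ to strong $H^{1/2}$-convergence and forces $v_{n_k}$, after reparametrization, to shadow a broken trajectory terminating precisely at the $x_{r_1}$ identified from the Floer side. The strict action inequalities $A_H(x_0) > \dots > A_H(x_{r_1}) > A_H(y_0) > \dots > A_H(y_{r_2})$ then follow from action--energy bookkeeping across the broken configuration.
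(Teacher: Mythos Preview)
Your approach is correct and matches the paper's, which gives no detailed proof but simply states that the result follows by combining Propositions~\ref{comp} and~\ref{braking}, together with a remark that no breaking can occur at the interface $s=0$ because a Floer half-cylinder reaching a critical point in finite time is forced to be constant (cf.\ \cite{hof}). Your sketch supplies the details the paper omits; the only imprecision is that Theorem~\ref{compactM}(ii) asserts precompactness of the $\PP^-$-projection of $\WW^u(x^-)\cap\{A_H\geq c\}$, not of the full set, but this is harmless since you already obtain $H^{1/2}$-convergence of the boundary loops directly from $H^1_{\loc}$-convergence and Lemma~\ref{ev}.
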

We want to remark that, indeed there is no braking at the boundary. This would result in a curve which runs into a critical point in finite time which is therefore 
constant, see \cite{hof}. Of course also the Morse trajectory would be constant in this case so the whole curve would converge to some critical point which contradicts 
the assumption that $A_H$ possesses regular values in the interior. 

\subsection{The isomorphism in $\Z_2$- coefficients}
 
By the results of section \ref{transF} we can choose a generic Hamiltonian $H$ such that $(H,J_0)$ is a regular pair. Furthermore due to Theorem \ref{res} we can choose 
$K\in \widehat \Kc_{\reg}$ such that $\X_K$ satisfies the Morse-Smale property up to order 2, i.e., the unstable and stable manifolds meet transversally 
and the moduli spaces $\MM_F(x^-,x^+,H,J_0)$ and $\MM_{\hyb}(x^-,x^+,H,J_0,\X_K)$ are manifolds for all $x^{\pm} \in \Pc_0(H)$ with $\mu(x^-)-\mu(x^+) \leq 2$. 
In this section we show that, in this situation, there is a chain isomorphism 
$$\Phi_* :\big(C_*(H),\p^M_*\big) \lo \big(C_*(H),\p^F_*\big)\,, \quad  i.e., \Phi_{*-1}\circ \p^M_* = \p^F_*\circ \Phi_{*} \,.$$ 
For technical reasons we reduce ourselves to the case of $\Z_2$-coefficients, i.e., 
$$ C_k(H) = \bigoplus_{\genfrac{}{}{0pt}{}{x \in \Pc_0(H)\,,}{\mu(x)=k }}\Z_2 x \,.$$ 
The first part is to prove that $\Phi_*$ is a chain homomorphism. This fact is an immediate consequence of the following result. 
  
\begin{prop}\label{propf} Assume we have chosen a regular pair $(H,J_0)$ and  $K\in \widehat \Kc_{\reg}$. Let $x^-,x^+ \in \Pc_0(H)$
with $\mu(x^-) -\mu(x^+) =1$. Then the moduli space $\overline{\MM_{\hyb}(x^-,x^+,H,J_0,\X_K)}$ is a $H^1_{\loc}$- compact manifold with boundary of dimension 1. 
Its boundary consists of all the broken hybrid type trajectories from $x^-$ to $x^+$.
\end{prop}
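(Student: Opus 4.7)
The approach combines the transversality, Fredholm, and compactness results already assembled in this section with a standard gluing argument, adapted to the non-Lagrangian boundary condition.

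First, under the identification $m(x,\V) = \mu(x)$ noted just after the proof of Theorem \ref{fred2}, the hypothesis $\mu(x^-) - \mu(x^+) = 1$ together with Corollary \ref{mani} shows that $\MM_{\hyb}(x^-,x^+,H,J_0,\X_K)$ is a $C^3$-manifold of dimension $1$. By Theorem \ref{compact2} it is $H^1_{\loc}$-precompact, and by the convergence proposition stated at the end of Section \ref{hybcompactness} every sequence $u_n$ in it admits a subsequence converging (after reparametrization of its Morse half) to a broken configuration.

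Next I would perform a dimension count on these limits. A general broken limit consists of a chain of Morse trajectories $x^- = x_0 \to x_1 \to \cdots \to x_{r_1}$, one hybrid piece $U_1 \in \MM_{\hyb}(x_{r_1}, y_0, H, J_0, \X_K)$, and a chain of Floer cylinders $y_0 \to y_1 \to \cdots \to y_{r_2} = x^+$. By Proposition \ref{man}$(iii)$, Corollary \ref{mani}, and the Floer transversality of Section \ref{transF}, each piece has non-negative dimension after quotienting by the $\R$-action where present, and these dimensions add to $\mu(x^-) - \mu(x^+) = 1$. Since $(H,J_0)$ is a regular pair and $\X_K$ satisfies the Morse--Smale condition up to order $2$, every non-constant Morse or Floer piece contributes at least $1$ to the total. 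Hence at most one break can occur, and the only possibilities are: $(a)$ a single orbit class in $\bigl(\WW^u(x^-)\cap\WW^s(z)\bigr)/\R$ for some $z \in \Pc_0(H)$ with $\mu(z) = \mu(x^+)$, followed by an isolated element of $\MM_{\hyb}(z,x^+,H,J_0,\X_K)$; or $(b)$ an isolated element of $\MM_{\hyb}(x^-,z,H,J_0,\X_K)$ for some $z$ with $\mu(z) = \mu(x^-)$, followed by a single class in $\widehat{\MM}_F(z,x^+,H,J_0)$. Each of these sets is finite, and degenerate cases $z = x^{\pm}$ collapse one of the two pieces to a constant by Lemma \ref{zero}.

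Finally -- and this is the main obstacle -- one must establish a gluing theorem showing that (i) every such broken configuration is attained as a limit from the interior, and (ii) each has a unique one-sided collar in $\overline{\MM_{\hyb}(x^-,x^+,H,J_0,\X_K)}$, so that the latter becomes a compact topological $1$-manifold with boundary equal to the disjoint union of the sets in $(a)$ and $(b)$. For Morse--hybrid breaking of type $(a)$, I would use the graph transform method of \cite{Al5} along the transverse intersection $\WW^u(x^-)\cap\WW^s(z)$ to pre-glue a family of boundary loops on $\WW^u(x^-)$ approximating the initial loop of the hybrid piece, then correct by Newton iteration using the surjective right inverse of $D\Txx$ supplied by Theorem \ref{res}. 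For hybrid--Floer breaking of type $(b)$, I would pre-glue a length-$T$ truncation of the Floer cylinder onto the cylindrical end of the hybrid trajectory in the classical manner of \cite{floer} and \cite{schwarz}, and correct via the same right inverse. The hard point will be obtaining right-inverse bounds that are uniform in the gluing parameter $T \to \infty$ in the non-Lagrangian setting: the exponential decay \eqref{en} at the ends, the elliptic estimate underlying Proposition \ref{ann}, and the essential verticality of $\WW^u$ from Theorem \ref{compactM}$(ii)$ should combine to provide these estimates, after which the standard implicit function argument yields both surjectivity and uniqueness of the gluing map modulo the gluing parameter, completing the description of the boundary.
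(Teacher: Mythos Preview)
Your proposal is correct and follows essentially the same approach as the paper: the paper's own proof is quite terse, invoking Corollary \ref{mani} and Theorem \ref{compact2} for the manifold and precompactness statements, identifying the same two types of broken configurations you call $(a)$ and $(b)$, and then deferring the gluing to the established methods of \cite{Al6} for the Morse side and \cite{schwarz} for the Floer side, noting that each localizes at one end and can therefore be applied independently. Your write-up is in fact more explicit than the paper's about the dimension count and the analytic ingredients needed for uniform right-inverse bounds in the gluing.
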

\begin{proof} By Corollary \ref{mani} $\MM_{\hyb}(x^-,x^+,H,J_0,\X_K)$ is a manifold of dimension 1, which  
is $H^1_{\loc}$- precompact according to Theorem \ref{compact2}. We claim that if $\p \MM_{\hyb}(x^-,x^+,H,J_0,\X_K)$ is non-empty, then 
$$W\in \overline{\MM_{\hyb}(x^-,x^+,H,J_0,\X_K)}\setminus \MM_{\hyb}(x^-,x^+,H,J_0,\X_K)$$ 
is a broken hybrid type trajectory, that is of the following form. Either there is $y\in \Pc_0(H)$ with $\mu(y)=\mu(x^+)$ and $W= (V,U)$, where $V$ is a connecting 
trajectory from $x^-$ to $y$ and $U \in \MM_{\hyb}(y,x^+,H,J_0,\X_K)$ or $\mu(y)=\mu(x^-)$ and $W=(V^{\prime},U^{\prime})$, 
where $V^{\prime} \in \MM_{\hyb}(x^-,y,H,J_0,\X_K)$ and $U^{\prime} \in \MM_{F}(y,x^+,H,J_0)$. Hence the ends of $\overline{\MM_{\hyb}(x^-,x^+,H,J_0,\X_K)}$ 
are half open intervals whose boundary points correspond to the broken hybrid type trajectories. To prove this result one uses the gluing methods established in 
\cite{Al6} and \cite{schwarz}, both of which localize either at the Morse part or the Floer part and can therefore be applied. 
\end{proof}

Now we construct the isomorphism as follows : \\

Let $H \in \Hr$ be given, then  by the results of section \ref{transF} we can assume that, up to a small perturbation, which leaves the periodic orbits fixed, $H$ is such that 
$(H,J_0)$ is a regular pair. We choose  $K\in \widehat \Kc_{\reg}$ and set for $\V = \M\times(\R^n\times\Hm^+)$ 
$$\upsilon(x,y) =    \# \MM_{\hyb}(x,y,H,J_0,\X_K) \mod 2 \,, \quad m(x,\V) = \mu(y)$$
to be the number of connected components of the corresponding moduli spaces. Due to Theorem \ref{compact2}, Theorem \ref{res} and Lemma \ref{zero} the above identity is well defined. 
Observe furthermore that since 
the solutions 
$$(v,u) \in \MM_{\hyb}(x,y,H,J_0,\X_K)$$
are half Morse trajectories and half pseudo-holomorphic curves due to the Carleman similarity 
principle, see \cite{hof}, each solution is isolated and is therefore a connected component.
Consider the abelian groups 
$$ C_k(H) = \bigoplus_{\genfrac{}{}{0pt}{}{x \in \Pc_0(H)\,,}{\mu(x)=k }}\Z_2 x \,. $$  
Then we define the following map $\Phi_k :\big(C_k(H),\p^M_k\big) \lo \big(C_k(H),\p^F_k\big)$ between the Morse- and Floer-complex on the generators as 
\begin{equation}\label{iso}
 \Phi_k(x) =\sum_{\genfrac{}{}{0pt}{}{y \in \Pc_0(H)\,,}{\mu(x)=\mu(y) }}\upsilon(x,y)y
\end{equation}
and state the following Theorem. 
\begin{thmmm}[Main Theorem] The Morse-complex of the Hamiltonian action $A_H$ is chain-isomorphic to the Floer-complex of $(H,J_0)$. 
\end{thmmm}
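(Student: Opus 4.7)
The plan is to prove the main theorem by establishing two properties of the map $\Phi_*$ defined in \eqref{iso}: first, that it is a chain map, and second, that it is an isomorphism in each degree. Both arguments rely on structural results proved in the preceding sections, most crucially Proposition \ref{propf}, Lemma \ref{zero}, and the Lyapunov property of $A_H$ for $\X_K$.

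To establish the chain map identity $\Phi_{k-1}\circ \p^M_k = \p^F_k \circ \Phi_k$ I would fix critical points $x,y \in \Pc_0(H)$ with $\mu(x)-\mu(y)=1$ and study the compactified one-dimensional moduli space $\overline{\MM_{\hyb}(x,y,H,J_0,\X_K)}$. By Proposition \ref{propf} this is a compact $1$-manifold with boundary whose boundary points correspond to broken trajectories of exactly two types: a Morse-then-hybrid configuration $(V,U)$ with $V$ a Morse flow line from $x$ to some $z$ having $\mu(z)=\mu(y)$ and $U\in\MM_{\hyb}(z,y,H,J_0,\X_K)$; or a hybrid-then-Floer configuration $(V',U')$ with $V'\in\MM_{\hyb}(x,z',H,J_0,\X_K)$, $\mu(z')=\mu(x)$, and $U'\in\MM_F(z',y,H,J_0)$. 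Counting boundary points of a compact $1$-manifold modulo $2$ gives
\begin{equation*}
\sum_{z} \rho(x,z)\,\upsilon(z,y) \;+\; \sum_{z'} \upsilon(x,z')\,\nu(z',y) \;\equiv\; 0 \pmod 2,
\end{equation*}
and the left-hand side is exactly the coefficient of $y$ in $\Phi_{k-1}(\p^M_k x) + \p^F_k(\Phi_k x)$. Working over $\Z_2$, this yields the chain map identity.

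For the isomorphism property, I would order the finitely many critical points in each index class $\{x\in\Pc_0(H) : \mu(x)=k\}$ by decreasing action $A_H$ and argue that the matrix of $\Phi_k$ in this basis is triangular with $1$'s on the diagonal. The diagonal coefficient $\upsilon(x,x)$ equals $1$ by Lemma \ref{zero}, since $\MM_{\hyb}(x,x,H,J_0,\X_K)$ consists only of the constant solution. For strict triangularity, pick $(v,u)\in\MM_{\hyb}(x,y,H,J_0,\X_K)$ with $\mu(x)=\mu(y)$: because $A_H$ is Lyapunov for $\X_K$ the Morse half yields $A_H(u(0,\cdot))\leq A_H(x)$, and nonnegativity of the Floer energy $E(u)=A_H(u(0,\cdot))-A_H(y)$ gives $A_H(y)\leq A_H(u(0,\cdot))\leq A_H(x)$, with equality throughout forcing $v$ and $u$ both constant and hence $x=y$. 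Thus $\upsilon(x,y)\neq 0$ with $x\neq y$ forces $A_H(y)<A_H(x)$. A triangular matrix over $\Z_2$ with unit diagonal is invertible, so $\Phi_k$ is an isomorphism.

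The hard part will be Proposition \ref{propf} itself, which is stated but not yet proved: one must rule out more elaborate broken configurations (multiple breaks, or simultaneous breaks at the Morse and Floer ends) by a dimension count that uses the Morse-Smale property up to order $2$ granted by Theorem \ref{res}, and one must perform the two gluings showing that every admissible broken configuration arises as a unique endpoint of an arc in $\overline{\MM_{\hyb}(x,y,H,J_0,\X_K)}$. Fortunately these gluings localize at the Morse end and at the Floer end respectively, so the graph-transform technique of \cite{Al5} on the Morse side and the classical Floer gluing of \cite{schwarz} on the Floer side can be applied independently, thanks to the asymptotic separation of the two regimes.
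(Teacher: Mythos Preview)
Your proposal is correct and follows essentially the same route as the paper: the chain map identity comes from Proposition \ref{propf} and the parity of boundary points of a compact one-manifold, and the isomorphism comes from Lemma \ref{zero} together with the action-filtration argument giving a triangular matrix with unit diagonal. The only cosmetic differences are that the paper orders by increasing rather than decreasing action and leaves the energy inequality $A_H(y)\leq A_H(x)$ implicit, and that Proposition \ref{propf} is in fact given a proof sketch in the paper (deferring only the two gluing constructions to \cite{Al6} and \cite{schwarz}), so your final paragraph slightly overstates what remains open.
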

\begin{proof} First we show that $\Phi$ is an homomorphism. For this consider the following identity 
\begin{equation}\label{numb}
 \sum_{\genfrac{}{}{0pt}{}{z \in \Pc_0(H)\,,}{\mu(y)=\mu(z) }}\sum_{\genfrac{}{}{0pt}{}{y \in \Pc_0(H)\,,}{\mu(x)-\mu(y)=1 }}\upsilon(y,z)\rho(x,y)z 
 + 
 \sum_{\genfrac{}{}{0pt}{}{z \in \Pc_0(H)\,,}{\mu(y)-\mu(z)=1 }}\sum_{\genfrac{}{}{0pt}{}{y \in \Pc_0(H)\,,}{\mu(x)=\mu(y) }}\nu(y,z)\upsilon(x,y)z =0 
\end{equation}
modulo $2$, where $\rho(x,y)$ is the number of connected components of the intersection $\WW^u(x)\cap\WW^s(y)$ with respect to $\X_K$  
and $\nu(y,z)$  is the number of connected components of $\widehat{\MM_F}(x,y,H,J_0)$. Indeed the coefficient in \eqref{numb} is the number of broken 
hybrid type trajectories from $x$ to $z$, which by Proposition  \ref{propf} equals the number of boundary points of $\MM_{\hyb}(x,z,H,J_0,\X_K)$. 
Since the boundary of a compact one-dimensional manifold always consists of an even number of points,\eqref{numb} holds. In particular there holds 
$$\sum_{\genfrac{}{}{0pt}{}{z \in \Pc_0(H)\,,}{\mu(y)=\mu(z) }}\sum_{\genfrac{}{}{0pt}{}{y \in \Pc_0(H)\,,}{\mu(x)-\mu(y)=1 }}\upsilon(y,z)\rho(x,y)z 
\quad=
 \sum_{\genfrac{}{}{0pt}{}{z \in \Pc_0(H)\,,}{\mu(y)-\mu(z)=1 }}\sum_{\genfrac{}{}{0pt}{}{y \in \Pc_0(H)\,,}{\mu(x)=\mu(y) }}\nu(y,z)\upsilon(x,y)z   $$
modulo $2$. Hence $$ \Phi_{*-1}\circ \p^M_* = \p^F_*\circ \Phi_{*} $$ and therefore $\Phi$ is an homomorphism as claimed. Now we order 
the critical points $x_1,\dots,x_n\in \Pc_0(H)$, $n\in\N$ with $\mu(x_i) =k$, $i =1,\dots,n$ by increasing action, choosing any order for subsets of solutions 
with identical action, i.e., $ A_H(x_1) \leq A_H(x_2) \leq \dots \leq A_H(x_{n-1})\leq A_H(x_n)$. Then, by Lemma \ref{zero}, $\Phi_k$ is of the form 
\begin{equation*}
 \Phi_k = \li(\begin{array}{cccc}
               1 &     *   &  * \\
               0 & \ddots & *    \\
               0 & 0      & 1                
              \end{array}\re)
\end{equation*}
where $* = 1$ or $*=0$. Hence $\Phi$ is an isomorphism.
\end{proof}

{\bf Acknowledgement} I thank the unknown referee and Alberto Abbondandolo for providing me with the improvements. Furthermore I want to thank Matthias Schwarz and Felix Schm\"aschke 
for many stimulating discussions.
\bibliographystyle
{plain}

\end{document}